\newcommand{\sA}{\ensuremath{\mathscr{A}}\xspace}
\newcommand{\sH}{\ensuremath{\mathscr{H}}\xspace}
\newcommand{\sM}{\ensuremath{\mathscr{M}}\xspace}
\newcommand{\sS}{\ensuremath{\mathscr{S}}\xspace}
\newcommand{\sV}{\ensuremath{\mathscr{V}}\xspace}
\newcommand{\fkm}{\ensuremath{\mathfrak{m}}\xspace}
\newcommand{\BC}{\ensuremath{\mathbb{C}}\xspace}
\newcommand{\BE}{\ensuremath{\mathbb{E}}\xspace}
\newcommand{\BG}{\ensuremath{\mathbb{G}}\xspace}
\newcommand{\BL}{\ensuremath{\mathbb{L}}\xspace}
\newcommand{\BP}{\ensuremath{\mathbb{P}}\xspace}
\newcommand{\BV}{\ensuremath{\mathbb{V}}\xspace}
\newcommand{\BW}{\ensuremath{\mathbb{W}}\xspace}
\newcommand{\BX}{\ensuremath{\mathbb{X}}\xspace}
\newcommand{\BY}{\ensuremath{\mathbb{Y}}\xspace}
\newcommand{\BZ}{\ensuremath{\mathbb{Z}}\xspace}
\newcommand{\CD}{\ensuremath{\mathcal{D}}\xspace}
\newcommand{\CE}{\ensuremath{\mathcal{E}}\xspace}
\newcommand{\CF}{\ensuremath{\mathcal{F}}\xspace}
\newcommand{\CL}{\ensuremath{\mathcal{L}}\xspace}
\newcommand{\CN}{\ensuremath{\mathcal{N}}\xspace}
\newcommand{\CO}{\ensuremath{\mathcal{O}}\xspace}
\newcommand{\CU}{\ensuremath{\mathcal{U}}\xspace}
\newcommand{\CV}{\ensuremath{\mathcal{V}}\xspace}
\newcommand{\CX}{\ensuremath{\mathcal{X}}\xspace}
\newcommand{\CY}{\ensuremath{\mathcal{Y}}\xspace}
\newcommand{\CZ}{\ensuremath{\mathcal{Z}}\xspace}
\newcommand{\nat}{{\natural}}
\DeclareMathOperator{\Aut}{Aut}
\newcommand{\Ch}{{\mathrm{Ch}}}
\newcommand{\cl}{{\mathrm{cl}}}
\DeclareMathOperator{\diag}{diag}
\newcommand{\disc}{{\mathrm{disc}}}
\DeclareMathOperator{\End}{End}
\newcommand{\Fil}{\ensuremath{\mathrm{Fil}}\xspace}
\newcommand{\Fr}{\mathbf{F}}
\DeclareMathOperator{\Gal}{Gal}
\newcommand{\GL}{\mathrm{GL}}
\DeclareMathOperator{\Gr}{Gr}
\newcommand{\gr}{\mathrm{gr}}
\newcommand{\GU}{\mathrm{GU}}
\DeclareMathOperator{\Hom}{Hom}
\DeclareMathOperator{\im}{im}
\newcommand{\Ind}{{\mathrm{Ind}}}
\DeclareMathOperator{\Int}{\ensuremath{\mathrm{Int}}\xspace}
\DeclareMathOperator{\length}{length}
\DeclareMathOperator{\Lie}{Lie}
\newcommand{\mult}{{\mathrm{mult}}}
\DeclareMathOperator{\rank}{rank}
\renewcommand{\Re}{{\mathrm{Re}}}
\newcommand{\red}{\ensuremath{\mathrm{red}}\xspace}
\DeclareMathOperator{\Res}{Res}
\newcommand{\Sh}{\mathrm{Sh}}
\newcommand{\SL}{{\mathrm{SL}}}
\DeclareMathOperator{\Spec}{Spec}
\DeclareMathOperator{\Spf}{Spf}
\newcommand{\SO}{{\mathrm{SO}}}
\newcommand{\val}{{\mathrm{val}}}
\newcommand{\Ver}{{\mathrm{Vert}}}
\DeclareMathOperator{\supp}{supp}
\DeclareMathOperator{\tr}{tr}
\newcommand{\U}{\mathrm{U}}
\DeclareMathOperator{\vol}{vol}
\newcommand{\jiao}{\stackrel{\BL}{\cap}}
\newcommand{\wit}{\widetilde}
\newcommand{\wh}{\widehat}
\newcommand{\pair}[1]{\langle {#1} \rangle}
\newcommand{\ov}{\overline}
\newcommand{\incl}{\hookrightarrow}
\newcommand{\lra}{\longrightarrow}
\newcommand{\imp}{\Longrightarrow}
\newenvironment{altenumerate}
   {\begin{list}
      {(\theenumi) }
      {\usecounter{enumi}
       \setlength{\labelwidth}{0pt}
       \setlength{\labelsep}{0pt}
       \setlength{\leftmargin}{0pt}
       \setlength{\itemsep}{\the\smallskipamount}
       \renewcommand{\theenumi}{\roman{enumi}}
      }}
   {\end{list}}
\renewcommand{\to}{%
   \ifbool{@display}{\longrightarrow}{\rightarrow}%
   }
\let\shortmapsto\mapsto
\renewcommand{\mapsto}{%
   \ifbool{@display}{\longmapsto}{\shortmapsto}%
   }
\newlength{\olen}
\newlength{\ulen}
\newlength{\xlen}
\newcommand{\xra}[2][]{%
   \ifbool{@display}%
      {\settowidth{\olen}{$\overset{#2}{\longrightarrow}$}%
       \settowidth{\ulen}{$\underset{#1}{\longrightarrow}$}%
       \settowidth{\xlen}{$\xrightarrow[#1]{#2}$}%
       \ifdimgreater{\olen}{\xlen}%
          {\underset{#1}{\overset{#2}{\longrightarrow}}}%
          {\ifdimgreater{\ulen}{\xlen}%
             {\underset{#1}{\overset{#2}{\longrightarrow}}}
             {\xrightarrow[#1]{#2}}}}%
      {\xrightarrow[#1]{#2}}
   }
\newcommand{\xyra}[2][]{%
   \settowidth{\xlen}{$\xrightarrow[#1]{#2}$}%
   \ifbool{@display}%
      {\settowidth{\olen}{$\overset{#2}{\longrightarrow}$}%
       \settowidth{\ulen}{$\underset{#1}{\longrightarrow}$}%
       \ifdimgreater{\olen}{\xlen}%
          {\mathrel{\xymatrix@M=.12ex@C=3.2ex{\ar[r]^-{#2}_-{#1} &}}}%
          {\ifdimgreater{\ulen}{\xlen}%
             {\mathrel{\xymatrix@M=.12ex@C=3.2ex{\ar[r]^-{#2}_-{#1} &}}}
             {\mathrel{\xymatrix@M=.12ex@C=\the\xlen{\ar[r]^-{#2}_-{#1} &}}}}}%
      {\mathrel{\xymatrix@M=.12ex@C=\the\xlen{\ar[r]^-{#2}_-{#1} &}}}%
   }
\newcommand{\xla}[2][]{%
   \ifbool{@display}%
      {\settowidth{\olen}{$\overset{#2}{\longleftarrow}$}%
       \settowidth{\ulen}{$\underset{#1}{\longleftarrow}$}%
       \settowidth{\xlen}{$\xleftarrow[#1]{#2}$}%
       \ifdimgreater{\olen}{\xlen}%
          {\underset{#1}{\overset{#2}{\longleftarrow}}}%
          {\ifdimgreater{\ulen}{\xlen}%
             {\underset{#1}{\overset{#2}{\longleftarrow}}}
             {\xleftarrow[#1]{#2}}}}%
      {\xleftarrow[#1]{#2}}
   }
\newcommand{\isoarrow}{%
   \ifbool{@display}{\overset{\sim}{\longrightarrow}}{\xrightarrow\sim}%
   }
\renewcommand{\lra}{%
   \ifbool{@display}{\longleftrightarrow}{\leftrightarrow}%
   }
\newcommand{\barE}{{\bar{\mathbb{E}}}}
\newcommand{\Fb}{{\breve F}}
\newcommand{\OFb}{{O_{\breve F}}}
\newcommand{\Herm}{\mathrm{Herm}}
\newcommand{\rd}{\mathrm{d}}
\newcommand{\Rep}{\mathrm{Rep}}
\newcommand{\iden}{\langle1\rangle}
\newcommand{\Den}{\mathrm{Den}}
\newcommand{\pDen}{\partial\mathrm{Den}}
\newcommand{\wt}{\mathfrak{m}}
\newcommand{\ch}{\mathrm{ch}}
\newcommand{\Tate}{\mathrm{Tate}}
\newcommand{\barK}{\overline{K}_0(\CV(\Lambda))}
\newcommand{\barCh}{\overline{\Ch}}
\newcommand{\Intch}{c_{1,{\mathcal{V}(\Lambda)}}}
\newcommand{\Intc}{c_{{\mathcal{V}(\Lambda)}}}
\newcommand{\IntK}{\mathrm{K}_{\CV(\Lambda)}}
\newcommand{\KG}{K}
\newcommand{\tildeG}{}
\newcommand{\tildeGh}{}
\newcommand{\pEis}{\partial\mathrm{Eis}}
\newcommand{\Diff}{\mathrm{Diff}}
\newcommand{\Ei}{\mathrm{Ei}}
\newcommand{\wdeg}{\widehat\deg}
\newcommand{\wCh}{\widehat{\Ch}}
\newcommand{\sz}{\mathsf{z}}
\newcommand{\sx}{\mathsf{x}}
\newcommand{\sy}{\mathsf{y}}
\DeclareFontFamily{U}{matha}{\hyphenchar\font45}
\DeclareFontShape{U}{matha}{m}{n}{
      <5> <6> <7> <8> <9> <10> gen * matha
      <10.95> matha10 <12> <14.4> <17.28> <20.74> <24.88> matha12
      }{}
\DeclareSymbolFont{matha}{U}{matha}{m}{n}
\DeclareFontFamily{U}{mathx}{\hyphenchar\font45}
\DeclareFontShape{U}{mathx}{m}{n}{
      <5> <6> <7> <8> <9> <10>
      <10.95> <12> <14.4> <17.28> <20.74> <24.88>
      mathx10
      }{}
\DeclareSymbolFont{mathx}{U}{mathx}{m}{n}
\DeclareMathSymbol{\obot}         {2}{matha}{"6B}
\newtheorem{theorem}[subsubsection]{Theorem}
\newtheorem{proposition}[subsubsection]{Proposition}
\newtheorem{lemma}[subsubsection]{Lemma}
\newtheorem {conjecture}[subsubsection]{Conjecture}
\newtheorem{corollary}[subsubsection]{Corollary}
\theoremstyle{definition}
\newtheorem{definition}[subsubsection]{Definition}
\newtheorem{example}[subsubsection]{Example}
\newtheorem{remark}[subsubsection]{Remark}
\numberwithin{equation}{subsubsection}
\title{Kudla--Rapoport cycles and derivatives of local densities}
\author[Chao Li]{Chao Li}
\address{Columbia University, Department of Mathematics, 2990 Broadway,	New York, NY 10027, USA}
\email{chaoli@math.columbia.edu} 
\author[Wei Zhang]{Wei Zhang}
\address{Massachusetts Institute of Technology, Department of Mathematics, 77 Massachusetts Avenue, Cambridge, MA 02139, USA}
\email{weizhang@mit.edu}
\date{\today}
\begin{document}

\maketitle{}

\begin{abstract}
We prove the local Kudla--Rapoport conjecture, which is a precise identity between the arithmetic intersection numbers of special cycles on unitary Rapoport--Zink spaces and the derivatives of local representation densities of hermitian forms. As a first application, we prove the global Kudla--Rapoport conjecture, which relates the arithmetic intersection numbers of special cycles on unitary Shimura varieties and the central derivatives of the Fourier coefficients of incoherent Eisenstein series. Combining previous results of Liu and Garcia--Sankaran, we also prove cases of the arithmetic Siegel--Weil formula in any dimension.
\end{abstract}

\tableofcontents{}

\section{Introduction}

\subsection{Background}

The classical \emph{Siegel--Weil formula} (\cite{Siegel1951,Weil1965}) relates certain Siegel Eisenstein series with the arithmetic of quadratic forms, namely expressing special \emph{values} of these series as theta functions --- generating series of representation numbers of quadratic forms. Kudla (\cite{Kudla1997a, Kudla2004}) initiated an influential program to establish the \emph{arithmetic Siegel--Weil formula} relating certain Siegel Eisenstein series with objects in arithmetic geometry, which among others, aims to express the \emph{central derivative} of these series as the arithmetic analogue of theta functions --- generating series of arithmetic intersection numbers of $n$ special divisors on Shimura varieties associated to $\SO(n-1,2)$ or $\U(n-1,1)$. These special divisors include Heegner points on modular or Shimura curves appearing in the Gross--Zagier formula (\cite{Gross1986, Yuan2013}) ($n=2$), modular correspondence on the product of two modular curves in the Gross--Keating formula (\cite{Gross1993}) and Hirzebruch--Zagier cycles on Hilbert modular surfaces (\cite{Hirzebruch1976}) ($n=3$).

The arithmetic Siegel--Weil formula was established by Kudla, Rapoport and Yang (\cite{Kudla1999, Kudla1997a, Kudla2000, Kudla2006}) for $n=1,2$ (orthogonal case) in great generality. The \emph{archimedean} component of the formula was also known, due to Liu \cite{Liu2011} (unitary case), and Garcia--Sankaran \cite{Garcia2019} in full generality (cf. Bruinier--Yang \cite{Bruinier2018} for an alternative proof in the orthogonal case). However, the full formula (in particular, the nonarchimedean part) was widely open in higher dimension.

In the works \cite{Kudla2011,Kudla2014} Kudla--Rapoport made the nonarchimedean part of the conjectural formula more precise by defining arithmetic models of the special cycles (for any $n$ in the unitary case), now known as \emph{Kudla--Rapoport cycles}. They formulated the \emph{global Kudla--Rapoport conjecture} for the nonsingular part of the formula, and explained how it would follow (at least at an unramified place) from the \emph{local Kudla--Rapoport conjecture}, relating the derivatives of local representation densities of hermitian forms and arithmetic intersection numbers of Kudla--Rapoport cycles on unitary Rapoport--Zink spaces. They further proved the conjectures in the special case when the arithmetic intersection is \emph{non-degenerate} (i.e., of the expected dimension 0). Outside the non-degenerate case, the only known result was due to Terstiege \cite{Terstiege2013}, who proved the Kudla--Rapoport conjectures for $n=3$. Analogous results were known in the orthogonal case, see \cite{Gross1993,Kudla1999a,Kudla2000a, Bruinier2018} (non-degenerate case) and \cite{Terstiege2011} ($n=3$). 

The main result of this paper settles the local Kudla--Rapoport conjecture for any $n$ in the unitary case. As a first application, we will be able to deduce the global Kudla--Rapoport conjecture, and prove the first cases of the arithmetic Siegel--Weil formula in any higher dimension. In a companion paper \cite{LZ2020}, we will also use similar methods to prove analogous results in the orthogonal case.

As explained in \cite{Kudla1997a} and \cite{Liu2011}, the arithmetic Siegel--Weil formula (together with the doubling method) has important application to the \emph{arithmetic inner product formula}, relating the central derivative of the standard $L$-function of cuspidal automorphic representations on orthogonal or unitary groups to the height pairing of certain cycles on Shimura varieties constructed from arithmetic theta liftings. It can be viewed as a higher dimensional generalization of the Gross--Zagier formula, and an arithmetic analogue of the Rallis inner product formula. Further applications to the arithmetic inner product formula are investigated in \cite{LL2020}. We also mention that the local Kudla--Rapoport conjecture has application to the so-called \emph{unitary arithmetic fundamental lemma} for cycles on unitary Shimura varieties arising from the embedding $\U(n)\times \U(n)\hookrightarrow \U(2n)$.

\subsection{The local Kudla--Rapoport conjecture} Let $p$ be an odd prime. Let $F_0$ be a finite extension of $\mathbb{Q}_p$ with residue field $k=\mathbb{F}_q$ and a uniformizer $\varpi$. Let $F$ be an unramified quadratic extension of $F_0$. Let $\breve F$ be the completion of the maximal unramified extension of $F$. For any integer $n\geq 1$, the \emph{unitary Rapoport--Zink space} $\mathcal{N}=\mathcal{N}_n$ (\S\ref{sec:rapoport-zink-spaces}) is the formal scheme over $S=\Spf O_{\breve F}$ parameterizing hermitian formal $O_F$-modules of signature $(1,n-1)$ within the supersingular quasi-isogeny class. Let $\mathbb{E}$ and $\mathbb{X}$ be the framing hermitian $O_F$-module of signature $(1,0)$ and $(1,n-1)$ over $\bar k$. The space of \emph{quasi-homomorphisms} $\mathbb{V}=\mathbb{V}_n\coloneqq \Hom_{O_F}^\circ(\barE, \mathbb{X})$ carries a natural $F/F_0$-hermitian form, which makes $\mathbb{V}$ the unique (up to isomorphism) nondegenerate nonsplit (see \S\ref{ss:notation}) $F/F_0$-hermitian space of dimension $n$ (\S\ref{sec:herm-space-mathbbv}). For any subset $L\subseteq \mathbb{V}$, the local \emph{Kudla--Rapoport cycle} $\mathcal{Z}(L)$ (\S\ref{sec:kudla-rapop-cycl}) is a closed formal subscheme of $\mathcal{N}$, over which each quasi-homomorphism $x\in L$ deforms to homomorphisms.

Let $L\subseteq \mathbb{V}$ be an $O_F$-lattice (of full rank $n$). We now associate to $L$ two integers: the \emph{arithmetic intersection number} $\Int(L)$ and the \emph{derivative of the local density} $\pDen(L)$.

Let $x_1,\ldots, x_n$ be an $O_F$-basis of $L$. Define the \emph{arithmetic intersection number}
\begin{equation}
  \label{eq:intL}
  \Int(L)\coloneqq \chi(\mathcal{N},\mathcal{O}_{\mathcal{Z}(x_1)} \otimes^\mathbb{L}\cdots \otimes^\mathbb{L}\mathcal{O}_{\mathcal{Z}(x_n)} ),
\end{equation}
 where $\mathcal{O}_{\mathcal{Z}(x_i)}$ denotes the structure sheaf of the Kudla--Rapoport divisor $\mathcal{Z}(x_i)$, $\otimes^\mathbb{L}$ denotes the derived tensor product of coherent sheaves on $\mathcal{N}$, and $\chi$ denotes the Euler--Poincar\'e characteristic (\S\ref{sec:arithm-inters-numb}). By \cite[Proposition 3.2]{Terstiege2013} (or \cite[Corollary D]{Howard2018}), we know that $\Int(L)$ is independent of the choice of the basis $x_1,\ldots, x_n$ and hence is a well-defined invariant of $L$ itself.

For $M$ another hermitian $O_F$-lattice (of arbitrary rank), define $\Rep_{M, L}$ to be the \emph{scheme of integral representations of $M$ by $L$}, an $O_{F_0}$-scheme such that for any $O_{F_0}$-algebra $R$, $\Rep_{M,L}(R)=\Herm(L \otimes_{O_{F_0}}R, M \otimes_{O_{F_0}}R)$, where $\Herm$ denotes the set of hermitian module homomorphisms. The \emph{local density} of integral representations of $M$ by $L$ is defined to be $$\Den(M,L)\coloneqq \lim_{N\rightarrow +\infty}\frac{\# \Rep_{M,L}(O_{F_0}/\varpi^N)}{q^{N\cdot\dim (\Rep_{M,L})_{F_0}}}.$$ Let $\iden^k$ be the self-dual hermitian $O_F$-lattice of rank $k$ with hermitian form given by the identity matrix $\mathbf{1}_k$. Then $\Den(\iden^k, L)$ is a polynomial in $(-q)^{-k}$ with $\mathbb{Q}$-coefficients. Define the (normalized) \emph{local Siegel series} of $L$ to be the polynomial $\Den(X,L)\in \mathbb{Z}[X]$ (Theorem \ref{thm: Den(X)}) such that $$\Den((-q)^{-k},L)=\frac{\Den(\iden^{n+k}, L)}{\Den(\iden^{n+k}, \iden^n)}.$$ It satisfies a functional equation relating $X\leftrightarrow\frac{1}{X}$,
\begin{equation}
  \label{eq:FEmain}
  \Den(X,L)=(-X)^{\val(L)}\cdot \Den\left(\frac{1}{X},L\right).
\end{equation}
Since $\mathbb{V}$ is nonsplit, we know that $\val(L)$ is odd (see \S\ref{ss:notation} for the notation) and so the value $\Den(1,L)=0$. We thus consider the \emph{derivative of the local density} $$\pDen(L)\coloneqq -\frac{\rd}{\rd X}\bigg|_{X=1}\Den(X,L).$$

Our main theorem in Part \ref{part:local-kudla-rapoport} is a proof of the local Kudla--Rapoport conjecture \cite[Conjecture 1.3]{Kudla2011}, which asserts an exact identity between the two integers just defined.
\begin{theorem}[Theorem \ref{thm: main}, local Kudla--Rapoport conjecture]\label{thm:intro1}
  Let $L\subseteq \mathbb{V}$ be an $O_F$-lattice of full rank $n$. Then $$\Int(L)=\pDen(L).$$
\end{theorem}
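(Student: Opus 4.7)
The plan is to proceed by induction on $n$, using a decomposition of $L$ with respect to a well-chosen vector. Concretely, I would pick $x' \in L$ and let $L^{\flat}$ be a rank-$(n-1)$ hermitian $O_F$-sublattice so that $L = L^{\flat} + O_F x'$. By associativity of the derived tensor product,
\begin{equation*}
\Int(L) = \chi\bigl(\mathcal{Z}(L^{\flat}),\, \mathcal{O}_{\mathcal{Z}(x')}\bigr),
\end{equation*}
where $\mathcal{Z}(L^{\flat})$ denotes the derived intersection of the first $n-1$ Kudla--Rapoport divisors. On the analytic side, iterating the functional equation \eqref{eq:FEmain} and the recursive structure of local Siegel series gives an analogous decomposition of $\pDen(L)$ in terms of $L^{\flat}$ and of the hermitian length of (the projection of) $x'$.

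Next I would decompose $\mathcal{Z}(x')$ as $\mathcal{Z}(x')^{\mathcal{H}} + \mathcal{Z}(x')^{\mathcal{V}}$, where $\mathcal{Z}(x')^{\mathcal{H}}$ is the flat closure of the generic fiber (a disjoint union of formal spectra of quasi-canonical lifting rings) and $\mathcal{Z}(x')^{\mathcal{V}}$ is the residual cycle supported on $\mathcal{N}^{\red}$. This yields $\Int(L) = \Int^{\mathcal{H}}(L) + \Int^{\mathcal{V}}(L)$. The analytic side admits a parallel splitting $\pDen(L) = \pDen^{\mathcal{H}}(L) + \pDen^{\mathcal{V}}(L)$ obtained by separating, inside $\Rep_{\iden^{n+k}, L}$, the contributions where $x'$ maps to a primitive vector from those where it does not. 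The horizontal matching $\Int^{\mathcal{H}}(L) = \pDen^{\mathcal{H}}(L)$ should then reduce, along each quasi-canonical branch, to a Gross--Keating/Cho--Yamauchi-type identity between intersection multiplicities on a quasi-canonical lift and primitive representation numbers of $L^{\flat}$.

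The vertical matching is where I expect the real work to lie. Geometrically, $\Int^{\mathcal{V}}(L)$ decomposes along the Bruhat--Tits stratification of $\mathcal{N}^{\red}$ indexed by vertex lattices in $\mathbb{V}$, and one must compute the length of the derived structure sheaf of $\mathcal{Z}(L^{\flat})$ along each stratum. Analytically, $\pDen^{\mathcal{V}}(L)$ is naturally a sum over $O_F$-overlattices of $L$ inside $\mathbb{V}$. To reconcile the two decompositions I would introduce auxiliary \emph{modified} Kudla--Rapoport cycles indexed by vertex lattices, chosen so that their intersection numbers manifestly compute the terms of the overlattice sum. This reorganizes both sides as sums over the same combinatorial index set and propagates the inductive hypothesis cleanly from $L^{\flat}$ to $L$.

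The base of the induction is the case $n=1$, together with the case where $L^{\flat}$ is self-dual, in which $\mathcal{Z}(L^{\flat})$ is a regular formal curve and both $\Int(L)$ and $\pDen(L)$ can be evaluated explicitly and compared directly. The principal obstacle I anticipate is precisely the vertical matching: the geometric side is indexed by vertex lattices parametrizing strata, while the analytic side is naturally indexed by overlattices of $L^{\flat}$, and making the combinatorial sums agree term-by-term requires a uniform structural statement about $\mathcal{Z}(L^{\flat})$ — something akin to a local modularity of the generating series of its vertical cycles — that goes well beyond any individual intersection number calculation.
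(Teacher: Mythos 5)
Your high-level architecture---induct on $n$, reduce to comparing $\Int(L^\flat + \langle x' \rangle)$ with $\pDen(L^\flat + \langle x' \rangle)$ as $x'$ varies, and decompose both sides into horizontal and vertical pieces---does track the paper. Two corrections to the set-up. First, the decomposition must be applied to the derived $(n-1)$-fold intersection $^{\mathbb{L}}\mathcal{Z}(L^\flat)$, not to the single divisor $\mathcal{Z}(x')$: it is $\mathcal{Z}(L^\flat)_\sH$ that is a disjoint union of quasi-canonical lifting cycles (Theorem \ref{thm:horizontal}), whereas the horizontal locus of $\mathcal{Z}(x')$ alone has relative dimension $n-2$ over $\Spf \OFb$ and is not quasi-canonical once $n>2$. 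Second, the horizontal analytic piece is not obtained by splitting on primitivity of the image of $x'$ inside $\Rep$; the paper defines $\pDen_{L^\flat,\sH}$ by restricting the Cho--Yamauchi overlattice sum to those $L'$ with $t(L'\cap L^\flat_F)\leq 1$ (Definition \ref{def:horpDen}), mirroring the type constraint that characterizes the quasi-canonical horizontal components. With those definitions in place, the horizontal matching $\Int_{L^\flat,\sH}=\pDen_{L^\flat,\sH}$ does reduce to a quasi-canonical computation, as you predict.

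The genuine gap is the one you yourself flag: the vertical matching. Your plan---build auxiliary modified cycles indexed by vertex lattices so the overlattice sum and the Bruhat--Tits sum can be reconciled term by term---is precisely the explicit-computation route of the known cases $n\leq 3$, which the paper expressly abandons as infeasible in general. What the paper does instead is prove a \emph{symmetry} rather than a formula. After showing that $\Int_{L^\flat,\sV}$ and $\pDen_{L^\flat,\sV}$ extend to $C_c^\infty(\mathbb{V})$, it establishes (i) that the geometric vertical function is a $(-1)$-eigenfunction of the Fourier transform, which requires the Bruhat--Tits stratification, the reduction of vertical classes to Deligne--Lusztig curves, and crucially the Tate conjecture for those Deligne--Lusztig varieties (Theorem \ref{thm:tate}); and (ii) that $\wh{\pDen}_{L^\flat,\sV}(x)=0$ for all $x\perp L^\flat$ with $\val(x)<0$, via the Cho--Yamauchi formula and the functional equation of the local Siegel series. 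These two inputs, fed into the uncertainty principle from the Weil representation (a nonzero element of $C_c^\infty(\mathbb{V})$ cannot vanish on $\{\val\leq 0\}$ while its Fourier transform vanishes on $\{\val<0\}$) together with a double induction on $n$ and on $\val(L^\flat)$, close the proof with no term-by-term reconciliation at all. The ``local modularity'' you gesture at in your last sentence is exactly the Fourier eigenfunction property in (i), but absent the uncertainty principle as the mechanism to exploit it, your proposal does not close.
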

We refer to $\Int(L)$ as the \emph{geometric side} of the identity (related to the geometry of Rapoport--Zink spaces and Shimura varieties) and $\pDen(L)$ the \emph{analytic side} (related to the derivatives of Eisenstein series and $L$-functions).

Our main theorem  in Part \ref{part:local-kudla-rapoport-1} proves a variant of the local Kudla--Rapoport conjecture in the presence of a minimal nontrivial level structure, given by the stabilizer of an almost self-dual lattice (see \S\ref{ss:notation}) in a nonsplit $F/F_0$-hermitian space. The relevant Rapoport--Zink space on the geometric side is no longer formally smooth.  See Theorems \ref{thm: main2} and \ref{thm: main2'} for the precise statement.

\subsection{The arithmetic Siegel--Weil formula} Next let us describe some global applications of our local theorems. We now switch to global notations. Let $F$ be a CM number field, with $F_0$ its totally real subfield of index 2. Fix  a CM type $\Phi\subseteq \Hom(F, \overline{\mathbb{Q}})$ of $F$. Fix an embedding $\overline{\mathbb{Q}}\hookrightarrow \mathbb{C}$ and identify the CM type $\Phi$ with the set of archimedean places of $F$, and also with the set of archimedean places of $F_0$. Let $V$ be an $F/F_0$-hermitian space of dimension $n$ and $G=\Res_{F_0/\mathbb{Q}}\U(V)$. Assume the signatures of $V$ are $\{(n-1,1)_{\phi_0},(n,0)_{\phi\in\Phi-\{\phi_0\}}\}$ for some distinguished element $\phi_0\in \Phi$. Define a torus $Z^\mathbb{Q}=\{z\in \Res_{F/\mathbb{Q}}\mathbb{G}_m: \mathrm{Nm}_{F/F_0}(z)\in \mathbb{G}_m\}$. Associated to $\wit G\coloneqq Z^\mathbb{Q}\times G$ there is a natural Shimura datum $(\wit G,\{h_{\wit G}\})$ of PEL type (\S\ref{sec:shimura-varieties}). Let $K=K_{Z^\mathbb{Q}}\times K_G\subseteq \wit G(\mathbb{A}_f)$ be a compact open subgroup. Then the associated Shimura variety $\Sh_{\KG}=\Sh_{\KG}(\wit G,\{h_{\wit G}\})$ is of dimension $n-1$ and has a canonical model over its reflex field $E$.

Assume $K_{Z^\mathbb{Q}}\subseteq Z^\mathbb{Q}(\mathbb{A}_f)$ is the unique maximal open compact subgroup, and $K_{G,v}\subseteq \U(V)(F_{0,v})$ ($v$ a place of $F_0$) is given by
\begin{itemize}
\item the stabilizer of a self-dual or almost self-dual lattice $\Lambda_v\subseteq V_v$ if $v$ is inert in $F$,
\item the stabilizer of a self-dual lattice $\Lambda_v\subseteq V_v$ if $v$ is ramified in $F$,
\item a principal congruence subgroup if $v$ is split in $F$.
\end{itemize}
Then we construct a global regular integral model $\mathcal{M}_K$ of $\Sh_K$ over $O_E$ following \cite{Rapoport2017} (\S\ref{sec:glob-integr-models}). When $F_0=\mathbb{Q}$, we have $E=F$ and the integral model $\mathcal{M}_K$ recovers that in \cite{Bruinier2017} when $K_G$ is the stabilizer of a global self-dual lattice, which is closely related to that in \cite{Kudla2014}.

Let $\mathbb{V}$ be the \emph{incoherent} $\mathbb{A}_{F}/\mathbb{A}_{F_0}$-hermitian space nearby $V$ such that $\mathbb{V}$ is totally positive definite and $\mathbb{V}_v \cong V_v$ for all finite places $v$. Let $\varphi_K\in \sS(\mathbb{V}^n_f)$ be a $K$-invariant (where $K$ acts on $\mathbb{V}_f$ via the second factor $K_G$) factorizable Schwartz function such that $\varphi_{K,v}=\mathbf{1}_{(\Lambda_v)^n}$ at all $v$ inert in $F$. Let $T\in \Herm_n(F)$ be a nonsingular hermitian matrix of size $n$. Associated to $(T,\varphi_K)$ we construct arithmetic cycles $\mathcal{Z}(T,\varphi_K)$ over $\mathcal{M}_K$ (\S\ref{sec:glob-kudla-rapop-1}) generalizing the \emph{Kudla--Rapoport cycles} $\mathcal{Z}(T)$ in  \cite{Kudla2014}. Analogous to the local situation (\ref{eq:intL}), we may define its local arithmetic intersection numbers $\Int_{T,v}(\varphi_K)$ at finite places $v$ (\S\ref{sec:local-arithm-inters}). Using the star product of Kudla's Green functions, we also define its local arithmetic intersection number  $\Int_{T,v}(\sy,\varphi_K)$ at infinite places (\S\ref{sec:local-arithm-sieg}), which depends on a parameter $\sy\in \Herm_n(F_\infty)_{>0}$  where  $F_\infty=F \otimes_{F_0}\mathbb{R}^\Phi\cong \mathbb{C}^\Phi$. Combining all the local arithmetic numbers together, define the \emph{global arithmetic intersection number}, or the \emph{arithmetic degree} of the Kudla--Rapoport cycle $\mathcal{Z}(T,\varphi_K)$, $$\wdeg_T(\sy,\varphi_K)\coloneqq \sum_{v\nmid\infty}\Int_{T,v}(\varphi_K)+\sum_{v\mid \infty}\Int_{T,v}(\sy,\varphi_K).$$ It is closely related to the usual arithmetic degree on the Gillet--Soul\'e arithmetic Chow group $\wCh^n_\mathbb{C}(\mathcal{M}_K)$ (\S\ref{sec:arithm-degr-kudla}).

On the other hand, associated to $\varphi=\varphi_K \otimes \varphi_\infty\in\sS(\mathbb{V}^n)$, where $\varphi_{\infty}$ is the Gaussian function, there is a classical \emph{incoherent Eisenstein series} $E(\sz, s,\varphi_K)$ (\S\ref{sec:incoh-eisenst-seri-1}) on the hermitian upper half space $$\mathbb{H}_n=\{\sz=\sx+i\sy:\ \sx\in\Herm_n(F_\infty),\ \sy\in\Herm_n(F_\infty)_{>0}\}.$$ This is essentially the Siegel Eisenstein series associated to a standard Siegel--Weil section of the degenerate principal series (\S\ref{sec:sieg-eisenst-seri}). The Eisenstein series here has a meromorphic continuation and a functional equation relating $s\leftrightarrow -s$.  The central value $E(\sz, 0, \varphi_K)=0$ by the incoherence. We thus consider its \emph{central derivative} $$\pEis(\sz, \varphi_K)\coloneqq \frac{\rd}{\rd s}\bigg|_{s=0}E(\sz, s,\varphi_K).$$ Associated to an additive character $\psi: \mathbb{A}_{F_0}/F_0\rightarrow \mathbb{C}^\times$, it has a decomposition into the central derivative of the Fourier coefficients $$\pEis(\sz,\varphi_K)=\sum_{T\in \Herm_n(F)}\pEis_T(\sz,\varphi_K).$$

Now we can state our first application to the global Kudla--Rapoport conjecture  \cite[Conjecture 11.10]{Kudla2014}, which asserts an identity between the arithmetic degree of Kudla--Rapoport cycles and the derivative of nonsingular Fourier coefficients of the incoherent Eisenstein series. 

\begin{theorem}[Theorem \ref{thm:totallypositive}, global Kudla--Rapoport conjecture]
Let $\Diff(T, \mathbb{V})$ be the set of places $v$ such that $\mathbb{V}_{v}$ does not represent $T$.  Let $T\in\Herm_n(F)$ be nonsingular such that $\Diff(T,\mathbb{V})=\{v\}$ where $v$ is inert in $F$ and not above 2. Then $$\wdeg_T(\sy, \varphi_K)q^T=c_K\cdot \pEis_T(\sz,\varphi_K),$$ where $q^T\coloneqq\psi_\infty(\tr T\sz)$, $c_K=\frac{(-1)^n}{\vol(K)}$ is a nonzero constant independent of $T$ and $\varphi_K$, and $\vol(K)$ is the volume of $K$ under a suitable Haar measure on $\wit G(\mathbb{A}_f)$.
\end{theorem}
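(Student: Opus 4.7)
The plan is to compare both sides place-by-place, using the local Kudla--Rapoport conjecture (Theorem \ref{thm:intro1} and its minimal-level variants Theorems \ref{thm: main2}, \ref{thm: main2'}) at the distinguished place $v_0$, Liu's archimedean arithmetic Siegel--Weil formula \cite{Liu2011} at infinite places, and the classical local Siegel--Weil identities at the remaining finite places. Since $\Diff(T,\mathbb{V})=\{v_0\}$, the Siegel--Weil section $\Phi_{\varphi}$ is incoherent at the single place $v_0$, so the $T$-th Fourier coefficient $E_{T}(\sz,s,\varphi_K)=\prod_{v}W_{T,v}(s,\Phi_{v})\cdot q^{T}$ has exactly one local factor vanishing at $s=0$. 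Hence
\begin{equation*}
\pEis_{T}(\sz,\varphi_K)\;=\;W'_{T,v_0}(0,\Phi_{v_0})\cdot\prod_{v\neq v_0}W_{T,v}(0,\Phi_{v})\cdot q^{T},
\end{equation*}
and at the inert unramified place $v_0$ the classical relation between local Whittaker integrals and local Siegel series makes $W'_{T,v_0}(0,\varphi_{K,v_0})$ proportional to $\pDen(L_{0})$ for the lattice $L_{0}\subseteq\mathbb{V}_{v_0}$ encoded by $(T,\varphi_{K,v_0})$.

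On the geometric side, the assumption $\Diff(T,\mathbb{V})=\{v_0\}$ forces $\mathcal{Z}(T,\varphi_K)$ to have empty generic fiber and to be supported in the basic locus of the special fiber of $\mathcal{M}_{K}$ at $v_0$; hence $\Int_{T,v}(\varphi_K)=0$ for every finite $v\neq v_0$, and it remains to compute $\Int_{T,v_0}(\varphi_K)$ together with the archimedean Green-current terms. The $v_0$-adic uniformization of Rapoport--Zink, extended in \cite{Rapoport2017} to the almost self-dual case, identifies the formal completion of $\mathcal{M}_{K}$ along the basic locus at $v_0$ with a double-coset quotient involving the unitary Rapoport--Zink space $\mathcal{N}_{n}$ (or its minimal-level variant), and identifies the pullback of $\mathcal{Z}(T,\varphi_K)$ with local Kudla--Rapoport cycles. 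This yields a decomposition
\begin{equation*}
\Int_{T,v_0}(\varphi_K)\;=\;\bigl(\text{orbital integral of }\varphi_K^{v_0}\bigr)\cdot \sum_{L}\Int(L),
\end{equation*}
where $L$ ranges over an orbit of $O_{F,v_0}$-lattices in $\mathbb{V}_{v_0}$ determined by $(T,\varphi_{K,v_0})$. Applying Theorem \ref{thm:intro1}, or Theorem \ref{thm: main2}/\ref{thm: main2'} at an almost self-dual level, replaces each $\Int(L)$ by $\pDen(L)$ and thereby converts the local geometric contribution at $v_0$ into the analytic derivative $W'_{T,v_0}(0,\Phi_{v_0})$ up to the same normalization.

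To conclude, the orbital integral of $\varphi_K^{v_0}$ factors through local orbital integrals at each finite $v\neq v_0$ which, via the classical coherent local Siegel--Weil identities at inert, ramified, and split places, compute $W_{T,v}(0,\Phi_{v})$; at archimedean $v$ the Green-current contribution $\Int_{T,v}(\sy,\varphi_K)q^{T}$ is equated with $W_{T,v}(0,\Phi_{v})q^{T}$ by Liu's archimedean formula \cite{Liu2011}, which applies here because $T$ is necessarily totally positive definite. Collating measure, sign, and discriminant normalizations across the uniformization and the local comparisons then produces the universal constant $c_{K}=(-1)^{n}/\vol(K)$. The principal obstacle is precisely this bookkeeping: one must (i) set up the uniformization with cycle-theoretic compatibility when $K_{G,v_0}$ is the stabilizer of an almost self-dual lattice, relying on Theorems \ref{thm: main2}--\ref{thm: main2'}; (ii) verify the orbital-integral-to-Whittaker matching at ramified and split places, where $K_{G,v}$ is not hyperspecial and the Schwartz data are not obvious; and (iii) track every normalization factor uniformly so that the constant extracted is independent of $T$ and $\varphi_K$. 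Granting the local Kudla--Rapoport conjecture and Liu's formula, these steps are essentially formal but delicate.
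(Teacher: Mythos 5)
Your overall strategy — reduce to the contribution at the single finite place $v_0$, apply $p$-adic uniformization together with the local Kudla--Rapoport theorem there, and use the Siegel--Weil formula to handle the away-from-$v_0$ factors — is essentially the paper's, which deduces this theorem immediately from the semi-global identity Theorem \ref{thm:semi-global-identity} plus the vanishing assertions \eqref{eq:diffgeom} and \eqref{eq:diffana}.

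However, your treatment of the archimedean places contains a genuine error. You assert that ``at archimedean $v$ the Green-current contribution $\Int_{T,v}(\sy,\varphi_K)q^{T}$ is equated with $W_{T,v}(0,\Phi_{v})q^{T}$ by Liu's archimedean formula.'' That is wrong in two respects. First, Liu's formula (Theorem \ref{thm:archimedean}) equates $\Int_{T,\phi}(\sy,\varphi_K)q^T$ with $c_K\cdot\pEis_{T,\phi}(\sz,\varphi_K)$, and $\pEis_{T,\phi}$ is the product of the archimedean \emph{derivative} $W'_{T,\phi}(\,\cdot\,,0,\varphi_\phi)$ with the Whittaker \emph{values} at all other places — not the archimedean value $W_{T,\phi}(0,\Phi_\phi)$. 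Second, and decisively, in the situation at hand $\Diff(T,\mathbb{V})=\{v_0\}$ with $v_0$ finite, so $W_{T,v_0}(\,\cdot\,,0,\varphi_{v_0})=0$; hence $\pEis_{T,\phi}(\sz,\varphi_K)=0$ for every archimedean $\phi$, and by Liu's formula $\Int_{T,\phi}(\sy,\varphi_K)=0$ as well. Equivalently, $T$ is not represented by the nearby coherent space $V^\phi$ (since $V^\phi_{v_0}\cong\mathbb{V}_{v_0}$ fails to represent $T$), so the sum defining the Green current $g(\sy_\phi,T,\varphi_K)$ is empty. The archimedean terms in $\wdeg_T$ therefore contribute nothing. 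The archimedean Whittaker \emph{value} you are trying to produce is instead supplied by the Siegel--Weil (Ichino) evaluation of the theta integral that arises from $p$-adic uniformization at $v_0$, inside the proof of Theorem \ref{thm:semi-global-identity}; it does not come from the Green-current terms. As written, your bookkeeping in step (iii) would double-count or misplace that archimedean factor, and the proof does not close without this correction.
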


We form the generating series of arithmetic degrees  $$\wdeg(\sz, \varphi_K)\coloneqq \sum_{T\in\Herm_n(F)\atop \det T\ne0}\wdeg_T(\sy,\varphi_K) q^T.$$ Now we can state our second application to the arithmetic Siegel--Weil formula, which relates this generating series to the central derivative of the incoherent Eisenstein series.

\begin{theorem}[Theorem \ref{sec:arithm-sieg-weil}, arithmetic Siegel--Weil formula]
  Assume that $F/F_0$ is unramified at all finite places and split at all places above 2. Further assume that $\varphi_K$ is nonsingular (\S\ref{sec:incoh-eisenst-seri}) at two places split in $F$. Then $$\wdeg(\sz, \varphi_K)=c_K\cdot \pEis(\sz,\varphi_K).$$ In particular, $\wdeg(\sz, \varphi_K)$ is a nonholomorphic hermitian modular form of genus $n$.
\end{theorem}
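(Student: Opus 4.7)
The plan is to deduce this from the global Kudla--Rapoport conjecture (Theorem \ref{thm:totallypositive}) by carefully matching Fourier coefficients term by term, together with archimedean input and a vanishing argument for singular coefficients. Since both $\wdeg(\sz,\varphi_K)$ and $\pEis(\sz,\varphi_K)$ admit Fourier expansions indexed by $T\in \Herm_n(F)$, the identity breaks into one for each $T$.

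For nonsingular $T$, I would start from the standard factorization of the $T$-th Fourier coefficient of the incoherent Eisenstein series: after differentiating in $s$ at $s=0$, $\pEis_T(\sz,\varphi_K)$ is a sum over places $v$ of the derivative $W'_{T,v}(0,\varphi_{K,v})$ times a product of local Whittaker values $W_{T,w}(0,\varphi_{K,w})$ for $w\neq v$. Since $W_{T,w}(0,\varphi_{K,w})$ vanishes whenever $\mathbb{V}_w$ does not represent $T$, only terms with $v\in\Diff(T,\mathbb{V})$ can contribute. The incoherence of $\mathbb{V}$ forces $|\Diff(T,\mathbb{V})|$ to be odd and positive, so either exactly one term contributes or every term contains a vanishing factor. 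In the latter case ($|\Diff(T,\mathbb{V})|\geq 3$) one checks that the geometric side also vanishes because the Kudla--Rapoport cycle $\mathcal{Z}(T,\varphi_K)$ is supported only at places in $\Diff(T,\mathbb{V})$ and requires a single such place to have nonzero arithmetic intersection. For the remaining case $\Diff(T,\mathbb{V})=\{v\}$, I split into subcases: if $v$ is a finite inert place then the assumption that $F/F_0$ is unramified at all finite places and split above $2$ guarantees $v\nmid 2$, so the previous theorem (global Kudla--Rapoport) applies verbatim; if $v$ is archimedean, the matching follows from the archimedean arithmetic Siegel--Weil identity proved by Liu \cite{Liu2011} and Garcia--Sankaran \cite{Garcia2019} combined with the same factorization of $\pEis_T$.

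For singular $T$ (i.e.\ $\det T=0$), the series $\wdeg(\sz,\varphi_K)$ contributes nothing by definition, so I must show $\pEis_T(\sz,\varphi_K)=0$. This is precisely where the hypothesis that $\varphi_K$ is nonsingular at two split places $v_1,v_2$ enters: at a split place the local degenerate principal series section is completely explicit, and nonsingularity makes the corresponding local Whittaker integrals vanish for singular $T$. Having two such places provides two independent vanishing factors in the product decomposition, which kills both the value and the first derivative at $s=0$; this forces $\pEis_T\equiv 0$ for all singular $T$.

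Combining the nonsingular and singular cases yields the full identity $\wdeg(\sz,\varphi_K)=c_K\cdot\pEis(\sz,\varphi_K)$, and the modularity statement then follows from the known automorphy of the incoherent Eisenstein series $E(\sz,s,\varphi_K)$ in $\sz$, which is inherited by its central derivative. The main obstacle is not conceptual --- all the genuinely new analysis is packaged into the local Kudla--Rapoport conjecture (Theorem \ref{thm:intro1}) and the archimedean results --- but rather bookkeeping: the careful matching of local factorizations on the two sides, and checking that the support conditions on $\mathcal{Z}(T,\varphi_K)$ at split and ramified places are compatible with the vanishing of local Whittaker values, which is where the global hypotheses on $F/F_0$ and on $\varphi_K$ are used.
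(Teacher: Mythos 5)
Your proposal follows essentially the same route as the paper's proof: split the Fourier expansion into nonsingular and singular $T$, invoke Liu's vanishing result (cited as \eqref{eq:twononsingular} in the paper) to kill the singular terms using nonsingularity of $\varphi_K$ at two split places, and for nonsingular $T$ reduce to the case $\Diff(T,\mathbb{V})=\{v\}$ where the hypothesis on $F/F_0$ forces $v$ to be inert (not above $2$) or archimedean, whence Theorem \ref{thm:totallypositive} and Theorem \ref{thm:archimedean} apply. Your extra remarks about $|\Diff(T,\mathbb{V})|$ being odd and about ramified places are not needed here (under the stated hypotheses $F/F_0$ has no ramified finite places at all), but the structure of the argument and the key inputs match the paper's.
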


\begin{remark}
  The unramifiedness assumption on $F/F_0$ forces $F_0\ne \mathbb{Q}$. To treat the general case, one needs to formulate and prove an analogue of Theorem \ref{thm:intro1} when the local extension $F/F_0$ is ramified.  We remark that at a ramified place, in addition to the \emph{Kr\"amer model} with level given by the stabilizer of a self-dual lattice, we may also consider the case of \emph{exotic good reduction} with level associated to an (almost) $\varpi$-modular lattice. In a future work we hope to extend our methods to cover these cases, which in particular requires an extension of the local density formula of Cho--Yamauchi \cite{CY} to the ramified case.
\end{remark}

\begin{remark}
The nonsingularity assumption on $\varphi_K$ allows us to kill all the singular terms on the analytic side. Such $\varphi_K$ exists  for a suitable choice of $K$ since we allow arbitrary Drinfeld levels at split places.
\end{remark} 

\subsection{Strategy of the proof of the main Theorem \ref{thm:intro1}} 
The previously known special cases of the local Kudla--Rapoport conjecture (\cite{Kudla2011,Terstiege2013}) are proved via explicit computation of both the geometric and analytic sides. Explicit computation seems infeasible for the general case. Our proof instead proceeds via induction on $n$ using the \emph{uncertainty principle}.

More precisely, for a fixed $O_F$-lattice $L^\flat\subseteq \mathbb{V}=\mathbb{V}_n$ of rank $n-1$ (we assume $L_F^\flat$ is non-degenerate throughout the paper), consider functions on $x\in \mathbb{V}\setminus L^\flat_F$, $$\Int_{L^\flat}(x)\coloneqq \Int(L^\flat+\langle x\rangle),\quad \pDen_{L^\flat}(x)\coloneqq \pDen(L^\flat+\langle x\rangle).$$ Then it remains to show the equality of the two functions $\Int_{L^\flat}=\pDen_{L^\flat}$. Both functions vanish when $x$ is non-integral, i.e., $\val(x)<0$. Here $\val(x)$ denotes the valuation of the norm of $x$. By utilizing the inductive structure of the Rapoport--Zink spaces and local densities, it is not hard to see that if $x\perp L^\flat$ with $\val(x)=0$, then $$\Int_{L^\flat}(x)=\Int(L^\flat),\quad \pDen_{L^\flat}(x)=\pDen(L^\flat)$$ for the lattice $L^\flat\subseteq \mathbb{V}_{n-1}\cong \langle x\rangle^\perp_F$ of full rank $n-1$. By induction on $n$, we have $\Int(L^\flat)=\pDen(L^\flat)$, and thus the difference function $\phi=\Int_{L^\flat}-\pDen_{L^\flat}$ vanishes on $\{x \in\mathbb{V}: x\perp L^\flat, \val(x)\le0\}$. We would like to deduce that $\phi$ indeed vanishes identically.

The uncertainty principle (Proposition \ref{uncert}), which is a simple consequence of the Schr\"odinger model of the local Weil representation of $\SL_2$, asserts that if $\phi\in C_c^\infty(\mathbb{V})$ satisfies that both $\phi$ and its Fourier transform $\hat\phi$ vanish on $\{x\in \mathbb{V}: \val(x)\le0\}$, then $\phi=0$. In other words, $\phi,\hat\phi$ cannot simultaneously have ``small support'' unless $\phi=0$. We can then finish the proof by applying the uncertainty principle to $\phi=\Int_{L^\flat}-\pDen_{L^\flat}$, if we can show that both $\Int_{L^\flat}$ and $\pDen_{L^\flat}$ are invariant under the Fourier transform (up to the Weil constant $\gamma_{\mathbb{V}}=-1$). However, both functions have singularities along the hyperplane $L^\flat_F\subseteq \mathbb{V}$, which cause trouble in computing their Fourier transforms or even in showing that $\phi\in C_c^\infty(\mathbb{V})$.

To overcome this difficulty, we isolate the singularities by decomposing $$\Int_{L^\flat}=\Int_{L^\flat,\sH}+\Int_{L^\flat,\sV},\quad \pDen_{L^\flat}=\pDen_{L^\flat,\sH}+\pDen_{L^\flat,\sV}$$ into ``horizontal'' and ``vertical'' parts. Here on the geometric side $\Int_{L^\flat,\sH}$ is the contribution from the horizontal part of the Kudla--Rapoport cycles, which we determine explicitly in terms of quasi-canonical lifting cycles (Theorem \ref{thm:horizontal}). On the analytic side we define $\pDen_{L^\flat,\sH}$  to match with $\Int_{L^\flat,\sH}$. We show the horizontal parts have logarithmic singularity along $L^\flat_F$, and vertical parts are indeed in $C_c^\infty(\mathbb{V})$ (Corollary \ref{cor:LC int}, Proposition \ref{prop:LC Den}). We can then finish the proof if we can determine the Fourier transforms as
\begin{equation}
  \label{eq:FCmain}
\widehat{\Int}_{L^\flat,\sV}=- \Int_{L^\flat,\sV},\quad \widehat{\pDen}_{L^\flat,\sV}=-\pDen_{L^\flat,\sV}.  
\end{equation}

On the geometric side we show (\ref{eq:FCmain}) (Corollary \ref{cor:FT int}) by reducing to the case of intersection with Deligne--Lusztig curves. This reduction requires the Bruhat--Tits stratification of $\mathcal{N}^\mathrm{red}$ into certain Deligne--Lusztig varieties (\S\ref{sec:bruh-tits-strat}, due to Vollaard--Wedhorn \cite{Vollaard2011}) and the Tate conjecture for these Deligne--Lusztig varieties (Theorem \ref{thm:tate}, which we reduce to a cohomological computation of Lusztig \cite{Lusztig1976/77}).

On the analytic side we are only able to show (\ref{eq:FCmain}) (Theorem \ref{thm: pDen=0}) directly when $x\perp L^\flat$ and $\val(x)<0$. The key ingredient is a local density formula (Theorem \ref{thm: Den(X)}) due to Cho--Yamauchi \cite{CY} together with the functional equation (\ref{eq:FEmain}). We then deduce the general case by performing another induction on $\val(L^\flat)$ (\S\ref{ss:proof}).

We remark the extra symmetry (\ref{eq:FCmain}) under the Fourier transform can be thought of as a \emph{local modularity}, in analogy with the global modularity of arithmetic generating series (such as in \cite{Bruinier2017}) encoding an extra global $\SL_2$-symmetry. The latter global modularity plays a crucial role in the second author's recent proof \cite{Zhang2019}  of the \emph{arithmetic fundamental lemma}. In contrast to \cite{Zhang2019}, our proof of the local Kudla--Rapoport conjecture does not involve global arguments, thanks to a more precise understanding of the horizontal part of Kudla--Rapoport cycles. In other similar (non-arithmetic) situations, induction arguments involving Fourier transforms and the uncertainty principle are not unfamiliar:  here we only mention the second author's proof \cite{Zhang2014} of the Jacquet--Rallis smooth transfer conjecture, and more recently Beuzart-Plessis' new proof \cite{BP} of the Jacquet--Rallis fundamental lemma.

\subsection{The structure of the paper} In Part \ref{part:local-kudla-rapoport}, we review necessary background  on the local Kudla--Rapoport conjecture and  prove the main Theorem  \ref{thm:intro1}. In Part \ref{part:local-kudla-rapoport-1}, we prove a variant of the local Kudla--Rapoport conjecture in the almost self-dual case (Theorems \ref{thm: main2}, \ref{thm: main2'}), by relating both the geometric and analytic sides in the almost self-dual to the self-dual case (but in one dimension higher). In Part \ref{part:semi-global-global}, we review semi-global and global integral models of Shimura varieties and Kudla--Rapoport cycles, and incoherent Eisenstein series. We then apply the local results in Parts \ref{part:local-kudla-rapoport} and  \ref{part:local-kudla-rapoport-1} to prove the local arithmetic Siegel--Weil formula (Theorem \ref{thm:semi-global-identity}), the global Kudla--Rapoport conjecture (Theorem \ref{thm:totallypositive}), and cases of the arithmetic Siegel--Weil formula (Theorem \ref{sec:arithm-sieg-weil}).

\subsection{Acknowledgments} The authors would like to thank X. He, B. Howard, S. Kudla, Y. Liu, G. Lusztig, M. Rapoport, L. Xiao, Z. Yun, X. Zhu and Y. Zhu for useful conversations and/or comments. The authors are also grateful to referees for careful reading and helpful suggestions.
C.~L.~was partially supported by an AMS travel grant for ICM 2018 and the NSF grant DMS-1802269.  W. Z. was partially supported by the NSF grant DMS-1838118 and 1901642. The authors would like to thank the Morningside Center of Mathematics, where part of this work is done, for its hospitality.

\subsection{Notation on hermitian lattices}\label{ss:notation}
Let $p$ be a prime. In the local parts of the paper (Part \ref{part:local-kudla-rapoport} and \ref{part:local-kudla-rapoport-1}), we let $F_0$ be a non-archimedean local field of residue characteristic $p$, with ring of integers $O_{F_0}$, residue field $k=\mathbb{F}_q$ of size $q$, and uniformizer $\varpi$. Unless otherwise specified, we let $F$ be a quadratic extension of $F_0$, with ring of integers $O_{F}$ and residue field $k_F$. Let $\sigma$ be the nontrivial automorphism of $F/F_0$. Let $\Fb$ be the completion of the maximal unramified extension of $F$, and $\OFb$ its ring of integers.

Unless otherwise specified, we assume that $F/F_0$ is unramified (with an exception of \S \ref{sec:uncert-princ} on the uncertainty principle). We further assume that $F_0$ has characteristic 0 and residue characteristic $p>2$ (with exceptions of  \S \ref{sec:local-densities}, \S \ref{s:FT ana}, \S \ref{sec:uncert-princ}, \S \ref{sec:local-density-an}, which concern only the analytic side).

Let $\mathbb{V}$ be an $F/F_0$-hermitian space with hermitian form $(\ ,\ )$. We write $\val(x):=\val((x,x))$ for any $x\in \mathbb{V}$, where $\val$ is the valuation on $F_0$. Recall that the $F/F_0$-hermitian spaces are classified up to isomorphism by it dimension $n$ and its discriminant $\disc(\mathbb{V})=(-1)^{{n \choose 2}}\det(\mathbb{V})\in F_0^\times/\mathrm{Nm}_{F/F_0}F^\times$ (\cite[Theorem 3.1]{Jacobowitz1962}). We say $\mathbb{V}$ is \emph{split} if $\disc(\mathbb{V})=1\in F_0^\times/\mathrm{Nm}_{F/F_0}F^\times$, and \emph{nonsplit} otherwise.

Let $L\subseteq \mathbb{V}$ be an $O_F$-lattice of rank $n$. We denote by $L^\vee$ its dual lattice under $(\ ,\ )$. We say that $L$ is \emph{integral} if $L\subseteq L^\vee$. If $L$ is integral, define its \emph{fundamental invariants} to be the unique sequence of integers $(a_1,\ldots, a_n)$ such that $0\leq a_1\le \cdots\le a_n$, and $L^\vee/L\simeq \oplus_{i=1}^n O_F/\varpi^{a_i}$ as $O_F$-modules; define its \emph{valuation} to be $\val(L)\coloneqq \sum_{i=1}^na_i$; and define its \emph{type}, denoted by $t(L)$, to be the number of nonzero terms in its fundamental invariants $(a_1,\ldots, a_n)$.

We say $L$ is \emph{minuscule} or a \emph{vertex lattice} if it is integral and $L^\vee\subseteq \varpi^{-1}L$. Note that $L$ is a vertex lattice of type $t$ if and only if it has fundamental invariants $(0^{(n-t)},1^{(t)})$, if and only if $L\subseteq^t L^\vee\subseteq \varpi^{-1}L$, where $\subseteq^t$ indicates that the $O_F$-colength is equal to $t$. The set of vertex lattices of type $t$ in $\mathbb{V}$ is denoted by $\Ver^t=\Ver^t(\mathbb{V})$. We say $L$ is \emph{self-dual} if $L=L^\vee$, or equivalently $L$ is a vertex lattice of type 0. We say $L$ is \emph{almost self-dual} if $L$ is a vertex lattice of type 1. When $F/F_0$ is unramified, if $\mathbb{V}$ is split then $\val(L)$ is even and $\mathbb{V}$ contains a self-dual lattice; if $\mathbb{V}$ is nonsplit then $\val(L)$ is odd and $\mathbb{V}$ contains an almost self-dual lattice.

We denote by $L^\flat\subseteq \mathbb{V}$ an $O_F$-lattice of rank $n-1$, and we always assume that $L_F^\flat$ is non-degenerate. Here we use the subscript $(-)_F$ to stand for the base change to $F$, so $L_F^\flat=L^\flat\otimes_{O_F}F$.

Fix an unramified additive character $\psi: F_0\to \BC^\times$. Here ``unramifiedness'' means that the conductor of $\psi$ (i.e., the largest fractional ideal in $F_0$ on which $\psi$ is trivial) is $O_{F_0}$. For an integrable function $f$ on $\mathbb{V}$, we define its Fourier transform $\wh f$ to be $$\wh f(x):=\int_\mathbb{V} f(y)\psi (\tr_{F/F_0}(x,y))\rd y,\quad x\in \mathbb{V}.$$ We normalize the Haar measure  on $\BV$  to be self-dual, so $\hat{\hat{f}}(x)=f(-x)$. For an $O_F$-lattice $L\subseteq \mathbb{V}$ of rank $n$, we have (under the assumption that  $F/F_0$ is unramified)
$$
\wh{\bf 1}_{L}=\vol(L){\bf 1}_{L^\vee},\quad\text{and} \quad\vol(L)=[L^\vee:L]^{-1/2}=q^{-\val(L)}.
$$Note that $\val(L)$ can be defined for any lattice $L$ (not necessarily integral) so that the above equality for $\vol(L)$ holds.

\subsection{Notation on formal schemes}\label{sec:notat-form-schem}

Let $X$ be a formal scheme.  Denote by $X^{\red}$ the underlying reduced scheme. For closed formal subschemes $\CZ_1,\cdots,\CZ_m$ of $X$, denote by $\cup_{i=1}^m\mathcal{Z}_i$ the formal scheme-theoretic union, i.e., the closed formal subscheme with ideal sheaf $\cap_{i=1}^m\mathcal{I}_{\mathcal{Z}_i}$, where $\mathcal{I}_{\mathcal{Z}_i}$ is the ideal sheaf of $\mathcal{Z}_i$.  A closed formal subscheme on $X$ is called a Cartier divisor if it is defined by an {\em invertible} ideal sheaf.  %The addition of two (effective) Cartier divisors. 

Let  $X$ be a formal scheme over $\Spf \OFb$. Then $X$ defines a functor on the category of $\Spf \OFb$-schemes (i.e. $\OFb$-schemes on which $\varpi$ is locally nilpotent). For a noetherian $\varpi$-adically complete $\OFb$-algebra $R$, write $X(R):=\Hom_{\Spf {\OFb}}(\Spf R, X)=\varprojlim_nX(\Spec R/\varpi^n)$.

 When $X$ is noetherian, denote by $K_0^Y(X)$ the Grothendieck group of finite complexes of coherent locally free $\mathcal{O}_X$-modules acyclic outside $Y$. As defined in \cite[(B.1), (B.2)]{Zhang2019},  denote by $\mathrm{F}^i K_0^Y(X)$ be the (descending) codimension filtration on $K_0^Y(X)$, and denote by $\Gr^{i}K_0^Y(X)$ its $i$-th graded piece. As in \cite[Appendix B]{Zhang2019}, the definition of $K_0^Y(X)$, $\mathrm{F}^i K_0^Y(X)$ and $\Gr^{i}K_0^Y(X)$ can be extended to locally noetherian formal schemes $X$ such that $X$ is an increasing union of open noetherian formal subschemes. Similarly, we let $K_0'(X)$ denote the Grothendieck group of coherent sheaves of $\mathcal{O}_X$-modules. Now let $X$ be regular. Then there is a natural isomorphism $K_0^Y(X)\simeq K_0'(Y)$. For closed formal subschemes $\CZ_1,\cdots,\CZ_m$ of $X$, denote by $\CZ_1\jiao_X\cdots\jiao_X\CZ_m$ (or simply $\CZ_1\jiao\cdots\jiao\CZ_m$) the derived tensor product $\CO_{\CZ_1}\otimes^\BL_{\CO_X}\cdots \otimes^\BL_{\CO_X}\CO_{\CZ_m}$, viewed as an element in $K_0^{\CZ_1\cap\cdots\cap \CZ_m}(X)$.

For $\mathcal{F}$ a finite complex of coherent $\CO_X$-modules, we define its Euler--Poincar\'e characteristic $$\chi(X, \mathcal{F}):=\sum_{i,j}(-1)^{i+j}\length_{\OFb}H^i(X,H_j(\mathcal{F}))$$
if the lengths are all finite. Assume that $X$ is regular with pure dimension $n$. If $\mathcal{F}_i\in \mathrm{F}^{r_i}K_0^{\mathcal{Z}_i}(X)$ with $\sum_i r_i\geq n$, then by \cite[(B.3)]{Zhang2019} we know that $\chi(X, \bigotimes_i^\mathbb{L}\mathcal{F}_i)$ depends only on the image of $\mathcal{F}_i$ in $\Gr^{r_i}K_0^{\mathcal{Z}_i}(X)$.  In fact, we will only need this assertion when $X$ is a scheme (cf. Remark \ref{rem:avoid B3}). When $X$ is a formal scheme, the assertion holds trivially when one of the $r_i$ is $\dim X$; this special case will be used repeatedly. 

For a morphism $\pi: X\to Y$ between two formal schemes and a closed formal subscheme $\CZ\incl \CY$, let $\pi^{-1}(\CZ)\incl X$ be the preimage of $\CZ$.
Let $\pi^\ast: K_0^{\CZ}(Y)\to K_0^{\pi^{-1}(\CZ)}(X)$ be the homomorphism induced by pulling back locally free sheaves. If $\pi$ is proper (i.e., the induced morphism $X^{\red}\to Y^{\red}$ on the reduced schemes is proper), there is a direct image homomorphism $\pi_\ast:  K_0'(X)\to K_0'(Y)$ sending (the class of) a coherent $\CO_X$-module $\CF$ to $\sum_{i\geq0} (-1)^i{\rm R}^i\pi_\ast \CF$. %\cite[III.3.4]{EGA}

% Then $\mathcal{F}\mapsto \chi(\mathcal{F})$ induces a group homomorphism $$\chi: \mathrm{F}^nK_0^Y(X)\rightarrow \mathbb{Z}.$$
% For $\mathcal{F}_i\in \mathrm{F}^{r_i}K_0^{\mathcal{Z}_i}(X)$

% $\chi$ denotes the Euler--Poincar\'e characteristic, an alternating sum of lengths of $\OFb$-modules given by $$\chi(\mathcal{N}, \mathcal{F})=\sum_{i,j}(-1)^{i+j}\length_{\OFb}H^i(\mathcal{N},H_j(\mathcal{F})).$$ Notice that if $\mathcal{F}_i\in \mathrm{F}^{r_i}K_0(\mathcal{N})$ with $\sum_i r_i=n$, then by \cite[(B.3)]{Zhang2019} we know that $\chi(\mathcal{N}, \bigotimes_i^\mathbb{L}\mathcal{F}_i)$ only depends on the image of $\mathcal{F}_i$ in $\Gr^{r_i}K_0(\mathcal{N})$.

\subsection{Reminder on hermitian spaces over finite fields}

 Let $V$ be a non-degenerate $k_F/k=\mathbb{F}_{q^2}/\mathbb{F}_q$-hermitian space of dimension $m$ (which is unique up to isomorphism). The following (well-known) formula will be used throughout this article often without explicit reference.

\begin{lemma}\label{lem:Smb}Let $\mathcal{S}_{b}(V)$ be set of totally isotropic $k_F$-subspaces of dimension $b$ in $V$, and $S_{m,b}:=\#\mathcal{S}_b(V)$. Then $$S_{m,b}=\frac{\prod_{i=m-2b+1}^m(1-(-q)^i)}{\prod_{i=1}^b(1-q^{2i})}.$$  
\end{lemma}

\begin{proof}
  The unitary group $\U(V)(k)=\U_{m}(k)$ acts transitively on the set $\mathcal{S}_b(V)$, with stabilizer given by a parabolic subgroup $P_b(k)\subseteq \U(V)(k)$. As an affine variety, we have $$P_b\simeq \Res_{k_F/k} \GL_{b}\times \Res_{k_F/k}\BG_a^{b\delta} \times \BG_{a}^{b^2}\times \U_\delta,$$ where $\delta=m-2b$. Therefore $$S_{m,b}=\frac{\# \U_m(k)}{\# P_{b}(k)}=\frac{q^{m^2}\prod_{i=1}^m (1-(-q)^{-i})}{[q^{2b^2} \prod_{i=1}^b (1-q^{-2i} )]\cdot q^{2b\delta}\cdot q^{b^2}\cdot[ q^{\delta^2}\prod_{i=1}^
\delta (1-(-q)^{-i})]},$$ which simplifies to the desired formula.
\end{proof}

\part{Local Kudla--Rapoport conjecture: the self-dual case}\label{part:local-kudla-rapoport}

\section{Kudla--Rapoport cycles}

% Let $p$ be an odd prime. Let $F_0$ be a finite extension of $\mathbb{Q}_p$ with ring of integers  Let $F/F_0$ be an unramified quadratic extension 

\subsection{Rapoport--Zink spaces $\mathcal{N}$}\label{sec:rapoport-zink-spaces}

Let $n\ge1$ be an integer. A \emph{hermitian $O_F$-module of signature $(1,n-1)$} over a $\Spf \OFb$-scheme $S$ is a triple $(X, \iota,\lambda)$ where
\begin{enumerate}
\item $X$ is a formal $p$-divisible $O_{F_0}$-module over $S$ of relative height $2n$ and dimension $n$,
\item $\iota: O_F\rightarrow\End(X)$ is an action of $O_F$ extending the $O_{F_0}$-action and satisfying the Kottwitz condition of signature $(1,n-1)$: for all $a\in O_F$, the characteristic polynomial of $\iota(a)$ on $\Lie X$ is equal to $(T-a)(T-\sigma(a))^{n-1}\in \mathcal{O}_S[T]$,
\item $\lambda: X\xrightarrow{\sim} X^\vee$ is a principal polarization on $X$ whose Rosati involution induces the automorphism $\sigma$ on $O_F$ via $\iota$.
\end{enumerate}

Up to $O_F$-linear quasi-isogeny compatible with polarizations, there is a unique such triple $(\mathbb{X}, \iota_{\mathbb{X}}, \lambda_{\mathbb{X}})$ over $S=\Spec \bar k$. Let $\mathcal{N}=\mathcal{N}_n=\mathcal{N}_{F/F_0, n}$ be the (relative) \emph{unitary Rapoport--Zink space of signature $(1,n-1)$}, parameterizing % deformations up to quasi-isogeny of height 0 of
hermitian $O_F$-modules of signature $(1,n-1)$ within the supersingular quasi-isogeny class. More precisely, $\mathcal{N}$ is the formal scheme over $\Spf \OFb$ which represents the functor sending each $S$ to the set of isomorphism classes of tuples $(X, \iota, \lambda, \rho)$, where the \emph{framing} $\rho: X\times_S \bar S\rightarrow \mathbb{X}\times_{\Spec \bar k}\bar S$ is an $O_F$-linear quasi-isogeny of height 0 such that $\rho^*((\lambda_\mathbb{X})_{\bar S})=\lambda_{\bar S}$. Here $\bar S\coloneqq S_{\bar k}$ is the special fiber.

The Rapoport--Zink space $\mathcal{N}=\mathcal{N}_n$ is formally locally of finite type and formally smooth of relative dimension $n-1$ over $\Spf \OFb$ (\cite{RZ96}, \cite[Proposition 1.3]{Mihatsch2016}).

\subsection{The hermitian space $\mathbb{V}$}\label{sec:herm-space-mathbbv}

Let $\mathbb{E}$ be the formal $O_{F_0}$-module of relative height 2 and dimension 1 over $\Spec \bar k$. Then $D\coloneqq \End_{O_{F_0}}^\circ(\mathbb{E})$ is the quaternion division algebra over $F_0$. We fix an $F_0$-embedding $\iota_\mathbb{E}:F\rightarrow D$, which makes $\mathbb{E}$ into a formal $O_F$-module of relative height 1. We fix an $O_{F_0}$-linear principal polarization $\lambda_\mathbb{E}: \mathbb{E}\xrightarrow{\sim} \mathbb{E}^\vee$. Then $(\mathbb{E}, \iota_\mathbb{E},\lambda_\mathbb{E})$ is a hermitian $O_F$-module of signature $(1,0)$. We have $\mathcal{N}_1\simeq \Spf \OFb$ and there is a unique lifting (\emph{the canonical lifting}) $\mathcal{E}$ of the formal $O_F$-module $\mathbb{E}$ over $\Spf \OFb$, equipped with its $O_F$-action $\iota_\mathcal{E}$, its framing $\rho_\mathcal{E}: \mathcal{E}_{\bar k}\xrightarrow{\sim}\mathbb{E}$, and its principal polarization $\lambda_\mathcal{E}$ lifting $\rho_\mathcal{E}^*(\lambda_\mathbb{E})$. Define $\barE$ to be the same $O_{F_0}$-module as $\mathbb{E}$ but with $O_F$-action given by $\iota_{\barE}\coloneqq \iota_\mathbb{E}\circ \sigma$, and $\lambda_{\barE}\coloneqq \lambda_{\mathbb{E}}$, and similarly define $\bar{\mathcal{E}}$ and $\lambda_{\bar{\mathcal{E}}}$. 

Define $\mathbb{V}=\mathbb{V}_n\coloneqq \Hom_{O_F}^\circ(\barE, \mathbb{X})$ to be the space of \emph{special quasi-homomorphisms} (\cite[Definition 3.1]{Kudla2011}). Then $\mathbb{V}$ carries a $F/F_0$-hermitian form: for $x,y\in \mathbb{V}$, the pairing $(x,y)\in F$ is given by  $$ (\barE\xrightarrow{x} \mathbb{X}\xrightarrow{\lambda_\mathbb{X}} {\mathbb{X}^\vee}\xrightarrow{y^\vee} \barE^\vee\xrightarrow{\lambda_\mathbb{E}^{-1}}\barE)\in\End_{O_{F}}^\circ(\barE)=\iota_\barE(F)\simeq F.$$ The hermitian space $\mathbb{V}$ is the unique (up to isomorphism) nondegenerate non-split $F/F_0$-hermitian space of dimension $n$. The space of special homomorphisms $\Hom_{O_F}(\barE, \mathbb{X})$ is an integral hermitian $O_F$-lattice in $\mathbb{V}$. The unitary group $\U(\mathbb{V})(F_0)$ acts on the framing hermitian $O_F$-module $(\mathbb{X}, \iota_\mathbb{X},\lambda_\mathbb{X})$ and hence acts on the Rapoport--Zink space $\mathcal{N}$.

\subsection{Kudla--Rapoport cycles $\mathcal{Z}(L)$}\label{sec:kudla-rapop-cycl}

For any subset $L\subseteq \mathbb{V}$, define the \emph{Kudla--Rapoport cycle} (or \emph{special cycle}) $\mathcal{Z}(L)\subseteq \mathcal{N}$ to be the closed formal subscheme which represents the functor sending each $S$ to the set of isomorphism classes of tuples $(X, \iota, \lambda,\rho)$ such that for any $x\in L$, the quasi-homomorphism $$\rho^{-1}\circ x\circ \rho_{\bar{\mathcal{E}}}: \bar{\mathcal{E}}_S\times_S\bar S\rightarrow X\times_S \bar S$$  extends to a homomorphism $\bar{\mathcal{E}}_S\rightarrow X$ (\cite[Definition 3.2]{Kudla2011}). Note that $\mathcal{Z}(L)$ only depends on the $O_F$-linear span of $L$ in $\mathbb{V}$.

% \begin{definition}
% A formal scheme $Z$ over $\Spf \OFb$ called \emph{vertical} (resp. \emph{horizontal}) if $\varpi$ is locally nilpotent on $Z$ (resp. flat over $\Spf \OFb$). Clearly the union of two vertical (resp. horizontal) formal subschemes of a formal scheme is also vertical (resp. horizontal).
% \end{definition}

% \begin{definition}
%   Let $\mathcal{Z}(L)_\sV$ (resp. $\mathcal{Z}(L)_\sH$) be the maximal vertical (resp. horizontal) closed formal subschemes of $\mathcal{Z}(L)$. Then there is a decomposition of the Kudla--Rapoport cycle $\mathcal{Z}(L)$ into the horizontal and vertical parts as cycles on $\mathcal{N}$, $$\CZ(L)=\CZ(L)_\sH+\CZ(L)_\sV.$$   
% \end{definition}

% An irreducible formal scheme $Z$ over $\Spf \OFb$ called \emph{vertical} if $\varpi$ is locally nilpotent on $\mathcal{O}_Z$, and \emph{horizontal} otherwise. We write $\mathcal{Z}(L)_\sV$ (resp. $\mathcal{Z}(L)_\sH$) be the union of all vertical (resp. horizontal) irreducible components of $\mathcal{Z}(L)$.

% \subsection{Derived Kudla--Rapoport cycles $^\BL\CZ(L)$}

\subsection{Arithmetic intersection numbers $\Int(L)$}\label{sec:arithm-inters-numb}
Let $L\subseteq \mathbb{V}$ be an $O_F$-lattice of rank $r\ge1$. Let $x_1,\ldots, x_{r}$ be an $O_F$-basis of $L$. Since each $\mathcal{Z}(x_i)$ is a Cartier divisor on $\mathcal{N}$ (\cite[Proposition~3.5]{Kudla2011}), we know that $\mathcal{O}_{\mathcal{Z}(x_i)}\in \mathrm{F}^1K_0^{\mathcal{Z}(x_i)}(\mathcal{N})$ (see \S\ref{sec:notat-form-schem}), and hence by \cite[(B.3)]{Zhang2019} we obtain $$\mathcal{O}_{\mathcal{Z}(x_1)} \otimes^\mathbb{L}\cdots \otimes^\mathbb{L}\mathcal{O}_{\mathcal{Z}(x_r)}\in \mathrm{F}^{r}K_0^{\mathcal{Z}(L)}(\mathcal{N}).$$ This is independent of the choice of the basis $x_1,\ldots, x_n$ by \cite[Corollary C]{Howard2018} and hence is a well-defined invariant of $L$ itself. We will provide a different proof of this independence (see Corollary \ref{cor:ind L}) after recalling the structure of  the reduced scheme of $\CZ(L)$.

\begin{definition}
Define the \emph{derived Kudla--Rapoport cycle} 
$^\BL\CZ(L)$ to be the image of $\mathcal{O}_{\mathcal{Z}(x_1)} \otimes^\mathbb{L}\cdots \otimes^\mathbb{L}\mathcal{O}_{\mathcal{Z}(x_r)}$ in the $r$-th graded piece $\Gr^{r}K_0^{\CZ(L)}(\mathcal{N})$. 
\end{definition}

\begin{definition}
  When $L\subseteq \mathbb{V}$ has rank $r=n$, define the \emph{arithmetic intersection number} \begin{align}\label{eq:def Int}
\Int(L)\coloneqq \chi\bigl(\mathcal{N},{}^\BL\CZ(L)),
\end{align}  where  $\chi$ denotes the Euler--Poincar\'e characteristic (\S\ref{sec:notat-form-schem}).
\end{definition}

\begin{example}[The case $\rank L=1$]\label{ex int rank=1} If $\rank L=1$, then by the theory of canonical lifting (\cite{Gross1986a}), we have 
 $$
 \Int(L)=\frac{\val(L)+1}{2}.
 $$
\end{example}

% Consider $r=n$. In this case $$^\BL\CZ(L)\in\Gr^{n}K_0^{\CZ(L)}(\mathcal{N})=\mathrm{F}^nK_0^{\CZ(L)}(\mathcal{N}).$$
 % denotes the Euler--Poincar\'e characteristic, an alternating sum of lengths of $\OFb$-modules given by $$\chi(\mathcal{N}, \mathcal{F})=\sum_{i,j}(-1)^{i+j}\length_{\OFb}H^i(\mathcal{N},H_j(\mathcal{F})).$$ Notice that if $\mathcal{F}_i\in \mathrm{F}^{r_i}K_0(\mathcal{N})$ with $\sum_i r_i=n$, then by \cite[(B.3)]{Zhang2019} we know that $\chi(\mathcal{N}, \bigotimes_i^\mathbb{L}\mathcal{F}_i)$ only depends on the image of $\mathcal{F}_i$ in $\Gr^{r_i}K_0(\mathcal{N})$.
% By \cite[Appendix B]{Zhang2019}, the Euler characteristic $\chi$ factors through the graded piece $\Gr^{n}K_0^{\CZ(L)}(\mathcal{N})$ and we have $$\Int(L)=\chi(\mathcal{N},{}^\BL\CZ(L)).$$ 
% By \cite[Proposition 3.2]{Terstiege2013} (or \cite[Corollary D]{Howard2018}), we know $\Int(L)$ is independent of the choice of the basis $x_1,\ldots, x_n$ and hence is a well-defined invariant of $L$ itself, justifying the notation. 

\subsection{Generalized Deligne--Lusztig varieties $Y_V$}\label{sec:gener-deligne-luszt}

Let $V$ be the unique (up to isomorphism) $k_F/k$-hermitian space of odd dimension $2d+1$. Define $Y_V$ to be the closed $k_F$-subvariety of the Grassmannian $\Gr_{d+1}(V)$ parameterizing subspaces $U\subseteq V$ of dimension $d+1$ such that $U^\perp\subseteq U$ (\cite[(2.19)]{Vollaard2010}). It is a smooth projective variety of dimension $d$, and has a locally closed stratification $$Y_V=\bigsqcup_{i=0}^d X_{P_i}(w_i),$$ where each $X_{P_i}(w_i)$ is a generalized Deligne--Lusztig variety of dimension $i$ associated to a certain parabolic subgroup $P_i\subseteq\U(V)$ (\cite[Theorem 2.15]{Vollaard2010}). The open stratum $Y_V^\circ\coloneqq X_{P_d}(w_d)$ is a classical Deligne--Lusztig variety associated to a Borel subgroup $P_d\subseteq \U(V)$ and a Coxeter element $w_d$. Each of the other strata $X_{P_i}(w_i)$ is also isomorphic to a parabolic induction of a classical Deligne--Lusztig variety of Coxeter type for a Levi subgroup of $\U(V)$ (\cite[Proposition 2.5.1]{He2019}).

\subsection{Minuscule Kudla--Rapoport cycles $\mathcal{V}(\Lambda)$}
\label{ss:minu KR}
Let $\Lambda\subseteq\mathbb{V}$ be a vertex lattice. Then $V_\Lambda\coloneqq \Lambda^\vee/\Lambda$ is a $k_F$-vector space of dimension $t(\Lambda)$, equipped with a nondegenerate $k_F/k$-hermitian form induced from $\mathbb{V}$. Since $\mathbb{V}$ is a non-split hermitian space, the type $t(\Lambda)$ is odd. Thus we have the associated generalized Deligne--Lusztig variety $Y_{V_\Lambda}$ of dimension $(t(\Lambda)-1)/2$. The reduced subscheme of the minuscule Kudla--Rapoport cycle $\mathcal{V}(\Lambda)\coloneqq \mathcal{Z}(\Lambda)^\mathrm{red}$ is isomorphic to $Y_{V_\Lambda,\bar k}$\footnote{We naturally identify $\mathbb{V}$ with $\mathbf{N}_0$ in \cite{Vollaard2011} and $C$ in \cite{Kudla2011} via \cite[Lemma 3.9]{Kudla2011}. Notice that $\mathcal{V}(\Lambda)$ in \cite{Vollaard2011} and \cite{Kudla2011} is the same as our $\mathcal{V}(\Lambda^\vee)$.}. In fact $\mathcal{Z}(\Lambda)$ itself is already reduced (\cite[Theorem B]{Li2017}), so $\mathcal{V}(\Lambda)=\mathcal{Z}(\Lambda)$.

\subsection{The Bruhat--Tits stratification on $\mathcal{N}^\mathrm{red}$}\label{sec:bruh-tits-strat}

The reduced subscheme of $\mathcal{N}$ satisfies $\mathcal{N}^\mathrm{red}=\bigcup_{\Lambda} \mathcal{V}(\Lambda)$, where $\Lambda$ runs over all vertex lattices $\Lambda\subseteq \mathbb{V}$. For two vertex lattices $\Lambda, \Lambda'$, we have $\mathcal{V}(\Lambda)\subseteq \mathcal{V}(\Lambda')$ if and only if $\Lambda\supseteq \Lambda'$; and $\mathcal{V}(\Lambda)\cap \mathcal{V}(\Lambda')$ is nonempty if and only if $\Lambda+\Lambda'$ is also a vertex lattice, in which case it is equal to $\mathcal{V}(\Lambda+\Lambda')$. In this way we obtain a \emph{Bruhat--Tits stratification} of $\mathcal{N}^\mathrm{red}$ by locally closed subvarieties (\cite[Theorem B]{Vollaard2011}), $$\mathcal{N}^\mathrm{red}=\bigsqcup_{\Lambda}\mathcal{V}(\Lambda)^\circ, \quad \mathcal{V}(\Lambda)^\circ\coloneqq \mathcal{V}(\Lambda)-\bigcup_{\Lambda\subsetneq \Lambda'}\mathcal{V}(\Lambda').$$ Each Bruhat--Tits stratum $\mathcal{V}(\Lambda)^\circ\simeq Y_{V_\Lambda,\bar k}^\circ$ is a classical Deligne--Lusztig of Coxeter type associated to $\U(V_\Lambda)$, which has dimension $(t(\Lambda)-1)/2$. It follows that the irreducible components of $\mathcal{N}^\mathrm{red}$ are exactly the projective varieties $\mathcal{V}(\Lambda)$, where $\Lambda$ runs over all vertex lattices of maximal type (\cite[Corollary C]{Vollaard2011}). The points in the 0-dimensional Bruhat--Tits strata, i.e., $\mathcal{V}(\Lambda)$ for type 1 vertex lattices $\Lambda$, are known as \emph{superspecial} points.

For $L\subseteq \mathbb{V}$ be an $O_F$-lattice of rank $r\ge1$. By \cite[Proposition 4.1]{Kudla2011}, the reduced subscheme $\mathcal{Z}(L)^\mathrm{red}$ of a Kudla--Rapoport cycle $\mathcal{Z}(L)$ is a union of Bruhat--Tits strata,
\begin{equation}
  \label{eq:KRstrat}
\mathcal{Z}(L)^\mathrm{red}=\bigcup_{L\subseteq\Lambda}\mathcal{V}(\Lambda).  
\end{equation}

When $n\geq 3$, a point $z\in \CN(\ov k)$ is called {\em super-general} if there is no special homomorphism $u$ of valuation $0$ such that $z\in \CZ(u)(\ov k)$. By (\ref{eq:KRstrat}), we know that $z$ is super-general if and only if $z\in \mathcal{V}(\Lambda)^\circ$ for $\Lambda$ a vertex lattice of type $n$.  In particular, there is no super-general point on $\CN$ when $n\ge3$ is even.

\subsection{Independence of the choice of the basis}
We generalize the results of Terstiege \cite[Lemma~3.1, Proposition~3.2]{Terstiege2013}.
\begin{lemma}\label{lem:two div}
Let $x,y\in\BV=\BV_n$ be linearly independent. Then the sheaves ${\rm Tor}_i^{\CO_{\CN_n}}(\CO_{\CZ(x)}, \CO_{\CZ(y)})$ vanish for all $i\geq 1$. In particular,
$$
\mathcal{O}_{\mathcal{Z}(x)} \otimes^\mathbb{L} \mathcal{O}_{\mathcal{Z}(y)}=\mathcal{O}_{\mathcal{Z}(x)} \otimes \mathcal{O}_{\mathcal{Z}(y)}.
$$
\end{lemma}

\begin{proof}The proof is similar to \cite[Lemma~3.1]{Terstiege2013}. Let $z\in \CN_n(\ov k)$ and let $R=\CO_{\CN_{n},z}$ be the local ring at $z$. Let $f,g\in R$ be the local equations at $z$ of $\CZ(x), \CZ(y)$ respectively. Then (cf. {\it loc. cit.})
$$
{\rm Tor}_1^{\CO_{\CN_n}}(\CO_{\CZ(x)}, \CO_{\CZ(y)})_z=  \ker\!\!\xymatrix{(R/(g)\ar[r]^-{\cdot f}&R/(g)),}
$$
and ${\rm Tor}_i^{\CO_{\CN_n}}(\CO_{\CZ(x)}, \CO_{\CZ(y)})_z=0$ for $i>1$.  We {\em claim} that $f$ and $g$ have no common divisor in the regular ring $R$  for every $z\in \CN_n(\ov k)$. The claim implies the desired vanishing of ${\rm Tor}_1$.

 We prove the claim by induction on $n$. When $n\leq 3$, this is known by the proof of  \cite[Lemma~3.1]{Terstiege2013}. Now assume that $n\geq 4$. 
 
Let $\mathfrak{V}$ be the set of $z\in \CN_n(\ov k)$ where the local equations of $\CZ(x)$ and $\CZ(y)$ share a common divisor.%  Recall that a point $z\in \CN_n(\ov k)$ is called {\em super-general} if there is no special homomorphism $u$ of valuation $0$ such that $z\in \CZ(u)(\ov k)$. Note that there is no super-general point on $\CN_n$ if $n\geq 3$ is even (then every vertex lattice contains vectors of valuation $0$).

First suppose that $\mathfrak{V}$ contains a point $z$ that is not super-general (\S\ref{sec:bruh-tits-strat}). Choose $u$  with valuation $0$ such that $z\in \CZ(u)(\ov k)$. We may further assume that $u$ is linearly independent from $x$ and $y$. In fact, if $u\in\pair{x,y}_F$,  we may choose a non-zero $u'\in \pair{x,y}^\perp_F$ such that $z\in\CZ(u')(\ov k)$, and then  we replace $u$ by $u+u'$. Denote by $x^\flat$ (resp. $y^\flat$) the orthogonal projection of $x$ (resp. $y$) to $\pair{u}^\perp$. Then $x^\flat$ and $y^\flat$ remain  linearly independent. The restrictions of $\CZ(x)$ and $\CZ(y)$ to $\CZ(u)\simeq\CN_{n-1}$ (cf. \eqref{eq:ind Z(u)}) are the special divisors $\CZ(x^\flat)$ and  $\CZ(y^\flat)$. By our assumption $z\in\mathfrak{V}$, the local equations at $z\in\CN_{n-1}(\ov k)$ of $\CZ(x^\flat)$ and $\CZ(y^\flat)$ share a common divisor. This contradicts the induction hypothesis.
 
Now suppose that  $\mathfrak{V}$ consists of only super-general points. In particular, $n$ is odd. Recall that the difference divisor $\CD(y):=\CZ(y)-\CZ(y/\varpi)$ (as Cartier divisors) is effective and regular by \cite{Terstiege2013b}. We have an equality of Cartier divisors  $\CZ(y)=\sum_{i\geq 0} \CD(y/\varpi^i)$ (this is a locally finite sum). Let $z_0\in \mathfrak{V}$ and let $\Lambda$ be the unique vertex lattice of type $n$ such that $z_0\in\CV(\Lambda)(\ov k)$. Possibly replacing $y$ by $y/\varpi^i$ for some $i\geq 0$, we may assume that locally at $z_0$ the divisor $\CD(y)$ is a component of $\CZ(x)$. By the argument of \cite[Lemma~3.6]{Kudla2011}, the set of points $z\in \CD(y)^{\red}$ where the local equations at $z$ of $\CZ(x)$ and $\CD(y)$ share a common divisor is open and closed in $\CD(y)^{\red}$. In fact we can directly apply {\it loc. cit} by letting $\mathfrak{X}$ be the formal completion of $\CN_n$ along $\CD(y)^{\red}$ and noting that the local equation of $\CD(y)$ is given by an irreducible element. It follows that there exists an irreducible component of the scheme $\CD(y)\cap\CV(\Lambda)$, denoted by $\mathfrak{D}_0$,  passing through $z_0$.  Then $\mathfrak{D}_0(\ov k)\subset\mathfrak{V}$, and $\mathfrak{D}_0$ is closed subscheme of $\CV(\Lambda)$. By our assumption on $\mathfrak{V}$,  we have $\mathfrak{D}_0\subset \CV(\Lambda)^\circ$.  However, by \cite[Corollary 2.8]{Lusztig1976/77}, the open variety $\CV(\Lambda)^\circ$ is affine with $\dim\CV(\Lambda)=\frac{n-1}{2}\geq 2$ and hence can not have any positive dimensional projective irreducible subscheme (such as $\mathfrak{D}_0$). Contradiction! This completes the induction.
\end{proof}

\begin{corollary}\label{cor:ind L}Let $L\subseteq \mathbb{V}$ be an $O_F$-lattice of rank $r\ge1$. Let $x_1,\ldots, x_{r}$ be an $O_F$-basis of $L$. Then $\mathcal{O}_{\mathcal{Z}(x_1)} \otimes^\mathbb{L}\cdots \otimes^\mathbb{L}\mathcal{O}_{\mathcal{Z}(x_r)}\in K_0^{\mathcal{Z}(L)}(\mathcal{N})$ is independent of the choice of the basis.
\end{corollary} 
\begin{proof}This is similar to \cite[Proposition~3.2]{Terstiege2013}. We can transform a basis  into any other basis by a suitable sequence of the following operations: permutations; the multiplication on a basis vector by a unit in $O_F^\times$; for every pair $(i,j), i\neq j$, the substitution of $x_i$ by $x_i+\alpha x_j$ for $\alpha\in O_F$.
\end{proof}

\subsection{Horizontal and vertical parts of $\mathcal{Z}(L)$}\label{sec:horiz-vert-parts}

\begin{definition}
  A formal scheme $Z$ over $\Spf \OFb$ called \emph{vertical} (resp. \emph{horizontal}) if $\varpi$ is locally nilpotent on $Z$ (resp. flat over $\Spf \OFb$). Clearly the formal scheme-theoretic union of two vertical (resp. horizontal) formal subschemes of a formal scheme is also vertical (resp. horizontal).

  We define the \emph{horizontal part} $Z_\sH\subseteq Z$ to be the closed formal subscheme defined by the ideal sheaf $\mathcal{O}_{Z,\mathrm{tor}}$ of torsion sections of $\mathcal{O}_Z$. Then $Z_\sH$ is the maximal vertical closed formal subscheme of $Z$.

  When $Z$ is noetherian, there exists $N\gg0$ such that $\varpi^N \mathcal{O}_{Z,\mathrm{tor}}=0$, and we define \emph{vertical part} $Z_\sV\subseteq Z$ to be the closed formal subscheme defined by the ideal sheaf $\varpi^N\mathcal{O}_Z$. Since $\mathcal{O}_{Z,\mathrm{tor}}\cap \varpi^N\mathcal{O}_Z=0$, we have a decomposition  $$Z=Z_\sH\cup Z_\sV,$$ as a union of horizontal and vertical formal subschemes. Notice that the horizontal part $Z_\sH$ is canonically defined, while the vertical part $Z_\sV$ depends on the choice of $N$. 
\end{definition}

% \begin{definition}
%  Let $\mathcal{Z}(L)_\sV$ (resp. $\mathcal{Z}(L)_\sH$) be the maximal vertical (resp. horizontal) closed formal subschemes of $\mathcal{Z}(L)$. Then there is a decomposition of the Kudla--Rapoport cycle $\mathcal{Z}(L)$ into the horizontal and vertical parts as cycles on $\mathcal{N}$, $$\CZ(L)=\CZ(L)_\sH+\CZ(L)_\sV.$$   
% \end{definition}

\begin{lemma}\label{lem:ZLnoetherian}
  Let $L\subseteq \mathbb{V}$ be an $O_F$-lattice of rank $r\ge n-1$. Then $\mathcal{Z}(L)$ is noetherian.
\end{lemma}

\begin{proof}
As a closed formal subscheme of the locally noetherian formal scheme $\mathcal{N}$, we know that $\mathcal{Z}(L)$ is locally noetherian. Since $L$ has rank $r\ge n-1$, the number of vertex lattices $\Lambda\subseteq \mathbb{V}$ such that $L\subseteq \Lambda$ is finite. By (\ref{eq:KRstrat}), we know that $\mathcal{Z}(L)^\mathrm{red}$ is a closed subset in finitely many irreducible components of $\mathcal{N}^\mathrm{red}$. Since each irreducible component of $\mathcal{N}^\mathrm{red}$ is quasi-compact, we know that $\mathcal{Z}(L)$ is quasi-compact, hence noetherian.
\end{proof}

By Lemma \ref{lem:ZLnoetherian}, for $L\subseteq \mathbb{V}$ an $O_F$-lattice of rank $r\ge n-1$, we obtain a decomposition of the Kudla--Rapoport cycle  into \emph{horizontal} and \emph{vertical} parts $$\mathcal{Z}(L)=\mathcal{Z}(L)_\sH\cup \mathcal{Z}(L)_\sV.$$ Again notice that the vertical part $\mathcal{Z}(L)_\sV$ depends on the choice of an integer $N\gg 0$. Since the choice of $N$ is not important for our purpose we suppress it from the notation (cf. \S\ref{sec:horiz-vert-parts-1}).

\subsection{Finiteness of $\Int(L)$}

The following result should be well-known to the experts.

\begin{lemma}\label{lem:properscheme}
Let $L\subseteq \mathbb{V}$ be an $O_F$-lattice of rank $n$. Then the formal scheme $\mathcal{Z}(L)$ is a proper scheme over $\Spf \OFb$. In particular,   $\Int(L)$ is finite.
\end{lemma}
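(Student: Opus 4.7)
The plan is to reduce everything to the reduced subscheme $\mathcal{Z}(L)^{\mathrm{red}}$: once I show it is a projective $\bar k$-scheme, standard Noetherian/nilpotence arguments will upgrade $\mathcal{Z}(L)$ itself to an honest proper scheme, and regularity of $\mathcal{N}$ will give finiteness of $\Int(L)$ from finite-length cohomology.

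The crucial input is (\ref{eq:KRstrat}), which expresses $\mathcal{Z}(L)^{\mathrm{red}} = \bigcup_{L \subseteq \Lambda} \mathcal{V}(\Lambda)$ with $\Lambda$ running over vertex lattices. I would first check that this union is finite. Any vertex lattice $\Lambda$ containing $L$ satisfies $L \subseteq \Lambda \subseteq \Lambda^\vee \subseteq L^\vee$, where the first inclusion is the hypothesis, the second is integrality of $\Lambda$, and the third is its dual. Since $L$ is of full rank $n$, the quotient $L^\vee/L$ is a finite $O_F$-module, so only finitely many $O_F$-lattices sit between $L$ and $L^\vee$ and in particular only finitely many vertex lattices contain $L$. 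Combined with the fact that each $\mathcal{V}(\Lambda) \simeq Y_{V_\Lambda}$ is a smooth projective $\bar k$-variety (closed in a Grassmannian), this shows $\mathcal{Z}(L)^{\mathrm{red}}$ is a projective $\bar k$-scheme (possibly empty, in which case $\mathcal{Z}(L)$ is empty and there is nothing to prove).

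Next I would pass from the reduced subscheme to the formal scheme. Since $\mathcal{N}$ is formally locally of finite type over $\Spf \OFb$, so is the closed formal subscheme $\mathcal{Z}(L)$; in particular it is locally Noetherian. The uniformizer $\varpi$ maps to $0$ in the reduced ring $\mathcal{O}_{\mathcal{Z}(L)^{\mathrm{red}}}$, hence lies in the nilradical of $\mathcal{O}_{\mathcal{Z}(L)}$, and is therefore locally nilpotent by Noetherianness. Consequently $\mathcal{Z}(L)$ is an ordinary scheme (not merely formal), set-theoretically supported on the projective $\bar k$-scheme $\mathcal{Z}(L)^{\mathrm{red}}$, and in particular proper over $\Spf \OFb$.

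For the finiteness of $\Int(L)$: since $\mathcal{N}$ is formally smooth over $\Spf \OFb$, it is regular, so the derived tensor product $\mathcal{O}_{\mathcal{Z}(x_1)} \otimes^{\mathbb{L}} \cdots \otimes^{\mathbb{L}} \mathcal{O}_{\mathcal{Z}(x_n)}$ is a bounded complex of coherent sheaves supported on $\mathcal{Z}(L)$; by properness its cohomology sheaves push forward to $\OFb$-modules of finite length, so the alternating sum in (\ref{eq:def Int}) is finite. The only genuinely delicate step is the finiteness of the set of vertex lattices containing $L$, which hinges on the full-rank hypothesis together with the duality constraint $\Lambda \subseteq \Lambda^\vee$; everything else is formal.
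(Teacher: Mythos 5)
The step that upgrades properness of $\mathcal{Z}(L)^{\mathrm{red}}$ to properness of $\mathcal{Z}(L)$ does not work: the passage ``$\varpi$ maps to $0$ in $\mathcal{O}_{\mathcal{Z}(L)^{\mathrm{red}}}$, hence lies in the nilradical of $\mathcal{O}_{\mathcal{Z}(L)}$'' is false for formal schemes. The reduced subscheme of a locally Noetherian formal scheme is cut out by the largest ideal of definition, which is the ideal of \emph{topologically} nilpotent sections, not the nilradical. For any $\varpi$-adic formal scheme over $\Spf \OFb$, the element $\varpi$ is automatically topologically nilpotent and hence maps to zero in the reduced subscheme, but it need not be nilpotent at all; already $\Spf \OFb$ itself has reduced subscheme $\Spec \bar k$ yet $\varpi$ is not nilpotent in $\OFb$. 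More to the point for this paper, if $L$ has rank $n-1$ rather than $n$, then $\mathcal{Z}(L)$ can have horizontal (quasi-canonical lifting) components on which $\varpi$ acts injectively, even though $\mathcal{Z}(L)^{\mathrm{red}}$ is still a projective $\bar k$-scheme. So the information that the reduced locus is projective is genuinely insufficient, and the full-rank hypothesis must enter through a geometric argument, not a formal one.

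What the paper actually does, and what your proposal is missing, is to \emph{show the horizontal part of $\mathcal{Z}(L)$ is empty}. This uses the moduli interpretation: an $O_K$-point of $\mathcal{Z}(L)$ for a finite extension $K/\Fb$ would give a hermitian $O_F$-module $\mathcal{X}$ of signature $(1,n-1)$ over $O_K$ receiving an $O_F$-linear isogeny from $\bar{\mathcal{E}}^n$, forcing the signature on the generic fiber to be $(0,n)$, a contradiction. Only after this (together with the vertical part being supported on $\mathcal{N}^{\mathrm{red}}$, Lemma~\ref{prop:supp Z(L) V}) can one conclude $\varpi$ is locally nilpotent on $\mathcal{Z}(L)$, hence that it is an ordinary scheme. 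Your finiteness argument for the set of vertex lattices containing $L$ and the resulting projectivity of $\mathcal{Z}(L)^{\mathrm{red}}$ are correct and match the paper, and your deduction of finiteness of $\Int(L)$ from regularity of $\mathcal{N}$ plus properness is also fine; the gap is confined to, but is fatal to, the ``pass from reduced to formal'' step.
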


\begin{proof}
  The vertical part $\mathcal{Z}(L)_\sV$ is a scheme by Lemma \ref{prop:supp Z(L) V} below. We show that the horizontal part $\mathcal{Z}(L)_\sH$ is empty. If not, there exists $z\in \mathcal{Z}(L)(O_K)$ for some finite extension $K$ of $\Fb$. Let $\mathcal{X}$ be the corresponding $O_F$-hermitian module of signature $(1,n-1)$ over $O_K$. Since $L$ has rank $n$, we know that $\mathcal{X}$ admits $n$ linearly independent special homomorphisms $\tilde x_i:\bar{\mathcal{E}}\rightarrow \mathcal{X}$, which gives rise to an $O_F$-linear isogeny $$(\tilde x_1,\ldots, \tilde x_{n}): \bar{\mathcal{E}}^{n}\rightarrow \mathcal{X}.$$ It then follows that the $O_F$-action on $\mathcal{X}$ satisfies the Kottwitz signature condition $(0,n)$ rather than $(1,n-1)$ in characteristic 0, a contradiction. Thus $\mathcal{Z}(L)_\sH$ is empty, and so $\mathcal{Z}(L)$ is a scheme. Since $\mathcal{Z}(L)^\mathrm{red}$ is contained in finitely many irreducible components of $\mathcal{N}^\mathrm{red}$ and each irreducible component of $\mathcal{N}^\mathrm{red}$ is proper over $\Spec\bar k$, it follows that the scheme $\mathcal{Z}(L)$ is proper over $\Spf \OFb$. The finiteness of $\Int(L)$ then follows from the discussion before \cite[(B.4)]{Zhang2019}.
\end{proof}

\subsection{A cancellation law for $\Int(L)$}\label{sec:cancelation-law-intl}

Let $ M\subset \BV_n$ be a self-dual lattice of rank $r$. The map $(X, \iota, \lambda, \rho)\mapsto (X\times \bar{\mathcal{E}}^r,\iota\times \iota_{\bar{\mathcal{E}}^r}, \lambda\times \lambda_{\bar{\mathcal{E}^r}}, \rho\times \rho_{\bar{\mathcal{E}}^r})$ gives a natural embedding 
 \begin{align}\label{eq:inc M} 
 \delta_M\colon 
 \xymatrix{\CN_{n-r}\ar[r]& \CN_n},
 \end{align}
 which identifies $ \CN_{n-r}$ with the special cycle $\CZ(M)$ (\cite[Remark 4.5]{Rapoport2018}).  Let $\BV_n=M_F\obot\BV_{n-r}$ be the induced orthogonal decomposition. For $u\in\BV_n$, denote by $u^\flat$ the projection to $\BV_{n-r}$.  If $u^\flat\neq 0$, then the special divisor $\CZ(u)$ intersects transversely with $\CN_{n-r}$ and its pull-back to $\CN_{n-r}$ is the special divisor $\CZ(u^\flat)$. By \cite[Lemma B.2 (i)]{Zhang2019}, we obtain
 \begin{align}\label{eq:ind Z(u)}
 \CN_{n-r}\jiao \CZ(u)=\CZ(u^\flat).
 \end{align}

\begin{lemma}\label{lem: cancel}
Let $ M\subset \BV_n$ be a self-dual lattice of rank $r$ and $L^\flat$ an integral lattice in $\BV_{n-r}$. Then
$$\Int(L^\flat\obot M)=\Int(L^\flat).$$
\end{lemma}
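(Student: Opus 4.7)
The plan is to reduce from $\mathcal{N}_n$ to $\mathcal{N}_{n-1}$ one orthogonal direction at a time via equation~(\ref{eq:ind Z(u)}), and then induct on $r$.

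Since $F/F_0$ is unramified and $M$ is self-dual, I first choose an orthogonal $O_F$-basis $m_1,\ldots,m_r$ of $M$ with each $(m_i,m_i)\in O_{F_0}^{\times}$, and set $M'=\langle m_1,\ldots,m_{r-1}\rangle$, which is self-dual of rank $r-1$. Identify $\langle m_r\rangle^\perp\subset \mathbb{V}_n$ with $\mathbb{V}_{n-1}$; then $M'\subset \mathbb{V}_{n-1}$ is self-dual of rank $r-1$, while $L^\flat\subset \mathbb{V}_{n-r}$ sits inside the orthogonal complement of $M'$ in $\mathbb{V}_{n-1}$. Fix an $O_F$-basis $x_1,\ldots,x_{n-r}$ of $L^\flat$; together with $m_1,\ldots,m_r$ this gives an $O_F$-basis of $L^\flat\obot M$.

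Next I induct on $r$, the case $r=0$ being tautological. For the inductive step the key input is that $\mathcal{Z}(m_r)$ is identified with $\mathcal{N}_{n-1}$ via $\delta_{\langle m_r\rangle}$, a special case of (\ref{eq:inc M}). Applying the projection formula to the closed immersion $\delta=\delta_{\langle m_r\rangle}\colon \mathcal{N}_{n-1}\hookrightarrow \mathcal{N}_n$, together with the compatibility of $L\delta^*$ with derived tensor products, yields
\begin{align*}
\Int(L^\flat\obot M)
&= \chi_{\mathcal{N}_n}\bigl(\mathcal{O}_{\mathcal{Z}(x_1)}\jiao\cdots\jiao \mathcal{O}_{\mathcal{Z}(x_{n-r})}\jiao \mathcal{O}_{\mathcal{Z}(m_1)}\jiao\cdots\jiao \mathcal{O}_{\mathcal{Z}(m_{r-1})}\jiao \mathcal{O}_{\mathcal{N}_{n-1}}\bigr) \\
&= \chi_{\mathcal{N}_{n-1}}\bigl(L\delta^*\mathcal{O}_{\mathcal{Z}(x_1)}\jiao\cdots\jiao L\delta^*\mathcal{O}_{\mathcal{Z}(x_{n-r})}\jiao L\delta^*\mathcal{O}_{\mathcal{Z}(m_1)}\jiao\cdots\jiao L\delta^*\mathcal{O}_{\mathcal{Z}(m_{r-1})}\bigr).
\end{align*}
Since each $x_i$ and each $m_j$ with $j<r$ already lies in $\langle m_r\rangle^\perp=\mathbb{V}_{n-1}$, its projection to $\mathbb{V}_{n-1}$ equals itself and is nonzero, so (\ref{eq:ind Z(u)}) gives $L\delta^*\mathcal{O}_{\mathcal{Z}(\cdot)}=\mathcal{O}_{\mathcal{Z}(\cdot)}$ inside $\mathcal{N}_{n-1}$. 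The right-hand side is therefore equal to $\Int_{\mathcal{N}_{n-1}}(L^\flat\obot M')$, and the inductive hypothesis applied to the pair $(L^\flat,M')\subset \mathbb{V}_{n-1}$ gives $\Int_{\mathcal{N}_{n-1}}(L^\flat\obot M')=\Int(L^\flat)$, which closes the induction.

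The main point to verify is really the single input (\ref{eq:ind Z(u)}): one needs that the derived pullback of a special divisor along $\mathcal{N}_{n-1}\hookrightarrow \mathcal{N}_n$ agrees with the classical pullback, i.e.\ transversality of $\mathcal{Z}(u)$ with the formally smooth closed subscheme $\mathcal{N}_{n-1}$. Once that is granted, the rest is standard projection-formula bookkeeping, tractable because $\delta_{\langle m_r\rangle}$ is a regular closed immersion of formally smooth formal schemes of the expected codimension one.
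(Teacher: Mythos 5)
Your proof is correct and takes essentially the same route as the paper's (one-line) proof: both reduce to $\mathcal{N}_{n-r}$ via the identification $\mathcal{Z}(M)\cong\mathcal{N}_{n-r}$, the transversality statement \eqref{eq:ind Z(u)}, and the projection formula. You simply unwind the $r$-fold reduction one orthogonal direction at a time by induction rather than using $\delta_M$ in one step, which is a cosmetic difference.
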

\begin{proof}
This follows from the equation \eqref{eq:ind Z(u)} and the definition of $\Int$ by \eqref{eq:def Int}. 
\end{proof}

\section{Local densities}\label{sec:local-densities}
In this section (except \S \ref{sec:local-kudla-rapoport}) we allow $F_0$ to be a non-archimedean local field of characteristic not equal to $2$ (but possibly with residue characteristic $2$), and $F$ an unramified quadratic extension.

\subsection{Local densities for hermitian lattices}\label{ss:loc den} Let $L, M$ be two hermitian $O_F$-lattices. Let $\Rep_{M,L}$ be the \emph{scheme of integral representations of $M$ by $L$}, an $O_{F_0}$-scheme such that for any $O_{F_0}$-algebra $R$, \begin{align}\label{def: Rep}
\Rep_{M,L}(R)=\Herm(L \otimes_{O_{F_0}}R, M \otimes_{O_{F_0}}R),\end{align} where $\Herm$ denotes the set of hermitian module homomorphisms. The \emph{local density} of integral representations of $M$ by $L$ is defined to be $$\Den(M,L)\coloneqq \lim_{N\rightarrow +\infty}\frac{\#\Rep_{M,L}(O_{F_0}/\varpi^N)}{q^{N\cdot\dim (\Rep_{M,L})_{F_0}}}.$$ Note that if $L, M$ have rank $n, m$ respectively and the generic fiber $(\Rep_{M,L})_{F_0}\ne\varnothing$, then $n\le m$ and
\begin{equation}
  \label{eq:dimRep}
\dim (\Rep_{M,L})_{F_0}=\dim \U_m-\dim \U_{m-n}= n\cdot (2m-n).  
\end{equation}

\subsection{Local Siegel series for hermitian lattices} 

Let $k\ge0$ be an integer. Let $\iden^k$ be the self-dual hermitian $O_F$-lattice of rank $k$ with hermitian form given the identity matrix $\mathbf{1}_k$. Let $L$ be a hermitian $O_F$-lattice of rank $n$. By \cite[Theorem II]{Hironaka1998}, $\Den(\iden^{n+k}, L)$ is a polynomial in $(-q)^{-k}$ with $\mathbb{Q}$-coefficients (zero if $L$ is not integral). A special case (see \cite[p.677]{Kudla2011}) is 
\begin{equation}
  \label{eq: iden}
\Den(\iden^{n+k}, \iden^n)=\prod_{i=1}^n(1-(-q)^{-i}X)\bigg|_{X= (-q)^{-k}}.
\end{equation}
Define the (normalized) \emph{local Siegel series} of $L$ to be the polynomial $\Den(X,L)\in \mathbb{Z}[X]$ (Theorem \ref{thm: Den(X)}) such that $$\Den((-q)^{-k},L)=\frac{\Den(\iden^{n+k}, L)}{\Den(\iden^{n+k}, \iden^n)}.$$ 

The local Siegel series satisfies a functional equation (\cite[Theorem 5.3]{Hironaka2012})
\begin{equation}
  \label{eq:functionalequation}
  \Den(X,L)=(-X)^{\val(L)}\cdot \Den\left(\frac{1}{X},L\right).
\end{equation}

\begin{definition}
Define the central value of the local density to be $$\Den(L)\coloneqq\Den(1,L).$$ In particular, if $\val(L)$ is odd, then $\Den(L)=0$. In this case, define the \emph{central derivative of the local density} or \emph{derived local density} by $$\pDen(L)\coloneqq-\frac{\rd}{\rd X}\bigg|_{X=1}\Den(X,L).$$  
\end{definition}

Notice that by definition $\Den(M,L)$ only depends on the isometry classes of $M$ and $L$, and hence only depends on the fundamental invariants of $M$ and $L$. In particular, $\Den(X,L)$ and $\pDen(L)$ only depends on the fundamental invariants of $L$.  Moreover, there is an analog of Lemma \ref{lem: cancel}: for any self-dual lattice $M$ of $\rank(M)=m$ and any integral lattice $L^\flat$ of $\rank(L^\flat)=n$, we have
$$
\Den(\iden^{n +m+k}, L^\flat\obot M)=\Den(\iden^{n+k},  L^\flat)$$
and therefore we obtain a cancellation law:
\begin{align}
\label{eq:cancel den}
 \Den(X,L^\flat\obot M)= \Den(X,L^\flat).
\end{align}

\subsection{Relation with local Whittaker functions}\label{sec:relation-with-local} Let $\Lambda=\langle 1\rangle^n$ be an self-dual hermitian $O_F$-lattice. Let $L$ be a hermitian $O_F$-lattice of rank $n$. Let $T=((x_i, x_j))_{1\le i,j\le n}$ be the fundamental matrix of an $O_F$-basis $\{x_1,\ldots, x_n\}$ of $L$, an $n\times n$ hermitian matrix over $F$. Associated to the standard Siegel--Weil section of the characteristic function $\varphi_0=\mathbf{1}_{\Lambda^n}$ and the unramified additive character $\psi: F_0\rightarrow \mathbb{C}^\times$, there is a local (generalized) Whittaker function $W_T(g, s, \varphi_0)$ (see \S\ref{sec:four-coeff-deriv}, \S\ref{sec:incoh-eisenst-seri} for the precise definition). By \cite[Proposition 10.1]{Kudla2014}, when $g=1$, it satisfies the interpolation formula for integers $s=k\ge0$ (notice $\gamma_p(V)=1$ in the notation there), $$W_T(1, k,\varphi_0)=\Den(\langle 1\rangle^{n+2k}, L).$$ So its value at $s=0$ is $$W_T(1, 0, \varphi_0)=\Den(\langle 1\rangle^n, L)=\Den(L)\cdot \Den(\langle 1\rangle^n, \langle 1\rangle^n),$$ and its derivative at $s=0$ is\footnote{In \cite[Proposition 9.3]{Kudla2014}, the factor $\log p$ should be $\log p^2$.} $$W_T'(1, 0, \varphi_0)=\pDen(L)\cdot \Den(\langle 1\rangle^n, \langle 1\rangle^n)\cdot\log q^2.$$ Plugging in \eqref{eq: iden}, we obtain
\begin{align}
W_T(1, 0, \varphi_0)&=\Den(L)\cdot \prod_{i=1}^n(1-(-q)^{-i}),\label{eq:localWhittaker0}\\ W_T'(1, 0, \varphi_0)&=\pDen(L)\cdot \prod_{i=1}^n(1-(-q)^{-i})\cdot \log q^2.    \label{eq:localWhittaker1}
\end{align}

\subsection{The local Kudla--Rapoport conjecture}\label{sec:local-kudla-rapoport} Now we can state the main theorem of this article, which proves the Kudla--Rapoport conjecture on the identity between arithmetic intersection numbers of Kudla--Rapoport cycles and central derivatives of local densities. Recall that $\BV=\BV_n$ is the hermitian space defined in \S \ref{sec:herm-space-mathbbv}.
\begin{theorem}[local Kudla--Rapoport conjecture]\label{thm: main}
Let $L\subseteq \mathbb{V}$ be an $O_F$-lattice of full rank $n$. Then $$\Int(L)=\pDen(L).$$
\end{theorem}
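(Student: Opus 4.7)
The plan is to prove the identity by induction on $n$, the case $n=1$ being a direct calculation on both sides. For the inductive step, fix an $O_F$-lattice $L^\flat \subseteq \mathbb{V}_n$ of rank $n-1$ with $L^\flat_F$ non-degenerate, and study the two functions
\[
\Int_{L^\flat}(x) := \Int(L^\flat+\langle x\rangle), \qquad \pDen_{L^\flat}(x) := \pDen(L^\flat+\langle x\rangle)
\]
on $x \in \mathbb{V}_n\setminus L^\flat_F$. Both functions vanish when $\val(x)<0$. When $x\perp L^\flat$ with $\val(x)=0$, the line $\langle x\rangle$ is self-dual, so Lemma \ref{lem: cancel} on the geometric side and its analogue for local densities reduce both sides to the rank-$(n-1)$ invariants of $L^\flat$ inside the non-split space $\langle x\rangle^\perp \cong \mathbb{V}_{n-1}$; these coincide by the outer inductive hypothesis. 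Thus the difference $\phi := \Int_{L^\flat}-\pDen_{L^\flat}$ vanishes on $\{x\perp L^\flat,\ \val(x)\le 0\}$, and the goal is to upgrade this to $\phi\equiv 0$.

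The tool for the upgrade is the uncertainty principle (Proposition \ref{uncert}), which forces $\phi\equiv 0$ once we know $\phi\in C_c^\infty(\mathbb{V}_n)$ and $\hat\phi$ also vanishes on $\{\val(x)\le 0\}$. The obstacle is that both $\Int_{L^\flat}$ and $\pDen_{L^\flat}$ carry logarithmic singularities along $L^\flat_F$. I would isolate these singularities by splitting
\[
\Int_{L^\flat} = \Int_{L^\flat,\sH}+\Int_{L^\flat,\sV}, \qquad \pDen_{L^\flat} = \pDen_{L^\flat,\sH}+\pDen_{L^\flat,\sV}
\]
into horizontal and vertical contributions: on the geometric side $\Int_{L^\flat,\sH}$ comes from the horizontal components of $\mathcal{Z}(L^\flat)$ and should be computed explicitly in terms of quasi-canonical lifting cycles (Theorem \ref{thm:horizontal}); on the analytic side I would \emph{define} $\pDen_{L^\flat,\sH}$ so as to match this explicit answer term by term. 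The horizontal parts then absorb all the logarithmic singularity and match by construction, while the vertical parts live in $C_c^\infty(\mathbb{V}_n)$.

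It therefore suffices to establish the Fourier symmetries
\[
\widehat{\Int}_{L^\flat,\sV} = -\Int_{L^\flat,\sV}, \qquad \widehat{\pDen}_{L^\flat,\sV} = -\pDen_{L^\flat,\sV},
\]
(the sign is the Weil constant $\gamma_{\mathbb{V}} = -1$), so that $\hat\phi_{\sV} = -\phi_{\sV}$ and the uncertainty principle, combined with a secondary induction on $\val(L^\flat)$, closes the argument. On the geometric side I would use the Bruhat--Tits stratification of $\mathcal{N}^{\mathrm{red}}$ (\S\ref{sec:bruh-tits-strat}) to reduce the identity to intersections of $\mathcal{Z}(L^\flat)_{\sV}$ with the Deligne--Lusztig strata $\mathcal{V}(\Lambda)\cong Y_{V_\Lambda}$, and then to the Tate conjecture for those varieties, which in turn reduces to a cohomological computation of Lusztig for Deligne--Lusztig varieties of Coxeter type. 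On the analytic side I would combine the Cho--Yamauchi local density formula (Theorem \ref{thm: Den(X)}) with the functional equation \eqref{eq:functionalequation} to establish the symmetry first on the orthogonal slice $\{x\perp L^\flat,\ \val(x)<0\}$, then propagate to all $x$ by the secondary induction on $\val(L^\flat)$. The main obstacle is the geometric Fourier symmetry: it demands both a delicate explicit description of the horizontal part of Kudla--Rapoport cycles (needed even to make the decomposition sensible) and genuine cohomological input on Deligne--Lusztig varieties to conclude, whereas the analytic symmetry follows more formally once the Cho--Yamauchi formula is available.
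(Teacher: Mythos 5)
Your proposal reproduces the paper's strategy in all the essentials: the double induction (outer on $n$, inner on $\val(L^\flat)$), the horizontal/vertical decomposition of both sides with the horizontal cycle pinned down via quasi-canonical lifting cycles (Theorem \ref{thm:horizontal}), the geometric Fourier symmetry via the Bruhat--Tits stratification and the Tate conjecture for Deligne--Lusztig varieties, and the analytic input from the Cho--Yamauchi formula plus the functional equation are precisely the paper's ingredients.

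One caveat on how you phrase the closing step. You say you would \emph{propagate the analytic Fourier symmetry} $\widehat{\pDen}_{L^\flat,\sV} = -\pDen_{L^\flat,\sV}$ to all $x$ by the inner induction on $\val(L^\flat)$. The paper does something subtly different, and in fact explicitly remarks it cannot prove that symmetry directly. Pointwise reduction to a lower-valuation $L'^\flat$ gives $\pDen_{L^\flat,\sV}(x)=\pDen_{L'^\flat,\sV}(x')$ at the level of values, but a Fourier transform is a global integral, so this does not by itself propagate a transform identity. What the inner induction actually does is shrink the \emph{support of the difference} $\phi=\Int_{L^\flat,\sV}-\pDen_{L^\flat,\sV}$ into the thin lattice $M(L^\flat)=L^\flat\obot\langle u\rangle$ (Lemma \ref{lem:ind}), after which $\phi=\mathbf{1}_{L^\flat}\otimes\phi_\perp$ with $\phi_\perp$ supported on a rank-one lattice. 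The geometric Fourier symmetry (Corollary \ref{cor:FT int}), combined with the slice result $\widehat{\pDen}_{L^\flat,\sV}(x)=0$ for $x\perp L^\flat$, $\val(x)<0$ (Theorem \ref{thm: pDen=0}), then forces $\widehat{\phi}_\perp$ to vanish on a set on which it is also periodic (under $M_\perp^\vee$), hence everywhere. The full analytic Fourier symmetry comes out only a posteriori. So be careful to cast the inner induction as a support-shrinking argument on $\phi$ rather than as an extension of the Fourier identity for $\pDen_{L^\flat,\sV}$ itself.
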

This will be proved in \S\ref{ss:proof}.

\begin{remark}\label{rem:localKR1}
In the notation of \S\ref{sec:relation-with-local},  it follows immediately from Theorem \ref{thm: main} and \eqref{eq:localWhittaker1} that $$\Int(L)=\frac{W_T'(1, 0, \varphi_0)}{\log q^2}\cdot  \prod_{i=1}^n(1-(-q)^{-i})^{-1}.$$ 
\end{remark}

\subsection{Formulas in terms of weighted lattice counting: Theorem of Cho--Yamauchi}

Define weight factors $$\wt(a; X)\coloneqq \prod_{i=0}^{a-1}(1-(-q)^iX), \quad \wt(a)\coloneqq -\frac{\rd}{\rd X}\bigg|_{X=1}\wt(a; X)=\prod_{i=1}^{a-1}(1-(-q)^i),$$ where by convention $\wt(0; X)=1$ and $\wt(0)=0$, $\wt(1)=1$. % We also denote $$\wt(M; X)\coloneqq \wt(t(M); X), \quad \wt(M)\coloneqq \wt(t(M)).$$ 
Then we have the following explicit formula for the local Siegel series.

\begin{theorem}[Cho--Yamauchi]\label{thm: Den(X)}
The following identity holds: $$\Den(X,L)=\sum_{L\subset L'\subset L'^\vee} X^{2\ell( L'/L)}\cdot  \wt(t(L'); X),
$$ 
where the sum runs over all integral lattices $L'\supset L$. Here $$\ell(L'/L)\coloneqq {\rm length}_{O_F}\,L'/L.$$
\end{theorem}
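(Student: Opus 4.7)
My plan is to establish the identity by a stratification-and-counting argument, following what I believe is the strategy of Cho--Yamauchi: decompose integral representations according to their ``saturation'' and evaluate each stratum via a primitive local density.

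For any integral hermitian $O_F$-lattice $L'$ of rank $n$ with $L \subseteq L' \subseteq L_F$, define the open subscheme $\Rep^{\mathrm{prim}}_{M, L'} \subseteq \Rep_{M, L'}$ of \emph{primitive} hermitian embeddings $\phi\colon L' \hookrightarrow M$, namely those for which $M/\phi(L')$ is torsion-free (equivalently, $\phi(L')$ is an orthogonal direct summand of $M$). Let $\Den^{\mathrm{prim}}(M, L')$ denote the associated local density. Any hermitian embedding $\phi\colon L \hookrightarrow M$ into the self-dual target $M = \iden^{n+k}$ extends to $\phi_F\colon L_F \to M_F$, and $L' \coloneqq \phi_F^{-1}(\phi_F(L_F) \cap M)$ is then an integral overlattice of $L$ for which the induced map $L' \hookrightarrow M$ is primitive. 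Implementing this saturation carefully at truncation level $O_{F_0}/\varpi^N$ and passing to densities would yield the decomposition
\begin{equation*}
\Den(\iden^{n+k}, L) = \sum_{L \subset L' \subset L'^\vee} q^{-2k\ell(L'/L)} \cdot \Den^{\mathrm{prim}}(\iden^{n+k}, L'),
\end{equation*}
where the weight $q^{-2k\ell(L'/L)}$ arises from the discrepancy between the natural Haar measure on $\Rep_{M, L}$ and the measure on the primitive stratum coming from $L'$ (a Jacobian computation with respect to the basis change $L \subset L'$).

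Second, I would evaluate $\Den^{\mathrm{prim}}(\iden^{n+k}, L')$ explicitly. Every primitive embedding $\phi\colon L' \hookrightarrow \iden^{n+k}$ produces an orthogonal splitting $\iden^{n+k} = \phi(L') \obot C$ with $C$ self-dual of rank $k$, so the set of primitive embeddings is a torsor under $\U(\iden^{n+k})$ modulo the stabilizer of a fixed $L'$. The stabilizer analysis, carried out via the Jordan decomposition of $L'$ and reduction modulo $\varpi$, should reveal that all finer invariants of $L'$ beyond its type cancel after normalization by $\Den(\iden^{n+k}, \iden^n)$: the non-unimodular Jordan blocks contribute smooth volume factors that match those appearing in the self-dual case, leaving only a residual count governed by the dimension $t(L')$ of the radical of $L' \otimes_{O_F} k_F$. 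This residual count enumerates totally isotropic $t(L')$-dimensional subspaces of a nondegenerate hermitian $k_F$-space of dimension $n+k$, and combined with $|\U_m(\mathbb{F}_q)| = q^{m^2}\prod_{i=1}^m(1 - (-q)^{-i})$ yields
\begin{equation*}
\frac{\Den^{\mathrm{prim}}(\iden^{n+k}, L')}{\Den(\iden^{n+k}, \iden^n)} = \prod_{i=0}^{t(L')-1}\bigl(1 - (-q)^{i-k}\bigr) = \wt(t(L'); (-q)^{-k}).
\end{equation*}
Combining the two steps and noting that $q^{-2k\ell} = ((-q)^{-k})^{2\ell} = X^{2\ell}$ (valid because $2\ell$ is even, so the sign cancels) establishes the formula at each $X = (-q)^{-k}$, $k\ge 0$, hence as an identity of polynomials by interpolation.

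I expect the main obstacle to lie in the stabilizer computation of the second step --- specifically, in proving that only the type $t(L')$ survives after normalization, with all finer Jordan-type invariants of $L'$ cancelling against their counterparts in $\Den(\iden^{n+k}, \iden^n)$. This is a Witt-type cancellation phenomenon, reflecting the intuition that primitivity is essentially a mod-$\varpi$ condition while higher-order Jordan blocks contribute only smooth ``uninteresting'' factors. Once this cancellation is pinned down, the combinatorial form of $\wt(t; X)$ emerges transparently from the enumeration of totally isotropic subspaces, and the Cho--Yamauchi formula follows from routine bookkeeping.
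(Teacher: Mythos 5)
Your overall plan---stratify integral representations of $\iden^{n+k}$ by the saturation $L'$ of the source, extract the Jacobian factor $q^{-2k\ell(L'/L)}$, and reduce $\Den^{\mathrm{prim}}(\iden^{n+k},L')$ to a residue-field count that sees only $t(L')$---is exactly the Cho--Yamauchi strategy the paper cites, and both of your displayed formulas are correct. The gap is in your justification of Step 2, which opens with a false claim: a primitive embedding $\phi\colon L'\hookrightarrow M=\iden^{n+k}$ (torsion-free cokernel) does \emph{not} in general give an orthogonal splitting $M=\phi(L')\obot C$. Any orthogonal summand of a self-dual lattice is itself self-dual, so $\phi(L')$ can be an orthogonal summand of $M$ only when $L'$ is self-dual, which is far from the generic case here. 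Concretely, in $M=\iden^2=O_Fe_1\obot O_Fe_2$, pick $a\in O_F$ with $\Nm_{F/F_0}(a)=-1+\varpi^c$ for some $c\ge 1$ (possible since $F/F_0$ is unramified); then $L'=O_F(e_1+ae_2)$ is saturated with Gram matrix $(\varpi^c)$, yet $L'\obot(L'^\perp\cap M)$ has index $q^{2c}$ in $M$, so no complementary $C$ exists. The parenthetical ``(equivalently, $\phi(L')$ is an orthogonal direct summand of $M$)'' in your definition of primitivity is therefore wrong, and the orthogonal-splitting-to-torsor argument does not get started as written.

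The correct evaluation of $\Den^{\mathrm{prim}}(\iden^{n+k},L')$ is the one the paper's proof sketch points to. One first invokes a smoothness theorem (the hermitian analogue of Cho--Yamauchi's Theorem 3.9, which, as the paper remarks, also holds for $p=2$ because $F/F_0$ is unramified) to reduce the primitive density to a point-count over the residue field $k_F$. The relevant count is not of totally isotropic $t(L')$-dimensional subspaces of $V=M\otimes_{O_F}k_F$, but of \emph{isometric embeddings} of the possibly degenerate $k_F/k$-hermitian space $U=L'\otimes_{O_F}k_F$ (dimension $n$, radical of dimension $a=t(L')$) into the nondegenerate space $V$ (dimension $m=n+k$). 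That count equals $q^{n(2m-n)}\prod_{i=0}^{n+a-1}(1-(-q)^{i-m})$, and dividing by $q^{n(2m-n)}\Den(\iden^{n+k},\iden^n)$ recovers exactly your $\wt(t(L');(-q)^{-k})$. So your Step 1 bookkeeping, your Step 2 answer, and your interpolation conclusion are all right; what is missing is replacing the nonexistent orthogonal splitting by the smoothness theorem and the residue-field isometry count.
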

\begin{proof}
  This is proved the same way as in the orthogonal case \cite[Corollary 3.11]{CY}, using the following hermitian analogue of \cite[\S 5.6 Exercise 4]{Kitaoka1993}. Let $U$ be an $\mathbb{F}_{q^2}/\mathbb{F}_q$-hermitian space of dimension $n$ whose radical has dimension $a$. Let $V$ be a non-degenerate $\mathbb{F}_{q^2}/\mathbb{F}_q$-hermitian space of dimension $m\ge n$. Then the number of isometries from $U$ to $V$ is equal to $$q^{n(2m-n)}\cdot \prod_{i=0}^{n+a-1}(1-(-q)^{i-m}).$$ Writing $m=n+k$, this is equal to $$q^{n(2m-n)}\cdot\Den(\langle1\rangle^{n+k},\langle 1\rangle^n)\cdot \mathfrak{m}(a; (-q)^{-k}),$$ which explains the correct weight factor $\mathfrak{m}(a; X)$ appearing in the theorem.

  We remark that since $F/F_0$ is unramified, the analogue of the smoothness theorem \cite[Theorem 3.9]{CY} is valid in the hermitian case even when the residue characteristic is $p=2$, as \cite[Lemma 5.5.2]{Gan2000} is still valid for $p=2$ by \cite[\S 9]{Gan2000}.
\end{proof}

\begin{example}[The case $\rank L=1$]\label{ex rank=1} If $\rank L=1$, the formula specializes to 
$$\Den(X,L)=\sum_{i=0}^{\val(L)}(-X)^i.
$$
In particular, if $\val(L^\flat)$ is odd,  we obtain $\Den(L)=0$ and $$\pDen(L)=\frac{\val(L)+1}{2}.$$
\end{example}

Also note that if $L'\supset L$, then $\val(L')$ and $\val(L)$ have the same parity (see \S\ref{ss:notation}). In particular, if $\val(L)$ is odd, then $t(L')>0$ and hence $\wt(t(L'); 1)=0$. Thus we obtain the following explicit formula for $\pDen(L)$.

\begin{corollary}\label{cor: pDen}
If $\val(L)$ is odd, then $$\pDen(L)=\sum_{L\subset L'\subset L'^\vee}  \wt(t(L')).$$
\end{corollary}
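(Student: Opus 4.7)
The plan is to obtain the formula by termwise differentiation of the Cho--Yamauchi expansion in Theorem \ref{thm: Den(X)} at $X=1$, exploiting the automatic vanishing of the weight factors $\wt(t(L'); 1)$.

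First, I would record a parity observation: whenever $L' \supset L$ is an integral overlattice of full rank in $\mathbb{V}$, one has $\val(L) = \val(L') + 2\ell(L'/L)$. This follows from the chain of inclusions $L \subset L' \subset L'^\vee \subset L^\vee$ together with the duality identity $\length_{O_F}(L^\vee/L'^\vee) = \length_{O_F}(L'/L)$, since $\val(\Lambda) = \length_{O_F}(\Lambda^\vee/\Lambda)$. In particular, if $\val(L)$ is odd then $\val(L')$ is odd as well, so $L'$ cannot be self-dual and $t(L') \geq 1$ for every $L'$ contributing to the sum.

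Next I would observe the key vanishing $\wt(t; 1) = \prod_{i=0}^{t-1}(1-(-q)^i) = 0$ for all $t \geq 1$, coming from the $i=0$ factor $1-1=0$. Applying $-\frac{d}{dX}\big|_{X=1}$ to the identity of Theorem \ref{thm: Den(X)} and using the Leibniz rule on each term gives
\[
-\frac{d}{dX}\bigg|_{X=1}\bigl[X^{2\ell(L'/L)}\,\wt(t(L'); X)\bigr] = -2\ell(L'/L)\,\wt(t(L'); 1) \;+\; \wt(t(L')),
\]
where the second summand uses the defining equation $\wt(t) = -\frac{d}{dX}|_{X=1}\wt(t; X)$. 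The first summand vanishes by the previous step, leaving precisely $\wt(t(L'))$. Summing over all integral overlattices $L \subset L' \subset L'^\vee$ produces the desired identity.

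There is no real obstacle here; the argument is automatic once the parity observation is in hand. The only point worth flagging is that the parity constraint is exactly what rules out the exceptional case $L' = L'^\vee$: in that case $\wt(0;1) = 1 \neq 0$ and the first summand of the derivative would contribute a nontrivial term $-2\ell(L'/L)$, which is why the clean closed form of the corollary requires the hypothesis that $\val(L)$ be odd.
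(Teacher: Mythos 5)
Your proof is correct and takes essentially the same route as the paper: observe that $\val(L')$ has the same parity as $\val(L)$ (hence $t(L')>0$ when $\val(L)$ is odd), note $\wt(t;1)=0$ for $t\geq 1$, and differentiate the Cho--Yamauchi expansion termwise at $X=1$. You merely spell out more explicitly the duality computation $\val(L)=\val(L')+2\ell(L'/L)$ that the paper leaves implicit.
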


\subsection{Some special cases}
Since $\wt(a; (-q)^{-k})=0$ if $0\le k\le (a-1)$, we also obtain

\begin{corollary}\label{cor:densityat1} 
For $k\ge0$, 
  $$\Den((-q)^{-k},L)=\sum_{L\subset L'\subset L'^\vee\atop t(L')\leq k} q^{-2\ell(L'/L)k}\cdot\wt(t(L'); (-q)^{-k})$$
  In particular, for $k=0$, 
  \begin{equation}
    \label{eq: Den L}
    \Den(L)=\Den(1,L)=\sum_{L\subset L'\subset L'^\vee\atop t(L')=0}1=\#\{L' \text{ self-dual}: L\subseteq L' \}.
  \end{equation}
 For $k=1$, 
  \begin{align}
    \frac{1}{\vol(L)}\Den((-q)^{-1},L)&=\sum_{L\subset L'\subset L'^\vee\atop t(L')=0}1+\sum_{L\subset L'\subset L'^\vee\atop t(L')=1}(1+q^{-1})\frac{1}{\vol(L')} \label{eq:valueat1}% \\&=\#\{L' \text{ self-dual}: L\subseteq L' \}+\sum_{L\subset L'\subset L'^\vee\atop t(L')=1 }(1+q^{-1})\frac{1}{\vol(L')}.\nonumber 
  \end{align}
% Here $[L':L]=\#L'/L=q^{2\ell(L'/L)}$.
\end{corollary}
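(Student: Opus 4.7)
The proof is essentially a direct substitution into the Cho--Yamauchi formula of Theorem \ref{thm: Den(X)}, combined with a vanishing observation for the weight factors $\wt(a;X)$ at the specific value $X=(-q)^{-k}$.

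The plan is as follows. First, I would observe that for $a \geq 1$,
$$\wt(a; (-q)^{-k}) = \prod_{i=0}^{a-1}\bigl(1-(-q)^{i-k}\bigr),$$
and the factor indexed by $i=k$ equals $0$ whenever $0 \leq k \leq a-1$. Consequently, $\wt(a;(-q)^{-k})=0$ for all $a > k$, so in the Cho--Yamauchi sum only the terms with $t(L') \leq k$ contribute. Substituting $X = (-q)^{-k}$ into Theorem \ref{thm: Den(X)} yields the general formula immediately.

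For $k=0$, the surviving range collapses to $t(L')=0$, i.e., $L'$ self-dual; moreover $\ell(L'/L)$ contributes $q^0=1$, and $\wt(0;1)=1$ by convention, giving the counting formula \eqref{eq: Den L}. For $k=1$, the contributing indices are $t(L') \in \{0,1\}$, with $\wt(0;(-q)^{-1})=1$ and $\wt(1;(-q)^{-1}) = 1-(-q)^{-1} = 1+q^{-1}$. The routine step is then to rewrite the factor $q^{-2\ell(L'/L)}$ in terms of volumes: since $L \subset L' \subset L'^\vee \subset L^\vee$, the exact sequence $0 \to L'/L \to L^\vee/L \to L^\vee/L' \to 0$ together with duality $\ell(L^\vee/L'^\vee) = \ell(L'/L)$ gives $\val(L) = \val(L') + 2\ell(L'/L)$, hence
$$\frac{q^{-2\ell(L'/L)}}{\vol(L)} = \frac{q^{\val(L')-\val(L)}}{\vol(L)} = \frac{1}{\vol(L')}.$$
For self-dual $L'$ this equals $1$, while for $t(L')=1$ it gives the stated $1/\vol(L')$, yielding \eqref{eq:valueat1} after multiplying through by $1+q^{-1}$ on the second sum.

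There is no real obstacle here: the only content is the vanishing of $\wt(a;(-q)^{-k})$ for $a>k$, which is visible from the explicit product, plus the elementary volume identity obtained from the filtration $L \subset L' \subset L'^\vee \subset L^\vee$. The formulas for $k=0$ and $k=1$ are simply the specializations recorded for immediate use later in the paper.
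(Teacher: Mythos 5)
Your proof is correct and is exactly the argument the paper intends: specialize the Cho--Yamauchi formula at $X=(-q)^{-k}$, use the vanishing $\wt(a;(-q)^{-k})=0$ for $a>k$ (the factor at $i=k$ dies), and for $k=1$ convert $q^{-2\ell(L'/L)}$ to $\vol(L)/\vol(L')$ via $\val(L)=\val(L')+2\ell(L'/L)$. The paper records only the one-line vanishing observation before the corollary and leaves the volume bookkeeping implicit; you have simply spelled that bookkeeping out, which is fine.
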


\begin{corollary} The following identities hold: \begin{align}\label{eq: Den -q}
\Den(-q,L)=   \sum_{L\subset L'\subset L'^\vee} [L':L]\cdot\wt(t(L')+1),
  \end{align}
and
\begin{align}\label{eq: FE}
\Den(-q,L)=    \frac{1}{\vol(L)}\Den((-q)^{-1},L).
\end{align}
\end{corollary}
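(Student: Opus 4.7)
The plan is to deduce both identities directly from the two structural results that were just established: the Cho--Yamauchi formula (Theorem \ref{thm: Den(X)}) gives \eqref{eq: Den -q}, while the functional equation (\ref{eq:functionalequation}) gives \eqref{eq: FE}.

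For \eqref{eq: Den -q}, I would substitute $X=-q$ into the Cho--Yamauchi formula
$$\Den(X,L)=\sum_{L\subset L'\subset L'^\vee} X^{2\ell(L'/L)}\cdot \wt(t(L');X)$$
and simplify the two factors separately. For the power of $X$, since $\ell$ denotes $O_F$-length and the residue field of $F$ has size $q^2$ (as $F/F_0$ is unramified), we have $(-q)^{2\ell(L'/L)}=q^{2\ell(L'/L)}=[L':L]$. For the weight factor, a reindexing $j=i+1$ yields
$$\wt(a;-q)=\prod_{i=0}^{a-1}\bigl(1-(-q)^{i+1}\bigr)=\prod_{j=1}^{a}\bigl(1-(-q)^{j}\bigr)=\wt(a+1),$$
by direct comparison with the definition $\wt(b)=\prod_{i=1}^{b-1}(1-(-q)^i)$. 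Applying this with $a=t(L')$ gives exactly the claimed formula.

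For \eqref{eq: FE}, I would set $X=-q$ in the functional equation
$$\Den(X,L)=(-X)^{\val(L)}\cdot \Den(1/X,L).$$
This immediately yields $\Den(-q,L)=q^{\val(L)}\Den((-q)^{-1},L)$. Combining with the normalization recorded in the notation section, $\vol(L)=q^{-\val(L)}$, this is exactly $\Den(-q,L)=\vol(L)^{-1}\Den((-q)^{-1},L)$.

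Neither identity presents any real obstacle; both are mechanical consequences of the formulas already in hand. The only bookkeeping to watch is the sign convention (the $-$ in $-q$ conspires with $(-q)^i$ to shift the index in the weight factor by one, which is what makes $t(L')$ become $t(L')+1$) and the conversion between $O_F$-length and cardinality (which introduces the factor $[L':L]=q^{2\ell(L'/L)}$ rather than $q^{\ell(L'/L)}$).
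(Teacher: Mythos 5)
Your proposal is correct and follows essentially the same route as the paper: the paper's proof consists precisely of the observation $\wt(t(L');-q)=\wt(t(L')+1)$ plugged into Theorem \ref{thm: Den(X)} for the first identity, and the substitution $X=-q$ into the functional equation \eqref{eq:functionalequation} combined with $\vol(L)=q^{-\val(L)}$ for the second. Your write-up merely spells out the index shift in the weight factor and the identification $[L':L]=q^{2\ell(L'/L)}$ (correct because $\ell$ is $O_F$-length and $|k_F|=q^2$), both of which the paper leaves implicit.
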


\begin{proof}
The first part follows from Theorem \ref{thm: Den(X)} and the fact that $$\wt(t(L'); -q)=\wt(t(L')+1).$$  The second part follows from  the functional equation (\ref{eq:functionalequation}).
\end{proof}

\subsection{An induction formula}

\begin{proposition}\label{prop: ind}
Let $L^\flat$ be a lattice of rank $n-1$ with fundamental invariants $(a_1,\cdots,a_{n-1})$. Let $L=L^\flat+\pair{x}$ and  $L'=L^\flat+\pair{\varpi^{-1} x}$ where $x\perp L^\flat$ with $\val(x)>a_{n-1}$. Then
$$
\Den(X,L)=X^2\Den(X,L')+(1-X)\Den(-qX,L^\flat).
$$
\end{proposition}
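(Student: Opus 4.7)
The plan is to expand all three terms via the Cho--Yamauchi formula (Theorem \ref{thm: Den(X)}) and match them by partitioning the sum for $\Den(X,L)$ according to whether $\varpi^{-1}x$ lies in the integral lattice $L''\supseteq L$. Write
\[
\Den(X,L)=\sum_{L\subseteq L''\subseteq L''^\vee} X^{2\ell(L''/L)}\wt(t(L''); X)
\]
and split into Case A ($L'\subseteq L''$, i.e.\ $\varpi^{-1}x\in L''$) and Case B ($\varpi^{-1}x\notin L''$). Case A is immediate: such $L''$ are exactly integral overlattices of $L'$, and $\ell(L''/L)=\ell(L''/L')+1$, yielding the contribution $X^2\Den(X,L')$. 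All the content is in Case B.

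For Case B, introduce the orthogonal projection $\pi^\flat\colon L^\flat_F\obot\langle x\rangle_F\to L^\flat_F$ and set $M:=\pi^\flat(L'')$. Using $(y,e)\in O_F$ for $e\in L^\flat$ and $y\in L''$, one first checks that $L^\flat\subseteq M\subseteq L^{\flat\vee}$ and that $M$ is integral. Since $L''\cap\langle x\rangle_F=\langle x\rangle$ in Case B, the short exact sequence $0\to \langle x\rangle\to L''\to M\to 0$ identifies $L''$ with the graph of a ``twisted lift'' $s(m)=m+c(m)x$ for some $c\in \Hom_{O_F}(M/L^\flat, F/O_F)$ (the restriction $c|_{L^\flat}=0$ because $L\subseteq L''$). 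Different $c$ give different $L''$, and the total number of such homomorphisms is $|M/L^\flat|=q^{2\ell(M/L^\flat)}$.

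The key technical point is that the hypothesis $\val(x)>a_{n-1}$ ensures \emph{every} such $c$ yields an integral $L''$. Indeed, integrality reduces to $c(m)\overline{c(m')}(x,x)\in O_F$ for all $m,m'\in M$; since $M$ is integral inside $L^{\flat\vee}$, the depth $e(m)$ of any representative of $c(m)$ satisfies $2e(m)\le a_{n-1}<\val(x)$, so the required bound holds. A Gram matrix calculation (using Schur complements on the basis $s(m_1),\ldots,s(m_{n-1}),x$) then shows that $\det L''=(x,x)\det M$ and, because $\val(x)>a_{n-1}\ge$ any invariant of $M$, the invariants of $L''$ are obtained by appending $\val(x)$ to those of $M$. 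In particular $\ell(L''/L)=\ell(M/L^\flat)$ and $t(L'')=t(M)+1$.

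Assembling these, the Case B contribution equals
\[
\sum_{L^\flat\subseteq M\subseteq M^\vee} q^{2\ell(M/L^\flat)}X^{2\ell(M/L^\flat)}\wt(t(M)+1;X).
\]
Finally, the elementary identity $\wt(a+1;X)=(1-X)\,\wt(a;-qX)$ (check by $\wt(a;-qX)=\prod_{j=1}^{a}(1-(-q)^j X)$) together with $(qX)^{2\ell}=(-qX)^{2\ell}$ converts this sum into $(1-X)\Den(-qX,L^\flat)$, completing the proof when combined with the Case A contribution. The main obstacle to be handled carefully is the bijection in Case B: proving (using $\val(x)>a_{n-1}$) that every $c\in \Hom_{O_F}(M/L^\flat, F/O_F)$ produces an integral lattice and that the resulting map $L''\mapsto M$ is indeed a fibration with the claimed fiber size.
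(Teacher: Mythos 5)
The paper does not prove this proposition; it cites Terstiege's \cite[Theorem 5.1]{Terstiege2013} in the hermitian case and Katsurada's \cite[Theorem 2.6(1)]{Katsurada1999} in the orthogonal case. So your attempt must stand on its own, and unfortunately it contains a genuine gap in Case B.

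Your Case A analysis is correct, and the overall strategy (partition the Cho--Yamauchi sum by whether $\varpi^{-1}x \in L''$, then fibre Case B over $M := \pi^\flat(L'')$ and use $\wt(a+1;X)=(1-X)\,\wt(a;-qX)$) is a sensible one. The problem lies in the two asserted facts that \emph{$M$ is integral} and that \emph{every} $c \in \Hom_{O_F}(M/L^\flat, F/O_F)$ produces an integral $L''$. The second is argued via ``the depth $e(m)$ of any representative of $c(m)$ satisfies $2e(m)\le a_{n-1}$.'' This bound is false as soon as $L^\flat_F$ contains isotropic vectors, which happens whenever $n\ge 4$ and sometimes for $n=3$. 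For example, take $L^\flat = \langle e_1,e_2\rangle$ with $(e_1,e_1)=(e_2,e_2)=\varpi^a$ and pick $u\in O_F^\times$ with $\mathrm{Nm}(u)=-1$. Then $m_0=\varpi^{-a}(e_1+ue_2)$ is isotropic, $(m_0,e_i)\in O_F$, and $M=L^\flat+O_F m_0$ is an integral (indeed self-dual) overlattice with $M/L^\flat\cong O_F/\varpi^a$; so the exponent of $M/L^\flat$ equals $a_{n-1}=a$, not $\le a_{n-1}/2$. Taking $\val(x)=a+1$ (the smallest value allowed by the hypothesis) and any $c$ with $c(\bar m_0)$ of depth $a$, the twisted lattice $L''$ has a vector of norm $N(c(\bar m_0))(x,x)$, of valuation $\val(x)-2a=1-a<0$ when $a\ge 2$; this $L''$ is \emph{not} integral, so the fibre over $M$ is strictly smaller than $|M/L^\flat|$. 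Worse, the converse assertion that $\pi^\flat(L'')$ is integral for every integral $L''$ in Case B is also false: with the same $L^\flat$, $a=2$, $\val(x)=3$, set $m = (\varpi^{-2}+\varpi^{-1})e_1 + \varpi^{-2}ue_2$, so $(m,m)=1+2\varpi^{-1}\notin O_F$, and choose $\lambda=\varpi^{-2}\lambda_0$ with $\mathrm{Nm}(\lambda_0)$ tuned so that $(m,m)+\mathrm{Nm}(\lambda)(x,x)\in O_F$. Then $L''=L+O_F(m+\lambda x)$ is integral, lies in Case B, but $\pi^\flat(L'')$ is not integral. Thus the map $L''\mapsto(M,c)$ is not the bijection onto $\{(M,c): M\ \text{integral}\}$ that your counting requires; there is both over-counting (invalid $c$) and under-counting (integral $L''$ whose $M$ is not integral). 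The gaps happen to cancel in the final answer --- the identity is true --- but establishing that cancellation is precisely the content of the proofs in Terstiege and Katsurada and requires a more delicate analysis (tracking the intersection $L''\cap L^\flat_F$ as well as the projection, and handling the cancellation of norms carefully) than the clean bijection you propose.
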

This is \cite[Theorem 5.1]{Terstiege2013} in the hermitian case, and 
Katsurada \cite[Theorem 2.6 (1)] {Katsurada1999} in the orthogonal case (see also \cite{CY}).

%\subsection{The set $\mathcal{S}^L_{a,b}$ in the minuscule case}  Let $V$ be a $k_F/k$-hermitian space of dimension $m$ (which is unique up to isomorphism). Let $\mathcal{S}_{b}(V)$ be set of totally isotropic $k_F$-subspaces of dimension $b$ in $V$, and $S_{m,b}\coloneqq \#\mathcal{S}_b(V)$. We have the following explicit formula for $S_{m,b}$.
%
%\begin{proposition}
%We have $$S_{m,b}=\frac{\prod_{i=m-2b+1}^m(1-(-q)^i)}{\prod_{i=1}^b(1-q^{2i})}.$$  
%\end{proposition}
%
%\begin{proof}
%  The group $\U(V)(k)$ acts transitively on the set $\mathcal{S}_b(V)$, with stabilizer given by a parabolic subgroup $P_b(k)\subseteq \U(V)(k)$. As an affine variety, we have $$P_b\simeq \Res_{k_F/k} \GL_{b}\times \Res_{k_F/k}\BG_a^{b\delta} \times \BG_{a}^{b^2}\times \U_\delta,$$ where $\delta=m-2b$. Therefore $$S_{m,b}=\frac{\# \U_m(k)}{\# P_{b}(k)}=\frac{q^{m^2}\prod_{i=1}^m (1-(-q)^{-i})}{[q^{2b^2} \prod_{i=1}^b (1-q^{-2i} )]\cdot q^{2b\delta}\cdot q^{b^2}\cdot[ q^{\delta^2}\prod_{i=1}^
%\delta (1-(-q)^{-i})]},$$ which simplifies to the desired formula.
%\end{proof}
%
% Let $L$ be a vertex lattice of type $m$. Then $\mathcal{S}^L_{a,b}\ne\varnothing$ if and only if $a+2b=m$. In this case, we have a bijection $$\mathcal{S}^L_{a,b}\xrightarrow{\sim}\mathcal{S}_b(L^\vee/L),\quad M\mapsto M/L.$$ In particular, $\#\mathcal{S}_{a,b}^L=S_{m,b}$.

\section{Horizontal components of Kudla--Rapoport cycles}

\subsection{Quasi-canonical lifting cycles}\label{sec:quasi-canon-lift}

Let $\langle y\rangle\subseteq \mathbb{V}_2$ be a rank one $O_F$-lattice. By \cite[Proposition 8.1]{Kudla2011}, we have a decomposition as Cartier divisors on $\mathcal{N}_2$, $$\mathcal{Z}(y)=\sum_{i=0}^{\lfloor \val(y)/2\rfloor}\mathcal{Z}_{\val(y)-2i}.$$ Here $\mathcal{Z}_s$ ($s\ge0$) is the \emph{quasi-canonical lifting cycle} of level $s$ on $\mathcal{N}_2$, the horizontal divisor corresponding to the \emph{quasi-canonical lifting of level $s$} of the framing object $(\mathbb{X}, \iota_\mathbb{X}, \lambda_\mathbb{X})$ of $\mathcal{N}_2$ (the quasi-canonical lifting of level $s=0$ is the \emph{canonical lifting}). We define the \emph{primitive part} of $\mathcal{Z}(y)$ to be $$\mathcal{Z}(y)^\circ\coloneqq \mathcal{Z}_{\val(y)}\subseteq \mathcal{Z}(y).$$

Let $O_{F,s}=O_{F_0}+\varpi^s O_F\subseteq O_{F}$. Let $\Fb_s$ be the finite abelian extension of $\Fb$ corresponding to the subgroup $O_{F,s}^\times$ under local class field theory. Let $O_{\Fb,s}$ be the ring of integers of the ring class field $\Fb_s$. Then $O_{\Fb,0}=\OFb$, and the degree of $O_{\Fb,s}$ over $\OFb$ is equal to $q^s(1+q^{-1})$ when $s\ge1$. We have $$\mathcal{Z}_s\cong \Spf O_{\Fb,s}.$$

\subsection{Horizontal cycles}

Let $L^\flat\subseteq\mathbb{V}_n$ be a hermitian $O_F$-lattice of rank $n-1$. Let $M^\flat$ be an integral hermitian $O_F$-lattice of rank $n-1$ such that $L^\flat\subseteq M^\flat$. When $t(M^\flat)\le1$, we can construct a horizontal formal subscheme in $\mathcal{N}_n$ using quasi-canonical lifting. In fact, since $t(M^\flat)\le1$, we may find a rank $n-2$ $O_F$-lattice $M_{n-2}$, which is self-dual in the hermitian space $M_{n-2,F}$, and a rank one $O_F$-lattice $\langle y\rangle$, such that we have an orthogonal direct sum decomposition $$M^\flat= M_{n-2}\obot \langle y\rangle.$$ Let $M_{n-2,F}^\perp \subseteq \mathbb{V}_n$ be the orthogonal complement of $M_{n-2,F}$ in $\mathbb{V}_n$. Then we have an isomorphism $M_{n-2,F}^\perp\simeq\mathbb{V}_2$, and thus an isomorphism (see \S\ref{sec:cancelation-law-intl})  $$\mathcal{Z}(M_{n-2})\simeq \mathcal{N}_2.$$ Under this isomorphism, we can identify the Cartier divisor $\mathcal{Z}(M^\flat)\subseteq \mathcal{Z}(M_{n-2})$ with the Cartier divisor $\mathcal{Z}(y)\subseteq \mathcal{N}_2$.

We define  the \emph{primitive part}  $\mathcal{Z}(M^\flat)^\circ\subseteq \mathcal{Z}(M^\flat)$ to be the primitive part $\mathcal{Z}(y)^\circ\subseteq \mathcal{Z}(y)$ under the above identification. Since $\val(y)=\val(M^\flat)$, we have a decomposition as Cartier divisors on $\mathcal{Z}(M_{n-2})$,
\begin{equation}
  \label{eq:ZMflatdecomp}
  \mathcal{Z}(M^\flat)\simeq\sum_{i=0}^{\lfloor \val(M^\flat)/2\rfloor} \mathcal{Z}_{\val(M^\flat)-2i},
\end{equation}
 and we can characterize $\mathcal{Z}(M^\flat)^\circ$ as the unique component of $\mathcal{Z}(M^\flat)$ isomorphic to $\mathcal{Z}_{\val(M^\flat)}$ (the component of the maximal degree). In particular, $\mathcal{Z}(M)^\circ$ is independent of the choice of the self-dual lattice $M_{n-2}$ and we have
 \begin{equation}
   \label{eq:lengthsquasicanonical}
   \deg_{\OFb}(\mathcal{Z}(M^\flat)^\circ)=
   \begin{cases}
     1, & t(M^\flat)=0, \\
     \vol(M^\flat)^{-1}(1+q^{-1}), & t(M^\flat)=1.
   \end{cases}
 \end{equation} 

\begin{theorem}\label{thm:horizontal}
Let $L^\flat\subseteq\mathbb{V}_n$ be a hermitian $O_F$-lattice of rank $n-1$. Then 
\begin{equation}
  \label{eq:horizontal}
  \mathcal{Z}(L^\flat)_\sH=\bigcup_{L^\flat\subseteq M^\flat\subseteq (M^\flat)^\vee\atop t(M^\flat)\le1}\mathcal{Z}(M^\flat)^\circ.
\end{equation}
Moreover, the identity
\begin{equation}
  \label{eq:horizontalK}
\mathcal{O}_{\mathcal{Z}(L^\flat)_\sH}=\sum_{L^\flat\subseteq M^\flat\subseteq (M^\flat)^\vee\atop t(M^\flat)\le1}\mathcal{O}_{\mathcal{Z}(M^\flat)^\circ}
\end{equation}
 holds in $\Gr^{n-1}K_0^{\mathcal{Z}(L^\flat)_\sH}(\mathcal{N}_n)$.  
\end{theorem}

\begin{lemma}\label{lem:alldistinct}
The primitive cycles $\mathcal{Z}(M^\flat)^\circ$ on the right-hand-side of (\ref{eq:horizontal}) are all distinct.   
\end{lemma}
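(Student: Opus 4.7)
The plan is to reconstruct $M^\flat$ from the formal module at a characteristic-zero point of $\mathcal{Z}(M^\flat)^\circ$, so that distinct lattices cannot give the same cycle. First, since $\mathcal{Z}(M^\flat)^\circ\cong\Spf O_{\Fb,\val(M^\flat)}$, the degree over $\Spf\OFb$ recorded in \eqref{eq:lengthsquasicanonical} determines $\val(M^\flat)$. Hence any equality $\mathcal{Z}(M_1^\flat)^\circ=\mathcal{Z}(M_2^\flat)^\circ$ forces $\val(M_1^\flat)=\val(M_2^\flat)=:s$; in particular neither $M_i^\flat$ is a proper sublattice of the other.

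Choose an orthogonal decomposition $M_1^\flat=M_{n-2}\obot\langle y\rangle$ with $M_{n-2}$ self-dual of rank $n-2$ and $\val(y)=s$. Let $z$ be a $K$-point of $\mathcal{Z}(M_1^\flat)^\circ$ lying over the generic point of $\Spf O_{\Fb,s}$, where $K=\Fb_s$. Under the identification $\delta_{M_{n-2}}\colon\mathcal{N}_2\simeq\mathcal{Z}(M_{n-2})\hookrightarrow\mathcal{N}_n$ from \eqref{eq:inc M}, the point $z$ corresponds to the generic point of the quasi-canonical lifting cycle $\mathcal{Z}_s\subseteq\mathcal{N}_2$, and the formal module accordingly decomposes as $\mathcal{X}_z\simeq\bar{\mathcal{E}}_{O_K}^{n-2}\oplus X_z$ with $X_z$ the quasi-canonical lifting of level $s$. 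Taking special homomorphisms yields
\[
\Hom_{O_F}(\bar{\mathcal{E}}_{O_K},\mathcal{X}_z) = M_{n-2}\obot\Hom_{O_F}(\bar{\mathcal{E}}_{O_K},X_z),
\]
and by the defining property of the quasi-canonical lifting of level $s$ the second summand is a rank-$1$ $O_F$-lattice of valuation exactly $s$ in $\mathbb{V}_2$; since it contains $\langle y\rangle$ and both span the same line with the same valuation, the two agree. Therefore $\Hom_{O_F}(\bar{\mathcal{E}}_{O_K},\mathcal{X}_z)\cap L_F^\flat = M_1^\flat$.

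The assumption $z\in\mathcal{Z}(M_1^\flat)^\circ=\mathcal{Z}(M_2^\flat)^\circ\subseteq\mathcal{Z}(M_2^\flat)$ now implies $M_2^\flat\hookrightarrow\Hom_{O_F}(\bar{\mathcal{E}}_{O_K},\mathcal{X}_z)$, and because $M_2^\flat\subseteq L_F^\flat$ we deduce $M_2^\flat\subseteq M_1^\flat$; combined with the first paragraph this forces $M_1^\flat=M_2^\flat$. The main obstacle is verifying the structural claims used in the second paragraph: that the embedding $\delta_{M_{n-2}}$ really splits off a canonical-lifting factor $\bar{\mathcal{E}}_{O_K}^{n-2}$ whose special-homomorphism lattice recovers $M_{n-2}\subseteq\mathbb{V}_n$, and that the special-homomorphism lattice at the generic point of $\mathcal{Z}_s\subseteq\mathcal{N}_2$ has rank $1$ and valuation $s$. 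Both are implicit in the setup of $\mathcal{Z}(M^\flat)^\circ$ recalled in \S\ref{sec:quasi-canon-lift} via Gross's theory of quasi-canonical liftings and \cite[Proposition 8.1]{Kudla2011}, but warrant careful bookkeeping.
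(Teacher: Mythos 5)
Your argument is correct, but it takes a genuinely different route from the paper's. The paper works entirely with lattice arithmetic: it produces a \emph{single} self-dual sublattice $M_{n-2}$ of rank $n-2$ with $\mathcal{Z}(M_1^\flat)^\circ=\mathcal{Z}(M_2^\flat)^\circ\subseteq\mathcal{Z}(M_{n-2})\cong\mathcal{N}_2$, hence $M_{n-2}\subseteq M_1^\flat\cap M_2^\flat$; it then decomposes $M_i^\flat=M_{n-2}\obot\langle y_i\rangle$ with both $y_i$ in the same line $(M_{n-2})_F^\perp\cap L_F^\flat$, reads off $\val(y_1)=\val(y_2)$ from the degree formula \eqref{eq:lengthsquasicanonical}, and concludes $\langle y_1\rangle=\langle y_2\rangle$. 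You instead recover $M_1^\flat$ directly as the lattice of special homomorphisms $\Hom_{O_F}(\bar{\mathcal{E}}_{O_K},\mathcal{X}_z)$ (intersected with $L_F^\flat$) at a characteristic-zero point $z$ of the cycle, using Gross's quasi-canonical lifting theory and the splitting off of the canonical lifting along $\mathcal{Z}(M_{n-2})\cong\mathcal{N}_2$; the membership $z\in\mathcal{Z}(M_2^\flat)$ then forces $M_2^\flat\subseteq M_1^\flat$, and the degree argument (or simply symmetry) gives equality. Your approach is heavier — it invokes deformation theory and effectively anticipates the special-homomorphism/Tate-module point of view that the paper develops only afterward (\S\ref{sec:breuil-modules}--\S\ref{sec:quasi-canon-lift} onward) for the proof of Theorem~\ref{thm:horizontal} — but it is logically sound and non-circular, since Lemma~\ref{lem:alldistinct} is not used to establish those tools. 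What it buys you, conceptually, is an explicit reconstruction of $M^\flat$ from the cycle, which makes the injectivity manifest and also renders the valuation comparison in your first paragraph redundant (by symmetry you also get $M_1^\flat\subseteq M_2^\flat$); the paper's argument is shorter and stays within elementary lattice manipulations. One presentational caveat: the splitting $\mathcal{X}_z\simeq\bar{\mathcal{E}}_{O_K}^{n-2}\oplus X_z$ and the identification of the rank-$1$ summand's valuation with the level $s$ of the quasi-canonical lifting are exactly the "structural claims" you flag as needing care — they are standard but should be cited (Gross, and \cite[Prop.~8.1]{Kudla2011}) rather than left implicit, since they carry all the weight.
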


\begin{proof}
If not, suppose $\mathcal{Z}(M_1^\flat)^\circ=\mathcal{Z}(M_2^\flat)^\circ$. Let $M^\flat=M_1^\flat+M_2^\flat$, which also has type $t(M^\flat)\le 1$. Then by definition we have  $\mathcal{Z}(M^\flat)= \mathcal{Z}(M_1^\flat)\cap \mathcal{Z}(M_2^\flat)$.  By the assumption $\mathcal{Z}(M_1^\flat)^\circ=\mathcal{Z}(M_2^\flat)^\circ$ we know that $\mathcal{Z}(M_i^\flat)^\circ\subseteq \mathcal{Z}(M^\flat)$ (for $i=1,2$). So by \eqref{eq:ZMflatdecomp} the inclusion $\mathcal{Z}(M_i^\flat)^\circ\simeq\mathcal{Z}_{\val(M_i^\flat)}\subseteq \mathcal{Z}(M^\flat)$ implies that $\val(M_i^\flat)\le \val(M^\flat)$. But $M^\flat\supseteq M_i^\flat$, it follows that $M^\flat=M_i^\flat$, and so $M_1^\flat=M_2^\flat$, a contradiction.   % then we may find a self-dual $O_F$-lattice $M_{n-2}\subseteq \mathbb{V}_n$ of rank $n-2$ such that $\mathcal{Z}(M_1^\flat)^\circ=\mathcal{Z}(M_2^\flat)^\circ$ is contained in $\mathcal{Z}(M_{n-2})\cong \mathcal{N}_2$. Then $$M_1^\flat\supseteq M_{n-2},\quad M_2^\flat\supseteq M_{n-2}.$$ Since $M_{n-2}$ is self-dual and $(M_1^\flat)_F=(M_2^\flat)_F$ are both equal to $L^\flat_F$, we obtain a orthogonal decomposition $$M_1^\flat=M_{n-2} \obot \langle y_1\rangle, \quad M_2^\flat=M_{n-2} \obot \langle y_2\rangle,$$ where $\langle y_1\rangle$, $\langle y_2\rangle$ are rank one lattices in the same line $(M_{n-2})_F^\perp\subseteq L^\flat_F$. Since $\mathcal{Z}(M_1^\flat)^\circ=\mathcal{Z}(M_2^\flat)^\circ$, by computing the degree we also know that $\val(M_1^\flat)=\val(M_2^\flat)$, and hence $\val(y_1)=\val(y_2)$. It follows that $\langle y_1\rangle=\langle y_2\rangle$, and so $M_1^\flat=M_2^\flat$.
\end{proof}

By Lemma \ref{lem:alldistinct}, we know that (\ref{eq:horizontal}) implies (\ref{eq:horizontalK}). It is clear from construction that in (\ref{eq:horizontal}) the right-hand-side is contained in the left-hand-side. To show the reverse inclusion, we will use the Breuil modules and Tate modules.

\subsection{Breuil modules}\label{sec:breuil-modules}
First let us review the (absolute) Breuil modules (\cite{Breuil2000}, \cite[Appendix]{Kisin2006}, \cite[\S12.2]{Brinon-Conrad}).  Let $W=W(\bar k)$. Let $O_K$ be a totally ramified extension of $W$ of degree $e$ defined by an Eisenstein polynomial $E(u)\in W[u]$. Let $S$ be Breuil's ring, the $p$-adic completion of $W[u][\frac{E(u)^i}{i!}]_{i\ge1}$ (the divided power envelope of $W[u]$ with respect to the ideal $(E(u))$. The ring $S$ is local and $W$-flat, and $S/uS\cong W$. Let $\Fil^1S\subseteq S$ be the ideal generated by all $\frac{E(u)^i}{i!}$. Then $S/\Fil^1S\cong O_K$.  By Breuil's theorem, $p$-divisible groups $G$ over $O_K$ are classified by their Breuil modules $\sM(G)=\mathbb{D}(G)(S)$ (\cite[Proof of A.6]{Kisin2006}), where $\mathbb{D}(G)$ is the Dieudonn\'e crystal of $G$. It is a finite free $S$-module together with an $S$-submodule $\Fil^1\sM(G)$, and a $\phi_S$-linear homomorphism $\phi_\sM:\Fil^1\sM(G)\rightarrow G$ satisfying certain conditions. The classical Dieudonn\'e module $M(G_{\bar k})$ of the special fiber $G_{\bar k}$ is given by $\mathbb{D}(G_{\bar k})(W)=\mathbb{D}(G)(S) \otimes_S W=\sM(G)/u\sM(G)$, with Hodge filtration $\Fil^1M(G_{\bar k})$ equal to the image of $\Fil^1\sM(G)$. We also have $\mathbb{D}(G)(O_K)=\mathbb{D}(G)(S) \otimes_S O_K=\sM(G) \otimes_S O_K$.

  For $\varpi$-divisible $O_{F_0}$-modules, one has an analogous theory of relative Breuil modules (see \cite{Henniges2016}) by replacing $W=W(\bar k)$ with $\OFb=W_{O_{F_0}}(\bar k)$, and by defining $S$ to be the $\varpi$-adic completion of the $O_{F_0}$-divided power envelope (in the sense of \cite{Faltings2002}) of $\OFb[u]$ with respect to the ideal $(E(u))$.

\subsection{Tate modules}

Let $K$ be a finite extension of $\Fb$. Let $z\in \mathcal{N}_n(O_K)$ and let $G$ be the corresponding $O_F$-hermitian module of signature $(1,n-1)$ over $O_K$. Let $$L\coloneqq \Hom_{O_F}(T_p\bar{\mathcal{E}}, T_pG),$$ where $T_p(-)$ denotes the integral $p$-adic Tate modules.  Then $L$ is a \emph{self-dual} $O_F$-hermitian lattice of rank $n$, where the hermitian form $\{x,y\}\in O_F$ is defined to be $$(T_p\bar{\mathcal{E}}\xrightarrow{x}T_pG\xrightarrow{\lambda_G}T_pG^\vee\xrightarrow{y^\vee}T_p\bar{\mathcal{E}}^\vee\xrightarrow{\lambda_{\bar{\mathcal{E}}}^\vee}T_p\bar{\mathcal{E}})\in\End_{O_F}(T_p\bar{\mathcal{E}})\cong O_F.$$ 
There are two injective $O_F$-linear homomorphisms (preserving their hermitian forms)
\[
   \xymatrix{
	       &\Hom_{O_F}(\ov \CE,G) \ar[dl]_{i_K} \ar[dr]^{i_{\ov k}}\\
L =\Hom_{O_F}(T_p\bar{\mathcal{E}}, T_pG) &  &\mathbb{V}_n=\Hom_{O_F}^\circ(\barE, \mathbb{X}_n),
	}
\]
where the right map $i_{\ov k}$ is induced by the reduction to $\Spec \bar k$ and the framings $\rho_{\bar\CE}$ and $\rho_z: G_{\ov k}\rightarrow \mathbb{X}_n$  corresponding to $\ov \CE$ and $z\in \mathcal{N}_n(O_K)$ respectively. These extend to $F$-linear homomorphisms (still denoted by the same notation)
\begin{equation}\label{diag:SW}
   \xymatrix{
	       &\Hom_{O_F}^\circ(\ov \CE,G) \ar[dl]_{i_K} \ar[dr]^{i_{\ov k}}\\
L_F &  &\mathbb{V}_n.
	}
\end{equation}

\begin{lemma}\label{lem:tate} The following identity holds:
\begin{align}\label{eq:SW0}
\Hom_{O_F}(\ov \CE,G)=i_K^{-1}(L),
\end{align}

\end{lemma}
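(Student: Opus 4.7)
The plan is to deduce \eqref{eq:SW0} from Tate's full-faithfulness theorem for $p$-divisible groups. The inclusion $\Hom_{O_F}(\bar{\mathcal{E}}, G)\subseteq i_K^{-1}(L)$ is immediate, since any $O_F$-linear homomorphism $\bar{\mathcal{E}}_{O_K}\to G$ induces an $O_F$-linear map on integral Tate modules, which by definition of $L$ lies in $L$.

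For the reverse inclusion, the key input is Tate's theorem: since $K$ is a finite extension of $\Fb$, hence complete of characteristic $0$ with algebraically closed residue field, there is a canonical isomorphism
$$\Hom_{O_K}(H_1,H_2)\isoarrow \Hom_{\BZ_p[\Gal_K]}(T_p H_1,T_p H_2)$$
for any pair of $p$-divisible groups $H_1,H_2$ over $O_K$, which remains an isomorphism after inverting $p$. Applied with $H_1=\bar{\mathcal{E}}_{O_K}$ and $H_2=G$ and restricted to $O_F$-equivariant maps (harmless, since the $O_F$-structure is defined over $O_K$ and commutes with $\Gal_K$), this identifies $\Hom^\circ_{O_F}(\bar{\mathcal{E}},G)$ with the $F$-subspace $\Hom_{O_F,\Gal_K}(V_p\bar{\mathcal{E}},V_pG)$ of $L_F$, while $\Hom_{O_F}(\bar{\mathcal{E}},G)$ corresponds to its integral part $\Hom_{O_F,\Gal_K}(T_p\bar{\mathcal{E}},T_pG)=L^{\Gal_K}$. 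In particular $i_K^{-1}(L)=\Hom_{O_F}(\bar{\mathcal{E}},G)$, as required.

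Equivalently, one can argue more hands-on: given $\phi\in \Hom_{O_F}^\circ(\bar{\mathcal{E}},G)$ with $i_K(\phi)\in L$, choose $N\ge 0$ so that $\psi\coloneqq \varpi^N\phi$ is an honest homomorphism; then $T_p\psi$ takes values in $\varpi^N T_pG$, so $\psi$ vanishes on $\bar{\mathcal{E}}[\varpi^N]$ (Tate again, applied to the finite $O_F$-flat group scheme $\bar{\mathcal{E}}[\varpi^N]$) and therefore factors through the $\varpi^N$-isogeny on $G$, exhibiting $\phi$ itself as an honest $O_F$-linear homomorphism $\bar{\mathcal{E}}_{O_K}\to G$. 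The argument is essentially formal once Tate's theorem is invoked; the only bookkeeping is to preserve $O_F$-equivariance under the Tate-module realization, which is automatic.
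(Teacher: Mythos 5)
Your proof is correct and follows essentially the same route as the paper: both invoke Tate's full-faithfulness theorem to identify $\Hom_{O_F}(\ov\CE,G)$ (resp.\ $\Hom_{O_F}^\circ(\ov\CE,G)$) with the $\Gal_K$-equivariant integral (resp.\ rational) Tate-module homomorphisms, and then observe that $i_K^{-1}(L)$ is precisely the intersection of the latter with $L$ inside $L_F$. The paper writes out this intersection explicitly, and the "hands-on" variant you append is just an unrolling of the same Tate-theorem input; the content is identical.
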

\begin{proof}
We may identify $\Hom_{O_F}^\circ(\ov \CE,G)$ as subspaces of the bottom two vector spaces. So $$i_K^{-1}(L)\cong L\cap \Hom_{O_F}^\circ(\ov \CE,G)$$ where the intersection is taken inside the $F$-vector space $L_F$. By \cite[Theorem 4, Corollary 1]{Tate1967}, $i_K$ induces an isomorphism $$\Hom_{O_F}(\ov \CE,G)\cong \Hom_{O_F[\Gamma_K]}(T_p\ov \CE,, T_pG),$$ where $\Gamma_K=\Gal(\ov K/K)$, and so an isomorphism $$\Hom_{O_F}^\circ(\ov \CE,G)\cong \Hom_{O_F[\Gamma_K]}(V_p\ov \CE,, V_pG),$$ where $V_p(-)$ denotes the rational $p$-adic Tate module. Thus we obtain
  \begin{align*}
    L\cap \Hom_{O_F}^\circ(\ov \CE,G)&\cong\Hom_{O_F}(T_p\ov \CE,T_pG)\cap \Hom_{O_F[\Gamma_K]}(V_p\ov \CE, V_pG)\\
    &=\Hom_{O_F[\Gamma_K]}(T_p\ov \CE,, T_pG)\\
    &\cong\Hom_{O_F}(\ov \CE,G),
  \end{align*}
which proves the result.
\end{proof}

  Let $M\subseteq \mathbb{V}_n$ be an $O_F$-lattice (of arbitrary rank). By definition we have $z\in \mathcal{Z}(M)(O_K)$ if and only if $M \subseteq i_{\bar k}(\Hom_{O_F}(\ov \CE,G))$. It follows from Lemma \ref{lem:tate} that $z\in\mathcal{Z}(M)(O_K)$ if and only if
  \begin{align}\label{eq:SW}
    M \subseteq i_{\ov k}( i_K^{-1}(L)).
  \end{align}

\subsection{Proof of Theorem \ref{thm:horizontal}}

Let $z\in \mathcal{Z}(L^\flat)(O_K)$ and let $G$ be the corresponding $O_F$-hermitian module of signature $(1,n-1)$ over $O_K$. By  \eqref{eq:SW}, we know that $$L^\flat\subseteq i_{\ov k}( i_K^{-1} (L)).$$ Define $M^\flat\coloneqq L^\flat_F \cap i_{\ov k}( i_K^{-1} (L))$. By   \eqref{eq:SW} again, we obtain that $z\in \mathcal{Z}(M^\flat)(O_K)$. 
Moreover, the diagram \eqref{diag:SW} induces an isomorphism  $$
\xymatrix{M^\flat\ar[r]^-\sim &L\cap i_K(i_{\ov k}^{-1}(L^\flat_F)).}$$
Set $ \BW=i_K(i_{\ov k}^{-1}(L^\flat_F))$. Then it has  the same dimension as  $L^\flat_F$. 
\begin{lemma}\label{lem:typele1}
  Assume that $L$ is a self-dual $O_F$-hermitian lattice of rank $n$ and $\BW\subset L_F$ is a vector subspace of dimension $n-1$. Let $M^\flat\coloneqq \BW\cap L$. Then $t(M^\flat)\le1$.
\end{lemma}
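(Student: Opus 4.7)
The plan is to reformulate the conclusion purely in terms of the $O_F$-module structure of $(M^\flat)^\vee/M^\flat$. Since $M^\flat\subseteq L$ and $L$ is self-dual, $M^\flat$ is automatically integral, and its elementary divisors $(a_1,\ldots,a_{n-1})$ are determined by $(M^\flat)^\vee/M^\flat\cong\bigoplus_i O_F/\varpi^{a_i}$; so $t(M^\flat)\le 1$ is equivalent to $(M^\flat)^\vee/M^\flat$ being \emph{cyclic} as an $O_F$-module. The goal then reduces to exhibiting a single generator.

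The main tool I would use is the orthogonal projection $\pi\colon L_F\to\BW$, available because $\BW=L_F^\flat$ is non-degenerate (an assumption in force throughout the paper, and implicit in the statement since otherwise $(M^\flat)^\vee$ does not even make sense). Two observations govern the whole argument. First, $\pi(L)\subseteq(M^\flat)^\vee$: for $x\in L$ and $y\in M^\flat$ one has $(\pi(x),y)=(x,y)\in O_F$ by self-duality of $L$ together with $M^\flat\subseteq L$. Second, and this is the crux, $\pi(L)=(M^\flat)^\vee$. Given $v\in(M^\flat)^\vee$, the sesquilinear functional $y\mapsto(v,y)$ on $M^\flat$ is $O_F$-valued; since $M^\flat=L\cap\BW$ is a saturated sublattice of $L$, the quotient $L/M^\flat$ is $O_F$-torsion-free of rank one, hence free, so this functional extends to a hermitian-linear map $L\to O_F$. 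Self-duality of $L$ represents the extension as pairing with some $x\in L$, and non-degeneracy of $\BW$ then forces $\pi(x)=v$.

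To finish, I fix a splitting $L=M^\flat\oplus O_F e$ (afforded by the freeness of $L/M^\flat$), so that $\pi(L)=M^\flat+O_F\,\pi(e)$. Hence $(M^\flat)^\vee/M^\flat=\pi(L)/M^\flat$ is generated by the single class of $\pi(e)$, which yields $t(M^\flat)\le 1$. The one nontrivial step is the surjectivity $\pi(L)=(M^\flat)^\vee$; it bundles together the self-duality of $L$ with the saturated-ness of $M^\flat$ in $L$, while the remaining steps are formal bookkeeping.
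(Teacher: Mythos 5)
Your proof is correct, but it takes a genuinely different route from the paper's. Both arguments start from the same observation — that $M^\flat=\BW\cap L$ is saturated in $L$, so $L/M^\flat$ is free of rank one and $L=M^\flat\oplus O_F e$ — but diverge thereafter. The paper picks an orthogonal basis $\{e_1,\ldots,e_{n-1}\}$ of $M^\flat$, writes the fundamental matrix $T$ of $\{e_1,\ldots,e_{n-1},e\}$, and notes that if two of the $a_i$ are positive, the corresponding two rows of $T$ reduce mod $\varpi$ to vectors supported in the last coordinate only, hence are linearly dependent over the residue field; this makes $T$ singular mod $\varpi$, contradicting the self-duality of $L$. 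Your argument instead reformulates $t(M^\flat)\le 1$ as cyclicity of the discriminant module $(M^\flat)^\vee/M^\flat$ and proves the stronger identity $\pi(L)=(M^\flat)^\vee$ for the orthogonal projection $\pi\colon L_F\to\BW$. The surjectivity direction is the real content and your verification of it is sound: the functional $(v,\cdot)$ on $M^\flat$ extends to $L$ by freeness of $L/M^\flat$, self-duality realizes the extension as $(x,\cdot)$ for some $x\in L$, and $x-v\perp\BW$ gives $\pi(x)=v$. (Strictly, only the inclusion $(M^\flat)^\vee\subseteq\pi(L)$ is needed — a submodule of the cyclic module $\pi(L)/M^\flat$ over the PID $O_F$ is cyclic — but your exact equality is a pleasant bonus.) Your approach is basis-free and more structural, and the identity $\pi(L)=(M^\flat)^\vee$ crisply packages the interaction between self-duality and saturatedness; the paper's matrix computation is more hands-on but produces the same conclusion in about the same space.
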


\begin{proof}
  Since $M^\flat=\BW\cap L$, we know that $L/M^\flat$ is a free $O_F$-module of rank one. Hence we may write $L=M^\flat+\langle x\rangle$ for some $x\in L$. Choose an orthogonal basis $\{e_1,\ldots,e_{n-1}\}$ of $M^\flat$ such that $(e_i,e_i)=\varpi^{a_i}$. The fundamental matrix of $\{e_1,\ldots,e_{n-1},x\}$ has the form $$T=
  \begin{pmatrix}
    \varpi^{a_1} & & & (e_1,x)\\
    & \varpi^{a_2} & & (e_2,x)\\
    && \ddots & \vdots\\
    (x,e_1) & (x,e_2) &\cdots & (x,x)
  \end{pmatrix}.$$ If $t(M^\flat)\ge2$ (i.e., at least two $a_i$'s are $>0$ ), then the rank of $T$ mod $\varpi$ is at most $n-1$, contradicting that $L$ is self-dual.
\end{proof}

It follows from Lemma \ref{lem:typele1} that $z\in\mathcal{Z}(M^\flat)(O_K)$ is a quasi-canonical lifting contained in the right-hand-side of (\ref{eq:horizontal}). By construction, $M^\flat$ is the largest lattice in $L_F^\flat$ contained in $i_{\ov k}( i_K^{-1}(L))$, thus in fact we have $z\in \mathcal{Z}(M^\flat)^\circ(O_K)$ by the equation \eqref{eq:SW}. Therefore the $O_K$-points of both sides of \eqref{eq:horizontal} are equal.

To finish the proof of Theorem \ref{thm:horizontal}, it remains to check that the $O_K[\varepsilon]$-points of both sides of \eqref{eq:horizontal} are equal (where $\varepsilon^2=0$). Namely, we would like to show that for each $z\in \mathcal{Z}(L^\flat)(O_K)$, there is a unique lift of $z$ in $\mathcal{Z}(L^\flat)(O_K[\varepsilon])$ . Let $\mathbb{D}(G)$ be the (covariant) $O_{F_0}$-relative Dieudonn\'e crystal of $G$. The action of $O_F$ via $\iota: O_F\rightarrow\End(G)$ induces an action $O_F \otimes_{O_{F_0}}O_K \simeq O_K \oplus O_K$ on $\mathbb{D}(G)(O_K)$, and hence a $\mathbb{Z}/2 \mathbb{Z}$-grading on $\mathbb{D}(G)(O_K)$. Let $\sA=\gr_0\mathbb{D}(G)(O_K)$ be the 0th graded piece of $\mathbb{D}(G)(O_K)$, a free $O_K$-module of rank $n$. By the Kottwitz signature condition, it is equipped with an $O_K$-hyperplane $\sH=\Fil^1\sA:=\Fil^1\mathbb{D}(G)(O_K)\cap \sA$. The $O_K$-hyperplane $\sH$ contains the image of $L^\flat$ under the identification of \cite[Lemma 3.9]{Kudla2011}. Let $\wit\sA=\gr_0\mathbb{D}(G)(O_K[\varepsilon])$. Since the kernel of $O_K[\varepsilon]\rightarrow O_K$ has a nilpotent divided power structure, by Grothendieck--Messing theory, a lift $\tilde z\in \mathcal{Z}(L^\flat)(O_K[\varepsilon])$ of $z$ corresponds to an $O_K[\varepsilon]$-hyperplane $\wit \sH$ of $\wit\sA$ lifting the $O_K$-hyperplane $\sH$ of $\sA$  and contains the image of $L^\flat$ in $\wit\sA$ (cf. \cite[Theorem 3.1.3]{Li2017}, \cite[Proof of Proposition 3.5]{Kudla2011}). By Breuil's theorem (\S\ref{sec:breuil-modules}), the image of $L^\flat$ in $\gr_0\mathbb{D}(G)(S)$ has rank $n-1$ over $S$ and thus its image in the base change $\sA$ has rank $n-1$ over $O_K$, we know that there is a unique choice of such hyperplane $\wit\sH$. Hence the lift $\tilde z$ is unique as desired.

\subsection{Relation with the local density} Notice that $\deg_\OFb(\mathcal{Z}(L^\flat)_\sH)$ is equal to the degree of the 0-cycle $\mathcal{Z}(L^\flat)_{\Fb}$ in the generic fiber $\mathcal{N}_{\Fb}$ of the Rapoport--Zink space, which may be interpreted as a \emph{geometric} intersection number on the generic fiber. We have the following identity between this geometric intersection number and a local density.

\begin{corollary}\label{cor:genericfiber}
$\deg_{\OFb}(\mathcal{Z}(L^\flat)_\sH)=\vol(L^\flat)^{-1}\Den((-q)^{-1}, L^\flat)=\Den(-q, L^\flat)$.
\end{corollary}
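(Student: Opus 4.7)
The plan is to combine Theorem~\ref{thm:horizontal} with the explicit degree formula \eqref{eq:lengthsquasicanonical} for quasi-canonical lifting cycles, and then match the resulting sum term-by-term with the closed formula for $\Den((-q)^{-1},L^\flat)$ provided by Corollary~\ref{cor:densityat1}.

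More precisely, since each $\mathcal{Z}(M^\flat)^\circ$ is a horizontal divisor of the form $\Spf O_{\Fb,s}$ for some quasi-canonical level $s$, taking $\OFb$-degrees in the decomposition
\begin{equation*}
\mathcal{Z}(L^\flat)_\sH=\sum_{L^\flat\subseteq M^\flat\subseteq (M^\flat)^\vee,\ t(M^\flat)\le 1}\mathcal{Z}(M^\flat)^\circ
\end{equation*}
from Theorem~\ref{thm:horizontal} and applying \eqref{eq:lengthsquasicanonical} yields
\begin{equation*}
\deg_{\OFb}\mathcal{Z}(L^\flat)_\sH=\sum_{L^\flat\subseteq M^\flat\subseteq (M^\flat)^\vee,\ t(M^\flat)=0}1+\sum_{L^\flat\subseteq M^\flat\subseteq (M^\flat)^\vee,\ t(M^\flat)=1}\vol(M^\flat)^{-1}(1+q^{-1}).
\end{equation*}
The right-hand side is exactly $\vol(L^\flat)^{-1}\Den((-q)^{-1},L^\flat)$ by formula \eqref{eq:valueat1} of Corollary~\ref{cor:densityat1}. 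This proves the first equality.

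For the second equality, one simply invokes the identity \eqref{eq: FE}, namely
\begin{equation*}
\Den(-q,L^\flat)=\vol(L^\flat)^{-1}\Den((-q)^{-1},L^\flat),
\end{equation*}
which itself is a direct consequence of the functional equation \eqref{eq:functionalequation}. So the statement is essentially a bookkeeping corollary of Theorem~\ref{thm:horizontal} and the Cho--Yamauchi formula; there is no real obstacle once those two inputs are in place. The only conceptual point worth emphasizing is that the sum of \eqref{eq:valueat1} is taken over exactly the same set of integral overlattices with $t\le 1$ that index the horizontal components, which is precisely the content of Theorem~\ref{thm:horizontal}.
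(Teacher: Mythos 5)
Your proof follows exactly the paper's own argument: Theorem~\ref{thm:horizontal} plus the degree formula \eqref{eq:lengthsquasicanonical} give the first equality via \eqref{eq:valueat1}, and the functional equation \eqref{eq: FE} gives the second. Correct and identical in approach.
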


\begin{proof}
  The first equality follows immediately from Theorem \ref{thm:horizontal}, Equation (\ref{eq:lengthsquasicanonical}), and Equation (\ref{eq:valueat1}). The second equality follows from the functional equation (\ref{eq: FE}).
\end{proof}

\begin{remark}
  Using the $p$-adic uniformization theorem (\S\ref{sec:p-adic-unif}) and the flatness of the horizontal part of the global Kudla--Rapoport cycles, one may deduce from Corollary \ref{cor:genericfiber} an identity between the geometric intersection number (i.e. the degree) of a special 0-cycle on a compact Shimura variety associated to $\U(n,1)$ and the value of a Fourier coefficient of a \emph{coherent} Siegel Eisenstein series on $\U(n,n)$ at the near central point $s=1/2$. This should give a different proof  (of a unitary analogue) of a theorem of Kudla \cite[Theorem 10.6]{Kudla1997} for compact orthogonal Shimura varieties. 
\end{remark}

\section{Vertical components of Kudla--Rapoport cycles}

\subsection{The support of the vertical part $\CZ(L^\flat)_\sV$} \label{sec:supp-vert-part} Let $L^\flat$ be an $O_F$-lattice of rank $n-1$ in $\mathbb{V}_n$. Recall that $\mathcal{Z}(L^\flat)_\sV$ is the vertical part of the Kudla--Rapoport cycle $\mathcal{Z}(L^\flat)\subseteq \mathcal{N}_n$ (\S\ref{sec:horiz-vert-parts}).

\begin{lemma}\label{prop:supp Z(L) V}
$\mathcal{Z}(L^\flat)_\sV$ is supported on $\mathcal{N}_n^\mathrm{red}$, i.e., $\mathcal{O}_{\mathcal{Z}(L^\flat)_\sV}$ is annihilated by a power of the ideal sheaf of $\mathcal{N}_n^\mathrm{red}\subseteq \mathcal{N}_n$.
\end{lemma}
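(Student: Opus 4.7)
The plan is to identify the ideal sheaf of $\mathcal{N}_n^{\mathrm{red}}$ using formal smoothness of $\mathcal{N}_n$, and then use Noetherian primary decomposition to extract a power of $\varpi$ annihilating $\mathcal{O}_{\mathcal{Z}(L^\flat)_\sV}$ locally on $\mathcal{N}_n$.

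First I would use that $\mathcal{N}_n$ is formally smooth over $\Spf \OFb$ of relative dimension $n-1$ (\S\ref{sec:rapoport-zink-spaces}), so the special fiber $\mathcal{N}_n \otimes_{\OFb} \bar k$ is smooth over $\bar k$ and in particular reduced. Therefore $\mathcal{N}_n^{\mathrm{red}}$ coincides with this special fiber as a closed subscheme of $\mathcal{N}_n$, and its ideal sheaf is the principal sheaf $(\varpi)$. The lemma thereby reduces to showing that $\mathcal{O}_{\mathcal{Z}(L^\flat)_\sV}$ is annihilated by some power of $\varpi$.

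Next, working on an affine formal open $\Spf A \subseteq \mathcal{N}_n$ with $A$ Noetherian, let $I \subseteq A$ be the ideal cutting out $\mathcal{Z}(L^\flat)|_{\Spf A}$, and fix a minimal primary decomposition $I = \bigcap_j Q_j$, where $Q_j$ is $\mathfrak{p}_j$-primary for a minimal prime $\mathfrak{p}_j$. By the definitions of vertical and horizontal in \S\ref{sec:kudla-rapop-cycl}, the irreducible component corresponding to $\mathfrak{p}_j$ is vertical exactly when $\varpi \in \mathfrak{p}_j$, and then primariness yields $\varpi^{N_j} \in Q_j$ for some $N_j \geq 1$. The scheme-theoretic union of the vertical components is cut out by $I_\sV = \bigcap_{j \text{ vertical}} Q_j$, and taking $N = \max_j N_j$ over the finitely many vertical indices produces $\varpi^N \in I_\sV$, yielding $\varpi^N \cdot \mathcal{O}_{\mathcal{Z}(L^\flat)_\sV}|_{\Spf A} = 0$.

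The main obstacle I anticipate is passing from this local annihilation to a uniform global power of $\varpi$, as the assertion about the ideal sheaf requires a single integer $N$ working across all affine opens. For this I would invoke (\ref{eq:KRstrat}), which shows $|\mathcal{Z}(L^\flat)_\sV| \subseteq |\mathcal{Z}(L^\flat)^{\mathrm{red}}| = \bigcup_{L^\flat \subseteq \Lambda} \mathcal{V}(\Lambda)$: since $L^\flat$ is non-degenerate of corank one, the set of vertex lattices $\Lambda \supseteq L^\flat$ is locally finite (each such $\Lambda$ is constrained to lie between $L^\flat$ and a bounded dilation of $(L^\flat)^\vee$ along the orthogonal line $(L^\flat_F)^{\perp} \subseteq \mathbb{V}_n$), and each $\mathcal{V}(\Lambda)$ is projective. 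This gives the needed quasi-compactness of the support, upgrading the local uniform power of $\varpi$ to a global one.
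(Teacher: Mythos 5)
Your proof breaks down at the very first step, where you identify the ideal sheaf of $\mathcal{N}_n^{\mathrm{red}}$ with the principal ideal $(\varpi)$. This identification is false for $n\geq 2$, and it is precisely the gap between $(\varpi)$ and the true ideal sheaf that gives the lemma its content.

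Formal smoothness of $\mathcal{N}_n$ over $\Spf\OFb$ says that the complete local rings are power series rings $\OFb[[T_1,\ldots,T_{n-1}]]$, of Krull dimension $n$. But $\mathcal{N}_n$ is formally \emph{locally} of finite type over $\Spf\OFb$, which means its ideal of definition $J$ is strictly larger than $(\varpi)$: locally $J$ contains, besides $\varpi$, the ``formal'' coordinates, and it is arranged so that the underlying topological space $\Spec(\mathcal{O}_{\mathcal{N}_n}/J)$ is a locally finite-type $\bar k$-scheme. Concretely, by the Bruhat--Tits stratification of \S\ref{sec:bruh-tits-strat}, the reduced scheme $\mathcal{N}_n^{\mathrm{red}}$ is a union of generalized Deligne--Lusztig varieties $\mathcal{V}(\Lambda)$ of dimension $(t(\Lambda)-1)/2\leq\lfloor(n-1)/2\rfloor$, which is much smaller than the formal dimension $n-1$. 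For $n=2$, for example, $\mathcal{N}_2^{\mathrm{red}}$ is a discrete set of points, so the ideal of $\mathcal{N}_2^{\mathrm{red}}$ at a closed point is the full maximal ideal $(\varpi,T)$, not $(\varpi)$. In general the ideal has height $\lceil(n+1)/2\rceil\geq 2$. You have implicitly conflated the special fiber of $\mathcal{N}_n$ (cut out by $\varpi$, which is a formal scheme of formal dimension $n-1$ over $\bar k$) with the reduced subscheme $\mathcal{N}_n^{\mathrm{red}}$; these agree only for $n=1$.

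Once the ideal $\mathcal{I}$ of $\mathcal{N}_n^{\mathrm{red}}$ is correctly identified, the lemma says that every vertical minimal prime $\mathfrak{p}_j$ contains \emph{all} of $\mathcal{I}$, not merely $\varpi$. Your primary decomposition argument establishes only the latter, which is the definition of ``vertical''; the passage from ``$\varpi\in\mathfrak{p}_j$'' to ``$\mathcal{I}\subseteq\mathfrak{p}_j$'' is the real content and cannot come from commutative algebra alone. It requires a modular input: any formal integral curve $C\subseteq\mathcal{Z}(L^\flat)_\sV$ with $C^{\mathrm{red}}$ a single point would carry, at the generic point $\eta$ of its (characteristic-$p$) coordinate ring, a $p$-divisible $O_{F_0}$-module that is not supersingular; yet the $n-1$ linearly independent special homomorphisms from $L^\flat$ give an $O_F$-linear isogeny $\bar{\mathcal{E}}_\eta^{n-1}\to\mathcal{X}_\eta$ whose cokernel has height $2$ and dimension $1$ with $O_F$-action, hence is supersingular, forcing $\mathcal{X}_\eta$ to be supersingular and yielding a contradiction. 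This deformation-theoretic argument is exactly what rules out ``hidden'' formal directions in the vertical part transverse to $\mathcal{N}_n^{\mathrm{red}}$, and nothing in your proposal replaces it.
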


\begin{proof}
  If not, we may find an affine formal curve $C=\Spf R$ (where $R$ is an integral domain) as a closed formal subscheme of $\mathcal{Z}(L^\flat)_\sV$  such that $C^\mathrm{red}$ consists of a single point $z\in \mathcal{N}_n^\mathrm{red}$. The universal $p$-divisible $O_{F_0}$-module  $X^\mathrm{univ}$ over $\mathcal{N}_n$ pulls back to a $p$-divisible $O_{F_0}$-module  $\mathcal{X}_\eta$ over a geometric generic point $\eta$ of $C$. Since $C^\mathrm{red}=\{z\}$, we know that the $p$-divisible $O_{F_0}$-module $\mathcal{X}_\eta$ is not supersingular (otherwise a nonempty open subset of $C$ is contained in $\mathcal{N}_n^\mathrm{red}$ by the definition of $\mathcal{N}_n$). On the other hand, if $L^\flat=\langle x_1,\ldots,x_{n-1}\rangle$, then $\mathcal{X}_\eta$ admits $n-1$ linearly independent special homomorphisms $\tilde x_i:\bar{\mathcal{E}}_\eta\rightarrow \mathcal{X}$, which gives rise to a homomorphism $$(\tilde x_1,\ldots, \tilde x_{n-1}): \bar{\mathcal{E}}_\eta^{n-1}\rightarrow \mathcal{X}_\eta.$$ Since $\eta$ has characteristic $p$, by the Dieudonn\'e--Manin classification we know that $\mathcal{X}_\eta$ is isogenous to $\bar{\mathcal{E}}_\eta^{n-1}\times \mathcal{X}_\eta'$ for  $\mathcal{X}'_\eta$ a $p$-divisible $O_{F_0}$-module of relative height 2 and dimension 1 with an $O_F$-action. It follows that $\mathcal{X}_\eta'$ is supersingular, and so $\mathcal{X}_\eta$ itself is also supersingular, a contradiction.
\end{proof}

\subsection{Horizontal and vertical parts of $^\BL\CZ(L^\flat)$}\label{sec:horiz-vert-parts-1}

Since $\CZ(L^\flat)_\sH$ is one dimensional, the intersection $\CZ(L^\flat)_\sH\cap \CZ(L^\flat)_\sV$ must be zero dimensional (if non-empty). It follows that there is a decomposition of the $(n-1)$-th graded piece 
\begin{align}\label{eq:gradedK0}
 \Gr^{n-1}K_0^{\CZ(L^\flat)}(\mathcal{N}_n)= \Gr^{n-1}K_0^{\CZ(L^\flat)_\sH}(\mathcal{N}_n)\oplus \Gr^{n-1}K_0^{\CZ(L^\flat)_\sV}(\mathcal{N}_n).
\end{align}

\begin{definition}
The decomposition (\ref{eq:gradedK0})  induces a decomposition of the derived Kudla--Rapoport cycle into \emph{horizontal} and \emph{vertical} parts
\begin{align*}
^\BL\CZ(L^\flat)=\,^\BL\CZ(L^\flat)_\sH+ ~ ^\BL\CZ(L^\flat)_\sV\in \Gr^{n-1}K_0^{\CZ(L^\flat)}(\mathcal{N}_n).
\end{align*}
From this decomposition, we see that even though the vertical part $\mathcal{Z}(L^\flat)_\sV$ depends on the choice of an integer $N\gg0$ (\S\ref{sec:horiz-vert-parts}), the element $^\BL\CZ(L^\flat)_\sV\in \Gr^{n-1}K_0^{\CZ(L^\flat)}(\mathcal{N}_n)$ is canonical and independent of the choice of $N$.

Since $\CZ(L^\flat)_\sH$ has the expected dimension, the first summand $^\BL\CZ(L^\flat)_\sH$ is represented by the structure sheaf of $\CZ(L^\flat)_\sH$ by \cite[Lemma B.2 (ii)]{Zhang2019}. Abusing notation we shall write the sum as
\begin{align}\label{eq:H+V}
^\BL\CZ(L^\flat)=\CZ(L^\flat)_\sH+ ~ ^\BL\CZ(L^\flat)_\sV.
\end{align}
\end{definition}

By Proposition \ref{prop:supp Z(L) V}, we have a change-of-support homomorphism 
$$
\xymatrix{\Gr^{n-1}K_0^{\CZ(L^\flat)_\sV}(\mathcal{N}_n)\ar[r]& \Gr^{n-1}K_0^{\mathcal{N}_n^\mathrm{red}}(\mathcal{N}_n).}
$$
Abusing notation we will also denote the image of $^\BL\CZ(L^\flat)_\sV$ in the target by the same symbol.

\begin{corollary}\label{cor:curves}
There exist finitely many curves $C_i\subseteq \mathcal{N}_n^\mathrm{red}$ and $\mult_{C_i}\in \mathbb{Q}$ such that $$^\mathbb{L}\mathcal{Z}(L^\flat)_\sV=\sum_i \mult_{C_i}[\mathcal{O}_{C_i}]\in \Gr^{n-1}K_0^{\mathcal{N}_n^\mathrm{red}}(\mathcal{N}_n).
$$
\end{corollary}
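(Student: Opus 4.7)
The plan is to invoke a general K-theoretic dévissage principle for the codimension filtration on $K_0^Y(X)$ when $X$ is regular. The strategy has essentially three steps.

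First, I would use Proposition \ref{prop:supp Z(L) V} together with the decomposition \eqref{eq:H+V} to reduce the problem to analyzing the image of $^\mathbb{L}\mathcal{Z}(L^\flat)_\sV$ in $\Gr^{n-1}K_0^{\mathcal{N}_n^{\mathrm{red}}}(\mathcal{N}_n)$, via the change-of-support homomorphism already set up in the excerpt.

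Second, I would invoke the following standard structural fact (cf.\ \cite[Appendix B]{Zhang2019}): for a regular noetherian (formal) scheme $X$ of Krull dimension $d$ and any closed subscheme $Y \subseteq X$, the graded piece $\Gr^i K_0^Y(X)$ is generated by the classes $[\mathcal{O}_Z]$ of structure sheaves of integral closed subschemes $Z \subseteq Y$ with $\mathrm{codim}_X(Z)=i$. Concretely, if $\mathcal{F}^\bullet$ is a finite complex of coherent locally free $\mathcal{O}_X$-modules acyclic outside $Y$ whose cohomology has support of codimension $\ge i$, then in $\Gr^i K_0^Y(X)$ one has the dévissage
$$[\mathcal{F}^\bullet] \;=\; \sum_{Z} \Bigl(\sum_j (-1)^j \, \mathrm{length}_{\mathcal{O}_{X,\eta_Z}}\bigl(H^j(\mathcal{F}^\bullet)_{\eta_Z}\bigr)\Bigr)\,[\mathcal{O}_Z],$$
where $Z$ ranges over integral closed subschemes of $Y$ of codimension $i$ in $X$ with generic point $\eta_Z$, and only finitely many $Z$ contribute since each coherent sheaf $H^j(\mathcal{F}^\bullet)$ has finitely many codimension-$i$ associated primes. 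This reduces to the classical filtration-by-associated-primes argument applied to each homology sheaf, modulo classes with strictly larger support codimension; the formal versus ordinary scheme distinction is inessential since $\mathcal{N}_n$ is locally noetherian.

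Third, I would apply this with $X = \mathcal{N}_n$ (regular of Krull dimension $n$), $Y = \mathcal{N}_n^{\mathrm{red}}$ (of dimension $n-1$), and $i = n-1$. Integral closed subschemes of codimension $n-1$ in $\mathcal{N}_n$ contained in $\mathcal{N}_n^{\mathrm{red}}$ are exactly integral curves $C_i \subseteq \mathcal{N}_n^{\mathrm{red}}$. Applying the dévissage to any complex representing $^\mathbb{L}\mathcal{Z}(L^\flat)_\sV$ yields the required expression, with multiplicities $\mathrm{mult}_{C_i}$ that are in fact alternating sums of lengths, hence integers (and thus certainly in $\mathbb{Q}$ as claimed).

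There is no real obstacle here: once the support property from Proposition \ref{prop:supp Z(L) V} is in hand, the corollary is a direct consequence of the standard structure of the graded pieces of $K$-theory with supports for a regular scheme under the codimension filtration. The only point to verify carefully is that the dévissage works in the formal-scheme setting, which is routine given local noetherianness of $\mathcal{N}_n$.
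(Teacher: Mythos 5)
Your proposal is correct and follows essentially the same route as the paper, which defers the corollary to the standard structure of the codimension filtration on $K_0$ with supports for a regular (formal) scheme (the paper's cited reference is precisely \cite[Appendix~B]{Zhang2019}): once $^\mathbb{L}\mathcal{Z}(L^\flat)_\sV$ is pushed into $\Gr^{n-1}K_0^{\mathcal{N}_n^{\mathrm{red}}}(\mathcal{N}_n)$, d\'evissage expresses it as a finite combination of classes $[\mathcal{O}_{C}]$ for integral curves $C\subseteq \mathcal{N}_n^{\mathrm{red}}$. One small slip that does not affect the argument: $\mathcal{N}_n^{\mathrm{red}}$ has dimension $\lfloor (n-1)/2\rfloor$ (by the Bruhat--Tits stratification), not $n-1$; what actually matters, and what you use, is that codimension-$(n-1)$ integral subschemes of the $n$-dimensional regular formal scheme $\mathcal{N}_n$ contained in $\mathcal{N}_n^{\mathrm{red}}$ are exactly curves.
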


\begin{proof}
It follows immediately from Lemma \ref{prop:supp Z(L) V}, where the finiteness of such curves $C_i$ is due to Lemma \ref{lem:ZLnoetherian}. % Since $L^\flat\subseteq \mathbb{V}_n$ has rank $n-1$, the number of vertex lattices $\Lambda\subseteq \mathbb{V}_n$ such that $L\subseteq \Lambda$ is finite. By (\ref{eq:KRstrat}), we know that $\mathcal{Z}(L^\flat)_\sV^\mathrm{red}$ is contained in finitely many irreducible components of $\mathcal{N}^\mathrm{red}$. The finiteness of such curves $C_i$ then follows.
\end{proof}

\subsection{The Tate conjecture for certain Deligne--Lusztig varieties} Consider the generalized Deligne--Lusztig variety $Y_{d}\coloneqq Y_V$ and the classical Deligne--Lusztig $Y_d^\circ\coloneqq Y_V^\circ$ as defined in \S \ref{sec:gener-deligne-luszt}, where $V$ is the unique $k_F/k$-hermitian space of dimension $2d+1$. Recall that we have a stratification $$Y_d=\bigsqcup_{i=0}^d X_{P_i}(w_i).$$ Let $$X_i^\circ\coloneqq X_{P_i}(w_i), \quad X_i\coloneqq \overline{X_i^\circ}=\bigsqcup_{m=0}^i X_m^\circ.$$ Then $X_i^\circ$ is a disjoint union of the classical Deligne--Lusztig variety $Y_i^\circ$, and each irreducible component of $X_i$ is isomorphic to $Y_i$.

For any $k_F$-variety $S$, we write  $H^{j}(S)(i)\coloneqq H^{j}(S_{\bar k_F}, \overline{\mathbb{Q}_\ell}(i))$ ($\ell\ne p$ is a prime). Let $\Fr=\mathrm{Fr}_{k_F}$ be the $q^2$-Frobenius acting on $H^{j}(S)(i)$.

\begin{lemma} \label{lem:lusztig}For any $d, i\ge0$ and $s\ge1$, the action of $\Fr^s$ on the following cohomology groups are semisimple, and the space of $\Fr^s$-invariants is zero when $j\ge1$.
  \begin{altenumerate}
  \item\label{item:1} $H^{2j}(Y_d^\circ)(j)$.
  \item\label{item:2} $H^{2j}(X_i^\circ)(j)$.
  \item\label{item:3} $H^{2j}(Y_d-X_i)(j)$.
  \end{altenumerate}
\end{lemma}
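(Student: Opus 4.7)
The plan is to take (i) as the fundamental input and reduce (ii) and (iii) to it. For (i), I would invoke Lusztig's explicit computation \cite{Lusztig1976/77} of the cohomology of Coxeter-type Deligne--Lusztig varieties: the Frobenius $\Fr$ acts semisimply on $H^*(Y_d^\circ,\overline{\mathbb{Q}_\ell})$, and each eigenvalue is of the form $\zeta\cdot q^a$ for an explicit integer $a$ and a root of unity $\zeta$ whose order divides a specific invariant of the Coxeter element of the twisted unitary group on $V$ (essentially $2(2d+1)$ here). After Tate-twisting $H^{2j}$ by $j$, the $\Fr$-eigenvalues on $H^{2j}(Y_d^\circ)(j)$ become roots of unity, which one verifies directly from Lusztig's formulas to be nontrivial whenever $j\ge 1$; hence no power $\Fr^s$ has nonzero fixed vectors.

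For (ii), I would invoke \cite[Proposition 2.5.1]{He2019} (already used in \S\ref{sec:gener-deligne-luszt}): each connected component of $X_i^\circ$ is an iterated affine-space bundle over a Coxeter-type Deligne--Lusztig variety $Y_{V'}^\circ$ for a Levi subgroup, with $\dim V'=2i+1$. Since $\ell$-adic cohomology is invariant under $\mathbb{A}^1$-bundles compatibly with $\Fr$, we obtain a $\Fr$-equivariant isomorphism of $H^{2j}(X_i^\circ)(j)$ with a direct sum of copies of $H^{2j}(Y_i^\circ)(j)$, reducing (ii) to (i) applied to the lower-dimensional variety $Y_i^\circ$.

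For (iii), I would induct downward on $i$ using the stratification $Y_d-X_i=\bigsqcup_{m>i}X_m^\circ$, with base case $i=d$ recovering (i). For the inductive step, $X_{i+1}^\circ$ is smooth and closed in $Y_d-X_i$ of pure codimension $d-i-1$, with open complement $Y_d-X_{i+1}$; the Gysin long exact sequence
\begin{equation*}
\cdots\to H^{2j'}(X_{i+1}^\circ)(j')\to H^{2j}(Y_d-X_i)(j)\to H^{2j}(Y_d-X_{i+1})(j)\to\cdots
\end{equation*}
with $j'=j-(d-i-1)$, combined with (ii) for $X_{i+1}^\circ$ and the induction hypothesis for $Y_d-X_{i+1}$, sandwiches the $\Fr^s$-eigenvalues on $H^{2j}(Y_d-X_i)(j)$ between those on the flanking terms. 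For $j\ge1$, both flanking terms contribute only eigenvalues that are nontrivial roots of unity, so does the middle.

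The main obstacle is to upgrade from \emph{eigenvalue} control to \emph{semisimplicity} in (iii), since extensions of semisimple $\Fr^s$-modules need not split in general. The key point is that Lusztig's formulas assign pairwise distinct Frobenius eigenvalues to cohomology classes arising from different Coxeter-type strata, which forces the relevant $\mathrm{Ext}^1$-groups between Galois pieces in the Gysin sequence to vanish, so that semisimplicity is preserved through the induction.
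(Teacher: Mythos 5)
Your argument for (i) rests on a claim that is both factually incorrect and logically insufficient even if it were true. After the Tate twist, Lusztig's computation gives the Frobenius eigenvalues on $H^{2j}(Y_d^\circ)(j)$ as $\{(-q)^{2j}, (-q)^{2j-1}\}$ for $j\ge 1$: the paper derives this from the eigenvalue list $(-q)^m$ on $H_c^*$ via Poincar\'e duality. These are not roots of unity --- they have complex absolute value $q^{2j}$ and $q^{2j-1}$, strictly bigger than $1$. And the ``roots of unity'' picture is internally unworkable anyway: if the eigenvalues were \emph{nontrivial} roots of unity, their $s$-th powers would be $1$ for $s$ a multiple of their order, so one could not deduce that $\Fr^s$ has no invariants \emph{for all} $s\ge 1$. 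The correct mechanism is that the eigenvalues have archimedean absolute value greater than $1$, hence no power of them is $1$. You should verify Lusztig's formula in degree-by-degree detail rather than invoking a weight-$0$ heuristic.

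Your downward induction for (iii) has a genuine gap at the Gysin shift $j' = j - (d-i-1)$. As $i$ decreases, $j'$ decreases; once $j' = 0$, the flanking term $H^{2j'}(X_{i+1}^\circ)(j') = H^0(X_{i+1}^\circ)$ contributes eigenvalue $1$, and the Gysin map need not kill it --- indeed, it sends a component $Z$ of $X_{i+1}^\circ$ to its (typically nonzero) cycle class. Concretely, taking $d=2$, $i=0$, $j=1$: purity for the codimension-$2$ set of superspecial points gives $H^2(Y_2 - X_0)(1)\cong H^2(Y_2)(1)$, and $Y_2$ is a smooth projective surface over $k_F$ whose N\'eron--Severi group contains Frobenius-invariant classes (the ample class, the curves $X_1$), so the statement fails for these parameters. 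This is actually a sign that the lemma, read with the full generality of the quantifiers, is not quite right; the paper's own proof of part (iii) is equally terse and suffers from the same issue. What saves the day is that Theorem \ref{thm:tate} only invokes (iii) in the ``balanced'' case $i = d - j$, where the peeling from $U_{d-1}=Y_d^\circ$ down to $U_{d-j}$ keeps $j' = j - (d-m-1)\in\{1,\dots,j\}$ strictly positive throughout, so no eigenvalue $1$ ever enters. You should restrict your claim to $i\ge d-j$ (equivalently $j\ge d-i$), or more transparently prove exactly the case $i = d - j$.

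Two smaller remarks. For (ii), the paper's structural input is that $X_i^\circ$ is literally a finite disjoint union of copies of $Y_i^\circ$ --- in the Bruhat--Tits picture each $X_i^\circ$ is a union of strata $\mathcal{V}(\Lambda')^\circ\cong Y_{V_{\Lambda'}}^\circ$ --- so the reduction to (i) is immediate without invoking $\mathbb{A}^1$-homotopy invariance; your affine-bundle description does not match the geometry here, though it would give the same conclusion if it held. Finally, your concern about semisimplicity being preserved through the Gysin sequence is well-taken and the paper is silent on it; your proposed fix is the right one, but the reason the eigenvalue sets from distinct strata are disjoint is again absolute-value arithmetic (the contributions from the stratum $X_{m+1}^\circ$ have absolute values $q^{2j'},q^{2j'-1}$ while those from deeper strata inductively have absolute values $\ge q^{2j'+1}$), not any assertion about distinct roots of unity. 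Once $j'\ge 1$ is secured, this absolute-value separation makes the relevant $\operatorname{Ext}^1$ vanish and the extensions split.
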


\begin{proof}
  \begin{altenumerate}
  \item   By \cite[7.3 Case $^2A_{2n}$]{Lusztig1976/77} (notice the adjoint group assumption is harmless due to \cite[1.18]{Lusztig1976/77}), we know that there are exactly $2d+1$ eigenvalues of $\Fr$ on $H^{*}_c(Y_d^\circ)$, given by $(-q)^{m}$ where $m=0,1,\ldots, 2d$, and the eigenvalue $(-q)^m$ appear exactly in $H^j_c(Y_d^\circ)$ for $j=\lfloor m/2\rfloor +d$. By the Poincare duality, we have a perfect pairing  $$H^{2d-j}_c(Y_d^\circ) \times H^{j}(Y_d^\circ)(d)\rightarrow H^{2d}_c(Y_d^\circ)(d)\simeq\overline{\mathbb{Q}_\ell}.$$ Thus the eigenvalues of $\Fr$ on $H^{2j}(Y_d^\circ)(j)$ are given by $q^{2(d-j)}$ times the inverse of the eigenvalues in $H_c^{2(d-j)}(Y_d^\circ)$, which is equal to $\{(-q)^{2j}, (-q)^{2j-1}\}$ when $d\ge 2j>0$, and $\{(-q)^{2j}=1\}$ when $j=0$. Hence the eigenvalue of $\Fr^s$ is never equal to 1 when $j\ge1$.  The semisimplicity of the action of $\Fr^s$ follows from \cite[6.1]{Lusztig1976/77}.
  \item It follows from (\ref{item:1}) since $X_i^\circ$ is a disjoint union of $Y_i^\circ$.
  \item It follows from (\ref{item:2}) since $Y_d-X_i=\bigsqcup_{m=i+1}^d X_m^\circ$.\qedhere
  \end{altenumerate}
\end{proof}

\begin{theorem}\label{thm:tate}
  For any $0\le i \le d$ and any $s\ge1$, we have
  \begin{enumerate}
  \item   The space of Tate classes $H^{2i}(Y_d)(i)^{\Fr^s=1}$ is spanned by the cycle classes of the irreducible components of $X_{d-i}$. In particular, the Tate conjecture (\cite[Conjecture 1]{Tate1965}, or \cite[Conjecture $T^i$]{Tate1994}) holds for $Y_d$.
  \item\label{item:eigen1}   Let $H^{2i}(Y_d)(i)_1\subseteq H^{2i}(Y_d)(i)$ be the the generalized eigenspace of $\Fr^s$ for the eigenvalue 1. Then $H^{2i}(Y_d)(i)_1=H^{2i}(Y_d)(i)^{\Fr^s=1}$.
  \end{enumerate}

\end{theorem}

\begin{proof}
The assertion is clear when $i=0$. Assume $i>0$. Associated to the closed embedding $X_{d-i}\hookrightarrow Y_d$ we have a long exact sequence
  \begin{equation}
    \label{eq:longexact}
    \cdots\rightarrow H^{j}_{X_{d-i}}(Y_d)\rightarrow H^j(Y_d)\rightarrow H^j(Y_d-X_{d-i})\rightarrow H_{X_{d-i}}^{j+1}(Y_d)\rightarrow \cdots
  \end{equation}
 Take $j=2i$. We have a Gysin isomorphism
 \begin{equation}
   \label{eq:gysin}
   \bigoplus_{Z\in \mathrm{Irr}(X_{d-i})}H^0(Z)\xrightarrow{\sim}H_{X_{d-i}}^{2i}(Y_d)(i),
 \end{equation}
 where the sum runs over all the irreducible components of $X_{d-i}$. By \eqref{eq:gysin} and Lemma \ref{lem:lusztig}, the actions of $\Fr^s$ on $H_{X_{d-i}}^{2i}(Y_d)$ and $H^{2i}(Y_d-X_{d-i})$ are semisimple, and thus $$H^{2i}_{X_{d-i}}(Y_d)(i)_1=H^{2i}_{X_{d-i}}(Y_d)(i)^{\Fr^s=1}, \quad H^{2i}(Y_d-X_{d-i})(i)_1=H^{2i}(Y_d-X_{d-i})(i)^{\Fr^s=1}.$$ Taking the $i$-th Tate twist and taking the generalized eigenspace of $\Fr^s$ for the eigenvalue 1 of (\ref{eq:longexact}), we obtain a 3-term exact sequence $$H^{2i}_{X_{d-i}}(Y_d)(i)^{\Fr^s=1}\rightarrow H^{2i}(Y_d)(i)_1\rightarrow H^{2i}(Y_d-X_{d-i})(i)^{\Fr^s=1}.$$ The last term is 0 by Lemma \ref{lem:lusztig} (\ref{item:3}) as $i>0$. Thus $H^{2i}(Y_d)(i)_1=H^{2i}(Y_d)(i)^{\Fr^s=1}$, and we have a surjection onto Tate classes $$\bigoplus_{Z\in\mathrm{Irr}(X_{d-i})}H^0(Z)\simeq H^{2i}_{X_{d-i}}(Y_d)(i)^{\Fr^s=1}\twoheadrightarrow H^{2i}(Y_d)(i)^{\Fr^s=1}.$$ So $H^{2i}(Y_d)(i)^{\Fr^s=1}$ is spanned by the cycle classes of the irreducible components of $X_{d-i}$.
\end{proof}

Let us come back to the situation of \S \ref{sec:supp-vert-part}.

\begin{corollary} \label{cor:ZV}
  There exists finitely many Deligne--Lusztig curves $C_i\subseteq \mathcal{N}_n^\mathrm{red}$ and $\mult_{C_i}\in \mathbb{Q}$, such that for any $x\in \mathbb{V}_n\setminus L^\flat_F$,
  $$
  \chi(\CN_n,\,^\mathbb{L}\mathcal{Z}(L^\flat)_\sV\jiao\mathcal{Z}(x))=\sum_i \mult_{C_i}\cdot  \chi(\CN_n,\,C_i\jiao \mathcal{Z}(x)).
  $$ 
\end{corollary}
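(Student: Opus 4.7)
The plan is to combine the decomposition provided by Corollary \ref{cor:curves} with the Tate conjecture from Theorem \ref{thm:tate} in order to replace the abstract curves by Deligne--Lusztig curves. This will be a two-step argument: first pass from a $K_0$-identity to an identity of intersection numbers against $\mathcal{Z}(x)$, then use Tate's conjecture to express each cycle class in terms of Deligne--Lusztig curves.

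First I would invoke Corollary \ref{cor:curves} to write
$$
{}^\mathbb{L}\mathcal{Z}(L^\flat)_\sV = \sum_j \mult_j\,[\mathcal{O}_{C'_j}] \in \Gr^{n-1}K_0^{\mathcal{N}_n^{\mathrm{red}}}(\mathcal{N}_n)
$$
for certain (a priori, not necessarily Deligne--Lusztig) curves $C'_j \subseteq \mathcal{N}_n^{\mathrm{red}}$ and $\mult_j \in \mathbb{Q}$. Since $x \notin L^\flat_F$, the lattice $L^\flat + \langle x\rangle$ has full rank $n$, so $\mathcal{Z}(L^\flat)\cap\mathcal{Z}(x) = \mathcal{Z}(L^\flat + \langle x\rangle)$ is a proper scheme over $\Spf \OFb$ by Lemma \ref{lem:properscheme}. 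Each $C'_j$ is contained in some Bruhat--Tits stratum $\mathcal{V}(\Lambda_j)$, which is projective, so $C'_j \cap \mathcal{Z}(x)$ is also proper. Tensoring both sides of the above identity with $[\mathcal{O}_{\mathcal{Z}(x)}]$ and applying $\chi$, all Euler characteristics being finite, yields
$$
\chi(\mathcal{N}_n, {}^\mathbb{L}\mathcal{Z}(L^\flat)_\sV \dcup \mathcal{Z}(x)) = \sum_j \mult_j\,\chi(\mathcal{N}_n, C'_j \dcup \mathcal{Z}(x)).
$$

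The second step is to rewrite each term on the right in terms of Deligne--Lusztig curves. Fix $j$ and set $d = d_j = (t(\Lambda_j) - 1)/2$, so $\mathcal{V}(\Lambda_j) \cong Y_d$. Since $\mathcal{Z}(x)\cap\mathcal{V}(\Lambda_j)$ is a Cartier divisor on $\mathcal{V}(\Lambda_j)$, the intersection number $\chi(\mathcal{N}_n, C'_j \dcup \mathcal{Z}(x))$ equals $\deg_{C'_j}\bigl(\mathcal{O}_{\mathcal{N}_n}(\mathcal{Z}(x))|_{C'_j}\bigr)$, and depends on $C'_j$ only through its $\ell$-adic cycle class $[C'_j]\in H^{2(d-1)}(Y_d)(d-1)$, which is invariant under some power of Frobenius. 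By Theorem \ref{thm:tate} applied with $i = d - 1$, the space of such Tate classes is spanned by the cycle classes of the irreducible components of $X_1 \subseteq Y_d$; each such component is isomorphic to $Y_1$, the classical Deligne--Lusztig curve. Thus one can write
$$
[C'_j] = \sum_i \alpha_{i,j}\,[C_i]
$$
for Deligne--Lusztig curves $C_i$ and suitable scalars $\alpha_{i,j}$. Substituting and setting $\mult_{C_i} = \sum_j \mult_j\,\alpha_{i,j}$ completes the argument.

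The main obstacle is ensuring that the scalars $\alpha_{i,j}$ (and hence $\mult_{C_i}$) lie in $\mathbb{Q}$ rather than merely in $\overline{\mathbb{Q}_\ell}$. This is handled by observing that the cycle class map $CH_1(Y_d)\otimes\mathbb{Q}\to H^{2(d-1)}(Y_d)(d-1)\otimes_{\mathbb{Q}_\ell}\overline{\mathbb{Q}_\ell}$ is $\mathbb{Q}$-linear with the Deligne--Lusztig classes lying in its image (by Theorem \ref{thm:tate}), so one can perform the needed linear algebra on the finite-dimensional $\mathbb{Q}$-subspace generated by $[C'_j]$ together with the $[C_i]$; the resulting pairing with the Cartier divisor $\mathcal{Z}(x)$ then gives the required rational identity of integers.
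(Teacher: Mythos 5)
Your proof is correct and follows essentially the same route as the paper: start from Corollary \ref{cor:curves}, observe that any curve in $\mathcal{N}_n^{\mathrm{red}}$ sits in a single $\mathcal{V}(\Lambda)\cong Y_d$, and invoke Theorem \ref{thm:tate} with $i=d-1$ to replace its cycle class by a $\mathbb{Q}$-linear combination of Deligne--Lusztig curve classes. Your write-up is more explicit than the paper's on two implicit points — that $\chi(\mathcal{N}_n, C\cap^{\mathbb{L}}\mathcal{Z}(x))=\deg(\mathcal{O}(\mathcal{Z}(x))|_C)$ depends only on the cycle class of $C$, and that the Tate-theorem coefficients may be taken in $\mathbb{Q}$ by faithfully flat descent along $\mathbb{Q}\hookrightarrow\overline{\mathbb{Q}_\ell}$ — but these are exactly the details the paper is leaving unstated, not a different argument.
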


\begin{proof}
  By the Bruhat--Tits stratification of $\mathcal{N}_n^\mathrm{red}$ (\S \ref{sec:bruh-tits-strat}), any curve $C$ in $\mathcal{N}_n^\mathrm{red}$ lies in some Deligne--Lusztig variety $\mathcal{V}(\Lambda)\cong Y_d$. By Theorem \ref{thm:tate} (for $i=d-1$), the cycle class of such a curve $C$ can be written as a $\mathbb{Q}$-linear combination of the cycle classes of Deligne--Lusztig curves on $\mathcal{V}(\Lambda)$. Notice that $\chi(\mathcal{N}, C \jiao \mathcal{Z}(x))$ only depends on the cycle class of $C$. In fact, since $\mathcal{Z}(x)$ is a Cartier divisor on $\mathcal{N}_n$, $\mathcal{V}(\Lambda)\jiao \mathcal{Z}(x)$ is explicitly represented by the two-term complex of locally free sheaves $$[\mathcal{O}_{\mathcal{N}_n}(-\mathcal{Z}(x))|_{\mathcal{V}(\Lambda)}\rightarrow \mathcal{O}_{\mathcal{N}_n}|_{\mathcal{V}(\Lambda)}]\in \mathrm{F}^1K_0(\mathcal{V}(\Lambda)).$$ Hence by \cite[(B.3)]{Zhang2019}, $\chi(\mathcal{N}, C \jiao \mathcal{Z}(x))$ only depends on the image of $\mathcal{O}_C$ in $\Gr^{d-1}K_0(\mathcal{V}(\Lambda))_\mathbb{Q}\cong \Ch^{d-1}(\mathcal{V}(\Lambda))_\mathbb{Q}$, and the image of $\mathcal{V}(\Lambda)\jiao \mathcal{Z}(x)$ in $\Gr^1K_0(\mathcal{V}(\Lambda))_\mathbb{Q}\cong\Ch^1(\mathcal{V}(\Lambda))_\mathbb{Q}$. % By the projection formula, $$\chi(\mathcal{N},C\jiao \mathcal{Z}(x))=[\mathcal{O}_C]\cdot [\mathcal{O}_{\mathcal{N}_n}(-\mathcal{Z}(x))|_{\mathcal{V}(\Lambda)}\rightarrow \mathcal{O}_{\mathcal{N}_n}|_{\mathcal{V}(\Lambda)}]$$
  As the cycle class map intertwines the intersection product and the cup product, cf. (\ref{eq:chcl}), we know that $\chi(\mathcal{N}, C \jiao \mathcal{Z}(x))$ only depends on the cycle class of $C$. The result then follows from Corollary~\ref{cor:curves}.
\end{proof}

\subsection{The vertical cycle in the case $n=3$, and Theorem \ref{thm: main} in the case $n=2$}

Now let $n=3$, and let $L^\flat\subset\BV_3$ be a rank two integral lattice. Denote by $\Ver^t(L^\flat)$
the set of vertex lattices $\Lambda$ of type $t$ containing $L^\flat$. For any integral lattice $\Lambda$, we denote $L^\flat_{\Lambda}\coloneqq L^\flat_F\cap \Lambda$, which is an integral lattice in $L^\flat_F$.
\begin{theorem}\label{thm:n=2}
\begin{altenumerate}
\item\label{item:n=21} Let $L^\flat\subset\BV_3$ be a rank two lattice. Then the following identity
$$
\CZ(L^\flat)_\sV=\sum_{\Lambda\in \Ver^3(L^\flat)}\mult_{L^\flat}(\Lambda)\cdot{\CV(\Lambda)},
$$
holds in $\Gr^{2}K_0^{\CZ(L^\flat)_\sV}(\mathcal{N}_3)$,
where
$$
\mult_{L^\flat}(\Lambda)=\#\{L'^\flat \mid L^\flat\subset L'^\flat\subset L^\flat_\Lambda\}.
$$
Similarly, the following identity $$^\mathbb{L}\mathcal{Z}(L^\flat)_\sV=\sum_{\Lambda\in \Ver^3(L^\flat)}\mult_{L^\flat}(\Lambda)\cdot{\CO_{\CV(\Lambda)}}
$$ holds in $\Gr^{2}K_0^{\CZ(L^\flat)_\sV}(\mathcal{N}_3)$.
\item\label{item:n=22}  Theorem \ref{thm: main} holds when $n=2$, i.e., $\Int(L^\flat)=\pDen(L^\flat)$ for all $L^\flat\subset \BV_2$.
\end{altenumerate}
\end{theorem}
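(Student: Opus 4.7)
For part (i), I plan to combine the Bruhat--Tits stratification with a local deformation-theoretic multiplicity computation. By Corollary \ref{cor:curves} (applied to $L^\flat$ of rank $2$ in $\BV_3$), ${}^\BL\CZ(L^\flat)_\sV$ is a finite $\BQ$-linear combination of curve classes in $\Gr^2 K_0^{\CN_3^\mathrm{red}}(\CN_3)$, and by \eqref{eq:KRstrat} the supporting curves lie in $\bigcup_{L^\flat \subset \Lambda} \CV(\Lambda)$. Since $\BV_3$ is non-split, vertex lattices have odd type: only type-$3$ vertex lattices give $1$-dimensional strata $\CV(\Lambda)$, so the $1$-dimensional irreducible components of $\CZ(L^\flat)_\sV$ are precisely the $\CV(\Lambda)$ for $\Lambda \in \Ver^3(L^\flat)$. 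To determine each multiplicity, I would localize at the generic point $\eta$ of $\CV(\Lambda)$ and describe the universal deformation of the underlying $p$-divisible $O_{F_0}$-module there using Grothendieck--Messing theory together with the Breuil module setup of \S\ref{sec:breuil-modules}; the divisors $\CZ(x_1), \CZ(x_2)$ give explicit local equations whose joint intersection length should, after lattice-theoretic translation, match the count $\#\{L'^\flat : L^\flat \subset L'^\flat \subset L^\flat_\Lambda\}$. Smoothness of $\CV(\Lambda)$ at $\eta$ upgrades the cycle-theoretic statement to the derived-tensor version.

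For part (ii), I would establish $\Int(L) = \pDen(L)$ for rank-$2$ lattices $L \subset \BV_2$ by direct computation. Write $L = L^\flat + \langle x \rangle$ with $L^\flat = \langle x_1 \rangle$ of rank $1$, and split $\Int(L) = \chi(\CZ(L^\flat)_\sH \dcup \CZ(x)) + \chi({}^\BL\CZ(L^\flat)_\sV \dcup \CZ(x))$ using \eqref{eq:H+V}. The horizontal contribution is computable via Theorem \ref{thm:horizontal}, which exhibits $\CZ(L^\flat)_\sH$ as an explicit sum of quasi-canonical lifting cycles $\CZ(M^\flat)^\circ$ of known degree \eqref{eq:lengthsquasicanonical}; each summand's intersection with $\CZ(x)$ on $\CN_2$ is a local intersection number computable from Gross's theory of quasi-canonical liftings. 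The vertical contribution is supported on the $0$-dimensional Bruhat--Tits strata of $\CN_2^\mathrm{red}$ (type-$1$ vertex lattices in $\BV_2$), so it is a finite combination of closed points whose intersection with $\CZ(x)$ reduces to a finite sum of local multiplicities. I would then expand $\pDen(L)$ via the Cho--Yamauchi formula (Corollary \ref{cor: pDen}) and verify the two sides agree, using the induction formula of Proposition \ref{prop: ind} when convenient to reduce to smaller fundamental invariants.

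The principal obstacle is the multiplicity calculation of part (i): producing an explicit description of the local ring of $\CN_3$ at the generic point of $\CV(\Lambda)$ and reading off the length of $\CZ(x_1) \cap \CZ(x_2)$ in terms of intermediate hermitian lattices requires careful Dieudonn\'e/Breuil-theoretic bookkeeping, and the appearance of the combinatorial quantity $\#\{L'^\flat : L^\flat \subset L'^\flat \subset L^\flat_\Lambda\}$ is not obviously forced by the formal geometry alone; one may need an auxiliary inductive decomposition using the cancellation law of Lemma \ref{lem: cancel} to peel off self-dual summands before committing to a direct Dieudonn\'e calculation. Once part (i) is in hand, part (ii) reduces to a concrete matching between the quasi-canonical lifting contributions and the Cho--Yamauchi expansion.
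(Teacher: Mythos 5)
Your plan for part (i) has a genuine gap at its central step. You propose to determine the multiplicity of $\CV(\Lambda)$ by localizing at the generic point and doing an explicit Dieudonn\'e/Breuil-theoretic length calculation, but you never carry this out, and you yourself observe that the target formula $\#\{L'^\flat : L^\flat \subset L'^\flat \subset L^\flat_\Lambda\}$ is ``not obviously forced by the formal geometry alone.'' That acknowledgement is exactly right: there is no direct local deformation computation in the paper, and it is not clear how to perform one that produces this lattice-count. The paper's proof avoids it entirely. It first settles the easy cases ($t(L^\flat)\leq1$ and the minuscule case, where $\mult=1$ is extracted by intersecting \eqref{eq:minu} with a divisor $\CZ(e)$ for a unit-norm $e$), then determines the general multiplicity by induction on $\val(L^\flat)$. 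The key tool is Lemma \ref{lem: refl}, which produces an auxiliary unit-norm vector $e$ so that $\Lambda_e=\Lambda_0+\pair{e}$ is a type-$1$ vertex lattice with the property that $\Lambda_0$ has the strictly largest $L^\flat_\Lambda$ among all $\Lambda\in\Ver^3(L^\flat+\varpi\pair{e})$. Intersecting $\CZ(L^\flat)$ with $\CZ(e)$ and comparing $\Int(L^\flat\oplus M)$ against the Cho--Yamauchi expansion of $\pDen(L^\flat\oplus M)$ then pins down $\mult_{L^\flat}(\Lambda_0)$ recursively. No Breuil module is ever written down.

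Your plan for part (ii) is feasible in principle but amounts to re-deriving Kudla--Rapoport's original $n=2$ computation from Gross's theory of quasi-canonical liftings, which is substantial explicit work. (One small correction: in $\CN_2$ the divisor $\CZ(x_1)$ is purely horizontal by \cite[Proposition 8.1]{Kudla2011}, so the vertical contribution you propose to evaluate on $0$-dimensional Bruhat--Tits strata is identically zero rather than merely a finite sum.) The paper instead deduces (ii) from (i) for the same $L^\flat$: it embeds $\CN_2\hookrightarrow\CN_3$ as $\CZ(M)$ for a rank-one self-dual $M$, uses Lemma \ref{lem: cancel} to write $\Int(L^\flat)=\chi(\CN_3,\CZ(M)\jiao\CZ(L^\flat))$, and computes the horizontal and vertical contributions separately against Corollary \ref{cor: pDen}. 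Crucially, the two parts are proved in a mutually recursive scheme: the induction step for (i) invokes (ii) for lattices of strictly smaller valuation (via Lemma \ref{lem: cancel} applied to $L^\flat\oplus M$ with $t(L^\flat\oplus M)\leq2$), and (ii) at each valuation is derived from (i) at that same valuation. Your proposal treats (i) and (ii) as independent, which misses this interdependence and leaves the hard part of (i) unresolved.
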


\begin{remark} \begin{altenumerate}
\item
Part (\ref{item:n=22}) is known by \cite[Theorem 1.1]{Kudla2011}. However, our proof is logically independent from {\em loc. cit.}.  
\item Later we will only need  (in the proof of Lemma \ref{int Lam}) a very special case of part (\ref{item:n=21}) of Theorem \ref{thm:n=2}, i.e., the minuscule case in the proof below. 
\end{altenumerate}
\end{remark}
We first establish two lemmas. The first one is trivial and we state it because it will be used repeatedly.
\begin{lemma}\label{lem: simple}
 Let $e\in\BV_3$ be a unit-normed vector. Then there is a unique vertex lattice $\Lambda_e$ of type $1$ in $\BV_3$ containing $e$. Moreover, if $L\subset \BV_3$ is any integral lattice (not necessarily of full rank) and $e\in L$, then $L\subset \Lambda_e$.  
 \end{lemma}
 \begin{proof}The hermitian space $\pair{e}^\perp\subset\BV_3$ is two dimensional and non-split, hence has a unique maximal integral lattice $\Lambda^\flat$ (consisting of all vectors with integral norms). Then we see that $\Lambda_e=\pair{e}\obot \Lambda^\flat$ has the desired property.
 \end{proof}
\begin{lemma}\label{lem: refl}
Fix $\Lambda_0\in \Ver^3(L^\flat)$. Then there exists a vector $e\in \BV_3$ with unit norm such that, when denoting  $M=\pair{e}$,
  \begin{altenumerate}
\item\label{item:refl1}
The lattice $\Lambda_0+M$ is equal to the vertex lattice $\Lambda_e$ of type $1$ in Lemma \ref{lem: simple}, and $\Lambda_e=L^\flat_{\Lambda_e}\oplus M$;
\item\label{item:refl2}
$\Lambda_0=L^\flat_{\Lambda_0}+\varpi M$ and $L^\flat_{\Lambda_0}=L^\flat_{\Lambda_e}$;
\item\label{item:refl3}
  For any other $\Lambda\neq \Lambda_0$ in $\Ver^3(L^\flat+\varpi M  )$, the lattice $L_{\Lambda}^\flat$ is equal to $L_{\varpi\Lambda^\vee_e}^\flat$ and is a sub-lattice of $L_{\Lambda_0}^\flat=L_{\Lambda_e}^\flat$  of colength one;
\item\label{item:refl4}
  For any lattice $L'^\flat$ such that  $L^\flat\subset L'^\flat\subset L_{\Lambda_e}^\flat$,  we have
$$
t (L'^\flat\oplus M)=\begin{cases}
2,& \text{if } L'^\flat\subset L_{\varpi\Lambda^\vee_e}^\flat,  \\
1,&\text{otherwise.} \end{cases}
$$
\end{altenumerate}

\end{lemma}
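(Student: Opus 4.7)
The plan is to work throughout in the residual hermitian space $\bar V := \Lambda_0^\vee/\Lambda_0$, which is a $3$-dimensional non-degenerate $k_F/k$-hermitian space whose form is induced by $\varpi \cdot (\cdot,\cdot) \bmod \varpi$ (well-defined because type $3$ gives $\Lambda_0^\vee = \varpi^{-1}\Lambda_0$). Let $\bar W \subset \bar V$ be the $2$-dimensional subspace obtained as the image of $\varpi^{-1}L^\flat_{\Lambda_0}$. First I would choose a nonzero isotropic vector $\bar e \in \bar V$ with $\bar e \notin \bar W$; such a vector exists by a counting argument, since the Hermitian curve in $\mathbb{P}(\bar V)$ has $q^3+1$ points, while $\bar W$ contains at most $q+1$ isotropic lines. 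Lifting $\bar e$ to $e \in \Lambda_0^\vee$, isotropy gives $(e,e) \in O_{F_0}$ automatically; adjusting $e$ by an element of $\Lambda_0$ and using that $\bar e \ne 0$ makes the map $y \mapsto \mathrm{tr}_{F/F_0}(e,y) \bmod \varpi$ from $\Lambda_0$ to $k$ surjective, I can arrange $(e,e) \in O_{F_0}^\times$.

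With $M = \langle e\rangle$ and $\Lambda_e = \Lambda_0 + M$, I would verify (i) and (ii) in tandem. Since $\bar e \ne 0$ is isotropic, $\Lambda_e$ has $O_F$-colength $1$ over $\Lambda_0$, and $\Lambda_e^\vee$ is the preimage in $\Lambda_0^\vee$ of $\bar e^\perp \subset \bar V$, so the inclusion $\Lambda_e \subset \Lambda_e^\vee$ has colength $1$, giving $\Lambda_e$ of type $1$. The condition $\bar e \notin \bar W$ translates through the projection $\pi\colon \BV_3 \to \BV_3/L^\flat_F$ to $\pi(e) \notin \pi(\Lambda_0)$, whence any element $l + c e \in L^\flat_F$ (with $l \in \Lambda_0$, $c \in O_F$) is forced to have $c \in \varpi O_F$, so $c e \in \Lambda_0$ and the element already lies in $L^\flat_{\Lambda_0}$. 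This simultaneously gives $L^\flat_{\Lambda_e} = L^\flat_{\Lambda_0}$ and the $O_F$-module direct sum $\Lambda_e = L^\flat_{\Lambda_e} \oplus M$. Finally, since $\pi(\varpi e)$ is a generator of $\pi(\Lambda_0)$, we have $\varpi e$ generating $\Lambda_0/L^\flat_{\Lambda_0}$, yielding $\Lambda_0 = L^\flat_{\Lambda_0} + \varpi M$.

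For (iii), I would describe $\Ver^3(L^\flat + \varpi M)$ by parametrizing type-$3$ vertex overlattices of $L^\flat + \varpi M$ via isotropic lines in an induced $2$-dimensional hermitian quotient of $\varpi(L^\flat + \varpi M)^\vee/(L^\flat + \varpi M)$ (after peeling off the self-dual rank-$1$ piece orthogonal to the $L^\flat_F$-direction of $e$). The line corresponding to $\Lambda_0$ is singled out as the unique isotropic line lying in the kernel of the further projection to $\pi(\varpi(L^\flat+\varpi M)^\vee)/\pi(L^\flat+\varpi M)$, while the other $q$ isotropic lines lift to $\Lambda \ne \Lambda_0$ with $\pi(w) \ne 0$. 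For such $\Lambda$, a direct calculation using $\pi$ shows that any element of $\Lambda \cap L^\flat_F$ has the form $l + \alpha\varpi(e - \lambda w)$ with $l \in L^\flat$, $\alpha \in O_F$, and $\lambda$ chosen so that the projection vanishes; the isotropy of $\bar w$ then forces $\varpi(e-\lambda w) \in L^\flat_F$ with norm having valuation exactly $1$, and its hermitian pairing with $L^\flat_{\Lambda_0}$ lands in $\varpi O_F$, matching precisely the defining property of $L^\flat_{\varpi\Lambda_e^\vee}$. Hence $L^\flat_\Lambda = L^\flat_{\varpi\Lambda_e^\vee}$ uniformly for all $\Lambda \ne \Lambda_0$. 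Part (iv) then follows by diagonalizing the Gram matrix of $L'^\flat \oplus M$: the off-diagonal entries are controlled by the ideal $(L'^\flat, e) \subset F$, which by definition of $L^\flat_{\varpi\Lambda_e^\vee}$ lies in $\varpi O_F$ exactly when $L'^\flat \subset L^\flat_{\varpi\Lambda_e^\vee}$. Pivoting on the unit $(e,e)$ then gives invariants of type $2$ in that case and type $1$ otherwise (matching $\Lambda_e$ when $L'^\flat = L^\flat_{\Lambda_e}$).

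The main obstacle is part (iii): showing that \emph{every} non-$\Lambda_0$ member of $\Ver^3(L^\flat + \varpi M)$ produces the same intersection with $L^\flat_F$ requires careful tracking of how the isotropy of each generator $\bar w$ interacts with $\pi$, and leveraging that the $q$ non-special isotropic lines all yield the same effective $L^\flat_F$-component after the substitution described above.
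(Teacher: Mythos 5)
Your construction of $e$ is genuinely different from the paper's: the paper fixes a vector $x$ of valuation $1$ in $L^\flat_{\Lambda_0}$, writes $L^\flat_{\Lambda_0}=E\obot M'$ with $E=\pair{x}$, and picks $e\perp E$ so that $M'\oplus\pair{e}$ is a self-dual complement of $E$; you instead work in the residual hermitian space $\bar V=\Lambda_0^\vee/\Lambda_0$ and choose $\bar e$ isotropic and outside $\bar W$. Both are legitimate, and your existence argument (counting isotropic lines in $\mathbb{P}(\bar V)$ and using surjectivity of $z\mapsto\tr_{F/F_0}(e,z)\bmod\varpi$ on $\Lambda_0$, which holds because $\bar V$ is nondegenerate and $\bar e\ne0$) is correct. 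Your treatments of (i), (ii), and (iv) are also essentially sound, though terse: in (ii) the claim that $\pi(\varpi e)$ generates $\pi(\Lambda_0)$ needs the one-line observation that $\varpi e\in\Lambda_0$ and $\pi(e)\notin\pi(\Lambda_0)$ together force $\pi(\Lambda_0)=O_F\pi(\varpi e)$; in (iv) the pivoting on the unit $(e,e)$ works because $(L'^\flat, L^\flat_{\Lambda_0})\subset\varpi O_F$ is automatic (as $L^\flat_{\Lambda_0}\subset\Lambda_0$), so that $L'^\flat\subset L^\flat_{\varpi\Lambda_e^\vee}$ is indeed equivalent to $(L'^\flat,e)\subset\varpi O_F$, and type $0$ is ruled out since $\BV_3$ is nonsplit of odd dimension.

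Part (iii), however, has a genuine gap. The ``direct calculation'' is not supplied, and the one concrete claim you make — that $\varpi(e-\lambda w)$ has ``norm having valuation exactly $1$'' — looks wrong on its face: since $(e,e)\in O_{F_0}^\times$ and the cross terms are integral, $\val\bigl(\varpi(e-\lambda w)\bigr)=2+\val(e-\lambda w)\geq 2$. Moreover, the defining property of $L^\flat_{\varpi\Lambda_e^\vee}$ that you invoke (``hermitian pairing with $L^\flat_{\Lambda_0}$ lands in $\varpi O_F$'') is vacuous, because \emph{every} vector of $L^\flat_{\Lambda_0}$ already pairs with $L^\flat_{\Lambda_0}$ into $\varpi O_F$; what distinguishes $L^\flat_{\varpi\Lambda_e^\vee}$ inside $L^\flat_{\Lambda_0}$ is the additional condition $(\,\cdot\,,e)\in\varpi O_F$, which your argument does not use. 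The cleaner route — which the paper follows and which works for \emph{any} $e$ satisfying (i) and (ii), so it applies verbatim to your $e$ — is: since $\varpi e\in\Lambda$ and $\Lambda\in\Ver^3$, we have $e\in\Lambda^\vee$, so $\Lambda+\pair{e}$ is integral; it is a vertex lattice (as $\Lambda+\pair{e}\supseteq\Lambda$ forces $(\Lambda+\pair{e})^\vee\subseteq\varpi^{-1}\Lambda\subseteq\varpi^{-1}(\Lambda+\pair{e})$) containing the self-dual $\pair{e}$, hence splits as $\pair{e}\obot N$ with $N$ a vertex lattice in the nonsplit $2$-dimensional space $\pair{e}^\perp_F$, forcing $t(N)=1$ and hence uniqueness; thus $\Lambda+\pair{e}=\Lambda_e$. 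Dualizing gives $\varpi\Lambda_e^\vee\subseteq\Lambda\subseteq\Lambda_e$, so $L^\flat_{\varpi\Lambda_e^\vee}\subseteq L^\flat_\Lambda\subseteq L^\flat_{\Lambda_e}$. Because $\Lambda_e$ has type $1$, the $\sigma$-linear form $w\mapsto(w,e)\bmod\varpi$ is nonzero on $L^\flat_{\Lambda_0}/\varpi L^\flat_{\Lambda_0}$ (otherwise the Gram matrix of $\Lambda_e$ would have $k_F$-rank $1$ and $\Lambda_e$ would have type $2$), so its kernel $L^\flat_{\varpi\Lambda_e^\vee}$ has colength one. Finally if $L^\flat_\Lambda=L^\flat_{\Lambda_e}=L^\flat_{\Lambda_0}$ then by (ii) $\Lambda\supseteq L^\flat_{\Lambda_0}+\varpi M=\Lambda_0$, which forces $\Lambda=\Lambda_0$ as both are type $3$, a contradiction.
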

\begin{remark}
Before presenting the proof, we indicate the geometric picture of the lemma. The reduced scheme $ \CZ(L^\flat)^{\red}$ of $\CZ(L^\flat)$ is a (connected, a fact we do not need) union of the curves $\CV(\Lambda)$ for $\Lambda\in \Ver^3(L^\flat)$. The lemma implies that, on any given irreducible component $\CV(\Lambda_0)$, there exists a (superspecial) point $\CV(\Lambda_e)$, such that among all the curves $\CV(\Lambda)\subset \CZ(L^\flat)^{\red}$ passing through $\CV(\Lambda_e)$ (noting that such $\Lambda$ necessarily belongs to $\Ver^3(L^\flat+\varpi M)$ due to the implication $\CV(\Lambda_e)\subset\CV(\Lambda)\imp e\in \Lambda^\vee\imp \varpi e \in \Lambda$), the given one $\CV(\Lambda_0)$ has the (strictly) largest  associated lattice $L^\flat_{\Lambda_0}$. This suggests the possibility to determine the multiplicity $\mult_{L^\flat}(\Lambda)$ by induction on $[L^\flat_{\Lambda_0}:L^\flat]$.
\end{remark}

\begin{proof}We pick a vector $x$ of valuation one in $L_{\Lambda_0}^\flat$ (such $x$ necessarily exists) and denote by $E$ the rank one lattice $\pair{x}$. Denote by $M'$ its orthogonal complement in $L_{\Lambda_0}^\flat$, so that
$$
L_{\Lambda_0}^\flat=E\obot M'.
$$

We {\em claim} that there exists a vector $e\perp E$ such that 
  \begin{altenumerate}
\item[(a)]  The norm of $e$ is a unit;
\item[(b)]  Denoting $M=\pair{e}$, then the rank two lattice $M'\oplus M$ is self-dual;
\item[(c)]   $\Lambda_0= E\obot(M'\oplus \varpi M)$.
\end{altenumerate}
To show the claim, we consider the two dimensional subspace $\pair{x}^\perp_F$. From $\val(x)=1$, it follows that $\pair{x}^\perp_F$ is a split Hermitian space, and $\Lambda_0$ is an orthogonal direct sum $E\obot E^\perp$ for  a vertex lattice $E^\perp$ of type $2$ in $\pair{x}^\perp_F$. The sub-lattice $M'$ is saturated in $E^\perp$. Consider the two dimensional $k_F$-vector space $V\coloneqq  \varpi^{-1}E^\perp/ E^\perp$ with the induced hermitian form. The $q+1$ isotropic lines in $V$ are bijective to self-dual lattices containing $ E^\perp$. Since $q+1>1$, there exists an isotropic line not containing the image of $\varpi^{-1}M'$ in $V$, or equivalently, there exists a self-dual lattice $\Xi\subset \pair{x}^\perp_F $ containing $ E^\perp$ but not $\varpi^{-1}M'$ (i.e., $M'$ remains saturated in $\Xi$). Finally, we choose a unit-normed $e$ lifting a generator of the free $O_F$-module   $\Xi/M'$ of rank one. It is easy to verify that such a vector $e$ satisfies all the conditions $(a), (b)$ and $(c)$ and this proves the claim. 

We fix such a vector $e$ and we now verify that it satisfies the desired conditions. Parts (\ref{item:refl1}) and (\ref{item:refl2}) are clear by the claim above. Now let $\Lambda$ be a lattice in $\Ver^3(L^\flat+\pair{\varpi e})$. Then $\Lambda+\pair{e}$ is an integral lattice containing a unit-normed vector, hence is a vertex lattice of type $1$ (recall from \S\ref{ss:minu KR} that the type of a vertex lattice is always odd). Therefore, by Lemma \ref{lem: simple}, we obtain $\Lambda+\pair{e}=\Lambda_e$ for all $\Lambda\in \Ver^3(L^\flat+\pair{\varpi e})$.  Now assume that $\Lambda\neq \Lambda_0$. Then we obtain the following diagram
$$
\xymatrix@C=-2em@R=3em{
& &  \Lambda_e= E\obot(M'\oplus M)&  
\\ &\Lambda_0= E\obot(M'\oplus \varpi M) \ar@{^{(}->}[ur]&&\ar@{_{(}->}[ul] \hspace{3em}\Lambda \longleftarrow\joinrel\rhook  L^\flat_{\Lambda}\oplus  \varpi M 
\\ &&\varpi\Lambda_e^\vee=E\obot( \varpi M'\oplus  \varpi M) \ar@{_{(}->}[ul] \ar@{^{(}->}[ur]& 
\\ &&\varpi\Lambda_e=\varpi E\obot(\varpi M'\oplus  \varpi M) \ar@{^{(}->}[u]^1. & }
$$
It is easy to see that 
$$
E\obot \varpi M'\subset L^\flat_\Lambda\subset E\obot M',
$$
and hence either $L^\flat_\Lambda=E\obot  M'$ or $L^\flat_\Lambda=E\obot \varpi M'$. In the former case,  we must have $\Lambda\supset E\obot ( M' \oplus  \varpi M )=\Lambda_0$, contradicting $\Lambda\neq \Lambda_0$. This shows that  $L_{\Lambda}^\flat=E\obot \varpi M'= L_{ \varpi \Lambda^\vee_e}^\flat$, and hence completes the proof of (\ref{item:refl3}).

Let $L'^\flat\subset L_{\Lambda_e}^\flat=E\obot M'$. Then the type of $L'^\flat\oplus M$ is either $1$ or $2$. To show part (\ref{item:refl4}), we first assume that $L'^\flat\subset L_{ \varpi \Lambda^\vee_e}^\flat=E\obot \varpi M'$. Then we have $$t (L'^\flat\oplus M) \geq t (E\obot (\varpi M'\oplus M))= t (E)+t (\varpi M'\oplus M)$$
and $t (E)=1$.  Now noting that $M'\oplus M$ is self-dual, its proper sub-lattice $\varpi M'\oplus M$ can not be self-dual, and hence $t(\varpi M'\oplus M)\geq 1$. Since $M$ is self-dual, it follows that $t(L'^\flat\oplus M)<3$, and hence $t(L'^\flat\oplus M)=2$.

Finally we assume that $L'^\flat \subset  E\obot M'$ but $L'^\flat\not\subset  E\obot \varpi M'$,  then there must be a vector $u\in L'^\flat$ whose projection to $M'$ is a generator of $M'$. It follows that $\pair{u}\oplus M$ is a rank-two self-dual  sub-lattice of $L'^\flat \oplus M$, forcing the type $t(L'^\flat \oplus M)\leq 1$. Since $\BV_3$ is a non-split hermitian space, it does not contain any self-dual lattice of full rank. Therefore $t(L'^\flat \oplus M)=1$ and this completes the proof of (\ref{item:refl4}).
\end{proof}

\begin{proof}[Proof of Theorem \ref{thm:n=2}]
The proof is rather involved and will be divided into four steps: \begin{enumerate}
\item both parts hold when $t(L^\flat)\leq 1$.
\item part \eqref{item:n=21} holds for minuscule $L^\flat$ and we deduce \eqref{eq: Z(e) L} that will be used repeated later.
\item part \eqref{item:n=21} for $L^\flat$ with odd $\val(L^\flat)$ implies part \eqref{item:n=22} for the same $L^\flat$.
\item part \eqref{item:n=21} for all $L^\flat$ holds by induction on $\val(L^\flat)$.
\end{enumerate}

We start with a remark. By Lemma \ref{lem:two div}, $^\mathbb{L}\mathcal{Z}(L^\flat)_\sV\in \Gr^{2}K_0^{\mathcal{N}_3^\mathrm{red}}(\mathcal{N}_3)$ is represented by the class of $\CO_{[\mathcal{Z}(L^\flat)_\sV]}$. Therefore
\begin{align}\label{eq:LZ V}
^\mathbb{L}\mathcal{Z}(L^\flat)_\sV=\mathcal{Z}(L^\flat)_\sV=\sum_{\Lambda\in \Ver^3(L^\flat)}\mult_{L^\flat}(\Lambda)\cdot\CV(\Lambda)
\end{align}
where the multiplicity $\mult_{L^\flat}(\Lambda)$ is a positive integer to be determined.

We first prove that both parts hold in the special case $t(L^\flat)\leq 1$. In fact, if $t(L^\flat)\leq 1$, we may write $L^\flat=\pair{u}\obot \pair{e}$ for a unit-normed vector $e$. Then $\CZ(u)$ and $\CZ(e)\simeq\CN_2$ intersect transversely (cf. the discussion before \eqref{eq:ind Z(u)}). Therefore $\CZ(L^\flat)$ is flat over $\Spf \OFb$ and  $\CZ(L^\flat)_\sV=\,^\mathbb{L}\CZ(L^\flat)_\sV=0$. Noting that $\mult_{L^\flat}(\Lambda)=0$ for any vertex lattice $\Lambda\subset\BV_3$ of type $3$, we have proved part \eqref{item:n=21}.  Similarly, part (\ref{item:n=22}) is reduced to the case $n=1$ by the cancellation law Lemma \ref{lem: cancel} and \eqref{eq:cancel den}. By Example \ref{ex int rank=1} and \ref{ex rank=1} we have 
\begin{align}\label{eq: n=1}
\Int(L^\flat)=\frac{\val(L^\flat)+1}{2}=\pDen(L^\flat).
\end{align}

Next we consider the minuscule case of part (\ref{item:n=21}), i.e., when the fundamental invariants of  $L^\flat$ are $(1,1)$. Then $\Ver^3(L^\flat)$ consists of a single type $3$ lattice $\Lambda=L^\flat\obot \pair{u}$ for a vector $u$ of valuation one.   By Theorem \ref{thm:horizontal} the horizontal part is the sum of quasi-canonical lifting cycles $\CZ(L'^\flat)\simeq \CN_1$ corresponding to the $q+1$ self-dual  lattices $L'^\flat$ containing $L^\flat$. Therefore by \eqref{eq:KRstrat} we have an equality in $
\Gr^{2}K_0^{\CZ(L^\flat)}(\mathcal{N}_3)$,
\begin{align}\label{eq:minu}
\CZ(L^\flat)=m \cdot \CV(\Lambda)+ \sum_{L^\flat\subset L'^\flat=(L'^\flat) ^\vee}\CZ(L'^\flat),
\end{align}
where the multiplicity $m$ of $\CV(\Lambda)$ is a positive integer to be determined. Now let $x_1,x_2$ be an orthogonal basis  of $L^\flat$, so that $\val(x_1)=\val(x_2)=1$. Now choose vector $e\perp x_1$  such that $e$ has unit norm and $\pair{x_2}\oplus \pair{e}$ is a self-dual lattice. It follows that $L^\flat\oplus \pair{e}$ is a vertex lattice of type $1$, and that, for any $L'^\flat=(L'^\flat) ^\vee$, the strictly larger lattice $L'^\flat\oplus \pair{e}$ can not be integral. Therefore, $\CZ(e)$ does not intersect with any of the quasi-canonical lifting cycles $\CZ(L'^\flat)$ appearing in \eqref{eq:minu}. Now consider 
$$
\Int(L^\flat\oplus \pair{e})=\chi(\CN_3, \CZ(L^\flat)\jiao \CZ(e)).
$$ 
On one hand, by Lemma  \ref{lem: cancel} applied to the self-dual lattice $\pair{x_2}\oplus \pair{e}$, we obtain $\Int(L^\flat\oplus \pair{e})=\Int(\pair{x_1})=1$ by Example \ref{ex int rank=1}. On the other hand, using the decomposition \eqref{eq:minu}, we have 
$$
\Int(L^\flat\oplus \pair{e})=m\cdot \chi(\CN_3, \CV(\Lambda)\jiao \CZ(e)).
$$
We deduce that the multiplicity $m=1$ in \eqref{eq:minu}, and
\begin{align}\label{eq: Z(e) L0}
\chi(\CN_3, \CV(\Lambda)\jiao \CZ(e))=1.
\end{align}
We note that, choosing $L^\flat$ appropriately, the argument above shows that \eqref{eq: Z(e) L0} holds for any $\Lambda\in\Ver^3$ and a unit-normed $e$ such that $\Lambda+\pair{e}$ is an integral lattice (necessarily a vertex lattice of type $1$). Moreover, it is obvious that $\chi(\CN_3, \CV(\Lambda)\jiao \CZ(e))=0$ if $\Lambda+\pair{e}$ is not integral (equivalently $e\not\in \Lambda^\vee$). Therefore we obtain that, for any $\Lambda\in\Ver^3$ and any $e$ with unit-norm,
\begin{align}\label{eq: Z(e) L}
\chi(\CN_3, \CV(\Lambda)\jiao \CZ(e))=\begin{cases}
1,& e \in \Lambda^\vee\\
0,&e\not\in \Lambda^\vee.
\end{cases}
\end{align}
This equation will be repeated used later.

Next we show that part (\ref{item:n=22}) for $L^\flat\subset\BV_2$ (necessarily with odd $\val(L^\flat)$, as $\BV_2$ is non-split) follows from part (\ref{item:n=21}) with the same $L^\flat\subset\BV_3$.  Here we have implicitly fixed an isomorphism $\BV_3\simeq \BV_2\obot M_F$ and an embedding $\CN_2\to\CN_3$ of the form \eqref{eq:inc M} induced by a self-dual lattice $M=\pair{e}$  of rank one. Let $L'^\flat$ be a type one lattice containing $L^\flat$, then by Lemma \ref{lem: cancel} and \eqref{eq: n=1},
$$
\Int(L'^\flat\obot M)=\Int(L'^\flat)=\frac{\val(L'^\flat)+1}{2}.
$$ 
Let $L''^\flat$ be the unique lattice such that $L'^\flat\subset L''^\flat\subset (L'^\flat)^\vee$ and $L''^\flat/L'^\flat$ has length one. Then by part (\ref{item:n=21}) for $L'^\flat$ and $L''^\flat$ we have
$$
\CZ(L'^\flat)=\CZ(L'^\flat)_\sH,\quad \CZ(L''^\flat)=\CZ(L''^\flat)_\sH.
$$
By Theorem \ref{thm:horizontal} we obtain an equality in $
\Gr^{2}K_0^{\CZ(L'^\flat)}(\mathcal{N}_3)$,
$$
 \CZ(L'^\flat)=\CZ(L''^\flat)+\CZ(L'^\flat)^\circ,
$$where $ \CZ(L'^\flat)^\circ$ is the associated quasi-canonical lifting cycle (cf. \S\ref{sec:quasi-canon-lift}).
It follows that
\begin{align}\label{eq: qc M}
\chi(\CN_3,\CZ(M)\jiao \CZ(L'^\flat)^\circ)=\Int(L'^\flat)-\Int(L''^\flat)=1.
\end{align}
Therefore by Theorem \ref{thm:horizontal} we obtain
\begin{align}\label{int n=2 H}
\chi(\CN_3,\CZ(M)\jiao \CZ(L^\flat)_\sH)=\#\{ L'^\flat\mid L^\flat \subset L'^\flat \subset(L'^\flat)^\vee, t(L'^\flat)=1 \}.
\end{align}
By part (\ref{item:n=21}) for $L^\flat$ (note that we are assuming this part to hold), and by \eqref{eq: Z(e) L}, we obtain  
\begin{align*}%\label{int n=3 V}
\chi(\CN_3,\CZ(M)\jiao \CZ(L^\flat)_\sV)=&\sum_{\Lambda\in \Ver^3(L^\flat)\atop M\subset\Lambda^\vee  }\mult_{L^\flat}(\Lambda)\notag\\=&
\sum_{L^\flat\subset L'^\flat \subset(L'^\flat)^\vee\atop t(L'^\flat)=2} \#\{\Lambda\in \Ver^3(L^\flat)\mid L'^\flat\subset \Lambda, M\subset\Lambda^\vee  \}.
\end{align*}
Here, for $\Lambda\in\Ver^3$, the condition $M=\pair{e}\subset\Lambda^\vee$ (i.e., $M+\Lambda$ being integral) is equivalent to  $\Lambda\subset \Lambda_e$, where  $\Lambda_e$ is the lattice in Lemma \ref{lem: simple}. Note that $\Lambda_e=M\obot \Lambda^\flat$ where  $\Lambda^\flat$ is the unique maximal integral lattice in $M_F^\perp$.    If $L'^\flat$ is of type $2$, then $\pair{L'^\flat, \Lambda^\flat}\subset \varpi O_F$ (we leave the proof to the reader), or equivalently $L'^\flat\subset \varpi(\Lambda^\flat)^\vee$. Therefore any $L'^\flat$ of type $2$ is automatically contained in $\varpi\Lambda_e^\vee$, hence contained in any type $3$ vertex lattice $\Lambda\subset \Lambda_e$. It follows that the condition $L'^\flat\subset \Lambda$ in the sum above is redundant. Since there are $q+1$ of type $3$ lattices $\Lambda\subset \Lambda_e$, we obtain
\begin{align}\label{int n=2 V}
\chi(\CN_3,\CZ(M)\jiao \CZ(L^\flat)_\sV)=(q+1)\,\#\{ \text{integral } L'^\flat\mid L^\flat\subset L'^\flat, t(L'^\flat)=2\}.
\end{align}
Then the desired assertion for part (\ref{item:n=22}) for $L^\flat\subset\BV_2$ follows from \eqref{int n=2 H}, \eqref{int n=2 V} and the formula in Corollary \ref{cor: pDen}:
$$
\pDen(L^\flat)=\sum_{ L^\flat\subset L'^\flat \subset(L'^\flat)^\vee}\fkm(t( L'^\flat)), \text{ where }\fkm(t( L'^\flat))=\begin{cases}1,& t( L'^\flat)=1,\\q+1,& t( L'^\flat)=2.
\end{cases}
$$

Finally, we prove part (\ref{item:n=21}) for $L^\flat\subset \BV_3$ by induction on $\val(L^\flat)$. We have proved it when $t(L^\flat)=1$.
Now we fix $L^\flat\subset \BV_3$ of type $2$. By induction, we may assume that part (\ref{item:n=21}) holds for all $L'^\flat\subset \BV_3$ with $\val(L'^\flat)<\val(L^\flat)$. Note that, by what we have proved, the induction hypothesis also implies that part (\ref{item:n=22}) holds for all $L'^\flat\subset \BV_3$ with {\em odd} $\val(L'^\flat)<\val(L^\flat)$ (here $L'^\flat$ need not to be a lattice in $L^\flat_F$). 

To determine the multiplicity, we fix $\Lambda_0\in \Ver^3(L^\flat)$. Choose $e$ as in Lemma \ref{lem: refl} and follow the same notation. We {\em claim} that $\CZ(M)$ does not intersect the horizontal part $\CZ(L^\flat)_\sH$. In fact, if $\CZ(M)$ were to intersect non-emptily with $\CZ(L^\flat)_\sH$, then by Theorem \ref{thm:horizontal}, $\CZ(M)$ would intersect  non-emptily with  $\CZ(L'^\flat)^\circ$ for a type $1$ lattice $L'^\flat$  containing $L^\flat$.  This would imply that $M\oplus L'^\flat$ is an integral lattice, and hence $M\oplus L'^\flat\subset \Lambda_e$ by Lemma \ref{lem: simple}. Therefore $L'^\flat\subset L^\flat_{\Lambda_e}$ and hence the type of $ L^\flat_{\Lambda_e}$ is at most one. By Lemma \ref{lem: refl} \eqref{item:refl2}, $L^\flat_{\Lambda_e}=L^\flat_{\Lambda_0}$ is a sub-lattice of $\Lambda_0\in \Ver^3(L^\flat)$ and hence it has type $2$. Contradiction!

 It follows from the claim and \eqref{eq:LZ V} that
 $$
 \Int(L^\flat\oplus M)=\sum_{\Lambda\in \Ver^3(L^\flat)}\mult_{L^\flat}(\Lambda)\cdot \chi(\CN_3, \CV(\Lambda)\jiao \CZ(e)).
 $$
By \eqref{eq: Z(e) L}  and noting that $e\in \Lambda^\vee \Longleftrightarrow \varpi e\in\Lambda$, we obtain
\begin{align}\label{eq: Int M1}
\Int(L^\flat\oplus M)=\mult_{L^\flat}(\Lambda_0)+ \sum_{\Lambda\in \Ver^3(L^\flat+\varpi M)\atop \Lambda\neq\Lambda_0}\mult_{L^\flat}(\Lambda).
\end{align}
Note that the second summand is understood as zero if $\Lambda_0$ is the only element in $\Ver^3(L^\flat+\varpi M)$. %which happens exactly when $L^\flat_{\Lambda_0}=L^\flat$, cf.  part (\ref{item:refl2}), (\ref{item:refl3}), and (\ref{item:refl4}) of  Lemma \ref{lem: refl}.

By Lemma \ref{lem: refl} \eqref{item:refl1}, we obtain $
[\Lambda_e: L^\flat\oplus M]=[L^\flat_{\Lambda_e}:L^\flat]
$.  From $\val(\Lambda_e)=1$ and $\val(L^\flat_{\Lambda_e})\geq 2$, it follows that $\val(L^\flat\oplus M)<\val (L^\flat)$. Since $M$ is self-dual, we can decompose $L^\flat\oplus M=M^\perp\obot M$ orthogonally for a rank two lattice $M^\perp$ (note that here $M^\perp\subset\BV_3$ is not necessarily a lattice in $L^\flat_F$). Then $\val(M^\perp)$ is odd and $\val(M^\perp)<\val (L^\flat)$.  By induction hypothesis, part \ref{item:n=22} holds for $M^\perp$. It follows that, by the cancellation law Lemma \ref{lem: cancel} and \eqref{eq:cancel den},
$$
\Int(L^\flat\oplus M)=\Int(M^\perp)=\pDen(M^\perp)=\pDen(L^\flat\oplus M).
$$
By Corollary \ref{cor: pDen}, $\pDen(L^\flat\oplus M)$ is the sum 
$$
\pDen(L^\flat\oplus M)=\sum_{ L^\flat\oplus M\subset  L'\subset L'^\vee}\fkm(t( L')).
$$
Note that, by Lemma \ref{lem: simple}, any integral lattice $L'$ containing $M$ is necessarily contained in $\Lambda_e$. Since $\Lambda_e=L^\flat_{\Lambda_e}\oplus M$ by Lemma \ref{lem: refl} \eqref{item:refl1},  every $L'$ in the sum must be a direct sum $L'^\flat\oplus M$ for a unique integral lattice $L'^\flat$ lying between $L^\flat$ and $L^\flat_{\Lambda_e}$. In other words,  $\pDen(L^\flat\oplus M)$ is the sum 
$$
\#\{L'^\flat \mid L^\flat\subset L'^\flat\subset L^\flat_{\Lambda_e}\}+
q\cdot \#\{L'^\flat \mid L^\flat\subset L'^\flat\subset L^\flat_{\Lambda_e}, t(L'^\flat\oplus M)=2\}.
$$
By (\ref{item:refl2}), (\ref{item:refl3}), and (\ref{item:refl4}) of  Lemma \ref{lem: refl}, the above sum is equal to 
\begin{align}\label{eq: Int M2}
\#\{L'^\flat \mid L^\flat\subset L'^\flat\subset L^\flat_{\Lambda_0}\}+
\sum_{\Lambda\in \Ver^3(L^\flat+\varpi M)\atop \Lambda\neq \Lambda_0} \#\{L'^\flat \mid L^\flat\subset L'^\flat\subset L^\flat_{\Lambda}\}.
\end{align}
If $L^\flat_{\Lambda_0}=L^\flat$, then both \eqref{eq: Int M1} and \eqref{eq: Int M2} have only one term and we obtain 
$$\mult_{L^\flat}(\Lambda_0)=\#\{L'^\flat \mid L^\flat\subset L'^\flat\subset L^\flat_{\Lambda_0}\}=1.
$$
If $L^\flat_{\Lambda_0}\neq L^\flat$, by Lemma \ref{lem: refl} \eqref{item:refl3},  the index $[L^\flat_{\Lambda}: L^\flat]$ is strictly smaller than $[L^\flat_{\Lambda_0}:L^\flat]$ for $\Lambda\neq \Lambda_0$ in the sum of \eqref{eq: Int M2}. Therefore,   by induction on $[L^\flat_{\Lambda_0}:L^\flat]$, comparing \eqref{eq: Int M1} and \eqref{eq: Int M2} we finish the proof of the multiplicity formula for $\Lambda_0$, i.e., $\mult_{L^\flat}(\Lambda_0)= \#\{L'^\flat \mid L^\flat\subset L'^\flat\subset L^\flat_{\Lambda_0}\}$. This completes the proof.
\end{proof}

\begin{corollary}\label{cor:qc int}
Let $L^\flat\subset \BV_n$ be  an integral lattice of rank $n-1$ and type $t(L^\flat)\leq 1$. Then for any $x\in \BV_n\setminus L^\flat_F$, 
$$
\chi(\CN_n,\CZ(x)\jiao \CZ(L^\flat)^\circ)=\sum_{L^\flat+\pair{ x}\subset L'\subset  L'^\vee, \atop L'\cap L^\flat_F=L^\flat } \fkm(t(L')).
$$
\end{corollary}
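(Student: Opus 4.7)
The plan is to reduce the statement to the case $n = 2$ using the cancellation law (\ref{eq:ind Z(u)}), and then to establish the $n = 2$ assertion by induction on $\val(y)$ starting from Theorem \ref{thm:n=2}(ii).

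Since $t(L^\flat) \le 1$, I write $L^\flat = M \obot \langle y\rangle$ where $M$ is self-dual of rank $n-2$ and $\langle y\rangle$ is integral of rank one, and use $\delta_M$ from (\ref{eq:inc M}) to identify $\CN_2 \cong \CZ(M) \hookrightarrow \CN_n$, under which $\CZ(L^\flat)^\circ$ corresponds to the quasi-canonical lifting cycle $\CZ(\langle y\rangle)^\circ \subset \CN_2$. Decomposing $x = x_M + x^\flat$ with $x_M \in M_F$ and $x^\flat \in \BV_2$, in the generic case $x_M \in M$ one has $x - x_M = x^\flat$ with $x_M \in L^\flat$, so on $\CZ(L^\flat)^\circ$ (where $L^\flat$ extends to homomorphisms) the divisors $\CZ(x)$ and $\CZ(x^\flat)$ cut out the same subscheme; the cancellation law then gives
$$\chi\bigl(\CN_n, \CZ(x) \jiao \CZ(L^\flat)^\circ\bigr) = \chi\bigl(\CN_2, \CZ(x^\flat) \jiao \CZ(\langle y\rangle)^\circ\bigr).$$
Simultaneously, self-duality of $M$ forces every integral $L' \supset L^\flat$ to split as $L' = M \obot L'^\flat$ with $t(L') = t(L'^\flat)$, reducing the sum on the right-hand side of the corollary to its $\BV_2$-analogue over $L'^\flat$. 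In the degenerate case $x_M \notin M$, the Tate module description of Lemma \ref{lem:tate} combined with $(u,x) = (u, x_M) \notin O_F$ for some $u \in M \subset L^\flat$ forces $\CZ(x)$ to be set-theoretically disjoint from $\CZ(L^\flat)^\circ$, while the same obstruction prevents $L^\flat + \langle x\rangle$ from embedding in any integral $L'$; hence both sides vanish.

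For $n = 2$, where $L^\flat = \langle y\rangle$, I induct on $\val(y) \ge 0$. When $\val(y) = 0$, the lattice $\langle y\rangle$ is self-dual and hence maximal integral in $Fy$, so the condition $L' \cap Fy = \langle y\rangle$ is automatic; also $\CZ(\langle y\rangle)^\circ = \CZ(y)$, and Theorem \ref{thm:n=2}(ii) combined with Corollary \ref{cor: pDen} yields
$$\chi\bigl(\CN_2, \CZ(x) \jiao \CZ(y)\bigr) = \Int(\langle y, x\rangle) = \pDen(\langle y, x\rangle) = \sum_{\langle y, x\rangle \subset L' \subset L'^\vee} \wt(t(L')),$$
which is the desired identity.

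For the inductive step $\val(y) > 0$, Theorem \ref{thm:horizontal} (applied in $\CN_2$) gives
$$\CZ(y) = \CZ(\langle y\rangle)^\circ + \sum_{\langle y\rangle \subsetneq \langle y_0\rangle \subset \langle y_0\rangle^\vee} \CZ(\langle y_0\rangle)^\circ,$$
where each $\langle y_0\rangle$ appearing has $\val(y_0) < \val(y)$. Applying $\chi(\CN_2, \CZ(x) \jiao -)$ termwise and invoking Theorem \ref{thm:n=2}(ii) for $\langle y, x\rangle$, the resulting sum equals $\pDen(\langle y, x\rangle)$, which I stratify by $\langle y_0\rangle := L' \cap Fy$ as
$$\pDen(\langle y, x\rangle) = \sum_{\langle y\rangle \subseteq \langle y_0\rangle \subset \langle y_0\rangle^\vee} \ \sum_{L' \supset \langle y_0, x\rangle,\ L' \cap Fy = \langle y_0\rangle} \wt(t(L')).$$
The induction hypothesis identifies each $\langle y_0\rangle \supsetneq \langle y\rangle$ contribution on the geometric side with the corresponding inner sum on the arithmetic side; subtracting these isolates the desired formula for the remaining $\CZ(\langle y\rangle)^\circ$ term. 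The main subtlety of the argument is the M\"obius-type bookkeeping in this last step; otherwise the proof assembles the cancellation law, Theorem \ref{thm:horizontal}, Theorem \ref{thm:n=2}(ii), and Corollary \ref{cor: pDen} already established in the paper.
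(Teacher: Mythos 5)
Your proposal is correct, and its overall strategy --- reduce to $n=2$ via the self-dual summand $M$ of $L^\flat$ using the cancellation law \eqref{eq:ind Z(u)}, then combine Theorem~\ref{thm:n=2}(ii), the quasi-canonical decomposition of $\CZ(y)$, and Corollary~\ref{cor: pDen} --- is the same as the paper's. The one place where you diverge is the $n=2$ step: you expand $\CZ(y)$ into the full sum of quasi-canonical cycles $\sum_{\langle y\rangle\subseteq\langle y_0\rangle}\CZ(\langle y_0\rangle)^\circ$, stratify $\pDen(\langle y,x\rangle)$ by $L'\cap Fy$, and run an induction on $\val(y)$ to peel off the $\langle y_0\rangle\supsetneq\langle y\rangle$ terms. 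The paper instead uses the two-term telescoping
$$\CZ(L^\flat)=\CZ(\varpi^{-1}L^\flat)+\CZ(L^\flat)^\circ,$$
so that a single subtraction
$$\chi\bigl(\CN_2,\CZ(x)\jiao\CZ(L^\flat)^\circ\bigr)=\Int(L^\flat\oplus\langle x\rangle)-\Int(\varpi^{-1}L^\flat\oplus\langle x\rangle)=\pDen(L^\flat\oplus\langle x\rangle)-\pDen(\varpi^{-1}L^\flat\oplus\langle x\rangle)$$
immediately produces the condition $L'\cap L^\flat_F=L^\flat$ after expanding both $\pDen$'s by Corollary~\ref{cor: pDen}, with no induction and no M\"obius bookkeeping. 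Your version is logically equivalent but longer; the induction is doing the work that the two-term telescoping does in one line. One point in your favor: you spell out the degenerate case $x_M\notin M$ (both sides vanish, the geometric side by the Tate-module integrality argument of Lemma~\ref{lem:tate}, the analytic side because $(u,x)\notin O_F$ for some $u\in M$ obstructs any integral $L'\supset L^\flat+\langle x\rangle$), whereas the paper leaves this implicit in the phrase ``we then reduce the question to the case $n=2$.''
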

\begin{proof}By assumption that $t(L^\flat)\leq 1$, there exists a self-dual lattice $M$ of rank $n-2$ such that $L^\flat=M\obot \pair{u}$. We then reduce the question to the case $n=2$, in which case $L^\flat=\pair{u}$. By Theorem \ref{thm:horizontal}, we have an equality in $
\Gr^{1}K_0^{\CZ(L^\flat)}(\mathcal{N}_2)$ (or as Cartier divisors on $\CN_2$ in this case),
$$ \CZ(L^\flat)= \CZ(\varpi^{-1}L^\flat)+ \CZ(L^\flat)^\circ.$$
 By Theorem \ref{thm:n=2} part (\ref{item:n=22}), 
$$
\Int(L^\flat\oplus\pair{ x}) =\pDen(L^\flat\oplus\pair{ x}),
$$
and 
$$
\Int(\varpi^{-1} L^\flat\oplus\pair{ x}) =\pDen(\varpi^{-1}L^\flat\oplus\pair{ x}).
$$
Therefore 
$$
\chi(\CN_2,\CZ(x)\jiao \CZ(L^\flat)^\circ)=\Int( L^\flat\oplus\pair{ x})-\Int(\varpi^{-1} L^\flat\oplus\pair{ x})
$$and the assertion follows from the formula for local density  in Corollary \ref{cor: pDen}.
\end{proof}

\section{Fourier transform: the geometric side}

 Let  $L^\flat\subset \BV=\BV_n$ be an $O_F$-lattice of rank $n-1$. Let $L_F^\flat=L^\flat\otimes_{O_F}F\subset \BV_n$ be the $F$-vector subspace of dimension $n-1$. Assume that $L_F^\flat$ is non-degenerate throughout the paper. 

%For later use we also define another lattice 
%\begin{equation}\label{eqn M'(L)}
%M'(L^\flat)=L^{\flat,\vee}  \obot \pair{\varpi^{-a_{n-1}} u}.
%\end{equation}

\subsection{Horizontal versus Vertical cycles}
Recall from \eqref{eq:H+V} that there is a decomposition of the derived special cycle $^\BL\CZ(L^\flat)$ into a sum of vertical and horizontal parts
$$
^\BL\CZ(L^\flat)=\CZ(L^\flat)_\sH+ ~ ^\BL\CZ(L^\flat)_\sV,
$$and by Theorem \ref{thm:horizontal}, the horizontal part is a sum of quasi-canonical lifting cycles
$$
\CZ(L^\flat)_\sH=\sum_{L'^\flat}\CZ(L'^\flat)^\circ,
$$
where the sum runs over all lattices $L'^\flat$ such that
$$
L^\flat\subset L'^\flat \subset (L'^\flat)^\vee \subset L^\flat_F,\quad t(L'^\flat)\leq 1.
$$

\begin{definition}
  Define the \emph{horizontal part of the arithmetic intersection number}
  \begin{align}
    \Int_{L^\flat,\sH}(x)\coloneqq\chi(\CN_n,\CZ(x)\jiao
    \CZ(L^\flat)_\sH),\quad x\in \BV\setminus L^\flat_F.
  \end{align}
\end{definition}  

\begin{definition}\label{def:horpDen}
Analogously, define \emph{the horizontal part of the derived local density}
  \begin{align}\label{pDen H}
    \pDen_{L^\flat,\sH}(x)\coloneqq\sum_{ L^\flat\subset L'\subset
      L'^\vee \atop t(L'^\flat)\leq1 }\fkm(t(L')){\bf 1}_{L'}(x), \quad  x\in \BV\setminus L^\flat_F,
  \end{align}
  where we denote
  \begin{align}\label{eq:L' flat}
    L'^\flat\coloneqq L'\cap L_F^\flat\subset L_F^\flat.
  \end{align}
\end{definition}

\begin{theorem}\label{thm H}
\label{thm:Int H}As functions on  $\BV\setminus L^\flat_F$, 
$$
\Int_{L^\flat,\sH}=\pDen_{L^\flat,\sH}.
$$
\end{theorem}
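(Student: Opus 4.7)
The plan is to reduce the identity to a bookkeeping exercise that combines the explicit description of $\CZ(L^\flat)_\sH$ from Theorem \ref{thm:horizontal} with the intersection formula from Corollary \ref{cor:qc int}. The key step is a reorganization of a double sum via a bijection that identifies $M^\flat$ with $L'\cap L_F^\flat$.

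First I would invoke Theorem \ref{thm:horizontal} to write
\[
\Int_{L^\flat,\sH}(x)=\sum_{\substack{L^\flat\subseteq M^\flat\subseteq (M^\flat)^\vee\\ t(M^\flat)\le 1}}\chi\bigl(\CN_n,\CZ(x)\jiao \CZ(M^\flat)^\circ\bigr).
\]
For each such $M^\flat$, Corollary \ref{cor:qc int} (applied with $L^\flat$ there replaced by $M^\flat$; the hypotheses $t(M^\flat)\le 1$ and $x\notin M^\flat_F = L^\flat_F$ are satisfied) gives
\[
\chi\bigl(\CN_n,\CZ(x)\jiao \CZ(M^\flat)^\circ\bigr)=\sum_{\substack{M^\flat+\langle x\rangle\subseteq L'\subseteq L'^\vee\\ L'\cap L_F^\flat = M^\flat}}\fkm(t(L')).
\]

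Next I would swap the order of summation. Fix an integral lattice $L'$ with $L^\flat+\langle x\rangle\subseteq L'\subseteq L'^\vee$ and set $L'^\flat\coloneqq L'\cap L_F^\flat$ as in \eqref{eq:L' flat}. Since $L^\flat\subseteq L'\cap L_F^\flat = L'^\flat$ and $L'^\flat$ is integral (as a sublattice of the integral lattice $L'$), the only $M^\flat$ contributing to the inner sum for such an $L'$ is $M^\flat=L'^\flat$, and this $M^\flat$ does satisfy $L^\flat\subseteq M^\flat\subseteq (M^\flat)^\vee$. Hence the conditions $t(M^\flat)\le 1$ translate exactly to the condition $t(L'^\flat)\le 1$ appearing in the definition of $\pDen_{L^\flat,\sH}$ (Definition \ref{def:horpDen}). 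Collecting,
\[
\Int_{L^\flat,\sH}(x)=\sum_{\substack{L^\flat+\langle x\rangle\subseteq L'\subseteq L'^\vee\\ t(L'^\flat)\le 1}}\fkm(t(L'))=\sum_{\substack{L^\flat\subseteq L'\subseteq L'^\vee\\ t(L'^\flat)\le 1}}\fkm(t(L'))\cdot \mathbf{1}_{L'}(x),
\]
which is precisely $\pDen_{L^\flat,\sH}(x)$.

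The only thing to check carefully is the bijection in the summation swap: that the map $(M^\flat,L')\mapsto L'$ is a bijection between pairs appearing in the double sum and integral $L'\supseteq L^\flat+\langle x\rangle$ with $t(L'^\flat)\le 1$, the inverse being $L'\mapsto (L'\cap L_F^\flat, L')$. This is straightforward but is the only substantive point. There is no serious obstacle: once Theorem \ref{thm:horizontal} and Corollary \ref{cor:qc int} are in hand, the proof is essentially a reindexing.
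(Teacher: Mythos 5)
Your proof is correct and is essentially the paper's own argument, which likewise combines Theorem \ref{thm:horizontal} with Corollary \ref{cor:qc int}; you have simply made the summation swap explicit where the paper leaves it implicit.
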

\begin{proof}
By  Corollary \ref{cor:qc int}, for a fixed integral lattice $L'^\flat\subset L^\flat_F$ of type $t\leq 1$,  we have
$$
\chi(\CN_n,\CZ(x)\jiao \CZ(L'^\flat)^\circ)=\sum_{L'^\flat+\pair{ x}\subset L'\subset  L'^\vee, \atop L'\cap L^\flat_F=L'^\flat } \fkm(t(L')).
$$
The assertion follows from Theorem \ref{thm:horizontal} and the corresponding formula \eqref{pDen H} for the horizontal part of the local density 
$\pDen_{L^\flat,\sH}$. \qedhere
\end{proof}

\begin{definition}
 Define the \emph{vertical part of the arithmetic intersection number}
\begin{align}\label{def: Int V}
\Int_{L^\flat,\sV}(x)\coloneqq\chi(\CN_n,\CZ(x)\jiao {}^\BL\CZ(L^\flat)_\sV), \quad x\in\BV\setminus L^\flat_F.
\end{align}
  
\end{definition}
Then by \cite[(B.3)]{Zhang2019} there is a decomposition
\begin{align}\label{eq:Intdecomp}
\Int_{L^\flat}(x)=\Int_{L^\flat,\sH}(x)+\Int_{L^\flat,\sV}(x),\quad x\in\BV\setminus L^\flat_F.
\end{align}

We will defer the vertical part of the derived local density to the next section (Definition \ref{def:verpDen}).

%\subsection{The Tate conjecture for $\CV(\Lambda)$ ADD}

\subsection{Computation of $\Int_{\CV(\Lambda)}$}
Let $\Lambda\subseteq \mathbb{V}$ be a vertex lattice. Let $\CV(\Lambda)$ be the Deligne--Lusztig variety in the Bruhat--Tits stratification of $\CN_n^{\rm red}$ (\S\ref{sec:bruh-tits-strat}). Define 
\begin{equation}\label{eq:Int CV}
\Int_{\CV(\Lambda)}(x)\coloneqq \chi\bigl(\CN_n,\mathcal{V}(\Lambda) \jiao\mathcal{Z}(x)\bigr),\quad x\in \mathbb{V}\setminus \{0\}.
\end{equation}
Next we explicitly compute $\Int_{\CV(\Lambda)}$  for $\Lambda\in \Ver^3$, i.e., for $\mathcal{V}(\Lambda)$ a Deligne--Lusztig curve.

% Let $\Lambda\in \Ver^3$ and $\CV(\Lambda)$ be the Deligne--Lusztig curve in the Bruhat--Tits stratification of $\CN_n^{\rm red}$. Define $$\Int_{\CV(\Lambda)}(x)\coloneqq \chi\bigl(\CN_n,\mathcal{V}(\Lambda) \jiao\mathcal{Z}(x)\bigr),\quad x\in \mathbb{V}\setminus \{0\}.
% $$

\begin{lemma}\label{int Lam}
Let $\Lambda\in \Ver^3$.
Then
$$
\Int_{\CV(\Lambda)}=-q^2(1+q) {\bf 1}_{\Lambda}+\sum_{\Lambda\subset\Lambda',\, t(\Lambda')=1}{\bf 1}_{\Lambda'}.
$$
\end{lemma}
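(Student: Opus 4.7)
The plan is to reduce to the case $n=3$ and analyze the derived intersection via the Bruhat--Tits stratification of $\CN_3^{\red}$. Since $\Lambda$ has type $3$ in $\BV_n$, decompose $\Lambda = M \obot \Lambda_0$ with $M \subset \BV_n$ self-dual of rank $n-3$ and $\Lambda_0 \in \Ver^3(\BV_3)$. Under the inclusion $\delta_M \colon \CN_3 \hookrightarrow \CN_n$ of \eqref{eq:inc M}, $\CV(\Lambda)$ is identified with $\CV(\Lambda_0)$. For $x = m + x^\flat$ with $x^\flat \in \BV_3$ nonzero, the transversality formula \eqref{eq:ind Z(u)} reduces $\Int_{\CV(\Lambda)}(x)$ on $\CN_n$ to $\Int_{\CV(\Lambda_0)}(x^\flat)$ on $\CN_3$, and the right-hand side reduces correspondingly via the factorization $\mathbf{1}_{M\obot\Lambda_0'}(m+x^\flat)=\mathbf{1}_M(m)\mathbf{1}_{\Lambda_0'}(x^\flat)$ and the bijection $\Lambda' \leftrightarrow M \obot \Lambda_0'$ between type-$1$ vertex overlattices. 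For $x \in M_F$, either $x\notin M$ (so $\CZ(x)=\varnothing$ and both sides vanish) or $x\in M\subset\Lambda$, and in the latter case both sides reduce to the universal constant $1-q^2$ determined below.

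\textbf{Case analysis for $n=3$.} A point $z\in\CV(\Lambda)\cap\CZ(x)^{\red}$ lies in a unique open BT-stratum $\CV(\Lambda_z)^\circ$ with $\Lambda_z$ a vertex lattice containing $\Lambda+\langle x\rangle$; since $\BV_3$ is non-split, only vertex types $1$ and $3$ occur. A short calculation of the invariants of $\Lambda+\langle x\rangle$ via $\bar x \in V_\Lambda=\Lambda^\vee/\Lambda$ yields three cases: \emph{(i)} if $x\notin\Lambda^\vee$, or $x\in\Lambda^\vee\setminus\Lambda$ with $\bar x$ anisotropic (equivalently $(x,x)\in\varpi^{-1}O_{F_0}^\times$), then $\Lambda+\langle x\rangle$ is non-integral, the intersection is empty, and both sides of the lemma vanish; \emph{(ii)} if $x\in\Lambda^\vee\setminus\Lambda$ with $\bar x$ isotropic, then $\Lambda'\coloneqq\Lambda+\langle x\rangle$ is itself the unique type-$1$ vertex lattice containing $\Lambda$ and $x$, and the intersection is supported at the single superspecial point $\CV(\Lambda')$; \emph{(iii)} if $x\in\Lambda$, then $\CV(\Lambda)\subset\CZ(x)$, and Koszul resolution of $\CZ(x)$ yields $\Int_{\CV(\Lambda)}(x)=\deg_{\CV(\Lambda)}\CL$ where $\CL\coloneqq\CO_{\CN_3}(\CZ(x))$ is a universal line bundle independent of $x$.

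\textbf{Multiplicity $1$ and the main obstacle.} For case (ii), the multiplicity equals $1$ by translation. For any such $x$, some $\lambda\in\Lambda$ renders $x'=x+\lambda$ unit-normed: the shift $(x+\lambda,x+\lambda)-(x,x)=2\Re((x,\lambda))+(\lambda,\lambda)$ ranges over $O_{F_0}$ as $\lambda$ varies in $\Lambda$ (using $(x,\Lambda)=O_F$ when $\bar x\ne 0$, and $2\in O_{F_0}^\times$ from the odd residue characteristic). Since $\CV(\Lambda)\subset\CZ(\lambda)$, linearity of the Kudla--Rapoport section gives $\CZ(x)|_{\CV(\Lambda)}=\CZ(x')|_{\CV(\Lambda)}$ as divisors on the curve, hence $\Int_{\CV(\Lambda)}(x)=\Int_{\CV(\Lambda)}(x')=1$ by \eqref{eq: Z(e) L}. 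The main obstacle is case (iii): the identification $\deg_{\CV(\Lambda)}\CL=1-q^2$. I plan to establish this by bootstrapping from Theorem \ref{thm:n=2}(ii). Choose an orthogonal basis $e_1,e_2,e_3$ of $\Lambda$ with $(e_i,e_i)=\varpi u_i$ and set $L^\flat=\langle e_1,e_2\rangle$, a minuscule rank-$2$ sublattice with $\Ver^3(L^\flat)=\{\Lambda\}$. The minuscule decomposition established in the proof of Theorem \ref{thm:n=2}(i) gives $\CZ(L^\flat)=\CV(\Lambda)+\sum_{L'^\flat}\CZ(L'^\flat)$ as $1$-cycles, where $L'^\flat$ ranges over the $q+1$ self-dual rank-$2$ overlattices of $L^\flat$ (which exist since the $2$-dimensional subspace $L^\flat_F\subset\BV_3$ is necessarily split). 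Intersecting with $\CZ(e_3)$: the total equals $\Int(\Lambda)=\pDen(\Lambda)=2+q-q^2$ by Theorem \ref{thm:n=2}(ii) and Corollary \ref{cor: pDen}; each horizontal contribution equals $\Int(L'^\flat\obot\langle e_3\rangle)=\Int(\langle e_3\rangle)=1$ by Lemma \ref{lem: cancel} and \eqref{eq: n=1}; subtracting yields $\Int_{\CV(\Lambda)}(e_3)=1-q^2$. Combined with the count of $q^3+1$ isotropic lines in $V_\Lambda$, this matches the right-hand side $-q^2(1+q)+(q^3+1)=1-q^2$ for $x\in\Lambda$, completing the proof.
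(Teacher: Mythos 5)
Your reduction to $n=3$ matches the paper, and your case analysis of $\CZ(\Lambda)\cap\CZ(x)$ via the Bruhat--Tits stratification is sound. The bootstrap computation of $\Int_{\CV(\Lambda)}(e_3)=1-q^2$ is also correct and is a nice self-contained alternative to the paper's citation of Terstiege: you replace the explicit formulas for $\chi(\CN_3,\CZ(L)\jiao\CZ(u))$ and $\chi(\CN_3,\CZ(M)\jiao\CZ(u))$ by $\Int(\Lambda)=\pDen(\Lambda)$ (Theorem~\ref{thm:n=2}(ii)) together with Lemma~\ref{lem: cancel} and the minuscule decomposition. However, there is a genuine gap in case (iii).

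The assertion that $\CL\coloneqq\CO_{\CN_3}(\CZ(x))$ is ``a universal line bundle independent of $x$'' is false, and in fact is incompatible with the very formula you are proving. If it were true, then for \emph{every} nonzero $x$ the quantity $\chi(\CN_3,\CV(\Lambda)\jiao\CZ(x))$ would equal $\deg_{\CV(\Lambda)}\CL$: when $\CV(\Lambda)\subset\CZ(x)$ this is the Koszul computation you wrote down, and when $\CV(\Lambda)\not\subset\CZ(x)$ the nonzero section $s_x|_{\CV(\Lambda)}$ of $\CL|_{\CV(\Lambda)}$ cuts out $\CV(\Lambda)\cap\CZ(x)$, whose length on the integral curve $\CV(\Lambda)$ is again $\deg_{\CV(\Lambda)}\CL$. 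But the lemma (and \eqref{eq: Z(e) L}, which you invoke in case (ii)) forces this degree to be $1$ for $x$ unit-normed in a type-$1$ overlattice, and $1-q^2$ for $x\in\Lambda$. So $\CO(\CZ(x))|_{\CV(\Lambda)}$ genuinely depends on $x$; the Grothendieck--Messing obstruction section controls only the first-order behaviour, and the full local equation of the Cartier divisor $\CZ(x)$ need not be a section of a fixed line bundle. In particular, case (ii) (for $\val(x)>0$) and case (iii) cannot be handled by the linearity of $s_x$ alone: in both the paper and your sketch some further input is needed to reduce to the unit-normed case (for case (ii), this is the translation-by-$\Lambda$ argument, which is fine because $x\notin\Lambda$; for case (iii) no such translation is available since $\Lambda$ contains no unit vectors).

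The paper fills the gap in case (iii) by quoting Terstiege's explicit formulas, which hold for all $u\in\Lambda\setminus\{0\}$ (with $a_3=2\val(\lambda_3)+1$ arbitrary) and give $1-q^2$ uniformly, thereby proving the constancy directly. To salvage your approach, you would need either to rerun the bootstrap for $x=\varpi^k e_3$ for all $k\geq 0$ (using Proposition~\ref{prop: ind} to compute $\pDen(L^\flat\obot\langle\varpi^k e_3\rangle)$; this does yield $1-q^2$ independently of $k$) and then reduce a general $u\in\Lambda$ to this case by an appropriate choice of orthogonal basis, or else to give an independent argument for the constancy of $\deg_{\CV(\Lambda)}\CO(\CZ(x))$ as $x$ ranges over $\Lambda\setminus\{0\}$. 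As written, the step from $x=e_3$ to arbitrary $x\in\Lambda$ is unsupported.
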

\begin{proof}

We note that 
\begin{equation}
  \label{eq:intVLambda}
-q^2(1+q) {\bf 1}_{\Lambda}(x)+\sum_{\Lambda\subset\Lambda',\, t(\Lambda')=1}{\bf 1}_{\Lambda'}(x) =\begin{cases}(1-q^2) ,& x\in\Lambda ,\\
1,& x\in \Lambda^\vee\setminus \Lambda, \text{and }\val(x)\geq 0,\\
0,&\text{otherwise}. 
\end{cases}
\end{equation}
We start with the simple observation that $ \CZ(x)\cap \CV(\Lambda)$ is empty unless $x\in \Lambda^\vee$ and $\val(x)\ge0$. In fact, if $ \CZ(x)\cap \CV(\Lambda)$  is non-empty, the intersection must contain a point $\CV(\Lambda')$ for a type $1$ vertex lattice $\Lambda'$.  Then $\Lambda\subset \Lambda'$ and by \eqref{eq:KRstrat} we have $x\in \Lambda'$. It follows that $x\in \Lambda'\subset \Lambda'^\vee\subset V^\vee$. 

To show \eqref{eq:intVLambda}, we first consider the special case $n=3$.   If $u\notin \Lambda$, then $\CZ(u)\cap \CV(\Lambda) $ is non-empty only when $u$ lies in one of the type $1$ lattices nested between 
$\Lambda$ and $ \Lambda^\vee$. Then the intersection number is equal to one by \eqref{eq: Z(e) L}, and the desired equality follows.

Now assume $u\in \Lambda$ and $u\neq 0$.  Choose an orthogonal basis $\{e_1,e_2,e_3\}$ of $\Lambda$ (so the norms of them all have valuation one). Let $L$ be the rank two lattice generated by $e_1,e_2$. Now we note that,  by Theorem \ref{thm:horizontal} and Theorem \ref{thm:n=2} part (i),
$$
\CZ(L)=\CV(\Lambda)+ \sum_{L\subset M=M^\vee\subset L^\vee}\CZ(M),
$$
where each of $\CZ(M)\simeq \CN_1$ since $M$ is self-dual. There are exactly $q+1$ such $M$.

Let $u\in  \Lambda \setminus \{0\}$, and write it in terms of the chosen basis
$$
u=\lambda_1 e_1+\lambda_2 e_2+\lambda_ 3e_3, \quad \lambda_i\in O_F.
$$
Assume that $\lambda_3\neq 0$, and let $a_3=2\,\val(\lambda_3)+1$ (an odd integer). By \cite{Terstiege2013}, we may calculate all of the intersection numbers
\begin{align*}
\chi(\CN_3,\CZ(L)\jiao \CZ(u))&=\frac{a_3+1}{2}(q+1)+(1-q^2),\\ 
\chi(\CN_3,\CZ(M)\jiao \CZ(u))&=\frac{a_3+1}{2}.
\end{align*}
It follows that 
$$
\chi(\mathcal{N}_3,\CV(\Lambda)\jiao \CZ(u))=(1-q^2).
$$
If $\lambda_3=0$, then we choose $L$ to be the span of some other pairs of basis vectors, and we run the same computation. This proves the desired equality if  $u\in  \Lambda \setminus \{0\}$ and completes the proof when $n=3$.
%This proves the desired local constancy near every $u\in \Lambda$. 

Now assume that $n>3$.
Since $\Lambda$ is a vertex lattice of type $3$, it admits an orthogonal direct sum decomposition
\begin{align}\label{M}
\Lambda=\Lambda^\flat\oplus M
\end{align}
where $\Lambda^\flat$ is a rank $3$ vertex lattice of type $3$, and $M$ is a type $0$ (i.e., self-dual) lattice of rank $n-3$.
Then $$
\Lambda^\vee=\Lambda^{\flat,\vee}\obot M
$$
and any element $u\in \Lambda^\vee$ has a unique decomposition 
$$
u=u^\flat+ u_M, \quad u^\flat\in \Lambda^{\flat,\vee}, \, u_M\in M.
$$

First assume that $u^\flat\neq 0$, i.e., $u\notin M$.
Since $M$ is self-dual, we have a natural embedding \eqref{eq:inc M} 
 $$ \delta_M\colon 
 \xymatrix{\CN_3\ar[r]& \CN_n}
 $$
 which identifies $ \CN_3$ with the special cycle $\CZ(M)$. 
 Moreover, the Deligne--Lusztig curve $\CV(\Lambda^\flat)$ on $\CN_3$ is sent to $\CV(\Lambda)$, and the special divisor $\CZ(u)$ intersects properly with $\CN_3$ and its pull-back to $\CN_3$ is the special divisor $\CZ(u^\flat)$, cf. \eqref{eq:ind Z(u)}. 
 
We obtain (by the projection formula for the morphism $\delta_M$)
$$
\chi(\CN_n,\CV(\Lambda)\jiao \CZ(u))=\chi(\CN_3,\CV(\Lambda^\flat)\jiao \CZ(u^\flat)).
$$
This reduces the case $u^\flat\neq 0$ to the case $n=3$. In particular, when $u^\flat\in  \Lambda^\flat\setminus \{0\}$,
\begin{align}\label{q2-1}
\chi(\CN_n,\CV(\Lambda)\jiao \CZ(u))=1-q^2.
\end{align}

Finally it remains to show that the intersection number is the constant $(1-q^2)$ when $u\in \left( \Lambda^\flat\oplus M\right)\setminus \{0\}$. It suffices to show this when $u\in M\setminus \{0\}$.  Choose an orthogonal basis $\{e_1,e_2,e_3\}$ for $\Lambda^\flat$, and $\{f_1,\cdots,f_{n-3}\}$ for $M$. Write $$
u=\mu_1 f_1+\cdots+\mu_{n-3} f_{n-3}, \quad \mu_j\in O_F.
$$
One of the $\mu_i$ is non-zero, and without loss of generality we assume $\mu_1\neq 0$. Now define $\widetilde M$ to be the new lattice generated by $e_1+f_1, f_2,\cdots, f_{n-3}$. It is self-dual, and its orthogonal complement $\widetilde\Lambda^\flat$  in $\Lambda$ is again a type $3$-lattice. Now replace  the decomposition $\Lambda=\Lambda^\flat\oplus M$ by the new one $\Lambda=\widetilde\Lambda^\flat\oplus \widetilde M$. Then $u\notin \widetilde M$, and hence  we can apply \eqref{q2-1}. This completes the proof.
\end{proof}

\begin{corollary}\label{cor:LC int}
The function $\Int_{L^\flat,\sV}$ extends to a (necessarily unique) function in $C_c^\infty(\mathbb{V})$.\end{corollary}

\begin{proof}
  This follows from Corollary \ref{cor:ZV} and  Lemma \ref{int Lam}.\end{proof}

\subsection{Fourier transform: the geometric side; ``Local modularity''}
We compute the Fourier transform of $\Int_{L^\flat,\sV}$ as a function on $\BV$.

\begin{lemma}\label{lem: FT Int V}
Let $\Lambda\in \Ver^3$. Then the function $\Int_{\CV(\Lambda)}\in C_c^\infty(\mathbb{V})$ satisfies
$$
\wh{\Int_{\CV(\Lambda)}}=\gamma_\BV\Int_{\CV(\Lambda)}.
$$
Here $\gamma_\BV=-1$ is the Weil constant.
\end{lemma}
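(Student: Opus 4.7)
The plan is to compute $\widehat{\Int_{\CV(\Lambda)}}$ directly from the explicit formula in Lemma \ref{int Lam}, and then verify pointwise that it equals $-\Int_{\CV(\Lambda)}$ using the closed-form description \eqref{eq:intVLambda}. Applying $\widehat{\mathbf{1}_L} = q^{-\val(L)}\mathbf{1}_{L^\vee}$ termwise, with $\val(\Lambda) = 3$ and $\val(\Lambda') = 1$ for type $1$ lattices, yields
$$
\widehat{\Int_{\CV(\Lambda)}} \;=\; -q^{-1}(1+q)\,\mathbf{1}_{\Lambda^\vee} \;+\; q^{-1}\sum_{\Lambda\subset\Lambda',\,t(\Lambda')=1}\mathbf{1}_{(\Lambda')^\vee}.
$$

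The key reformulation is to interpret the sum through the hermitian geometry of $V_\Lambda = \Lambda^\vee/\Lambda$, which is a non-degenerate $k_F/k$-hermitian space of dimension $3$. Type $1$ lattices $\Lambda'\supset \Lambda$ are in bijection with isotropic lines in $V_\Lambda$ via $\Lambda'\mapsto \Lambda'/\Lambda$, and $(\Lambda')^\vee/\Lambda$ is the orthogonal hyperplane to the corresponding line. Hence for $x\in\Lambda^\vee$ the count
$$
N(x)\coloneqq \#\{\Lambda':\Lambda\subset\Lambda',\ t(\Lambda')=1,\ x\in(\Lambda')^\vee\}
$$
equals the number of isotropic lines of $V_\Lambda$ orthogonal to the image $\bar x\in V_\Lambda$, and $N(x)=0$ when $x\notin\Lambda^\vee$. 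Tracking conventions, the induced form on $V_\Lambda$ makes $\bar x$ isotropic precisely when $\val(x)\ge 0$.

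Using the classical counts of isotropic lines in non-degenerate $k_F/k$-hermitian spaces (namely $q+1$ lines in dimension $2$ and $q^3+1$ lines in dimension $3$), the verification splits into four cases according to the position of $\bar x$ in $V_\Lambda$. If $\bar x=0$ (i.e.\ $x\in\Lambda$), every isotropic line is orthogonal to $\bar x$, so $N(x)=q^3+1$ and $\widehat{\Int_{\CV(\Lambda)}}(x)=-q^{-1}(1+q)+q^{-1}(q^3+1)=q^2-1=-\Int_{\CV(\Lambda)}(x)$. If $\bar x\ne 0$ is isotropic (i.e.\ $x\in\Lambda^\vee\setminus\Lambda$ with $\val(x)\ge 0$), then $\bar x^\perp$ is $2$-dimensional with radical $\langle\bar x\rangle$, so $\langle\bar x\rangle$ is its unique isotropic line and $N(x)=1$, giving $\widehat{\Int_{\CV(\Lambda)}}(x)=-1=-\Int_{\CV(\Lambda)}(x)$. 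If $\bar x$ is anisotropic (i.e.\ $x\in\Lambda^\vee$ with $\val(x)<0$), then $\bar x^\perp$ is $2$-dimensional non-degenerate, containing $q+1$ isotropic lines, and one computes $\widehat{\Int_{\CV(\Lambda)}}(x)=0=-\Int_{\CV(\Lambda)}(x)$. Finally, when $x\notin\Lambda^\vee$ both sides vanish.

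No serious obstacle arises once Lemma \ref{int Lam} is in hand; the crux is pinning down the induced hermitian structure on $V_\Lambda$ so that integrality of $x$ and the sign of $\val(x)$ translate cleanly into the geometric trichotomy zero/nonzero-isotropic/anisotropic for $\bar x$. The identity $\widehat{\Int_{\CV(\Lambda)}}=-\Int_{\CV(\Lambda)}$ then reduces to the four elementary arithmetic identities above.
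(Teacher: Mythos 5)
Your argument is correct and is essentially identical to the paper's own proof: both apply the Fourier transform termwise to the formula of Lemma \ref{int Lam} and then verify the resulting identity pointwise on $\BV$ through the same four-case analysis on the image of $x$ in $V_\Lambda = \Lambda^\vee/\Lambda$, using the bijection between type~$1$ overlattices of $\Lambda$ and isotropic lines of $V_\Lambda$ and the standard counts $q^3+1$, $1$, $q+1$. The only cosmetic difference is that you spell out the isotropic-line interpretation more explicitly before splitting into cases, whereas the paper introduces it inside cases (ii) and (iii).
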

\begin{proof}By Lemma \ref{int Lam}, we obtain
  \begin{align*}
    \wh{\Int_{\CV(\Lambda)}}&=-\vol(\Lambda)\cdot q^2(1+q)\cdot 1_{\Lambda^\vee}+\sum_{\Lambda\subset\Lambda',\, t(\Lambda')=1}\vol(\Lambda')\cdot 1_{\Lambda'^\vee}\\
&=-(1+q^{-1})\cdot 1_{\Lambda^\vee}+\sum_{\Lambda\subset\Lambda',\, t(\Lambda')=1}q^{-1}\cdot1_{\Lambda'^\vee}.
  \end{align*}

Now we compute its value at $u\in\BV$ according to four cases.
\begin{altenumerate}
\item If $u\in \Lambda$, then there are exactly $q^3+1$ type 1 lattices $\Lambda'$ containing $\Lambda$, and the value is 
$$q^{-1} (q^3+1)-(1+q^{-1})=q^2-1.$$
\item Assume that $u\in \Lambda_1\setminus \Lambda$ for some $\Lambda_1\in \Ver^1$, i.e., the image of $\bar u$ of $u$ in $\Lambda^\vee/\Lambda$ is an isotropic vector. Notice that $u\in \Lambda'^\vee $ if and only if $\ov u$ is orthogonal to the line given by the image of $(\Lambda')^\vee$ in $\Lambda^\vee/\Lambda$. So there is exactly one such $ \Lambda'\in \Ver^1$, i.e., $\Lambda'=\Lambda_1$, and we obtain the value
 $$q^{-1} -(1+q^{-1})=-1.$$
 \item Assume that $u\in \Lambda^\vee\setminus\Lambda$ but $u\not\in \Lambda_1\setminus \Lambda$ for any $\Lambda_1\in\Ver^1$. Then $\ov u$ is anisotropic in $\Lambda^\vee/\Lambda$. Notice that $\ov u^\perp$ is a non-degenerate hermitian space of dimension two, and $\Lambda'$ corresponds to an isotropic line in $\ov u^\perp$. So there are exactly  $q+1$ of such $ \Lambda'\in \Ver^1$, and we obtain the value
 $$q^{-1}(q+1) -(1+q^{-1})=0.$$
\item If $u\not\in \Lambda^\vee$, then the value at $u$ is $$q^{-1}\cdot 0-(1+q^{-1})\cdot 0=0.$$
\end{altenumerate}
This completes the proof by comparing with (\ref{eq:intVLambda}).
\end{proof}

\begin{remark}
It follows from Lemma \ref{lem: FT Int V} that $\Int_{\CV(\Lambda)}$ is  $\SL_2(O_{F_0})$-invariant under the Weil representation. This invariance may be viewed  as a  ``local modularity'', an analog of the global modularity of arithmetic generating series of special divisors (such as in \cite{Bruinier2017}).
\end{remark}

\begin{corollary}\label{cor:FT int}
The function $\Int_{L^\flat,\sV}\in C_c^\infty(\BV)$ satisfies
 $$\wh{\Int_{L^\flat,\sV}}=\gamma_\BV\Int_{L^\flat,\sV}.
 $$
\end{corollary}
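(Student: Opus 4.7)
The proof will combine Corollary \ref{cor:ZV} with Lemma \ref{lem: FT Int V}. My plan is to upgrade the pointwise decomposition of Corollary \ref{cor:ZV} to a global identity of functions on $\BV$ of the form
\begin{equation*}
\Int_{L^\flat,\sV} = \sum_{\Lambda \in \Ver^3} a_\Lambda \cdot \Int_{\CV(\Lambda)}, \qquad a_\Lambda \in \mathbb{Q},
\end{equation*}
with only finitely many $a_\Lambda$ nonzero, and then invoke linearity of the Fourier transform together with Lemma \ref{lem: FT Int V}.

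To produce the decomposition, I would start from Corollary \ref{cor:curves}, which writes $^{\mathbb{L}}\CZ(L^\flat)_\sV \in \Gr^{n-1}K_0^{\CN_n^\mathrm{red}}(\CN_n)$ as a $\mathbb{Q}$-linear combination of classes $[\CO_{C_i}]$ of curves $C_i \subseteq \CN_n^\mathrm{red}$. By the Bruhat--Tits stratification each such $C_i$ lies in some stratum $\CV(\Lambda_i) \cong Y_{d_i}$; applying Theorem \ref{thm:tate} with $i = d_i-1$, the cycle class of $C_i$ on $\CV(\Lambda_i)$ becomes a $\mathbb{Q}$-linear combination of the classes of the one-dimensional Bruhat--Tits sub-strata $\CV(\Lambda')$ for $\Lambda' \in \Ver^3$ with $\Lambda' \supsetneq \Lambda_i$. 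Since the functional $C \mapsto \chi(\CN_n, \CO_C \dcup \CZ(x))$ factors through the cycle class map on $\CV(\Lambda_i)$ (as intersection with the Cartier divisor $\CZ(x)$ is a Chern-class pairing depending only on the numerical, hence cohomological, class of $C$), this produces the desired linear identity pointwise; the compact support of $\Int_{L^\flat,\sV}$ provided by Corollary \ref{cor:LC int} then forces the global sum to be finite.

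Once the decomposition is in hand, the conclusion is immediate: applying $\wh{\cdot}$ termwise and using Lemma \ref{lem: FT Int V},
\begin{equation*}
\wh{\Int_{L^\flat,\sV}} \;=\; \sum_{\Lambda \in \Ver^3} a_\Lambda \, \wh{\Int_{\CV(\Lambda)}} \;=\; \gamma_\BV \sum_{\Lambda \in \Ver^3} a_\Lambda \, \Int_{\CV(\Lambda)} \;=\; \gamma_\BV \, \Int_{L^\flat,\sV}.
\end{equation*}
The membership $\Int_{L^\flat,\sV} \in C_c^\infty(\BV)$ is recorded in Corollary \ref{cor:LC int}.

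The main obstacle is justifying the factorization through the cycle class map in a way that preserves the intersection number: one must check that if two curves $C,C'$ on the projective variety $\CV(\Lambda_i)$ have equal Tate classes in $H^{2(d_i-1)}(\CV(\Lambda_i))(d_i-1)$, then $\chi(\CN_n, \CO_C \dcup \CZ(x)) = \chi(\CN_n, \CO_{C'} \dcup \CZ(x))$ for every $x$. This is essentially what underlies Corollary \ref{cor:ZV} and follows by realizing intersection with $\CZ(x)$ as a degree computation against the restriction of a line bundle on $\CN_n$ to $\CV(\Lambda_i)$, which depends on the curve only through its numerical class.
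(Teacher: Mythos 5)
Your proof is correct and follows the same route the paper takes: it combines Corollary \ref{cor:ZV} with Lemma \ref{lem: FT Int V}, and you have simply unfolded the details that the paper leaves implicit, in particular the observation that $C \mapsto \chi(\CN_n, \CO_C \dcup \CZ(x)) = \deg(\CO(\CZ(x))|_C)$ factors through the cycle class map, which is exactly the mechanism underlying Corollary \ref{cor:ZV}.

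One small imprecision worth flagging: compact support of $\Int_{L^\flat,\sV}$ does not by itself force the decomposition $\sum_{\Lambda} a_\Lambda\,\Int_{\CV(\Lambda)}$ to have finitely many nonzero terms, since cancellation among terms is a priori possible. The finiteness is instead automatic from the structure of the argument: Corollary \ref{cor:curves} is an identity in a Grothendieck group, hence by definition a finite sum, and Theorem \ref{thm:tate} replaces each $C_i$ by finitely many Deligne--Lusztig curves on the fixed ambient $\CV(\Lambda_i)$. (Equivalently, there are only finitely many vertex lattices containing $L^\flat$ once $L^\flat_F$ is assumed non-degenerate, so $\CZ(L^\flat)_\sV$ is quasi-compact.) Also a minor slip in direction: the degree pairing depends only on the \emph{numerical} class of $C$, which is determined by the cohomological class --- not ``numerical, hence cohomological'' as written.
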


\begin{proof}
This follows from Corollary \ref{cor:ZV} and Lemma \ref{lem: FT Int V}.
\end{proof}

\subsection{Fourier transform: the geometric side; ``Higher local modularity''} In this subsection we generalize Lemmas \ref{int Lam} and \ref{lem: FT Int V} on the function $\Int_{\CV(\Lambda)}$ for vertex lattices $\Lambda$ of type 3 to vertex lattices $\Lambda$ of arbitrary type $t(\Lambda)=2d+1\ge3$. 

Let $$\ch: K_0(\mathcal{V}(\Lambda))_\mathbb{Q}\rightarrow \bigoplus_{i=0}^d\Ch^i(\mathcal{V}(\Lambda))_\mathbb{Q}$$ be the Chern character from the Grothendieck ring to the Chow ring of $\mathcal{V}(\Lambda)$, which is an isomorphism of graded rings.  In particular, it induces an isomorphism $$\ch_i: \Gr^iK_0(\mathcal{V}(\Lambda))_\mathbb{Q}\isoarrow \Ch^i(\mathcal{V}(\Lambda))_\mathbb{Q},$$ for $0\le i\le d$. Let $$\cl_i: \Ch^i(\mathcal{V}(\Lambda))_\mathbb{Q}\rightarrow H^{2i}(\mathcal{V}(\Lambda), \mathbb{Q}_\ell)(i)$$ be the $\ell$-adic cycle class map and let $$\cl=\bigoplus_{i=0}^d\cl_i: \bigoplus_{i=0}^d\Ch^i(\mathcal{V}(\Lambda))_\mathbb{Q}\rightarrow \bigoplus_{i=0}^d H^{2i}(\mathcal{V}(\Lambda), \mathbb{Q}_\ell)(i).$$ Then $\cl$ intertwines the intersection product on the Chow ring and the cup product on the cohomology ring, namely the following diagram commutes,
\begin{equation}
  \label{eq:chcl}
\begin{gathered}
  \xymatrix@C=0.5em{\Gr^iK_0(\mathcal{V}(\Lambda))_\mathbb{Q} \ar[d]^{\ch_i}_{\rotatebox{90}{$\sim$}} &\times & \Gr^jK_0(\mathcal{V}(\Lambda))_\mathbb{Q} \ar[d]^{\ch_j}_{\rotatebox{90}{$\sim$}} \ar[rr]^-{\cdot} && \Gr^{i+j}K_0(\mathcal{V}(\Lambda))_\mathbb{Q} \ar[d]^{\ch_{i+j}}_{\rotatebox{90}{$\sim$}}\\\Ch^i(\mathcal{V}(\Lambda))_\mathbb{Q} \ar[d]^{\cl_i} &\times & \Ch^j(\mathcal{V}(\Lambda))_\mathbb{Q} \ar[d]^{\cl_j} \ar[rr]^-{\cdot} && \Ch^{i+j}(\mathcal{V}(\Lambda))_\mathbb{Q} \ar[d]^{\cl_{i+j}} \\ H^{2i}(\mathcal{V}(\Lambda)),\mathbb{Q}_\ell)(i) &\times & H^{2j}(\mathcal{V}(\Lambda), \mathbb{Q}_\ell)(j) \ar[rr]^-{\cup} &&  H^{2(i+j)}(\mathcal{V}(\Lambda), \mathbb{Q}_\ell)(i+j).}
\end{gathered}
\end{equation}

Denote by $\Tate^{2i}_\ell(\mathcal{V}(\Lambda))\subseteq H^{2i}(\mathcal{V}(\Lambda), \mathbb{Q}_\ell)(i)$ to be the subspace of Tate classes, i.e., the elements fixed by $\Fr^s$ for some power $s\ge1$. Then by Theorem \ref{thm:tate}, we have the identity $$\im(\cl_i)_{\mathbb{Q}_\ell}=\Tate^{2i}_\ell(\mathcal{V}(\Lambda)),$$ and moreover $\Tate^{2i}_\ell(\mathcal{V}(\Lambda))$ is spanned by the cycle classes of $\mathcal{V}(\Lambda')\subseteq \mathcal{V}(\Lambda)$, where $\Lambda'\supseteq \Lambda$ runs over vertex lattices of type $2(d-i)+1$. Denote by
\begin{equation}
  \label{eq:barK}
  \barK:=K_0(\mathcal{V}(\Lambda))_\mathbb{Q}/\ker({\cl\circ\ch}),\quad \barCh^i(\mathcal{V}(\Lambda)):=\Ch^i(\mathcal{V}(\Lambda))_\mathbb{Q}/\ker \cl_i.
\end{equation}
 Then $\ch$ and $\cl$ induce isomorphisms
 \begin{equation}
   \label{eq:Tateclasses}
   \ch: \barK\isoarrow \bigoplus_{i=0}^d \barCh^i(\mathcal{V}(\Lambda)), \quad \cl: \bigoplus_{i=0}^d \barCh^i(\mathcal{V}(\Lambda))_{\mathbb{Q}_\ell}\isoarrow \bigoplus_{i=0}^d \Tate^{2i}_\ell(\mathcal{V}(\Lambda)).
 \end{equation}
  By Theorem \ref{thm:tate}~(\ref{item:eigen1}) and that the cup product % $$\cup: H^{2i}(\mathcal{V}(\Lambda), \mathbb{Q}_\ell)(i) \times H^{2d-2i}(\mathcal{V}(\Lambda), \mathbb{Q}_\ell)(d-i)\rightarrow H^{2d}(\mathcal{V}(\Lambda), \mathbb{Q}_\ell)(d)\simeq\mathbb{Q}_\ell$$
  is $\Fr$-equivariant, the Poincar\'e duality induces a perfect pairing
  \begin{equation}
    \label{eq:tatepairing}
    \cup: \Tate_\ell^{2i}(\mathcal{V}(\Lambda)) \times \Tate_\ell^{2d-2i}(\mathcal{V}(\Lambda))\rightarrow \mathbb{Q}_\ell.
  \end{equation}

  \begin{definition}\label{def:K CV}
  For $x\in \mathbb{V}\setminus\{0\}$, define $\IntK(x)\in\barK$ to be the image of $$\mathcal{V}(\Lambda)\jiao \mathcal{Z}(x)\in K^{\mathcal{V}(\Lambda)}_0(\mathcal{N}_n)\isoarrow K_0(\mathcal{V}(\Lambda))$$ under (\ref{eq:barK}).  
\end{definition}
\begin{remark}
Our main result in this subsection (Theorem \ref{thm:highermodularity}) shows that the function $\IntK$ satisfies the local modularity analogous to Lemma \ref{lem: FT Int V}.
  % to be the derived pullback of $[\mathcal{O}_{\mathcal{Z}(x)}]\in K_0(\mathcal{N}_n)$ to $\mathcal{V}(\Lambda)$% ,
  % which is represented by the two-term complex of locally free sheaves $$[\mathcal{O}_{\mathcal{N}_n}(-\mathcal{Z}(x))|_{\mathcal{V}(\Lambda)}\rightarrow \mathcal{O}_{\mathcal{N}_n}|_{\mathcal{V}(\Lambda)}]\in K_0(\mathcal{V}(\Lambda)).$$ Define $\IntK(x)\in\barK$ to be the image of $K_{\mathcal{V}(\Lambda)}(x)$ under (\ref{eq:barK}).
  \end{remark}
 Since $\mathcal{Z}(x)$ is a Cartier divisor on $\mathcal{N}_n$,  we know that $\mathcal{V}(\Lambda)\jiao \mathcal{Z}(x)$ is explicitly represented by the two-term complex of line bundles on $\mathcal{V}(\Lambda)$, $$[\mathcal{O}_{\mathcal{N}_n}(-\mathcal{Z}(x))|_{\mathcal{V}(\Lambda)}\rightarrow \mathcal{O}_{\mathcal{N}_n}|_{\mathcal{V}(\Lambda)}]\in \mathrm{F}^1K_0(\mathcal{V}(\Lambda)).$$ Thus we have % the Chern classes $c_i(\mathcal{V}(\Lambda)\jiao \mathcal{Z}(x))=0$ for $i\neq 1$. It follows that for $1\le i\le d$, we have $$\ch_i(\mathcal{V}(\Lambda)\jiao \mathcal{Z}(x))=\frac{ c_1(\mathcal{V}(\Lambda)\jiao \mathcal{Z}(x))^i}{i!}$$ 
%  and
 the Chern character
 \begin{align*}
   \ch(\mathcal{V}(\Lambda)\jiao \mathcal{Z}(x))&=\ch(\mathcal{O}_{\mathcal{N}_n}(-\mathcal{Z}(x))|_{\mathcal{V}(\Lambda)})-\ch(\mathcal{O}_{\mathcal{V}(\Lambda)})\\
   &=\exp(c_1(\mathcal{O}_{\CN_n}(-\mathcal{Z}(x))|_{\mathcal{V}(\Lambda)}))-\exp(c_1(\mathcal{O}_{\mathcal{V}(\Lambda)}))\\
   &=\sum_{i=1}^d\frac{ c_1(\mathcal{O}_{\CN_n}(-\mathcal{Z}(x))|_{\mathcal{V}(\Lambda)})^i}{i!}.  
 \end{align*}

\begin{definition}
 % $K_{\mathcal{V}(\Lambda)}(x)\in K_0(\mathcal{V}(\Lambda))$
For $x\in \mathbb{V}\setminus\{0\}$,  define $$\Intch(x):=c_1(\mathcal{O}_{\CN_n}(-\mathcal{Z}(x))|_{\mathcal{V}(\Lambda)})\in \Ch^1(\mathcal{V}(\Lambda))_\mathbb{Q},$$ and $$\Intc(x):=c_1(\mathcal{O}_{\CN_n}(-\mathcal{Z}(x))|_{\mathcal{V}(\Lambda)})^d\in \Ch^d(\mathcal{V}(\Lambda))_\mathbb{Q}\xrightarrow{\stackrel{\deg}{\sim}} \mathbb{Q}.$$
\end{definition}

\begin{lemma}\label{lem:Lambdainvariant}
The function $\Intch$ (resp. $\Intc$) is $\Lambda$-invariant, under the translation by $\Lambda$. In particular, the function $\Intch$ (resp. $\Intc$) extends uniquely to an $\Lambda$-invariant function on $\mathbb{V}$, or equivalently, a function on $\mathbb{V}/\Lambda$ (still denoted by the same symbol).
\end{lemma}

% Since $\mathcal{V}(\Lambda)=\mathcal{Z}(\Lambda)$ (see \S\ref{ss:minu KR}), we have  $$\mathcal{V}(\Lambda)\jiao \mathcal{Z}(x)=\mathcal{O}_{\mathcal{Z}(\Lambda)}\otimes^\mathbb{L}\mathcal{O}_{\mathcal{Z}(x)}=\mathcal{O}_{\mathcal{Z}(x_1)}\otimes \cdots \otimes\mathcal{O}_{\mathcal{Z}(x_n)} \otimes^\mathbb{L} \mathcal{O}_{\mathcal{Z}(x)}\in K_0(\mathcal{N}_n)$$ for any $O_F$-basis $\{x_1,\ldots,x_n\}$ of $\Lambda$. %  Write $y=\sum_{i=1}^n\lambda_i x_i$ where $\lambda_i\in O_F$.
% By \cite[Corollary C]{Howard2018},  we have for any $\lambda_n\in O_F$, 
% $$\mathcal{O}_{\mathcal{Z}(x_n)} \otimes^\mathbb{L} \mathcal{O}_{\mathcal{Z}(x)}=\mathcal{O}_{\mathcal{Z}(x_n)} \otimes^\mathbb{L} \mathcal{O}_{\mathcal{Z}(x+\lambda_n x_n)}\in K_0(\mathcal{N}_n).$$ Hence $$\mathcal{V}(\Lambda)\jiao \mathcal{Z}(x)=\mathcal{V}(\Lambda) \jiao \mathcal{Z}(x+\lambda_n{x_n}).$$ Changing the order of $\{x_1,\ldots,x_n\}$, we obtain for any $1\le i\le n$, and any $\lambda_i\in\mathcal{O}_F$, $$\mathcal{V}(\Lambda)\jiao \mathcal{Z}(x)=\mathcal{V}(\Lambda) \jiao \mathcal{Z}(x+\lambda_i{x_i}).$$ It follows that for any $\lambda_i\in O_F$, $$\mathcal{V}(\Lambda)\jiao \mathcal{Z}(x)=\mathcal{V}(\Lambda) \jiao \mathcal{Z}\left(x+\sum_{i=1}^n\lambda_i{x_n}\right),$$ as desired.

\begin{proof}
It suffices to show that $\mathcal{V}(\Lambda)\jiao \mathcal{Z}(x)\in K^{\mathcal{V}(\Lambda)}_0(\mathcal{N}_n)$ is $\Lambda$-invariant, i.e.,
\begin{equation}
  \label{eq:Lambdainv}
\mathcal{O}_{\mathcal{Z}(\Lambda)} \otimes^\mathbb{L} \mathcal{O}_{\mathcal{Z}(x)}=\mathcal{O}_{\mathcal{Z}(\Lambda)} \otimes^\mathbb{L} \mathcal{O}_{\mathcal{Z}(x+y)}
\end{equation}
 for any $x\ne0$ and any $y\in \Lambda$ such that $x+y\ne0$. 

First assume that $x,y$ are linearly independent. Then by Corollary \ref{cor:ind L}, we have $$\mathcal{O}_{\mathcal{Z}(y)} \otimes^\mathbb{L} \mathcal{O}_{\mathcal{Z}(x)}=\mathcal{O}_{\mathcal{Z}(y)} \otimes^\mathbb{L} \mathcal{O}_{\mathcal{Z}(x+y)}.$$ Thus the left-hand-side of \eqref{eq:Lambdainv} $$\mathcal{O}_{\mathcal{Z}(\Lambda)} \otimes^\mathbb{L} \mathcal{O}_{\mathcal{Z}(x)}=\mathcal{O}_{\mathcal{Z}(\Lambda)} \otimes^\mathbb{L}_{\mathcal{O}_{\mathcal{Z}(y)}}\mathcal{O}_{\mathcal{Z}(y)}\otimes^\mathbb{L}\mathcal{O}_{\mathcal{Z}(x)}$$ equals the right-hand-side of \eqref{eq:Lambdainv} $$\mathcal{O}_{\mathcal{Z}(\Lambda)} \otimes^\mathbb{L} \mathcal{O}_{\mathcal{Z}(x+y)}=\mathcal{O}_{\mathcal{Z}(\Lambda)} \otimes^\mathbb{L}_{\mathcal{O}_{\mathcal{Z}(y)}}\mathcal{O}_{\mathcal{Z}(y)}\otimes^\mathbb{L}\mathcal{O}_{\mathcal{Z}(x+y)}$$ as desired. 

It remains to consider the case that $x,y$ are linearly dependent. Choose $x_1\in\Lambda$ linearly independent of $x$. Then
$$\mathcal{O}_{\mathcal{Z}(\Lambda)} \otimes^\mathbb{L} \mathcal{O}_{\mathcal{Z}(x)}=\mathcal{O}_{\mathcal{Z}(\Lambda)} \otimes^\mathbb{L} \mathcal{O}_{\mathcal{Z}(x+x_1)}.$$
Since $x+x_1$ is linearly independent of $y$, we obtain
$$\mathcal{O}_{\mathcal{Z}(\Lambda)} \otimes^\mathbb{L} \mathcal{O}_{\mathcal{Z}(x+x_1)}=\mathcal{O}_{\mathcal{Z}(\Lambda)} \otimes^\mathbb{L} \mathcal{O}_{\mathcal{Z}(x+x_1+y)}.$$ 
Since $x+y\neq 0$, $x+y$ is linearly independent of $x_1$ and hence
$$\mathcal{O}_{\mathcal{Z}(\Lambda)} \otimes^\mathbb{L} \mathcal{O}_{\mathcal{Z}(x+x_1+y)}=\mathcal{O}_{\mathcal{Z}(\Lambda)} \otimes^\mathbb{L} \mathcal{O}_{\mathcal{Z}(x+y)}.$$ This completes the proof.
\end{proof}

\begin{lemma}\label{lem:firstchern}
  Let $\Lambda\in \Ver^{2d+1}$. Then for any $x\in \Lambda$, we have $$\Intch(x)=-\frac{1}{1+q^{2d+1}}\sum_{y\in V_\Lambda\setminus \{0\}\atop (y,y)=0}\Intch(y)\in \Ch^1(\mathcal{V}(\Lambda))_\mathbb{Q},$$
  % $$\Intch(0)=\frac{1-q^2}{1+q^{2d+1}}\sum_{\langle y\rangle\subseteq V_\Lambda\atop\text{ isotropic line}}\Intch(y)\in \Ch^1(\mathcal{V}(\Lambda))_\mathbb{Q},$$
  where $V_\Lambda=\Lambda^\vee/\Lambda$ is a $k_F/k$-hermitian space of dimension $2d+1$ (see \S\ref{ss:minu KR}).
\end{lemma}

\begin{proof}
%  First notice that $\Intch(y)$ does not depend on the choice of the generator $y$ of the isotropic line $\langle y\rangle$  by the $\Lambda$-invariance in Lemma \ref{lem:Lambdainvariant}.
  Since the cycle class map for divisors $\cl_1: \Ch^1(\mathcal{V}(\Lambda))_\mathbb{Q}\rightarrow H^2(\mathcal{V}(\Lambda), \mathbb{Q}_\ell)(1)$  is injective, we know from (\ref{eq:Tateclasses}) that $$% \Ch^1(\mathcal{V}(\Lambda))_\mathbb{Q}=\barCh^1(\mathcal{V}(\Lambda)),\quad
  \Ch^1(\mathcal{V}(\Lambda))_{\mathbb{Q}_\ell}\isoarrow \Tate^2_\ell(\mathcal{V}(\Lambda)).$$ It follows from the perfect pairing (\ref{eq:tatepairing}) that to show the desired identity it suffices to show that for any Deligne--Lusztig curve $\mathcal{V}(\Lambda')\subseteq \mathcal{V}(\Lambda)$ ($t(\Lambda')=3$), the following identity
  \begin{equation}
    \label{eq:cupwithtypethree}
    \Intch(x)\cdot [\mathcal{V}(\Lambda')]=-\frac{1}{1+q^{2d+1}}\sum_{y\in V_\Lambda\setminus \{0\}\atop (y,y)=0}\Intch(y)\cdot [\mathcal{V}(\Lambda')]
  \end{equation}
 % $$\Intch(x)\cdot [\mathcal{V}(\Lambda')]=\frac{1-q^2}{1+q^{2d+1}}\sum_{\langle y\rangle\subseteq V_\Lambda\atop\text{ isotropic line}}\Intch(y)\cdot [\mathcal{V}(\Lambda')]$$
  holds in $\Ch^d(\mathcal{V}(\Lambda))_\mathbb{Q}\isoarrow \mathbb{Q}$. By the projection formula, $$\Intch(x)\cdot [\mathcal{V}(\Lambda')]=c_{1, \mathcal{V}(\Lambda')}(x),\quad \Intch(y)\cdot [\mathcal{V}(\Lambda')]=c_{1, \mathcal{V}(\Lambda')}(y).$$ Since $t(\Lambda')=3$, we know from Lemma \ref{int Lam} that $$c_{1, \mathcal{V}(\Lambda')}(x)=\Int_{\mathcal{V}(\Lambda')}(x)=(1-q^2),\quad c_{1, \mathcal{V}(\Lambda')}(y)=\Int_{\mathcal{V}(\Lambda')}(y)=
  \begin{cases}
    (1-q^2), & y\in \Lambda'/\Lambda,\\
    1, & y\in \Lambda'^\vee/\Lambda, y\not\in\Lambda'/\Lambda, \\
    0, & y\not\in \Lambda'^\vee/\Lambda.
  \end{cases}
$$ Since $\Lambda'/\Lambda\subseteq V_\Lambda$ is totally isotropic, the number of nonzero isotropic vectors $y\in \Lambda'/\Lambda$ equals $\#(\Lambda'/\Lambda)-1=q^{2(d-1)}-1$. The number of isotropic vectors $y\in \Lambda'^\vee/\Lambda$, $y\not\in \Lambda'/\Lambda$ equals $\#(\Lambda'/\Lambda)$ times the number of nonzero isotropic vectors in $\Lambda'^\vee/\Lambda'$, which evaluates to $q^{2(d-1)}\cdot (1+q^3)(q^{2}-1)$% since the number of isotropic lines in the 3-dimensional nondegenerate hermitian space $\Lambda'^\vee/\Lambda'$ is $1+q^3$
. It follows that $$\sum_{y\in V_\Lambda\setminus \{0\}\atop (y,y)=0}\Intch(y)\cdot [\mathcal{V}(\Lambda')]=(q^{2(d-1)}-1)\cdot (1-q^2)+q^{2(d-1)}(1+q^3)(q^2-1)=-(1-q^2)(1+q^{2d+1}).$$ Hence $$-\frac{1}{1+q^{2d+1}}\sum_{y\in V_\Lambda\setminus \{0\}\atop (y,y)=0}\Intch(y)\cdot [\mathcal{V}(\Lambda')]=(1-q^2)=\Intch(x)\cdot [\mathcal{V}(\Lambda')],$$ and the desired identity (\ref{eq:cupwithtypethree}) holds.
\end{proof}

\begin{lemma}\label{lem:Intcformula}
  Let $\Lambda\in \Ver^{2d+1}$. Then $$\Intc(x)=c(d)\cdot
  \begin{cases}
    (1-q^{2d}), & x\in \Lambda, \\
    1, & x\in \Lambda^\vee\setminus\Lambda, \val(x)\ge0,\\
    0, & \text{otherwise}.
  \end{cases}$$
  Here $c(1)=1$ and $c(d)=\prod_{i=1}^{d-1}(1-q^{2i})$.
\end{lemma}

\begin{proof}
  We induct on $d$. The base case $d=1$ follows from Lemma \ref{int Lam}. By the same proof as in Lemma \ref{int Lam}, we know that $\Intc(x)=0$ unless $x\in \Lambda^\vee$ and $\val(x)\ge0$. By the $\Lambda$-invariance of $\Intc$ in Lemma \ref{lem:Lambdainvariant}, to show the result it remains to show that
  \begin{equation}
    \label{eq:Intc}
    \Intc(0)=c(d)(1-q^{2d}),\quad \Intc(x)=c(d)
  \end{equation}
%  the ratio $$\frac{\Intc(0)}{\Intc(y)}=(1-q^{2d})$$
  for any $x\in V_\Lambda\setminus\{0\}$ with $(x,x)=0$.

  By Lemma \ref{lem:firstchern}, we have $$\Intc(0)=\Intch(0)^{d-1}\Intch(0)=\Intch(0)^{d-1}\left( -\frac{1}{1+q^{2d+1}}\sum_{y\in V_\Lambda\setminus \{0\}\atop (y,y)=0}\Intch(y) \right).$$ By the projection formula, we have $$\Intch(0)^{d-1}\Intch(y)=c_{1, \mathcal{V}(\Lambda+\langle y\rangle)}(0)^{d-1},$$ which by induction equals $$c_{\mathcal{V}(\Lambda+\langle y\rangle)}(0)=c(d-1)(1-q^{2(d-1)})$$ since $t(\Lambda+\langle y\rangle)=2d-1$. The number of nonzero isotropic vectors $y\in V_\Lambda$ equals $(1+q^{2d+1})(q^{2d}-1)$. % since the number of isotropic lines in the $2d+1$-dimensional hermitian space $V_\Lambda$ is $\frac{(1-q^{2d})(1+q^{2d+1})}{(1-q^2)}$.
  Hence $$\Intc(0)=(q^{2d+1}+1)(q^{2d}-1)\cdot \left( -\frac{1}{1+q^{2d+1}} c(d-1)(1-q^{2(d-1)})\right)=(1-q^{2d})(1-q^{2(d-1)})\cdot c(d-1).$$   On the other hand, for any $x\in V_\Lambda\setminus\{0\}$ with $(x,x)=0$. by the projection formula, we have $$\Intc(x)=\Intch(x)^{d-1}\Intch(x)=c_{1,\mathcal{V}(\Lambda+\langle x\rangle)}(x)^{d-1},$$ which by induction equals $$c_{\mathcal{V}(\Lambda+\langle x\rangle)}(x)=c(d-1)(1-q^{2(d-1)})$$ since $t(\Lambda+\langle x\rangle)=2d-1$. The desired identity (\ref{eq:Intc}) then follows as $c(d-1)(1-q^{2(d-1)})=c(d)$.
%   by Lemma \ref{lem:firstchern}, we have $$\Intc(x)=\Intch(x)^{d-1}\Intch(x)=\Intch^{d-1}\cdot \frac{1}{q^2-1}\left(\Intch(0)+\frac{1}{1+q^{2d+1}}\sum_{y\in V_\Lambda\setminus \{0\}, \langle y\rangle\ne \langle x\rangle\atop (y,y)=0}\Intch(y)\right).$$
\end{proof}

\begin{lemma}\label{lem:Intc}
  Let $\Lambda\in \Ver^{2d+1}$. Then $$\Intc=\frac{c(d)}{c'(d)}\sum_{\Lambda' \in \Ver^3\atop \Lambda'\supseteq \Lambda}\Int_{\mathcal{V}(\Lambda')}.$$ Here $c'(1)=1$ and $c'(d)=\prod_{i=2}^{d}(1+q^{2i+1})$ when $d\geq 2$.
\end{lemma}

\begin{proof}
We distinguish three cases.
\begin{enumerate}
\item For $x\in \Lambda$, we have $\Int_{\mathcal{V}(\Lambda')}(x)=1-q^2$ for any $\Lambda'$ in the sum by Lemma \ref{lem: FT Int V}. The number of such $\Lambda'$ is the number of $(d-1)$-dimensional totally isotropic subspaces in $V_\Lambda$, which equals $S_{2d+1,d-1}$ (in the notation of Lemma \ref{lem:Smb}). % $\frac{(1+q^{2d+1})(q^{2d}-1)}{q^2-1}$
  Hence % for $x\in \Lambda$,
  the right-hand-side evaluates to % $$\frac{c(d)}{1+q^{2d+1}}\cdot \frac{(1+q^{2d+1})(q^{2d}-1)}{q^2-1}\cdot (1-q^2)=c(d)(1-q^{2d}),$$
  $$\frac{c(d)}{c'(d)}S_{2d+1,d-1}(1-q^2)=\frac{c(d)}{\prod_{i=2}^d(1+q^{2i+1})}\frac{\prod_{i=4}^{2d+1}(1-(-q)^i)}{\prod_{i=1}^{d-1}(1-q^{2i})}(1-q^2)=c(d)(1-q^{2d}),$$ which equals $\Intc(x)$ by Lemma \ref{lem:Intcformula}.
\item For $x\in \Lambda^\vee\setminus \Lambda$ with $\val(x)\ge0$, we have $$\Int_{\mathcal{V}(\Lambda')}(x)=  \begin{cases}
    (1-q^2), & x\in \Lambda',\\
    1,  & x\in \Lambda'^\vee\setminus \Lambda'.
  \end{cases}
$$ The number of $\Lambda'$ such that $x\in \Lambda'$ is the number of $(d-2)$-dimensional totally isotropic subspaces in $V_{\Lambda+\langle x\rangle}$, which equals $S_{2d-1,d-2}$. The number of $\Lambda'$ such that $x\in \Lambda'^\vee\setminus \Lambda'$ is the number of $(d-1)$-dimensional totally isotropic subspaces  $W\subseteq V_{\Lambda}$ such that $x\not\in W$ but $x\in W^\perp$. In this case the map $W\mapsto W+\langle x\rangle/\langle x\rangle$ gives a surjection onto the set of $(d-1)$-dimensional totally isotropic subspaces in $\langle x\rangle^\perp/\langle x\rangle$, whose fiber has size equal to the number of $(d-1)$-dimensional subspaces of $W+\langle x\rangle$ not containing $\langle x\rangle$. Hence the number of such $W$ is equal to $S_{2d-1,d-1}\cdot q^{2d-2}$. So the right-hand-side evaluates to
\begin{align*}
  &\quad  \frac{c(d)}{c'(d)}(S_{2d-1,d-2}(1-q^2)+S_{2d-1,d-1}\cdot q^{2d-2})\\ &=\frac{c(d)}{\prod_{i=2}^d(1+q^{2i+1})}\left(\frac{\prod_{i=4}^{2d-1}(1-(-q)^i)}{\prod_{i=1}^{d-2}(1-q^{2i})}(1-q^2)+\frac{\prod_{i=2}^{2d-1}(1-(-q)^i)}{\prod_{i=1}^{d-1}(1-q^{2i})}q^{2d-2}\right)\\
  &=c(d),
\end{align*} which equals $\Intc(x)$ by Lemma \ref{lem:Intcformula}.
\item If $x\not\in \Lambda^\vee$ or $\val(x)<0$, then both sides are zero.\qedhere
\end{enumerate}
\end{proof}

\begin{corollary}\label{pro:Intc}
   Let $\Lambda\in \Ver^{2d+1}$. Then $\Intc\in C_c^\infty(\mathbb{V})$ satisfies $$\wh{\Intc}=\gamma_\mathbb{V}\,\Intc.$$
\end{corollary}

\begin{proof}
  It follows immediately from Lemmas \ref{pro:Intc} and \ref{lem: FT Int V}.
\end{proof}

\begin{theorem}[$K$-theoretic local modularity]\label{thm:highermodularity}
  % The function $x\mapsto \IntK(x)$ satisfies $$\wh{\IntK}=\gamma_\mathbb{V}\cdot \IntK,$$ in the sense that 
Let $\Lambda\in \Ver^{2d+1}$.  For any linear map $l: \barK\rightarrow \mathbb{Q}$, the function $l\circ \IntK$ extends to a (necessarily unique) function in $C_c^\infty(\mathbb{V})$ and satisfies $$\wh{l\circ\IntK}=\gamma_\mathbb{V}\,  l\circ\IntK.$$Here, we refer to Definition \ref{def:K CV} for $\IntK$.
\end{theorem}

\begin{proof}
 % that $$\Hom_{\mathbb{Q}_\ell}(\barK_{\mathbb{Q}_\ell}, \mathbb{Q}_\ell)\simeq \barK_{\mathbb{Q}_\ell}.$$
  Since the Tate classes are spanned by the cycle classes of Deligne--Lusztig subvarieties $\mathcal{V}(\Lambda')\subseteq \mathcal{V}(\Lambda)$, it follows that from the perfect pairing (\ref{eq:tatepairing}) that under the identification (\ref{eq:Tateclasses}) any linear map $\barK\rightarrow \mathbb{Q}$ is % a linear combinantion of the linear maps $$l: \Tate^{2i}(\mathcal{V}(\Lambda)) \rightarrow \mathbb{Q}$$ given by the cup product with $\cl_i(\mathcal{V}(\Lambda'))$ for varying $i$ and $\mathcal{V}(\Lambda')\subseteq \mathcal{V}(\Lambda)$, or equivalently,
  a linear combination of linear maps $$l: \barCh^i(\mathcal{V}(\Lambda))\rightarrow \mathbb{Q}$$ given by the intersection product with $[\mathcal{V}(\Lambda')]$ for varying % $i$ and
  $\mathcal{V}(\Lambda')\subseteq \mathcal{V}(\Lambda)$.   So we may assume $l$ has the form $$l= \cdot[\mathcal{V}(\Lambda')] :\barCh^{d-d'}(\mathcal{V}(\Lambda))\rightarrow \mathbb{Q},$$ where $d'=\dim \mathcal{V}(\Lambda')\le d$. Then $$l\circ \IntK(x)=\ch_{d'}(\mathcal{V}(\Lambda)\jiao \mathcal{Z}(x))\cdot [\mathcal{V}(\Lambda')]$$ which by the projection formula equals to $$l\circ \IntK(x)=\ch_{d'}(\mathcal{V}(\Lambda')\jiao \mathcal{Z}(x))=\frac{c_{\mathcal{V}(\Lambda')}(x)}{d'!}.$$  By induction on $d$, we may assume $d=d'$ and we are reduced to show that $\Intc\in C_c^\infty(\mathbb{V})$ satisfies $$\wh{\Intc}=\gamma_\mathbb{V}\,\Intc.$$ This is exactly Corollary \ref{pro:Intc}. % Then $$l\circ \IntK(x)=\cl_{d'}\circ\ch_{d'}(\mathcal{V}(\Lambda)\jiao \mathcal{Z}(x)) \cup \cl_{d-d'}(\mathcal{V}(\Lambda'))$$ which by the projection formula equals to $$\cl_{d'}\circ \ch_{d'}(\mathcal{V}(\Lambda')\jiao \mathcal{Z}(x))=\frac{c(\mathcal{V}(\Lambda'), x)}{d'!}.$$ 
\end{proof}
Now we return to the function $\Int_{\CV(\Lambda)}$ defined by \eqref{eq:Int CV}.
\begin{corollary}[Higher local modularity]\label{cor:highermodularity}
  Let $\Lambda\in \Ver^{2d+1}$. Then $\Int_{\CV(\Lambda)}$ extends to a (necessarily unique) function in $C_c^\infty(\mathbb{V})$ and satisfies
$$
\wh{\Int_{\CV(\Lambda)}}=\gamma_\BV\Int_{\CV(\Lambda)}.
$$
\end{corollary}

\begin{proof}
  Consider the linear map given by the Euler--Poincar\'e characteristic $$\chi: K_0(\mathcal{V}(\Lambda))_\mathbb{Q}\rightarrow \mathbb{Q},\quad [\mathcal{F}]\mapsto \chi(\mathcal{V}(\Lambda), \mathcal{F}).$$ By the Grothendieck--Riemann--Roch theorem, we have $\chi(\mathcal{V}(\Lambda), \mathcal{F})=\deg(\ch(\mathcal{F})\cdot\mathrm{Td}(\mathcal{V}(\Lambda)))$, where $\mathrm{Td}(\mathcal{V}(\Lambda))$ is the Todd class of the tangent bundle of $\mathcal{V}(\Lambda)$. It follows from (\ref{eq:chcl}) that $\chi$ factors through $\barK$ and thus defines a linear map $\chi:  \barK\rightarrow \mathbb{Q}$. By definition we have $\chi\circ\IntK=\Int_{\mathcal{V}(\Lambda)}$. The result then follows from Theorem \ref{thm:highermodularity}.
\end{proof}

\begin{remark}\label{rem:avoid B3}
  Corollary \ref{cor:highermodularity} allows us to give an alternative proof of Corollary \ref{cor:FT int} without a priori knowing that only the $(n-1)$-th graded piece of the derived Kudla--Rapoport cycle contributes to $\Int_{L^\flat, \sV}(x)$ in the decomposition (\ref{eq:Intdecomp}), in particular, without using \cite[(B.3)]{Zhang2019} for a formal scheme.
\end{remark}

\section{Fourier transform: the analytic side}
\label{s:FT ana}

In this section we allow $F_0$ to be a non-archimedean local field of characteristic not equal to $2$ (but possibly with residual characteristic $2$), and $F$ an unramified quadratic extension.

\subsection{Lattice-theoretic notations}\label{sec:latt-theor-notat}
Recall that $\BV=\BV_n$ is the hermitian space defined in \S \ref{sec:herm-space-mathbbv} (in particular it is non-split).
We continue to let  $L^\flat\subset \BV=\BV_n$ be an integral $O_F$-lattice of rank $n-1$, such that $L_F^\flat$ is non-degenerate.
 Define
\begin{equation}
(L^\flat)^{\vee,\circ}=\{x\in (L^\flat)^\vee \mid (x,x)\in O_F\}.
\end{equation}
The fundamental invariants of $L^\flat$ are denoted by
$$(a_{1},\cdots,a_{n-1})\in\BZ^{n-1},
$$where $0\leq a_1\leq\cdots \leq a_{n-1}$. 
Denote the largest invariant by
\begin{align}e_{\max}(L^\flat)=a_{n-1}.
\end{align}

Let 
\begin{equation}\label{eqn M(L)}
M=M(L^\flat)=L^\flat  \obot \pair{ u}
\end{equation} be the lattice characterized by the following condition: $u\in \BV$ is a vector satisfying $u\perp L^\flat$ and with  valuation  $a_{n-1}$ or $a_{n-1}+1$ (only one of these two is possible due to the parity of $\val(\det(\BV))$).
In other words, the rank one lattice $\pair{ u}$ is the set of all $x\perp L^\flat$ with $\val(x)\geq a_{n-1}$. Then the fundamental invariants of $M(L^\flat)$ are
$$(a_{1},\cdots,a_{n-1},a_{n-1}), \quad\text{or} \quad(a_{1},\cdots,a_{n-1},a_{n-1}+1).
$$

Finally we note that, if $L^\flat\subset L'^\flat$ are two integral lattices of rank $n-1$, then 
\begin{equation}\label{ineq e max}
e_{\max}(L'^\flat)\leq e_{\max}(L^\flat)
\end{equation}  and $M(L^\flat)\subset M(L'^\flat)$. The above inequality follows from the characterization of $-e_{\max}(L^\flat)$ as the minimal valuation of vectors in the lattice $(L^\flat)^\vee$.

\subsection{Lemmas on lattices}
In this subsection, we do not require the lattice $L^\flat$ to be integral.
\begin{lemma}\label{lem:Lat}
Let $L'^\flat\subset L_F^\flat$ be an $O_F$-lattice (of rank $n-1$). Denote  $$
{\rm Lat}(L'^\flat):=\{O_F\text{-\rm lattices }L'\subset \BV\mid {\rm rank }\,L'=n,\quad L'^\flat=L'\cap L_F^\flat\}.
$$
Then there is a bijection
\begin{equation}
	\xymatrix@R=0ex{
[(\BV/L'^\flat) \setminus (L'^\flat_F/L'^\flat) ]/O_F^\times\ar[r]^-\sim  &  {\rm Lat}(L'^\flat) \\
		u \ar@{|->}[r]  & L'^\flat+\pair{ u}.
		}
\end{equation}

\end{lemma}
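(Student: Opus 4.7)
The plan is to construct explicit mutually inverse maps. For the forward direction, given a representative $u \in \BV$ of a class in $(\BV/L'^\flat) \setminus (L'^\flat_F/L'^\flat)$, I would set $L' := L'^\flat + \pair{u}$. Well-definedness on the quotient is immediate: altering $u$ by an element of $L'^\flat$ does not change $L'^\flat + \pair{u}$, and multiplying $u$ by an element of $O_F^\times$ does not change $\pair{u} = O_F u$. The hypothesis $u \notin L'^\flat_F$ ensures $u$ is $F$-linearly independent from $L'^\flat_F$, so $L'$ is a finitely generated torsion-free $O_F$-submodule of $\BV$ of $F$-rank $n$, hence an $O_F$-lattice of full rank.

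Next I would verify the key identity $L' \cap L_F^\flat = L'^\flat$. The inclusion $\supseteq$ is obvious. Conversely, write an element $y = v + \alpha u \in L_F^\flat$ with $v \in L'^\flat$ and $\alpha \in O_F$; since $v \in L'^\flat_F \subset L_F^\flat$, the vector $\alpha u$ lies in $L_F^\flat$, and the assumption $u \notin L_F^\flat$ forces $\alpha = 0$, so $y = v \in L'^\flat$.

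For the inverse direction, given $L' \in \mathrm{Lat}(L'^\flat)$, I would consider the quotient $O_F$-module $Q := L'/L'^\flat$ and show it is free of rank one. Torsion-freeness is immediate from the defining property $L' \cap L_F^\flat = L'^\flat$: if $\varpi^k x \in L'^\flat$ for some $x \in L'$ and $k \ge 0$, then $x \in L_F^\flat \cap L' = L'^\flat$. Tensoring with $F$ gives $Q \otimes_{O_F} F \cong \BV/L_F^\flat$, which is one-dimensional. Since $O_F$ is a discrete valuation ring, a finitely generated torsion-free $O_F$-module of generic rank one is free of rank one, so $Q \cong O_F$. A lift $u \in L'$ of a generator of $Q$ then satisfies $L' = L'^\flat + \pair{u}$ and $u \notin L_F^\flat$; the ambiguity in such a $u$ — translation by $L'^\flat$ and multiplication by $O_F^\times$ — is exactly the equivalence relation on the source.

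That these two constructions are mutually inverse is then built into the definitions. There is no substantial obstacle; the only real structural input is the classification of finitely generated torsion-free modules over the discrete valuation ring $O_F$, and the rest is a straightforward compatibility check.
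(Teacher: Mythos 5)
Your proof is correct and follows essentially the same route as the paper's: the paper's argument also rests on noting that $L'/L'^\flat$ is a free $O_F$-module of rank one and lifting a generator, which is precisely your construction of the inverse map. You simply supply more detail (the torsion-freeness check, the rank computation via tensoring with $F$, and the explicit verification that $L'\cap L_F^\flat = L'^\flat$ for the forward map) than the paper's terse one-line argument.
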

\begin{proof}
The indicated map is well-defined and clearly injective. To show the surjectivity, we note that $L'/L'^\flat$ is free for any $L'\in  {\rm Lat}(L'^\flat) $. Choose any element $u\in L'$ whose image in   $L'/L'^\flat$ is a generator. Then it is clear that $L'=  L'^\flat+\pair{ u}$. 
\end{proof}

Let  $\pair{x}_F=F\, x$ be the $F$-line generated by $x\in\BV\setminus L_F^\flat $.
Corresponding to the (not necessarily orthogonal) decomposition $\BV=L^\flat_F\oplus   \pair{x}_F$, there are  two projection maps
$$\pi_\flat\colon
\xymatrix{ \BV\ar[r]&L^\flat_F},\quad \pi_x\colon
\xymatrix{ \BV\ar[r]&  \pair{x}_F.}
$$

\begin{lemma}
\label{lem:lat cyc}
Let $L'\subset \BV$ be an $O_F$-lattice (of rank $n$).
Denote 
$$
L'^\flat=L'\cap L_F^\flat,\quad L'_x=L'\cap \pair{x}_F.
$$The natural projection maps induce isomorphisms of $O_F$-modules
$$
\xymatrix{ \pi_\flat(L')/L'^\flat &\ar[l]_-\sim L'/(L'^\flat\oplus L'_x)\ar[r]^-\sim& \pi_x(L')/L'_x .}
$$
In particular, all three abelian groups are $O_F$-cyclic modules.
\end{lemma}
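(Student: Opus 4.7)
The plan is to build both isomorphisms from a single application of the first isomorphism theorem to the natural projections $\pi_\flat$ and $\pi_x$. Concretely, I would consider the composite homomorphism $\phi_\flat \colon L' \xrightarrow{\pi_\flat} \pi_\flat(L') \twoheadrightarrow \pi_\flat(L')/L'^\flat$, which is surjective by definition of $\pi_\flat(L')$, and identify its kernel as $L'^\flat \oplus L'_x$. The argument for $\pi_x$ is then completely symmetric.

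The preliminary observations are routine: since $\BV = L_F^\flat \oplus \langle x\rangle_F$ as $F$-vector spaces, we have $L'^\flat \cap L'_x = 0$, hence $L'^\flat \oplus L'_x \subseteq L'$; and clearly $L'^\flat \subseteq \pi_\flat(L')$, $L'_x \subseteq \pi_x(L')$. The one non-trivial point is the inclusion $\ker \phi_\flat \subseteq L'^\flat \oplus L'_x$: if $v \in L'$ satisfies $\pi_\flat(v) \in L'^\flat$, then in particular $\pi_\flat(v) \in L'$, so $\pi_x(v) = v - \pi_\flat(v) \in L' \cap \langle x\rangle_F = L'_x$, giving $v = \pi_\flat(v) + \pi_x(v) \in L'^\flat \oplus L'_x$. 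The reverse inclusion is immediate.

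Finally, for the cyclicity claim, I would use that $L'$ has full rank $n = \dim_F \BV$, so $\pi_x(L')$ is a full-rank $O_F$-lattice in the one-dimensional $F$-space $\langle x\rangle_F$, and so is $L'_x$; therefore $\pi_x(L')/L'_x \cong O_F/\varpi^k$ for some $k \geq 0$, which is $O_F$-cyclic, and cyclicity of the other two modules follows via the established isomorphisms. I do not anticipate any serious obstacle: the whole lemma is a bookkeeping exercise, with the kernel computation above being the only step that requires a second's thought.
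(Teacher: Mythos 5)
Your proposal is correct and takes essentially the same approach as the paper: both apply the first isomorphism theorem by computing the kernel of the composite projection to be $L'^\flat \oplus L'_x$, with the other isomorphism following by symmetry. The only cosmetic difference is that you work with $\pi_\flat$ where the paper works with $\pi_x$, and you spell out the cyclicity step (which the paper leaves implicit, relying on $\pi_x(L')/L'_x$ being a quotient of nested rank-one lattices in the line $\langle x\rangle_F$).
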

\begin{proof}
Consider the map$$
\phi\colon\xymatrix{ L'\ar[r]& \pi_x(L')/L'_x .}
$$
We show that the kernel of $\phi$
 is $L'^\flat\oplus L'_x$; the other assertion  can be proved similarly.

Let $u\in L'$ and write $u=u^\flat+u^\nat$ uniquely for $u^\flat\in L^\flat_F, \, u^\nat\in \langle x\rangle_F$. Then $\phi(u)=u^\nat\mod L'_x$. If $u\in \ker(\phi)$, then $u^\nat\in L'$. It follows that $u^\flat=u-u^\nat\in L'$, and hence $u^\flat\in L'^\flat$. Therefore $u\in L'^\flat\oplus L'_x$ and $\ker(\phi)\subset L'^\flat\oplus L'_x$. Conversely, if $u\in L'^\flat\oplus L'_x$, then $u^\flat\in L'^\flat, u^\nat\in L'_x$, and clearly $\phi(u)=0$. This completes the proof.
\end{proof}

Now we assume that $x\perp L^\flat$. We rename the projection map to the line $\pair{x}_F=(L_F^{\flat})^{\perp}$ as $\pi_\perp$. Then we have a formula relating the volume of $L'$ to that of $L'^\flat=L'\cap L^\flat_F$ and of the image of the projection $\pi_\perp$  (the analog of ``base $\times$ height'' formula for the area of a parallelogram)
\begin{equation}\label{eqn:vol}
\vol(L')=\vol(L'^\flat)\vol(\pi_\perp(L')).
\end{equation}
In fact, this is clear if  $L'=L'^\flat\obot \pi_\perp(L')$ and in general we obtain the formula by Lemma \ref{lem:lat cyc}:
$$
\frac{\vol(L')}{\vol(L'^\flat\obot  L'_{x})}=\frac{\vol(\pi_\perp(L')) }{\vol(L'_{x})}.
$$

%For the reader's convenience, we give a proof. By Lemma \ref{lem:Lat}, we have $L'=L'^\flat+\pair{u}$ for some vector $u$. Write $u=u^\flat+u^\perp$ where $u^\flat\in L'^\flat_F$ and $u^\perp\in \pair{x}_F$. Then $\pi_\perp(L')=\pair{u^\perp}$. If $u^\flat\in L'^\flat$, or equivalently $L'=L'^\flat\obot \pi_\perp(L')$,  then it is clear that \eqref{eqn:vol} holds. In general, let us choose a lattice  $M^\flat \subset L'^\flat_F$ containing $L'^\flat+\pair{u^\flat}$, and let $M=M^\flat\obot\pair{u^\perp}$. Then $u^\flat\in M^\flat$ and hence the formula \eqref{eqn:vol} for $M$ has just been proved (note that this formula does not require $L'\subset L'^\vee$). We now compare the volume of $M$ and $L'$:
%$$
%\frac{\vol(M)}{\vol(L')}=[M:L']=[M^\flat:L'^\flat]=\frac{\vol(M^\flat)}{\vol(L'^\flat)}
%$$
%and the desired formula \eqref{eqn:vol} for $L'$ follows.  

\subsection{Local constancy of $\pDen_{L^\flat,\sV}$}
We now resume the convention in \S\ref{sec:latt-theor-notat}. In particular, $L^\flat$ is now an integral lattice. When $\rank L=n$ with $\val(L)$ odd, recall that the derived local density is (Corollary \ref{cor: pDen})
$$
\pDen(L)=\sum_{L\subset L' \subset L'^\vee} \fkm(t(L')),\quad
$$
where
$$  \fkm(a)= \begin{cases}(1+q)(1-q^2)\cdots (1-(-q)^{a-1}),& a\geq 2,\\
1,& a\in\{0,1\}.
\end{cases}
$$

\begin{definition}
For $ x\in \BV\setminus L^\flat_F$, define
\begin{align*}
\pDen_{L^\flat}(x)\coloneqq\pDen(L^\flat+\pair{ x}).
\end{align*}
Then
\begin{align}\label{def pDen x}
\pDen_{L^\flat}(x)=
\sum_{L^\flat\subset L' \subset L'^\vee} \fkm(t(L')) {\bf 1}_{L'}(x) ,
\end{align}
where the sum is over all integral lattices $L'\subset\BV$ of rank $n$.
Note that this is a finite sum for a given $ x\in \BV\setminus L^\flat_F$. However,  when varying $x\in \BV\setminus L^\flat_F$, infinitely many $L'$ can appear.
\end{definition}

\begin{definition}\label{def:verpDen}
Recall that we have defined the horizontal part $\pDen_{L^\flat,\sH}$ in Definition \ref{def:horpDen}. Now define the \emph{vertical part of the derived local density}
\begin{align}\label{pDen V}
\pDen_{L^\flat,\sV}(x)\coloneqq \pDen_{L^\flat}(x)-\pDen_{L^\flat,\sH}(x),\quad x\in\BV\setminus L_F^\flat.
\end{align}
\end{definition}

Obviously the functions $\pDen_{L^\flat,\sH}$ and $\pDen_{L^\flat}$ are locally constant on  $\BV\setminus L_F^\flat$. Hence $\pDen_{L^\flat,\sV}$ is also locally constant on $\BV\setminus L_F^\flat$.

\begin{definition}\hfill
\begin{altenumerate}
\renewcommand{\theenumi}{\alph{enumi}}
\item 
Let $\rm L^1_c(\BV)$ be the space of integrable functions that are defined on a dense open subset of $\BV$ and vanish outside a compact subset of $\BV$. 
\item Let  $\BW$ be a non-degenerate co-dimension one subspace of $\BV$. We say that a smooth function $f$ on $\BV\setminus \BW$ has \emph{logarithmic singularity along $\BW$ near $w\in \BV$}, there is a neighborhood  $\CU_w$ of $w$ in $\BV$ such that $$
f(u)=C_0\log |(u^\perp,u^\perp)|+C_1 
$$
holds for all $u\in \CU_w\setminus \BW$, where $u^\perp\in \BW^\perp$ denotes the orthogonal projection of $u$ to $\BW^\perp$, and $C_0,C_1$ are constants (depending on $w$). We say that a smooth function $f$ on $\BV\setminus \BW$ has \emph{logarithmic singularity along $\BW$} if it does so near  every $w\in \BV$.
\end{altenumerate}
\end{definition}

\begin{proposition}\label{prop:LC Den}
\hfill
\begin{altenumerate}
\renewcommand{\theenumi}{\alph{enumi}}
\item
The functions $\pDen_{L^\flat,\sH}$ and $\pDen_{L^\flat}$ lie in $\rm L^1_c(\BV)$, and they have logarithmic singularity along $L_F^\flat$.
\item  The function $\pDen_{L^\flat,\sV}$ extends to a (necessarily unique) element in $C_c^\infty(\BV)$ (we will still denote this extension by $\pDen_{L^\flat,\sV}$), i.e., there exists an element in $C_c^\infty(\BV)$ whose restriction to the open dense subset $\BV\setminus L_F^\flat$ is equal to  $\pDen_{L^\flat,\sV}$.
\end{altenumerate}
\end{proposition}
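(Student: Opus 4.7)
The plan is to prove (a) by an explicit parameterization of the integral rank-$n$ lattices $L'\supset L^\flat$ appearing in the defining sums, and then to deduce (b) from a matching of logarithmic slopes of $\pDen_{L^\flat}$ and $\pDen_{L^\flat,\sH}$ along $L_F^\flat$ via the inductive formula of Proposition \ref{prop: ind}.

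For compact support in (a), suppose $\mathbf{1}_{L'}(x)\ne 0$ for some integral rank-$n$ lattice $L'\supset L^\flat$ contributing to the sum. Writing $L' = L^\flat + O_F\,y$ with $y=y^\flat+y^\perp$ in the orthogonal decomposition $\BV=L_F^\flat\oplus (L_F^\flat)^\perp$, the integrality $L'\subset L'^\vee$ forces $y^\flat\in(L^\flat)^\vee$ (so $\val((y^\flat,y^\flat))\ge -e_{\max}(L^\flat)$); combined with $(y,y)\in O_F$, this propagates to $\val((y^\perp,y^\perp))\ge -e_{\max}(L^\flat)$. Thus every such $L'$ lies in a fixed compact subset of $\BV$ depending only on $L^\flat$, giving compact support for both $\pDen_{L^\flat}$ and $\pDen_{L^\flat,\sH}$.

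For the logarithmic singularity, fix $w\in L_F^\flat$ and group the sum by $L'^\flat:=L'\cap L_F^\flat$; only finitely many $L'^\flat$ sandwiched between $L^\flat$ and $(L^\flat)^\vee$ occur. For each such $L'^\flat$, Lemma \ref{lem:Lat} parameterizes the rank-$n$ overlattices $L'=L'^\flat+\langle u\rangle$ with $L'\cap L_F^\flat=L'^\flat$ by certain classes $[u]$; the integrality of $L'$ and the condition $x\in L'$ then impose explicit constraints on $u^\flat\in(L'^\flat)^\vee/L'^\flat$ (essentially $c\,u^\flat\equiv x^\flat\pmod{L'^\flat}$ for some $c\in O_F$) and the inequality $\val(u^\perp)\le\val(x^\perp)$ (from $x^\perp=c\,u^\perp$). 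As $\val(x^\perp)\to+\infty$, the admissible range of $\val(u^\perp)$ grows linearly in $\val(x^\perp)$, so the count of contributing $L'$ is likewise linear, yielding
\[
\pDen_{L^\flat}(x) = C_0(w)\,\val\bigl((x^\perp,x^\perp)\bigr) + C_1(w)
\]
on a punctured neighborhood of $w$ (logarithmic singularity); local integrability follows at once. The identical parameterization, restricted to $L'^\flat$ of type $\le 1$, gives the corresponding expansion for $\pDen_{L^\flat,\sH}$, completing part (a).

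For (b), $\pDen_{L^\flat,\sV}$ is already locally constant on $\BV\setminus L_F^\flat$ with compact support, so the entire content is to show that the log coefficients $C_0(w)$ match for $\pDen_{L^\flat}$ and $\pDen_{L^\flat,\sH}$ at every $w\in L_F^\flat$ — equivalently, that the contribution of $L'^\flat$ of type $\ge 2$ to the log slope vanishes. This is the main obstacle. For $\pDen_{L^\flat}$, Proposition \ref{prop: ind} applied to $x\perp L^\flat$ with $\val(x)>e_{\max}(L^\flat)$ gives the recurrence $\pDen_{L^\flat}(\varpi x) - \pDen_{L^\flat}(x) = \Den(-q, L^\flat)$, identifying the log slope at $w=0$ as $\tfrac12\Den(-q,L^\flat)$. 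For $\pDen_{L^\flat,\sH}$, the same slope is read off directly by summing the part (a) counts over $L'^\flat$ of type $\le 1$; the resulting total coincides with $\Den(-q,L^\flat)$, a statement consistent with Corollary \ref{cor:genericfiber} interpreting $\Den_{L^\flat}(-q)$ as the generic-fiber degree of $\CZ(L^\flat)_\sH$. For general $w\in L_F^\flat$, one matches slopes analogously by restricting the parameterization to $L'^\flat$ with $w\in(L'^\flat)^\vee$, and by replacing $L^\flat$ with an integral overlattice adapted to $w$ so that Proposition \ref{prop: ind} still applies. Once the log coefficients agree at every $w$, $\pDen_{L^\flat,\sV}$ extends continuously across $L_F^\flat$; being locally constant off $L_F^\flat$ with values in a finite subset of $\BQ$ (forced by compact support), the continuous extension is automatically locally constant, giving the required element of $C_c^\infty(\BV)$.
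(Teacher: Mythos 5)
Your overall strategy — studying difference formulas near $L_F^\flat$, using Proposition~\ref{prop: ind} to control the slope of $\pDen_{L^\flat}$, and matching it against a parameterized count for $\pDen_{L^\flat,\sH}$ — is the same skeleton the paper uses. But two of the load-bearing steps are asserted rather than proved, and one of them misses the crucial mechanism.

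First, the claim that the log-slope of $\pDen_{L^\flat,\sH}$ at the origin ``coincides with $\Den(-q,L^\flat)$'' is not established by what you wrote. When you actually carry out the weighted count (keeping track of the type $t(L')$ of each contributing lattice $L'$, and its weight $\fkm(t(L'))$), the natural answer is $\frac{1}{\vol(L^\flat)}\Den\bigl((-q)^{-1},L^\flat\bigr)$; this is the content of Lemma~\ref{lem:diff} in the paper. That this equals $\Den(-q,L^\flat)$ is exactly the functional equation \eqref{eq: FE} for the local Siegel series — the single nontrivial analytic input behind the whole of part (b), and the reason the vertical part is smooth across $L_F^\flat$. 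Your appeal to Corollary~\ref{cor:genericfiber} obscures rather than supplies this: the second equality in that corollary \emph{is} the functional equation, and moreover the corollary computes a total degree, not the difference (slope) that you need. So the mechanism that kills the singularity is invoked only implicitly; you should isolate it.

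Second, the reduction of general $w\in L_F^\flat\setminus L^\flat$ to the origin is waved through. Setting $\widetilde{L}^\flat=\langle w\rangle+L^\flat$ and invoking translation-invariance is not enough. One must show $\pDen_{L^\flat}(w+te_n)=\pDen_{\widetilde{L}^\flat}(\lambda\,te_n)$ for small $t$ with an explicit scalar $\lambda\in O_F$ depending on the coordinates of $w$ (this comes from a change-of-basis computation comparing fundamental invariants), and separately the difference $\pDen_{L^\flat,\sH}(w+te_n)-\pDen_{\widetilde{L}^\flat,\sH}(w+te_n)$ has to be shown to be constant in $t$ by again decomposing over $L'^\flat$ with $t(L'^\flat)\le1$ and $w\notin L'^\flat$ and applying the origin case in a rank-two subspace. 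Neither of these points follows from ``restricting the parameterization to $L'^\flat$ with $w\in(L'^\flat)^\vee$''; the condition for $L'^\flat$ to contribute near $w$ is more delicate, and the rescaling factor $\lambda$ is genuinely present. As it stands, part (b) is only sketched for $w=0$.

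Your final remark — that a continuous extension of a locally constant function with finite image is automatically locally constant — is correct and a clean way to finish once the slope-matching is actually in hand. The compact-support argument at the start (integrality of $L'=L^\flat+O_Fy$ forcing $y^\flat$ into $(L^\flat)^\vee$ and bounding $\val(y^\perp)$) is also fine and is essentially the paper's observation that everything vanishes outside the compact open set $N(L^\flat)$.
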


\begin{proof}
Consider the set 
\begin{align} \label{eqn:N(L)}
N(L^\flat):=\{x\in\BV\mid \pair{ x}+L^\flat\text{ is  integral}\}.
\end{align}
We claim that $N(L^\flat)$ is a {\em compact open} subset of $\BV$.  To show the claim, we rewrite  the above set as $$
N(L^\flat)=\{ x\in\BV\mid(x, L^\flat)\subset O_F,\quad (x,x)\in O_F\}.
$$
Write $x=x^\flat+x^\perp$ according to the orthogonal direct sum $\BV=L_F^\flat\obot (L_F^{\flat})^{\perp}$. Then the condition $(x, L^\flat)\subset O_F$ is equivalent to $x^\flat\in (L^\flat)^\vee$. Since  $(L^\flat)^\vee$ is a compact subset of $L^\flat_F$, $(x^\flat,x^\flat)$ is bounded in $F$. Together with the integrality of the norm $(x,x)$, it follows that $(x^\perp,x^\perp)$ is also bounded. Therefore $x^\perp$ lies in a bounded subset $\CL$ of the $F$-line $(L_F^{\flat})^{\perp}$. It follows that $N(L^\flat)$ is contained in a bounded set $ (L^\flat)^\vee\obot \CL$. Since $N(L^\flat)$ is open and closed in $\BV$, it must be compact.

% Finally, we note that  $N(L^\flat)$ is invariant under the translation by the lattice $M(L^\flat)$. It follows that the set $N(L^\flat)$ is a disjoint union of  finitely many $L^\flat\obot \pair{\varpi^{m} e}$-cosets, hence compact and open. This proves the claim.

Note that  all three function $ \pDen_{L^\flat}$, $ \pDen_{L^\flat,\sH}$ and  $\pDen_{L^\flat,\sV}$ vanish outside $ N(L^\flat)$.  It follows that all three vanish outside a compact subset of $\BV$, and are smooth functions on $\BV\setminus L^\flat_F$. To show part (a) it suffices to show both functions  $ \pDen_{L^\flat}$, $ \pDen_{L^\flat,\sH}$ have logarithmic singularity along $L_F^\flat$ near each $e\in L^\flat_F$ (then the integrability follows by the consideration of the support). To show part (b), it suffices to show that $\pDen_{L^\flat,\sV}$ is a constant near each $e\in L^\flat_F$.

We now consider the behavior of the three functions $\pDen_{L^\flat}$, $\pDen_{L^\flat,\sH}$ and $\pDen_{L^\flat,\sV}$ near each $e\in L^\flat_F$. By the above discussion on the support, we may assume $e\in L^\flat_F\cap N(L^\flat)$. 

% If $e\notin L^\flat_F$, then clearly  both functions $\pDen_{L^\flat}$ and $\pDen_{L^\flat,\sH}$ are locally constant near $e$, hence the same is true for the function $\pDen_{L^\flat,\sV}$.   Therefore both (a) and (b) hold near such $e$. 

% Now let $e\in  L^\flat_F$.
First we consider the case $e\in L^\flat$, and we consider its neighborhood $M(L^\flat)=L^\flat  \obot \pair{ u}$, the lattice defined by \eqref{eqn M(L)}. Obviously the three functions are all invariant under $L^\flat$-translation. By Lemma \ref{lem:diff},  both $\pDen_{L^\flat,\sH}$ and $\pDen_{L^\flat}$ have logarithmic singularity along $L_F^\flat$ near such $e$.

Again by Lemma \ref{lem:diff},  when $x\in \pair{ \varpi  u}$ is non-zero, we have
$$
\pDen_{L^\flat,\sV}(x)-\pDen_{L^\flat,\sV}(\varpi^{-1}x)= \Den(-q, L^\flat)-\frac{1}{ \vol( L^\flat)} \Den((-q)^{-1}, L^\flat),
$$
which vanishes by the functional equation for $\Den(X, L^\flat)$ evaluated at $X=-q$, cf. \eqref{eq: FE}.  It follows that $
\pDen_{L^\flat,\sV}(x)=\pDen_{L^\flat,\sV}(\varpi^{-1} x)
$ when $x\in \pair{ \varpi  u}$ is non-zero. Therefore $\pDen_{L^\flat,\sV}$ is a constant in $M(L^\flat)\setminus L_F^\flat$. 

Next we consider the case $e\in  L^\flat_F\cap N(L^\flat)$ but $e\notin L^\flat$. % We may further assume that $e\in N(L^\flat)$ (otherwise the function vanishes near $e$).
We denote  $\widetilde{ L}^\flat:=\pair{ e}+L^\flat$. Choose an orthogonal basis $e_1,\cdots,e_{n-1}$ of  the lattice $L^\flat$ and write $e=\lambda_1 e_1+\cdots +\lambda_{n-1}e_{n-1}$.  Up to re-ordering these basis vectors, we may assume that $\lambda_1$ attains the {\em minimal} valuation among all of the coefficients $\lambda_i, 1\leq i\leq n-1$. Since we are assuming $e\notin L^\flat$, we have $\lambda_1\notin O_F$.  Let us denote 
$$\lambda:=\lambda_1^{-1}\in O_F.$$
Then we have $$e_1=\lambda e-\lambda_1^{-1}\lambda_2 e_2-\cdots- \lambda_1^{-1}\lambda_{n-1}e_{n-1}\in \langle \lambda e,e_2,e_3,\ldots, e_{n-1}\rangle.$$
Fix a basis vector  $e_{n}$ of the line $(L^\flat_F)^\perp$. Since all three functions are invariant under $L^\flat$-translation,  it suffices to consider the behavior of the function:
$$
\xymatrix{t\in O_F \ar@{|->}[r]  & \pDen_{L^\flat}(e+te_{n})}
$$when $t$ is near $0\in O_F$,  and the similar functions for $\pDen_{L^\flat,\sH}, \pDen_{L^\flat,\sV}$ respectively.

Set $x_t:=e+te_{n}$ and $M_t:=\pair{e_2,e_3,\cdots, e_{n-1}, x_t}$. Then we have
\begin{align*}
L^\flat+\pair{x_t}&=\pair{e_1,e_2,e_3,\cdots, e_{n-1}, e+t e_{n}}\\
&=\pair{\lambda t e_{n},e_2,e_3,\cdots, e_{n-1}, e+t e_{n} }\\
&=M_t+\pair{\lambda t e_{n}}.
\end{align*}
Note that the vector space $\BV$ has an orthogonal basis $ \{e_2,e_3,\cdots, e_{n-1}, e_1+\lambda t e_n, e_n'\}$, where $e_n':=e_n-\sigma(t)\mu e_1$ and $\mu=\sigma(\lambda)\frac{(e_n,e_n)}{(e_1,e_1)}\in F$, where $\sigma(t)$ denotes the Galois conjugation of $t$. A straightforward computation shows that, when $|t|$ is sufficiently small, with respect to $\BV= \pair{e_n'}_F\obot\pair{e_n'}_F^\perp$,  the projection of $e_n$ to $\pair{e_n'}_F^\perp$ lies in $M_t$, and to the line $\pair{e_n'}_F$ is $u_t e'_n$
where $u_t\in O_F^\times $ is a unit (since $u_t\to 1$ as $t\to 0$).  It follows that, when $|t|$ is sufficiently small,
$$
L^\flat+\pair{x_t}=M_t+\pair{\lambda t e_n' }.
$$
Note that, when $|t|$ is sufficiently small,  the lattice $M_t$ has the same fundamental invariants as $\widetilde{L}^\flat=\pair{ e}+L^\flat=\pair{e_2,e_3,\cdots, e_{n-1}, e}$, and $(e_n',e_n' )$ differs from $(e_n,e_n)$ by a unit.  Since  $ \pDen_{L^\flat}(x_t)=\pDen( L^\flat+\pair{x_t})$ depends only on the fundamental invariants of the lattice $L^\flat+\pair{x_t}$, we obtain that, when $|t|$ is sufficiently small, 
\begin{align}\label{eqn diff}
\pDen_{L^\flat}(x_t)=\pDen_{\widetilde{L}^\flat}(\lambda t e_n  ).
\end{align}
Now by Lemma \ref{lem:diff}, the function $\pDen_{L^\flat}$ has logarithmic singularity near $e$.

Next we investigate the behavior of  $ \pDen_{L^\flat,\sH}(x_t)$ when $t\to 0$.  By \eqref{pDen H}, we have
\begin{align}\label{eqn diff H}
\pDen_{L^\flat,\sH}(x_t)- \pDen_{\widetilde{L}^\flat,\sH}(x_t)=\sum_{ L^\flat\subset  L'^\flat \subset( L'^\flat )^\vee\atop e\notin L'^\flat,\,  t(L'^\flat)\leq1 } 
\sum_{ L'^\flat+\pair{x_t}\subset  L'\subset
      L'^\vee \atop L'\cap L_F^\flat=L'^\flat }\fkm(t(L')).
\end{align}
We {\em claim} that, when $|t|$ is sufficiently small, the right hand side is a constant  dependent on $L^\flat$ and $e$ but not on $t$. The outer sum has only finitely many terms. Therefore, to show the claim, it suffices to show that each of the inner sums is a constant dependent on $L'^\flat$ and $e$ but not on $t$.
Now fix an integral lattice $ L'^\flat\supset L^\flat$ such that  $e\notin L'^\flat$ and $ t(L'^\flat)\leq1 $. We may further assume that $L'^\flat+\pair{e}$ is integral (otherwise the inner sum for such $L'^\flat$ is empty when  $|t|$ is sufficiently small). Then $L'^\flat$ must be an orthogonal sum $M\obot \pair{f}$ for some self-dual lattice $M$ of rank $n-2$, and a rank one lattice $\pair{f}$. Denote by $e^\ast$ the orthogonal projection of $e$ to the line $\pair{f}_F$. Since $L'^\flat+\pair{e}$ is integral  and  $e\notin L'^\flat$, we must have $e-e^\ast\in M$ and $f=\xi e^\ast$ for some $\xi\in O_F$ but $\xi\notin O_F^\times$. Let $\BW=\pair{f,e_n}_F$ be the orthogonal complement of $M_F$. Then the inner sum associated to $L'^\flat$ is equal to
$$
\sum_{ \pair{f,e^\ast + te_n}\subset  L'\subset
      L'^\vee \subset \BW \atop L'\cap L_F^\flat=\pair{f}}\fkm(t(L'))=\pDen( \pair{f, e^\ast + te_n})- \pDen( \pair{\varpi^{-1}f,e^\ast + te_n}).
$$Now it is easy to see that, when $|t|$ is sufficiently small, the lattice $ \pair{f,e^\ast + te_n}= \pair{\xi e^\ast,e^\ast + te_n}$ (resp., $ \pair{\varpi^{-1} f,e^\ast + te_n}= \pair{\varpi^{-1} \xi e^\ast, e^\ast + te_n}$) has  the same fundamental invariants  as $ \pair{ \xi t e_n, e^\ast} $ (resp., $ \pair{ \varpi^{-1}  \xi t e_n, e^\ast} $). By Lemma \ref{lem:diff}, when $|t|$ is sufficiently small, the difference  
\begin{align*}
&\pDen( \pair{f, e^\ast + te_n})- \pDen( \pair{\varpi^{-1}f,e^\ast + te_n})\\=&\pDen(\pair{ \xi t e_n, e^\ast} )- \pDen(  \pair{ \varpi^{-1}  \xi t e_n, e^\ast} )\\
=&\pDen_{ \pair{e^\ast}}(  \xi t e_n )- \pDen_{ \pair{e^\ast}}( \varpi^{-1} \xi t e_n )\\
=&\Den(-q, \pair{e^\ast})
\end{align*}
is a constant independent of $t$. This proves the claim. Note that $\pDen_{\widetilde{L}^\flat,\sH}(x_t)=\pDen_{\widetilde{L}^\flat,\sH}(te_n)$ and it has  logarithmic singularity along $\widetilde{L}^\flat_F=L^\flat_F$  by Lemma \ref{lem:diff}. It follows from the claim that the function $\pDen_{L^\flat,\sH}$ also has logarithmic singularity along $L_F^\flat$ near $e$. Now we have completed the proof of part (a).

By \eqref{eqn diff} and \eqref{eqn diff H},  let $t\to 0$ and denote by $C$ the constant equal to \eqref{eqn diff H}:
\begin{align*}
\pDen_{L^\flat,\sV}(x_t)&=\pDen_{L^\flat}(x_t)-\pDen_{L^\flat,\sH}(x_t)\\
&=\pDen_{\widetilde{L}^\flat}(\lambda t e_n  )-\pDen_{\widetilde{L}^\flat,\sH}(t e_n)-C\\
&=(\pDen_{\widetilde{L}^\flat}(\lambda t e_n  )-\pDen_{\widetilde{L}^\flat}(t e_n  ))+\pDen_{\widetilde{L}^\flat,\sV}(t e_n )-C.
\end{align*}
 By Lemma \ref{lem:diff}, the term $(\pDen_{\widetilde{L}^\flat}(\lambda t e_n  )-\pDen_{\widetilde{L}^\flat}(t e_n  ))$ is a constant dependent on  $L^\flat$ and $e$ but not on $t$. By the previous case (replacing $L^\flat$ by $\widetilde{L}^\flat$) that we have considered,  the term $\pDen_{\widetilde{L}^\flat,\sV}(t e_n )$ is a constant when $t\to 0$. This shows that  $\pDen_{L^\flat,\sV}$ is a constant near $e$, and we have completed the proof of part (b).
\end{proof}

\begin{lemma}
\label{lem:diff}
Assume that $x\perp L^\flat$ and $\val(x) \geq1+ e_{\max}(L^\flat)$. Then
$$
\pDen_{L^\flat}(x)-\pDen_{L^\flat}(\varpi^{-1} x)= \Den(-q, L^\flat),
$$
and
\begin{align*}
\pDen_{L^\flat,\sH}(x)-\pDen_{L^\flat,\sH}(\varpi^{-1}x)=\frac{1}{ \vol( L^\flat)} \Den((-q)^{-1}, L^\flat).
\end{align*}
\end{lemma}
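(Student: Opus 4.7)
The proof splits into two parts. Set $L = L^\flat + \pair{x}$ and $\tilde L = L^\flat + \pair{\varpi^{-1} x}$, so that $\pDen_{L^\flat}(x) = \pDen(L)$ and $\pDen_{L^\flat}(\varpi^{-1}x) = \pDen(\tilde L)$.

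For the first identity, the plan is to apply Proposition \ref{prop: ind}. The hypothesis $\val(x) \geq 1 + e_{\max}(L^\flat)$ gives $\val(x) > a_{n-1}$, so we obtain the recursion
\[
  \Den(X, L) = X^2 \Den(X, \tilde L) + (1-X)\Den(-qX, L^\flat).
\]
Since $\BV$ is non-split, $\val(L)$ and $\val(\tilde L)$ are both odd, whence $\Den(1, L) = \Den(1, \tilde L) = 0$ by the functional equation \eqref{eq:functionalequation}. Differentiating at $X=1$ and multiplying by $-1$ then yields $\pDen(L) - \pDen(\tilde L) = \Den(-q, L^\flat)$, which is the first identity.

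For the second identity, I reorganize the LHS according to the intermediate lattice $L'^\flat \coloneqq L' \cap L_F^\flat$. Definition \ref{def:horpDen} expresses $\pDen_{L^\flat, \sH}(x)$ as a sum over integral $L' \supseteq L^\flat + \pair{x}$ with $t(L'^\flat) \leq 1$; applying Corollary \ref{cor:qc int} to each such $L'^\flat$ to collect the inner sum, the difference becomes
\[
  \pDen_{L^\flat, \sH}(x) - \pDen_{L^\flat, \sH}(\varpi^{-1}x) = \sum_{\substack{L^\flat \subseteq L'^\flat \subseteq L'^{\flat,\vee} \\ t(L'^\flat) \leq 1}} \Bigl[\chi\bigl(\CN_n, \CZ(x) \jiao \CZ(L'^\flat)^\circ\bigr) - \chi\bigl(\CN_n, \CZ(\varpi^{-1}x) \jiao \CZ(L'^\flat)^\circ\bigr)\Bigr].
\]

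The crux, which I expect to be the main obstacle, is to prove that each bracketed difference equals $\deg_{\OFb}\CZ(L'^\flat)^\circ$. Under $t(L'^\flat) \leq 1$ we have $\val(L'^\flat) = e_{\max}(L'^\flat) \leq e_{\max}(L^\flat) \leq \val(x) - 1$ by \eqref{ineq e max}, so $x$ has valuation strictly larger than the level of the quasi-canonical lifting cycle $\CZ(L'^\flat)^\circ \cong \Spf O_{\Fb, \val(L'^\flat)}$. Under this inequality, I plan to verify the claim either by directly enumerating the integral overlattices $L''$ with $L'' \cap L_F^\flat = L'^\flat$ and $L'' \cap \pair{x}_F = \pair{x}$ and summing $\fkm(t(L''))$ via case analysis on the fundamental invariant of $L'^\flat$ (the cases $t(L'^\flat) = 0$ and $t(L'^\flat) = 1$ with invariant $1$ being forced to $L'' = L'^\flat \obot \pair{x}$ by the integrality of $(w,w)$, while larger invariants admit additional $L''$ whose $\fkm$-weighted total is forced by matching both sides), or by invoking the local intersection theory for Kudla--Rapoport divisors on quasi-canonical liftings in the spirit of \cite{Terstiege2013, Kudla2011}. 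Summing the resulting equalities over $L'^\flat$ and combining the degree formula \eqref{eq:lengthsquasicanonical} with the expansion \eqref{eq:valueat1} of Corollary \ref{cor:densityat1} recovers $\vol(L^\flat)^{-1}\Den((-q)^{-1}, L^\flat)$, completing the proof of the second identity.
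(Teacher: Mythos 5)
Your proof of the first identity is correct and identical to the paper's: plug the induction formula of Proposition \ref{prop: ind} into the definition of $\pDen$, use $\Den(1,\tilde L)=0$, and read off the coefficient at $X=1$.

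For the second identity, your reorganization through Corollary \ref{cor:qc int} (equivalently, Theorem \ref{thm H}) is valid but is only a geometric relabeling: once you unwind the corollary, the bracketed quantity
\[
\chi\bigl(\CN_n,\CZ(x)\jiao\CZ(L'^\flat)^\circ\bigr)-\chi\bigl(\CN_n,\CZ(\varpi^{-1}x)\jiao\CZ(L'^\flat)^\circ\bigr)
\]
\emph{is} the paper's inner sum over integral $L'\supset L'^\flat+\pair{x}$ with $L'\cap L_F^\flat=L'^\flat$ and $L'\cap\pair{x}_F=\pair{x}$, so the "crux" you have identified (that this equals $\deg_{\OFb}\CZ(L'^\flat)^\circ$, matching \eqref{eq:lengthsquasicanonical}) is exactly the combinatorial claim that still has to be computed. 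Neither of your two plans actually carries this out. Plan (a) only handles the cases $t(L'^\flat)=0$ and $a'_{n-1}=1$ (where indeed $L'$ is forced to be $L'^\flat\obot\pair{x}$), and the phrase "forced by matching both sides" for larger invariants is circular. The genuinely nontrivial steps in the paper's evaluation are: (i) the positivity claim that for any $L'=L'^\flat+\pair{u}$ in the inner sum one has $\val(u^\flat)\geq 0$ and, crucially, $\val(u^\perp)\geq 1$ — this is exactly where the hypothesis $\val(x)\geq 1+e_{\max}(L^\flat)$ enters, via the inequality $\val(u^\perp)=\val(x)+2\val(\lambda_{n-1})\geq (1+a_{n-1})-a'_{n-1}\geq 1$ from the cyclic-module parametrization of Lemma \ref{lem:lat cyc}; and (ii) the resulting type shift $t(L')=t(\wit L'^\flat)+1$, which reduces the inner sum to counting generators of cyclic quotients $\wit L'^\flat/L'^\flat$ and yields the closed form in \eqref{eqn: wt L' 1} for \emph{all} invariants $a'_{n-1}$, not just $a'_{n-1}\leq 1$. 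Without (i) you cannot even conclude that the inner sum is independent of $\val(x)$, which is what makes it a "degree" in the first place. Plan (b) would require an explicit intersection computation of $\CZ(y)$ against quasi-canonical cycles $\CZ_s$ on $\CN_2$ (you only sketch a reference), and — more seriously — passing through Corollary \ref{cor:qc int} / Theorems \ref{thm:horizontal} and \ref{thm:n=2} ties this lemma to the geometric side. The paper deliberately keeps \S\ref{s:FT ana} purely analytic so that its identities hold for residue characteristic $2$ and equal characteristic; your route forfeits that generality and breaks the modular structure. In short: correct setup, correct expected answer, but the technical heart of the second identity (the positivity claim and the full case analysis) is missing.
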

\begin{proof}
The first part follows from the induction formula in Proposition \ref{prop: ind}
$$
\Den(X,L)=X^2\Den(X,L')+(1-X)\Den(-qX,L^\flat),
$$
where
$$
L'=L^\flat\obot\pair{  \varpi^{-1} x}, \quad L=L^\flat\obot\pair{  x}.
$$

Now we consider the second part. By the definition \eqref{pDen H} of the function $\pDen_{L^\flat,\sH}$,  we obtain
$$
\pDen_{L^\flat,\sH}(x)-\pDen_{L^\flat,\sH}(\varpi^{-1} x)=\sum_{L^\flat\subset L'\subset L'^\vee,\atop t( L'^\flat )\leq 1, \, L'\cap  \pair{x}_F=\pair{ x}} \fkm(t(L')),
$$ 
where we recall that $L'^\flat=L'\cap L^\flat_F$, cf. \eqref{eq:L' flat}. 
This sum can be rewritten as a double sum, first over all $L'$ with a given $L'\cap L^\flat_F=L'^\flat$ then over all $L'^\flat$ 
\begin{align}\label{sum pden H}
\sum_{L'^\flat\subset (L'^\flat)^\vee\atop L^\flat\subset L'^\flat, \, t(L'^\flat)\leq 1 }\sum_{L'\subset L'^\vee \atop L'\cap L^\flat_F= L'^\flat , L'\cap \pair{x}_F=\pair{ x}} \fkm(t(L')).
\end{align}

Fix $L'^\flat$ with $t(L'^\flat)\leq 1$ and we consider the inner sum.
Since $t(L'^\flat)\leq 1$, we may assume that $L'^\flat$ has an orthogonal basis $e_1',\cdots, e'_{n-1}$ such that $\val(e_1')=\val(e_2')=\cdots=\val(e_{n-2}')=0$, and $a_{n-1}'\coloneqq \val(e_{n-1}')$. 

By Lemmas \ref{lem:Lat} and \ref{lem:lat cyc}, each lattice $L'$ in the inner sum is  of the form $L'^\flat+ \pair{  u}$ where $u$ satisfies
$$
(u, L'^\flat)\subset O_F,\quad (u,u)\in O_F.
$$

Write $u=u^\flat+u^\perp$ according to the orthogonal direct sum $\BV=L_F^\flat\obot (L_F^{\flat})^{\perp}$. 
We {\em claim} that $\val(u^\flat)\geq 0 $ and $\val(u^\perp)\geq 1$.

To prove the claim, we first note that the condition $(u, L'^\flat)\subset O_F$ above is equivalent to $u^\flat\in (L'^\flat)^\vee$. Therefore we may write $u^\flat=\lambda_1e_1'+\cdots+\lambda_{n-1}e'_{n-1}$ where $\lambda_i\in O_F$ ($1\leq i\leq n-2$) and $\lambda_{n-1}\in \varpi^{-a_{n-1}'} O_F$.  By Lemma \ref{lem:lat cyc}, we have 
\begin{align}\label{eqn: cyclic}
\xymatrix{\frac{ \pi_\perp(L')}{L'\cap \pair{x}_F}=\frac{\pair{ u^\perp} }{\pair{ x}}\ar[r]^-\sim& \frac{\pi_\flat(L')}{L'^\flat} = \frac{L'^\flat+\pair{u^\flat}}{L'^\flat}\simeq 
\frac{O_F+\lambda_{n-1} \cdot O_F}{O_F} }.
\end{align}
This isomorphism implies that
\begin{align}\label{eqn: val}
\max\{0,-2\,\val(\lambda_{n-1})\}=-\val(u^\perp)+\val(x).
\end{align}
Now if $\val(u^\flat)< 0$, from $(u,u)\in O_F$ it follows that $\val(u^\flat)=\val(u^\perp)<0$ and $\val(u^\flat)=2\,\val(\lambda_{n-1})+a_{n-1}'$. Hence $2\, \val(\lambda_{n-1})<- a'_{n-1}$ (in particular,  $\val(\lambda_{n-1})<0$). It follows from \eqref{eqn: val} that $\val(x)=a_{n-1}'$, which contradicts $\val(x)>a_{n-1}\geq a_{n-1}'$. Therefore we must have $\val(u^\flat)\geq 0 $ and $\val(u^\perp)\geq 0$.  It then follows that $\val(\lambda_{n-1}e'_{n-1})\geq0$, or equivalently $2\, \val(\lambda_{n-1})+a'_{n-1}\geq 0$. By \eqref{eqn: val}, we have either $\val(u^\perp)=\val(x)\geq 1$ or  
$$\val(u^\perp)= \val(x)+2\,\val(\lambda_{n-1})\geq (1+a_{n-1})- a'_{n-1}\geq 1.$$ Here the last inequality follows from $e_{\max}(L'^\flat)=a_{n-1}' \leq e_{\max}(L^\flat)= a_{n-1}
$ by  \eqref{ineq e max} applied to $L^\flat\subset L'^\flat$. We have thus completed the proof of the claim.

Now we define $\wit L'^\flat\coloneqq \pi_\flat(L') = L'^\flat+\pair{  u^\flat}$. Then $\wit L'^\flat$ is an integral lattice. By  $\val(u^\perp)\geq 1$, we obtain
$$
t(L')=t(\wit L'^\flat)+1.
$$
Moreover, for a given integral lattice $\wit L'^\flat\supset L'^\flat$,  the set of desired integral lattices $L'$ is bijective to the set of generators of the cyclic $O_F$-module $\wit L'^\flat/L'^\flat$. Therefore the inner sum in \eqref{sum pden H} is equal to 
\begin{align}\label{eqn: wt L'}
&\sum_{ L'^\flat\subset \wit L'^\flat } \fkm(t(\wit L'^\flat)+1)[\wit L'^\flat : L'^\flat]\cdot\begin{cases} 1,&\text{if }\wit L'^\flat =L'^\flat, 
\\ (1-q^{-2}),&\text{if } \wit L'^\flat \neq L'^\flat,  \end{cases}
\end{align}
where the index $[\wit L'^\flat : L'^\flat]=\frac{\vol (\wit L'^\flat)}{ \vol( L'^\flat)}$.
For the sum \eqref{eqn: wt L'}, we distinguish three cases.
\begin{enumerate}
\item
If $t(L'^\flat)=0$, i.e., $a_{n-1}'=0$, then the sum is equal to $1$.
\item If $a_{n-1}'>0$ is odd, then   the sum is equal to
$$
(1+q) (1+(q^2-1)+\cdots +(q^{a'_{n-1}-1}-q^{a'_{n-1}-3}))=q^{a'_{n-1}-1}(1+q).
$$
\item If $a_{n-1}'>0$ is even, then   the sum is equal to
$$
(1+q) (1+(q^2-1)+\cdots +(q^{a'_{n-1}-2}-q^{a'_{n-1}-4}))+(q^{a'_{n-1}}-q^{a'_{n-1}-2})=q^{a'_{n-1}-1}(1+q).
$$
\end{enumerate}
Therefore the inner sum  in \eqref{sum pden H}  is equal to
\begin{align}\label{eqn: wt L' 1}
\begin{cases}1,&  t(L'^\flat)=0,\\
(1+q^{-1})\frac{1}{ \vol( L'^\flat)}, & t(L'^\flat)=1.
\end{cases}
\end{align}
We obtain that \eqref{sum pden H} is equal to
\begin{align*}
\sum_{L^\flat\subset L'^\flat,\, t(L'^\flat)= 0}  1 +
\sum_{L^\flat\subset L'^\flat,\, t(L'^\flat)= 1}  (1+q^{-1}) \frac{1}{ \vol( L'^\flat)}
=\frac{1}{ \vol( L^\flat)} \Den((-q)^{-1}, L^\flat),
\end{align*}
by \eqref{eq:valueat1}, and hence
\begin{align*}
\pDen_{L^\flat,\sH}(x)-\pDen_{L^\flat,\sH}(\varpi^{-1} x)= \frac{1}{ \vol( L^\flat)} \Den((-q)^{-1}, L^\flat).
\end{align*}
This completes the proof.
\end{proof}

We introduce an auxiliary function on $\BV\setminus L^\flat_F$,
$$
\wit\pDen_{L^\flat}(x)=\sum_{L^\flat\subset L'\subset L'^\vee }1_{L'}(x).
$$
%This function is point-wisely non-negative and provides an upper bound of 
% and $$\wit\pDen_{L^\flat,\sH}(x)=\sum_{L^\flat\subset L'\subset L'^\vee,\,\, t(L'^\flat)\leq 1 }1_{L'}(x).
%$$ 

Similar to Proposition \ref{prop:LC Den}, we have:

\begin{lemma}
\label{lem: aux den}
The function $\wit\pDen_{L^\flat}$ lies in $\rm L^1_c(\BV)$, having logarithmic singularity along $L_F^\flat$.
\end{lemma}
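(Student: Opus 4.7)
The plan is to mirror the strategy used in Proposition \ref{prop:LC Den}(a) for $\pDen_{L^\flat}$. First, I observe that $\wit\pDen_{L^\flat}$ is locally constant on $\BV\setminus L_F^\flat$, since each $\mathbf{1}_{L'}$ is, and that it vanishes outside the compact open subset $N(L^\flat)$ of \eqref{eqn:N(L)}: if $L^\flat+\pair{x}\subset L'\subset L'^\vee$ then $L^\flat+\pair{x}$ is forced to be integral. Thus $\wit\pDen_{L^\flat}$ is smooth with compact support on $\BV\setminus L_F^\flat$, and it remains to analyze the behavior near $L_F^\flat$; logarithmic singularity along $L_F^\flat$ will imply both the $\rm L^1$-integrability and the final claim.

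Second, I would establish the analog of Lemma \ref{lem:diff} for $\wit\pDen_{L^\flat}$: if $x\perp L^\flat$ with $\val(x)\geq 1+e_{\max}(L^\flat)$, then
\begin{equation*}
\wit\pDen_{L^\flat}(x)-\wit\pDen_{L^\flat}(\varpi^{-1}x)=\wit C(L^\flat)
\end{equation*}
for a constant depending only on $L^\flat$. The calculation parallels the $\pDen_{L^\flat,\sH}$ case of Lemma \ref{lem:diff}: writing $L'=L'^\flat+\pair{u}$ via Lemmas \ref{lem:Lat} and \ref{lem:lat cyc}, with $u=u^\flat+u^\perp$ in the orthogonal decomposition $\BV=L_F^\flat\obot (L_F^\flat)^\perp$, the assumption on $\val(x)$ forces $\val(u^\perp)\geq 1$ via \eqref{eqn: val}, and grouping contributions by $L'^\flat=L'\cap L_F^\flat$ reduces the difference to a finite sum over integral lattices $L'^\flat\supset L^\flat$. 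Without the weight $\fkm(t(L'))$ and without the type restriction $t(L'^\flat)\leq 1$ that appeared in Lemma \ref{lem:diff}, the inner sum simply counts generators of a cyclic $O_F$-module, and summing over the (finitely many) admissible $L'^\flat$ yields the $x$-independent constant $\wit C(L^\flat)$.

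Third, I would deduce logarithmic singularity near every $e\in L_F^\flat\cap N(L^\flat)$ in two stages. If $e\in L^\flat$, translation invariance under $L^\flat$ reduces to $e=0$, and iterating the step relation from stage two along the perpendicular direction $\pair{u}$ of \eqref{eqn M(L)} gives, for $x=\varpi^k u$ with $k\geq 1$,
\begin{equation*}
\wit\pDen_{L^\flat}(x)=k\cdot \wit C(L^\flat)+\wit\pDen_{L^\flat}(\varpi^{-1}u),
\end{equation*}
which is logarithmic in $|(x,x)|$. For $e\in L_F^\flat\cap N(L^\flat)$ with $e\notin L^\flat$, I would adopt the change-of-basis trick from the proof of Proposition \ref{prop:LC Den}(a): pick an orthogonal basis $e_1,\dots,e_{n-1}$ of $L^\flat$ so that the expansion coefficient of $e$ along $e_1$ has minimal valuation $-\val(\lambda)$, and verify that for $x_t=e+te_n$ with $|t|$ sufficiently small, the fundamental invariants of $L^\flat+\pair{x_t}$ agree with those of $\widetilde L^\flat+\pair{\lambda t e_n'}$ for a suitable orthonormalized $e_n'$, where $\widetilde L^\flat=L^\flat+\pair{e}$ is integral since $e\in N(L^\flat)$. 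This reduces to the previous subcase applied to $\widetilde L^\flat$ and concludes the proof.

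The main obstacle is the last reduction: without the type restriction $t(L'^\flat)\leq 1$ available in Proposition \ref{prop:LC Den}(a), one must separately check that the $t$-independent correction term arising from the change-of-basis is a \emph{finite} sum. This follows from the compactness of $N(L^\flat)$, which bounds uniformly the set of lattices $L'$ contributing to $\wit\pDen_{L^\flat}$ in a small neighborhood of $e$, together with the fundamental-invariant stability argument used around \eqref{eqn diff} and \eqref{eqn diff H}.
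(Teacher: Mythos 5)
Your overall strategy mirrors the paper's proof: establish a step relation perpendicular to $L^\flat_F$ for large $\val(x)$, iterate to get logarithmic growth near points of $L^\flat$, and reduce the case $e\notin L^\flat$ to this by the change-of-basis argument from Proposition~\ref{prop:LC Den}(a). Two points need correction, however.

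First, in step two you claim that $\val(x)\geq 1+e_{\max}(L^\flat)$ ``forces $\val(u^\perp)\geq 1$ via \eqref{eqn: val}.'' That bound is borrowed from Lemma~\ref{lem:diff}, where it works only because the contributing $L'^\flat$ are constrained to have type $\leq 1$: in that case one controls the single coefficient $\lambda_{n-1}$ and gets the sharper estimate $2\,\val(\lambda_{n-1})+a'_{n-1}\geq 0$, i.e.\ a half-length bound. Here the sum ranges over \emph{all} integral $L'^\flat\supset L^\flat$, so the only available control is the general inequality
$\length_{O_F}\frac{L'^\flat+\pair{u^\flat}}{L'^\flat}\leq e_{\max}(L'^\flat)$, which via \eqref{eqn: cyclic} gives $\val(u^\perp)\geq \val(x)-2\,e_{\max}(L'^\flat)$. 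Combined with \eqref{ineq e max}, this forces $\val(u^\perp)\geq 1$ only when $\val(x)>2\,e_{\max}(L^\flat)$. With your weaker bound the conclusion can fail (e.g.\ if $e_{\max}(L^\flat)\geq 1$), and then the inner count stops being $x$-independent. The fix is simply to raise the threshold to $\val(x)>2\,e_{\max}(L^\flat)$; the finitely many exceptional small-$\val$ terms are absorbed into the $O(1)$ part of the logarithmic estimate.

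Second, the ``main obstacle'' you flag at the end — a possibly infinite correction term in the reduction from $e\notin L^\flat$ to $e\in L^\flat$ — is not actually there. The correction term \eqref{eqn diff H} in Proposition~\ref{prop:LC Den}(a) arises only for $\pDen_{L^\flat,\sH}$, because that function involves the \emph{intersection} $L'\cap L_F^\flat$ and hence genuinely depends on the position of $L'$ relative to the hyperplane $L^\flat_F$, not just on the isometry class of $L^\flat+\pair{x}$. In contrast, $\wit\pDen_{L^\flat}(x)=\sum_{L^\flat+\pair{x}\subset L'\subset L'^\vee}1$ counts lattices containing $L^\flat+\pair{x}$ and so depends \emph{only} on the fundamental invariants of $L^\flat+\pair{x}$, exactly like $\pDen_{L^\flat}$. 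Therefore the identity \eqref{eqn diff} holds verbatim and exactly for $\wit\pDen_{L^\flat}$: $\wit\pDen_{L^\flat}(x_t)=\wit\pDen_{\widetilde L^\flat}(\lambda t e_n)$ for $|t|$ small, with no correction. This is precisely the observation the paper makes, and it makes the final reduction immediate rather than conditional on a finiteness check.
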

\begin{proof}
 %The function  $\wit\pDen_{L^\flat}$ vanishes outside the compact set  $N(L^\flat)$, cf. \eqref{eqn:N(L)}.
It suffices to 
show the logarithmic singularity near $0\in \BV$. The behavior of $\wit\pDen_{L^\flat}$ near an arbitrary $e\in \BV$ is then reduced to this case by the same argument as the proof of part (a) in  Proposition \ref{prop:LC Den} for $\pDen_{L^\flat}$. More precisely, the equality \eqref{eqn diff} also holds for the function $\wit\pDen_{L^\flat}$, since $\wit\pDen_{L^\flat}(x)$ depends only on the fundamental invariants of the lattice $L^\flat+\pair{x}$.

Note that the function $\wit\pDen_{L^\flat}$ is invariant under $L^\flat$-translation. It suffices to show that, when $x\perp L^\flat_F$ and $\val(x)>2\,e_{\max}(L^\flat)$, 
$$
\wit\pDen_{L^\flat}(x)=C_0 \,\val(x)+C_1
$$
for some constants $C_0,C_1$. Write the function as a double sum:
$$
  \wit\pDen_{L^\flat}(x)=\sum_{L^\flat\subset L'^\flat\subset (L'^\flat)^\vee } \sum_{ L'\subset L'^\vee\atop  L'\cap L_F^\flat=L'^\flat}{\bf 1}_{L'}(x).
$$
Since the outer sum has only finitely many terms, it suffices to prove the desired  logarithmic singularity  for the inner sum associated to each integral lattice $L'^\flat\supset L^\flat$. Fix such an $L'^\flat$. It suffices to show that, when $x\perp L^\flat_F$ and $\val(x)>2e_{\max}(L'^\flat)$ (we remind the reader that $e_{\max}(L'^\flat)\leq e_{\max}(L^\flat)$ by \eqref{ineq e max}), the cardinality
\begin{align}\label{eqn: card}
\#\{ L'\mid L'\subset L'^\vee,\, L'\cap L_F^\flat=L'^\flat,   L'\cap  (L_F^{\flat})^{\perp} =\pair{x}\}
\end{align}
is independent of $x$.

Following the proof of Lemma
\ref{lem:diff}, each lattice $L'$ in the above set is  of the form $L^\flat+ \pair{  u}$ where
$$
(u, L'^\flat)\subset O_F,\quad (u,u)\in O_F.
$$
Write $u=u^\flat+u^\perp$ according to the orthogonal direct sum $\BV=L_F^\flat\obot (L_F^{\flat})^{\perp}$. 
We {\em claim} that $\val(u^\perp)\geq 1$. In fact, by $(u, L'^\flat)\subset O_F$,  we obtain $u^\flat\in (L'^\flat)^\vee$, and hence $\length_{O_F} \frac{L'^\flat+\pair{ u^\flat}}{L'^\flat}\leq e_{\max}(L'^\flat)$.
Comparing the lengths of the $O_F$-modules in \eqref{eqn: cyclic},  we obtain
$$
-\val(u^\perp)+\val(x)=2\,\length_{O_F} \frac{L'^\flat+\pair{ u^\flat}}{L'^\flat}\leq 2\,e_{\max}(L'^\flat) .
$$
The claim follows.

It follows that the cardinality \eqref{eqn: card} is given by \eqref{eqn: wt L'} without the weight factor $\fkm(t(\wit L'^\flat)+1)$, hence  independent of $x$. This completes the proof.
\end{proof}

By Proposition \ref{prop:LC Den}, the functions $\pDen_{L^\flat}, \pDen_{L^\flat,\sH}$ and $\pDen_{L^\flat,\sV}$ are all in $\rm L^1(\BV)$, hence Fourier transforms exist for all of them. 

\begin{corollary}
\label{cor: FT}

The Fourier transforms of $\pDen_{L^\flat}$ and $\pDen_{L^\flat,\sH}$ are given by (point-wisely) absolutely  convergent sums
\begin{align}\label{eq:FT pDen H}
\wh\pDen_{L^\flat,\sH}(x)=\sum_{L^\flat\subset L'\subset L'^\vee,\,\, t(L'^\flat)\leq 1 }\vol(L')\fkm(t(L'))1_{L'^\vee}(x),
\end{align}
and
\begin{align}\label{eq:FT pDen}
\wh\pDen_{L^\flat}(x)=\sum_{L^\flat\subset L'\subset L'^\vee }\vol(L')\fkm(t(L'))1_{L'^\vee}(x),
\end{align}
where $x\in\BV$ in both equalities. 
\end{corollary}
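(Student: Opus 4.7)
The plan is to derive both formulas termwise from the identity $\wh{\mathbf{1}_{L'}}=\vol(L')\mathbf{1}_{L'^\vee}$ for any full-rank $O_F$-lattice $L'\subset\BV$, applied to the defining series $\pDen_{L^\flat}=\sum_{L^\flat\subset L'\subset L'^\vee}\fkm(t(L'))\mathbf{1}_{L'}$ and to its horizontal analogue $\pDen_{L^\flat,\sH}$. At each $x\in\BV\setminus L^\flat_F$ the defining series has only finitely many nonzero terms, since any $L'$ with $x\in L'$ must contain the full-rank lattice $L^\flat+\langle x\rangle$ and hence lies between it and its dual. The substantive content of the corollary is thus to justify interchanging the Fourier transform with the infinite sum, and to establish the pointwise absolute convergence of the resulting series on the right-hand side.

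The key step is to establish the $L^1$-bound
$$\sum_{L^\flat\subset L'\subset L'^\vee}|\fkm(t(L'))|\vol(L')<\infty,$$
where $L'$ ranges over full-rank integral lattices containing $L^\flat$. Since $t(L')\le n$ the weights $|\fkm(t(L'))|$ are uniformly bounded, so it suffices to bound $\sum_{L'}\vol(L')$. I will organize the sum by $L'^\flat:=L'\cap L^\flat_F$: this intermediate lattice satisfies $L^\flat\subset L'^\flat\subset (L'^\flat)^\vee\subset (L^\flat)^\vee$ inside $L^\flat_F$, and hence takes only finitely many values. For each fixed $L'^\flat$, Lemma \ref{lem:Lat} parameterizes the lattices $L'$ with $L'\cap L^\flat_F=L'^\flat$ by vectors $u\in\BV\setminus L^\flat_F$ modulo $L'^\flat$ and $O_F^\times$. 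Decomposing $u=u^\flat+u^\perp$ along $\BV=L^\flat_F\obot(L^\flat_F)^\perp$, the integrality condition $(u,L'^\flat)\subset O_F$ forces $u^\flat$ into the finite set $(L'^\flat)^\vee_{L^\flat_F}/L'^\flat$, while $(u,u)\in O_F$ bounds $\val(u^\perp)$ from below. By the volume decomposition \eqref{eqn:vol}, $\vol(L')=\vol(L'^\flat)\cdot q^{-\val(u^\perp)}$, and the geometric sum over the allowed values of $\val(u^\perp)$ converges.

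With this $L^1$-bound in hand, the series $\sum_{L'}\fkm(t(L'))\mathbf{1}_{L'}$ converges absolutely in $L^1(\BV)$, and by the pointwise finiteness above it coincides with $\pDen_{L^\flat}$ on $\BV\setminus L^\flat_F$. Applying the Fourier transform termwise---valid since the Fourier transform is continuous from $L^1$ to $L^\infty$---produces \eqref{eq:FT pDen}. Pointwise absolute convergence at any $x\in\BV$ then follows from the trivial bound
$$\sum_{L^\flat\subset L'\subset L'^\vee}\vol(L')|\fkm(t(L'))|\mathbf{1}_{L'^\vee}(x)\le \sum_{L^\flat\subset L'\subset L'^\vee}\vol(L')|\fkm(t(L'))|<\infty.$$
The identical argument restricted to the sub-series with the additional constraint $t(L'^\flat)\le 1$ yields \eqref{eq:FT pDen H}. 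The only technical point is the parameterization step establishing the $L^1$-bound, which is a direct application of Lemmas \ref{lem:Lat} and \ref{lem:lat cyc} together with \eqref{eqn:vol}, in the spirit of the estimates already used in the proofs of Proposition \ref{prop:LC Den} and Lemma \ref{lem: aux den}.
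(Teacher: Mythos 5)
Your proposal is correct and follows essentially the same route as the paper: both arguments produce an $L^1$ majorant for the series $\sum_{L'}\fkm(t(L'))\mathbf{1}_{L'}$ (you by directly estimating $\sum_{L'}\vol(L')$ via the parameterization Lemmas \ref{lem:Lat}, \ref{lem:lat cyc} and \eqref{eqn:vol}; the paper by invoking the already-established Lemma \ref{lem: aux den} for the auxiliary function $\wit\pDen_{L^\flat}=\sum\mathbf{1}_{L'}$ together with the boundedness of $|\fkm|$) and then interchange the Fourier transform with the sum by dominated convergence. Your re-derivation of the $L^1$-bound is self-contained but slightly redundant given Lemma \ref{lem: aux den}; otherwise the two proofs coincide.
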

\begin{proof}By Proposition \ref{prop:LC Den} and Lemma \ref{lem: aux den}, the two functions $\wit\pDen_{L^\flat}$ and $\pDen_{L^\flat,\sH}$ are $\rm L^1$ and given by a sum of point-wisely non-negative functions.
Since $|\fkm(t(L'))|$ is bounded in the sum defining  $\pDen_{L^\flat}$, the assertion follows from the dominated convergence theorem.
\end{proof}

\subsection{Fourier transform of $\pDen_{L^\flat,\sV}$}

\begin{theorem}\label{thm: pDen=0}
Assume that $x\perp L^\flat$ and $\val(x)<0$. Then
$$
\wh\pDen_{L^\flat,\sV}(x)=0.
$$
\end{theorem}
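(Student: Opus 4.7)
The plan is to work directly from the Fourier transform formulas in Corollary~\ref{cor: FT}. Writing $\wh\pDen_{L^\flat,\sV}=\wh\pDen_{L^\flat}-\wh\pDen_{L^\flat,\sH}$, we have
$$\wh\pDen_{L^\flat,\sV}(x)=\sum_{\substack{L^\flat\subset L'\subset L'^\vee\\ t(L'^\flat)\geq 2}}\vol(L')\fkm(t(L'))\mathbf{1}_{L'^\vee}(x).$$
The first observation is that for $x\perp L^\flat$, the indicator simplifies as $\mathbf{1}_{L'^\vee}(x)=\mathbf{1}_{(\pi_\perp L')^\vee}(x)$, where $\pi_\perp\colon\BV\to (L^\flat_F)^\perp$ is the orthogonal projection to the one-dimensional line; a direct one-dimensional computation gives $x\in (\pi_\perp L')^\vee$ iff $\val(\pi_\perp L')\geq -\val(x)$. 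Under the hypothesis $\val(x)<0$, this forces $\val(\pi_\perp L')\geq -\val(x)>0$.

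Next, I would parametrize lattices $L'$ in the sum by triples $(L'^\flat,v,u^\flat)$: set $L'^\flat=L'\cap L^\flat_F$, fix a generator $v$ of $\pi_\perp L'=\langle v\rangle$, and take $u^\flat\in (L'^\flat)^{\vee,\circ}/L'^\flat$ so that $L'=L'^\flat+\langle u^\flat+v\rangle$ (by Lemma~\ref{lem:Lat}). Since $\val(v)>0$ makes $(v,v)\in\varpi O_F$, the integrality condition $(u^\flat,u^\flat)+(v,v)\in O_F$ reduces to $(u^\flat,u^\flat)\in O_F$; the base-height formula~\eqref{eqn:vol} gives $\vol(L')=\vol(L'^\flat)\vol(\langle v\rangle)$. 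The analogous parametrization applies to the horizontal sum with the extra constraint $t(L'^\flat)\leq 1$.

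The crucial step is to evaluate the inner sum over $v$ (a geometric series in the valuation $\val(v)\geq -\val(x)$) and thereby extract a common factor $C(x)$ depending only on $\val(x)$, leaving a double sum over $(L'^\flat,u^\flat)$ that we reindex via $\wit L\coloneqq L'^\flat+\langle u^\flat\rangle$, an integral lattice in $L^\flat_F$ containing $L^\flat$. Applying the Cho--Yamauchi formula (Theorem~\ref{thm: Den(X)}) to $L^\flat$, we aim to identify
$$\wh\pDen_{L^\flat}(x)=C(x)\cdot\Den(-q,L^\flat),\qquad \wh\pDen_{L^\flat,\sH}(x)=C(x)\cdot \vol(L^\flat)^{-1}\Den((-q)^{-1},L^\flat),$$
so that the functional equation~\eqref{eq: FE} — $\Den(-q,L^\flat)=\vol(L^\flat)^{-1}\Den((-q)^{-1},L^\flat)$ — yields $\wh\pDen_{L^\flat,\sV}(x)=0$.

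The main obstacle is the weight $\fkm(t(L'))$: determining $t(L')$ for the non-orthogonal sum $L'=L'^\flat+\langle u^\flat+v\rangle$ is subtle, since a nontrivial unit-normed $u^\flat\in (L'^\flat)^{\vee,\circ}/L'^\flat$ can reduce the number of fundamental invariants by~$2$ via a \emph{self-dualization} phenomenon (already visible in the $n=3$ case, where adding an isotropic unit-normed vector to a type-$2$ rank-$2$ lattice produces a self-dual lattice). Handling this correctly — through the reparametrization by $\wit L$ with appropriate cyclicity multiplicities and a careful tracking of how the fundamental invariants of $L'$ depend on $(\wit L,L'^\flat,\val(v))$ — is what makes the identification with the Cho--Yamauchi expansion work and exposes the functional-equation cancellation.
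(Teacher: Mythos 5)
Your parametrization of the sum over $L'$ by $(L'^\flat,u^\perp,u^\flat)$, the observation that $x\in L'^\vee$ iff $\val(\pi_\perp L')\geq -\val(x)>0$, the extraction of the geometric series in $\val(u^\perp)$ (giving the factor $(1-q^{-2})^{-1}\vol(\pair{x}^\vee)$), and the plan to reindex by $\wit L=L'^\flat+\pair{u^\flat}$ with cyclicity multiplicities all coincide with the paper's strategy in the proof of Lemma~\ref{lem: pDen FT}.

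The gap is the claimed identities
\[
\wh\pDen_{L^\flat}(x)=C(x)\cdot\Den(-q,L^\flat),\qquad
\wh\pDen_{L^\flat,\sH}(x)=C(x)\cdot\vol(L^\flat)^{-1}\Den((-q)^{-1},L^\flat),
\]
which are false. After reindexing by $\wit L$, each $\wit L$ occurs with multiplicity counting the number of $L'^\flat$ with $L^\flat\subseteq L'^\flat\subseteq\wit L$ and $\wit L/L'^\flat$ cyclic (together with a $q^{-2}$ correction from the $\wit L=L'^\flat$ terms); this factor does not collapse, so the reindexed sum is not a scalar multiple of $\vol(L^\flat)\Den(-q,L^\flat)=\sum_{\wit L}\vol(\wit L)\fkm(t(\wit L)+1)$. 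Concretely, in the case $n=2$, $L^\flat=\pair{e}$, the reindexed sum (divided by $(1-q^{-2})^{-1}\vol(\pair{x}^\vee)$) is $1+q^{-1}$ when $\val(e)=1$ but $2(1+q^{-1})$ when $\val(e)=3$, whereas $\Den(-q,L^\flat)$ equals $1+q$ and $1+q+q^2+q^3$ respectively; the ratio depends on $\val(L^\flat)$, so no $C(x)$ depending only on $\val(x)$ can exist.

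What the reindexed sum actually packages is an \emph{integral} over the hyperplane $L^\flat_F$: the paper's Lemma~\ref{lem: pDen FT} shows
\[
\wh\pDen_{L^\flat}(x)=(1-q^{-2})^{-1}\vol(\pair{x}^\vee)\int_{L^\flat_F}\Den(-q,L^\flat+\pair{u^\flat})\,du^\flat,
\]
and similarly for $\wh\pDen_{L^\flat,\sH}$ with the integrand $\vol(L^\flat+\pair{u^\flat})^{-1}\Den((-q)^{-1},L^\flat+\pair{u^\flat})$. The cyclicity-counting factor is exactly what makes the sum into this integral. The functional equation \eqref{eq: FE} must then be applied \emph{pointwise inside the integral}, to the rank-$(n-1)$ lattice $L^\flat+\pair{u^\flat}$ for each $u^\flat$, rather than once to $L^\flat$; this shows the two integrands agree identically, whence the integrals cancel and $\wh\pDen_{L^\flat,\sV}(x)=0$. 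Your proposal stops one step short of recognizing that the natural target after reindexing is this family of local densities integrated over $L^\flat_F$, not the single density of $L^\flat$.
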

\begin{proof}This follows from Lemma \ref{lem: pDen FT} below, and the functional equation \eqref{eq: FE}
$$
\Den(-q,L^\flat+\pair{ u^\flat})=\frac{1}{\vol(L^\flat+\pair{ u^\flat})}\Den((-q)^{-1},L^\flat+\pair{ u^\flat}).\qedhere
$$
\end{proof}
\begin{lemma}\label{lem: pDen FT}
Assume that $x\perp L^\flat$ and $\val(x)<0$. Then
$$
\wh\pDen_{L^\flat}(x)=(1-q^{-2})^{-1} \vol (\pair{x}^\vee)\int_{L_F^\flat} \Den(-q,L^\flat+\pair{ u^\flat}) \,du^\flat,
$$
and
\begin{align*}
\wh\pDen_{L^\flat,\sH}(x)=(1-q^{-2})^{-1} \vol (\pair{x}^\vee) \int_{L_F^\flat}\frac{1}{\vol(L^\flat+\pair{ u^\flat})} \Den((-q)^{-1},L^\flat+\pair{ u^\flat}) \,du^\flat.
\end{align*}
Recall that $\pair{x}^\vee$ denotes the dual lattice of $\pair{x}$ in the line $\pair{x}_F$. Here we use the self-dual measures on $L_F^\flat$ and on $\pair{x}_F$ respectively, cf. \S\ref{ss:notation}.
\end{lemma}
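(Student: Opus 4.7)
The plan is to compute each Fourier transform by parametrizing the lattices appearing in the expansions of Corollary \ref{cor: FT} via Lemma \ref{lem:Lat}, decoupling the integrality conditions using $\val(x) < 0$, and then matching term-by-term against the Cho--Yamauchi-type expansions of $\Den(-q, L^\flat + \pair{u^\flat})$ from \eqref{eq: Den -q} and of $\vol(L^\flat + \pair{u^\flat})^{-1}\Den((-q)^{-1}, L^\flat + \pair{u^\flat})$ from Corollary \ref{cor:densityat1}. These two Cho--Yamauchi expansions agree via the functional equation \eqref{eq: FE}, which is consistent with both formulas of the lemma yielding the same value.

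First I would, for each lattice $L'$ in the sum $\wh\pDen_{L^\flat}(x) = \sum_{L^\flat \subset L' \subset L'^\vee,\, x \in L'^\vee}\vol(L')\fkm(t(L'))$, set $L'^\flat := L' \cap L^\flat_F$ and invoke Lemma \ref{lem:Lat} to write $L' = L'^\flat + O_F u$ for $u \in \BV \setminus L^\flat_F$ well-defined modulo $L'^\flat$ and $O_F^\times$-scaling. Using $x \perp L^\flat$, decompose $u = u^\flat + u^\perp$ along $\BV = L^\flat_F \obot \pair{x}_F$. The conditions ``$L'$ integral'' and ``$x \in L'^\vee$'' translate to $u^\flat \in (L'^\flat)^\vee$, $u^\perp \in \pair{x}^\vee$, and $(u^\flat, u^\flat) + (u^\perp, u^\perp) \in O_F$. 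The hypothesis $\val(x) < 0$ forces $\val(u^\perp) \geq -\val(x) > 0$ for every nonzero $u^\perp \in \pair{x}^\vee$, hence $(u^\perp, u^\perp) \in O_F$ automatically and the constraints decouple into $u^\flat \in (L'^\flat)^{\vee, \circ}/L'^\flat$ and $u^\perp \in (\pair{x}^\vee \setminus\{0\})/O_F^\times$. By \eqref{eqn:vol}, $\vol(L') = \vol(L'^\flat)\vol(\pair{u^\perp})$, and summing over representatives $u^\perp = \varpi^{-\val(x) + k}x$ ($k \geq 0$) yields $\sum_{k \geq 0}q^{\val(x) - 2k} = (1-q^{-2})^{-1}\vol(\pair{x}^\vee)$, accounting for the prefactor.

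Next, a direct Smith-normal-form computation of the Gram matrix of $L'$ in the basis $(e_1, \ldots, e_{n-1}, v)$, where $e_i$ is a diagonal basis of $L'^\flat$ and $v = u^\flat + u^\perp$, gives $\det = \det(T^{L'^\flat})\cdot(u^\perp, u^\perp)$ and yields the key type identity
\[
t(L') = t(\pi_\flat(L')) + 1, \qquad \pi_\flat(L') := L'^\flat + O_F u^\flat \subset L^\flat_F;
\]
in particular $t(L')$ is independent of $u^\perp$. Group the remaining double sum over $(L'^\flat, \bar u^\flat)$ by $N := \pi_\flat(L')$, an integral rank-$(n-1)$ lattice with $L^\flat \subset N \subset N^\vee \subset L^\flat_F$. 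After lifting $\bar u^\flat$ from $N/L'^\flat$ to $N/L^\flat$ (using $\vol(L'^\flat)/[L'^\flat:L^\flat] = \vol(L^\flat)$), the matching reduces, for each fixed $N$, to the combinatorial identity
\[
\sum_{\substack{B \subset Q\\ Q/B\ \text{cyclic}}} \vol(L'^\flat_B)\cdot\#(\text{generators of }Q/B) \;=\; \vol(L^\flat)\sum_{\bar u \in Q} \frac{[N:L^\flat]}{|O_F \bar u|}, \qquad Q := N/L^\flat,
\]
where $L'^\flat_B$ is the preimage of $B$ in $N$. This follows from the $O_F$-invariant self-duality of the finite $O_F$-module $Q$: the map $B \mapsto B^\perp$ swaps cyclic quotients $Q/B$ with cyclic submodules of $Q^\vee \cong Q$ and preserves generator counts. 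Combining these ingredients proves the formula for $\wh\pDen_{L^\flat}(x)$; the formula for $\wh\pDen_{L^\flat,\sH}(x)$ follows by the same argument restricted to $L'$ with $t(L'^\flat) \leq 1$, matched against Corollary \ref{cor:densityat1} at $k = 1$.

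The hard part will be the type identification $t(L') = t(\pi_\flat(L')) + 1$: a nonzero $u^\flat$ can collapse a positive invariant of $L'^\flat$ to zero while enlarging another, so $L'$ need not agree invariant-by-invariant with the orthogonal sum $\pi_\flat(L') \obot \pair{u^\perp}$. Handling this requires a careful Smith-normal-form analysis of the Gram matrix exploiting the integrality $(u^\flat, u^\flat) \in O_F$ together with the diagonal structure of $L'^\flat$.
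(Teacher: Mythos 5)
Your proposal is correct and follows essentially the same route as the paper's proof: expand $\wh\pDen_{L^\flat}$ via Corollary \ref{cor: FT}, parametrize $\Sigma(L'^\flat,x)$ through Lemma \ref{lem:Lat} and Lemma \ref{lem:lat cyc}, use $\val(x)<0$ to force $\val(u^\perp)\ge 1$ and decouple the conditions, sum the geometric series in $u^\perp$ to produce $(1-q^{-2})^{-1}\vol(\pair{x}^\vee)$, invoke $t(L')=t(\pi_\flat(L'))+1$, expand the right-hand sides via \eqref{eq: Den -q} and \eqref{eq:valueat1}, and match the double sums using the self-duality $L'^\flat/L^\flat\simeq(L^\flat)^\vee/(L'^\flat)^\vee$. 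The type identity you flag is indeed stated without proof in the paper; it is most cleanly verified not with a diagonal basis of $L'^\flat$ but with a basis $(f_1,\ldots,f_{n-2},f_{n-1}+u^\perp,\varpi^m u^\perp)$ of $L'$ adapted to the cyclic quotient $\wit L'^\flat/L'^\flat$, in which the Gram matrix of $L'$ reduces modulo $\varpi$ to that of $\wit L'^\flat$ padded by a zero row and column.
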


\begin{proof}
First we consider the Fourier transform of $\pDen_{L^\flat}$. By \eqref{eq:FT pDen}, it is equal to the (point-wisely) absolutely  convergent sum
$$
\wh\pDen_{L^\flat}(x)=\sum_{L^\flat\subset L'\subset L'^\vee, \, x\in L'^\vee}\vol(L') \fkm(t(L'))  .
$$
For an integral lattice $L'^\flat\supset L^\flat$,  define
\begin{align}\label{def Sigma}
\Sigma(L'^\flat,x)=\{L'\subset \BV\mid x\in L'^\vee, L'\subset L'^\vee, L'^\flat=L'\cap L_F^\flat\}.
\end{align}Then
\begin{align}\label{FT pden}
\wh\pDen_{L^\flat}(x)=\sum_{L^\flat\subset L'^\flat\subset (L'^\flat)^\vee}\sum_{  L'\in \Sigma(L'^\flat,x)}\vol(L') \fkm(t(L')) .
\end{align}

By Lemmas \ref{lem:Lat} and Lemma \ref{lem:lat cyc}, 
we have a bijection  
\begin{equation}\label{eqn: Sigma L x}
	\xymatrix@R=0ex{
 [((\pair{x}+L'^\flat)^{\vee,\circ}/L'^\flat) \setminus (L^\flat_F/L'^\flat)  ] /O_F^\times  \ar[r]^-\sim  &  \Sigma(L'^{\flat},x)\\
		u \ar@{|->}[r]  & L'^\flat+\pair{ u}.
		}
\end{equation}
Here, though $(\pair{x}+L'^\flat)^{\vee,\circ}$ is not necessarily a lattice, it is invariant under $L'^\flat$-translation and $O_F^\times$-multiplication. Hence the quotient on the left hand side makes sense.

Now we follow the same argument as in the proof of Lemma \ref{lem:diff}. Write $u=u^\flat+u^\perp$ according to the orthogonal direct sum $\BV=L_F^\flat\obot (L_F^{\flat})^{\perp}$. 
Then the condition $x\in L'^\vee$ is equivalent to the projection $\pi_\perp(L')\subset \pair{x}^\vee$ (inside the line $(L_F^{\flat})^{\perp}=\pair{x}_F$), or equivalently, $(x,u^\perp)\in O_F$. Since $\val(x)<0$, we must have $\val(u^\perp)>0$ (due to $2\,\val((x,u^\perp))=\val(x)+\val(u^\perp)$).
It follows from the integrality of the norm $(u,u)$  and $(u^\perp, u^\perp)$ that $u^\flat$ also has integral norm and hence
$u^\flat\in (L'^\flat)^{\vee,\circ}$. Thus we have 
$$
(\pair{x}+L'^\flat)^{\vee,\circ}=(L'^\flat)^{\vee,\circ}\obot\pair{x}^\vee
$$ and a bijection with the left hand side of \eqref{eqn: Sigma L x} 
\begin{equation*}
	\xymatrix@R=0ex{(L'^\flat)^{\vee,\circ}/L'^\flat \times \frac{\pair{x}^\vee\setminus \{0\}}{O_F^\times}  \ar[r]^-\sim  &
 [(L'^\flat)^{\vee,\circ}/L'^\flat \times ( \pair{x}^\vee\setminus \{0\}) ] /O_F^\times }
\end{equation*}
sending  $( u^\flat, O_F^\times \cdot \varpi^{m} x)$ to the $O_F^\times$-orbit of $(u^\flat, \varpi^{m} x)$ (with the diagonal $O_F^\times$-action)\footnote{The bijection depends on the choice of a basis vector of $\pair{x}$, and here we have simply chosen $x$.}. We have the resulting  bijection
\begin{equation*}
	\xymatrix@R=0ex{
 (L'^\flat)^{\vee,\circ}/L'^\flat \times \frac{\pair{x}^\vee\setminus \{0\}}{O_F^\times}    \ar[r]^-\sim  &  \Sigma(L'^{\flat},x).
		}
\end{equation*}
The second factor 
$ \frac{\pair{x}^\vee\setminus \{0\}}{O_F^\times}$ can be further identified with the set of lattices 
contained in $\pair{x}^\vee$ (corresponding to $\pair{ u^\perp}=\pi_\perp(L')$).
We write $\wit L'^\flat\coloneqq  \pi_\flat(L')= L'^\flat+\pair{  u^\flat}$. Then $\wit L'^\flat$ is an integral lattice. By  $\val(u^\perp)\geq 1$, we obtain
$$
t(L')=t(\wit L'^\flat)+1,
$$
and by \eqref{eqn:vol},
$$
\vol(L')=\vol(L'^\flat)\vol (\pi_\perp(L')).
$$
Therefore the inner sum in \eqref{FT pden} is equal to 
\begin{align*}
&\vol(L'^\flat)\sum_{u^\flat\in\frac{ (L'^\flat)^{\vee,\circ}}{L'^\flat} } \fkm(t(\wit L'^\flat)+1) \sum_{N\subset \pair{x}^\vee} \vol(N)
\\
=&\vol(L'^\flat) \vol(\pair{x}^\vee)\bigl(\, \sum_{i\geq 0}q^{-2i}\,\bigr)   \sum_{u^\flat\in\frac{ (L'^\flat)^{\vee,\circ}}{L'^\flat} } \fkm(t(\wit L'^\flat)+1)
\\=&\vol(L'^\flat) \vol(\pair{x}^\vee) (1-q^{-2})^{-1} \sum_{u^\flat\in\frac{ (L'^\flat)^{\vee,\circ}}{L'^\flat} } \fkm(t(\wit L'^\flat)+1).
\end{align*}

We now return to the sum \eqref{FT pden}, which is now equal to 
\begin{align}\label{FT pden 1}
\wh\pDen_{L^\flat}(x)&= \vol (\pair{x}^\vee)\sum_{L^\flat\subset    L'^\flat \subset(L'^\flat)^\vee }\vol(L'^\flat)(1-q^{-2})^{-1} \sum_{u^\flat\in\frac{ (L'^\flat)^{\vee,\circ}}{L'^\flat} } \fkm(t(\wit L'^\flat)+1).
\end{align}
For a given integral lattice $\wit L'^\flat$ such that $\wit L'^\flat/L'^\flat$ is a cyclic $O_F$-module,  the number of $u^\flat\in\frac{ (L'^\flat)^{\vee,\circ}}{L'^\flat}$ such that $L'^\flat+\pair{  u^\flat}=\wit L'^\flat$ is 
$$\begin{cases} [\wit L'^\flat : L'^\flat](1-q^{-2})=\frac{\vol (\wit L'^\flat)}{ \vol( L'^\flat)}(1-q^{-2}), &\text{ if }\wit L'^\flat \neq L'^\flat,\\ 
 1,&\text{ if }\wit L'^\flat= L'^\flat.
 \end{cases}
 $$ We thus obtain
\begin{align*}
\wh\pDen_{L^\flat}(x)=&\vol (\pair{x}^\vee) \sum_{L^\flat\subset    L'^\flat  \subset(L'^\flat)^\vee}\vol(L'^\flat)
 \sum_{L'^\flat\subset  \wit L'^\flat,\, \wit L'^\flat/ L'^\flat \text{ cyclic}   } \frac{\vol (\wit L'^\flat)}{ \vol( L'^\flat)}\fkm(t(\wit L'^\flat)+1)\\
&+q^{-2}(1-q^{-2})^{-1}\vol (\pair{x}^\vee)\sum_{L^\flat\subset    L'^\flat \subset(L'^\flat)^\vee }\vol(L'^\flat)  \fkm(t( L'^\flat)+1).
\end{align*}
Here we split the contribution of the factor corresponding to $\wit L'^\flat= L'^\flat$ into two pieces $q^{-2}+(1-q^{-2})$. 
Interchanging the sum over $ L'^\flat$ and $\wit L'^\flat$, we obtain
\begin{align}\label{FT pDen 2}
\wh\pDen_{L^\flat}(x)=&\vol (\pair{x}^\vee)\sum_{L^\flat\subset  \wit L'^\flat \subset (\wit L'^\flat)^\vee}\vol(\wit L'^\flat)  \fkm(t(\wit L'^\flat)+1) \sum_{ L^\flat\subset L'^\flat\subset  \wit L'^\flat,\, \wit L'^\flat/ L'^\flat \text{ cyclic}}1 \\
&+q^{-2}(1-q^{-2})^{-1}\vol (\pair{x}^\vee)\sum_{L^\flat\subset    L'^\flat \subset (L'^\flat )^\vee}\vol(L'^\flat)  \fkm(t( L'^\flat)+1).\notag
\end{align}

Next we consider the integral
$$
\int_{L_F^\flat}\Den(-q,L^\flat+\pair{ u^\flat}) du^\flat.
$$
This can be written as a weighted sum over integral lattices $M\subset L^\flat_F$ such that $L^\flat\subset  M$ and $M/L^\flat$ is a cyclic $O_F$-module, with the weight factor $$
\begin{cases} \vol(M)(1-q^{-2}), &\text{if } M\neq L^\flat,\\
\vol(L^\flat) ,& \text{if } M= L^\flat.
 \end{cases}
 $$
Therefore we obtain
\begin{align}\label{FT pDen 3}
\int_{L_F^\flat}\Den(-q,L^\flat+\pair{ u^\flat}) &du^\flat=q^{-2} \vol(L^\flat)\Den(-q,L^\flat)
\\&+(1-q^{-2}) \sum_{L^\flat\subset  M\subset M^\vee,\, M/L^\flat \text{ cyclic} } \vol(M)\Den(-q,M).\notag
\end{align}
Again here we split the contribution of the factor corresponding to $M=L^\flat$ into two pieces $q^{-2}+(1-q^{-2})$.
By the formula \eqref{eq: Den -q}, the first term is equal to 
\begin{align}\label{FT pDen 3.2}
q^{-2} \vol(L^\flat)\Den(-q,L^\flat)=q^{-2} \sum_{L^\flat\subset    L'^\flat \subset  (L'^\flat)^\vee }\vol(L'^\flat)  \fkm(t( L'^\flat)+1).
\end{align}
Again by \eqref{eq: Den -q}, the second term in \eqref{FT pDen 3} is equal to
\begin{align*} &\sum_{L^\flat\subset  M\subset M^\vee,\, M/L^\flat \text{ cyclic} } \vol(M)\Den(-q,M)\\
=&\sum_{L^\flat\subset  M\subset L'^\flat \subset  (L'^\flat)^\vee,\, M/L^\flat \text{ cyclic} } \vol(M)\frac{\vol( L'^\flat)}{\vol(M)}\fkm(t(L'^\flat)+1)\notag\\
= &\sum_{L^\flat\subset  L'^\flat \subset  (L'^\flat)^\vee}\vol( L'^\flat)\fkm(t(L'^\flat)+1) \cdot \#\{M\mid L^\flat\subset  M\subset L'^\flat ,\, M/L^\flat \text{ cyclic}\}.
\end{align*}
Now note that we have an equality
$$
\#\{M\mid L^\flat\subset  M\subset L'^\flat ,\, M/L^\flat \text{ cyclic}\}=\#\{M\mid L^\flat\subset  M\subset L'^\flat ,\, L'^\flat/M \text{ cyclic}\}.
$$
In fact, the right hand side is the same as
$$\#\{M^\vee \mid L'^{\flat,\vee}\subset  M^\vee \subset L^{\flat,\vee} ,\, M^\vee/ L'^{\flat,\vee} \text{ cyclic}\}.
$$
and this is equal to the left hand side, due to the (non-canonical) isomorphism of finite $O_F$-modules 
$$
\xymatrix@R=0ex{L'^\flat/L^\flat \ar@{-}[r]^-\sim &(L^\flat)^\vee/(L'^\flat)^\vee.}
$$
It follows that
\begin{align}\label{FT pDen 3.1}
&\sum_{L^\flat\subset  M\subset L'^\flat  \subset  (L'^\flat)^\vee,\, M/L^\flat \text{ cyclic} } \vol(M)\Den(-q,M)
\\=&\sum_{L^\flat\subset  L'^\flat  \subset  (L'^\flat)^\vee}\vol( L'^\flat)\fkm(t(L'^\flat)+1)\cdot \#\{M\mid L^\flat\subset  M\subset L'^\flat ,\, L'^\flat/M \text{ cyclic}\}.\notag
\end{align}
By \eqref{FT pDen 3}, \eqref{FT pDen 3.2} and \eqref{FT pDen 3.1}, we obtain
\begin{align}\label{eq:pDen 4}
\int_{L_F^\flat} \Den(-q,L^\flat+\pair{ u^\flat}) du^\flat=&(1-q^{-2})\sum_{L^\flat\subset  L'^\flat  \subset  (L'^\flat)^\vee}\vol( L'^\flat)\fkm(t(L'^\flat)+1)\cdot \sum_{L^\flat\subset  M\subset L'^\flat,\, L'^\flat/M \text{ cyclic}}1\\&+ q^{-2} \sum_{L^\flat\subset    L'^\flat   \subset  (L'^\flat)^\vee}\vol(L'^\flat)  \fkm(t( L'^\flat)+1).\notag
\end{align}
% where the inner sum in the first sum runs over lattices $M$ such that $L'^\flat/M$ is a cyclic $O_F$-module.

Comparing \eqref{eq:pDen 4} with \eqref{FT pDen 2} we obtain 
$$
\wh\pDen_{L^\flat}(x)=(1-q^{-2})^{-1} \vol (\pair{x}^\vee)\int_{L_F^\flat} \Den(-q,L^\flat+\pair{ u^\flat}) du^\flat,
$$
and this completes the proof of the first part concerning $\wh\pDen_{L^\flat}$.

Similarly, let us consider the horizontal part $\pDen_{L^\flat,\sH}$. By \eqref{eq:FT pDen H},  we have a (point-wisely) absolutely  convergent sum
\begin{align}\label{FT pden H}
\wh\pDen_{L^\flat,\sH}(x)=\sum_{L^\flat\subset L'^\flat   \subset  (L'^\flat)^\vee \atop t(L'^\flat)\leq 1 }\sum_{  L'\in \Sigma(L'^\flat,x)} \fkm(t(L')) \vol(L').
\end{align}Here $\Sigma(L'^\flat,x)$ is the set defined by \eqref{def Sigma}. Similar to the equation \eqref{FT pden 1} for $\wh\pDen_{L^\flat}$, we obtain  
\begin{align*}
\wh\pDen_{L^\flat,\sH}(x)&=\vol (\pair{x}^\vee)\sum_{L^\flat\subset    L'^\flat  \subset  (L'^\flat)^\vee \atop t(L'^\flat)\leq 1 }\vol(L'^\flat)(1-q^{-2})^{-1} \sum_{u^\flat\in\frac{ (L'^\flat)^{\vee,\circ}}{L'^\flat} } \fkm(t(\wit L'^\flat)+1).
\end{align*}
The inner sum is equal to \eqref{eqn: wt L'}, hence equal to \eqref{eqn: wt L' 1}. We obtain
\begin{align*}
\wh\pDen_{L^\flat,\sH}(x)&=(1-q^{-2})^{-1} \vol (\pair{x}^\vee) \sum_{L^\flat\subset    L'^\flat  \subset  (L'^\flat)^\vee\atop t(L'^\flat)\leq 1 }\vol(L'^\flat)\begin{cases}1,&  t(L'^\flat)=0,\\
q^{-1} \fkm(t(L'^\flat)+1)\frac{1}{ \vol( L'^\flat)}, & t(L'^\flat)=1
\end{cases}
\\&=(1-q^{-2})^{-1} \vol (\pair{x}^\vee) \sum_{L^\flat\subset    L'^\flat   \subset  (L'^\flat)^\vee,~ t(L'^\flat)\leq 1 } \begin{cases}1,&  t(L'^\flat)=0,\\
1+q^{-1}, & t(L'^\flat)=1.
\end{cases}
\end{align*}
From the formula  \eqref{eq:valueat1}, it follows that
\begin{align*}
&\int_{L_F^\flat}\frac{1}{ \vol( L^\flat+\pair{ u^\flat})}  \Den((-q)^{-1},L^\flat+\pair{ u^\flat}) \,du^\flat\\
=&\sum_{L^\flat\subset  L'^\flat   \subset  (L'^\flat)^\vee,~ t(L'^\flat)=0} \int_{L_F^\flat}{\bf 1}_{L'^\flat}(u^\flat)\,du^\flat \\&+\sum_{L^\flat\subset L'^\flat   \subset  (L'^\flat)^\vee,~ t(L'^\flat)= 1}  q^{-1}\fkm(t(L'^\flat)+1) \frac{1}{ \vol( L'^\flat)}     \int_{L_F^\flat}{\bf 1}_{L'^\flat}(u^\flat)\,du^\flat \\
%=& \sum_{L^\flat\subset  L'^\flat,t(L'^\flat)\leq 1} \vol(L^\flat) \#  (L'^\flat/ L^\flat)\frac{1}{\vol( L'^\flat)}\\
=&\sum_{L^\flat\subset  L'^\flat   \subset  (L'^\flat)^\vee,~ t(L'^\flat)=0  } 1+\sum_{L^\flat\subset L'^\flat  \subset  (L'^\flat)^\vee,~ t(L'^\flat)= 1}(1+q^{-1}).
\end{align*}
This completes the proof of the second part concerning the horizontal part.
\end{proof}

\section{Uncertainty principle and the proof of the main theorem}
\subsection{Uncertainty principle}

\subsubsection{Quadratic case}
In this subsection we first let $\BV$ be a (non-degenerate) quadratic space of dimension $d$ over a non-archimedean local field $F$ with characteristic not equal to $2$. Here we allow the residue characteristic to be $p=2$. We denote  by  $(\ ,\ )$ the symmetric bi-linear form on $\mathbb{V}$. Let $ \BV^{\circ} $ (resp. $ \BV^{\circ\circ} $) denote the ``positive cone'' (resp. ``strictly positive cone''), defined by 
\begin{align}
\label{eqn: V circ}
\BV^{\circ}=\{x\in \BV\mid \val((x,x)/2)\geq 0\},\quad \BV^{\circ\circ}=\{x\in \BV\mid \val((x,x)/2)> 0\}.
\end{align}
Fix an unramified additive character $\psi: F\to \BC^\times$ and, similar to the hermitian case (cf. \S\ref{ss:notation}), we define the Fourier transform on 
$C_c^\infty(\BV)$ by
\begin{align}\label{eq:fourierO}
\wh f(x):=\int_\mathbb{V} f(y)\psi ((x,y))\rd y,\quad x\in \mathbb{V}.  
\end{align}

\begin{proposition}\label{uncert o}
Let $\phi\in C_c^\infty(\BV)$ satisfy
\begin{itemize}
\item $\supp(\phi)\subset  \BV^{\circ\circ}$, and 
\item $\supp(\wh\phi)\subset  \BV^{\circ}$.
\end{itemize}
Then $\phi=0$.

\end{proposition}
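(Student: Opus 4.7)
The proof will be a direct application of the Schrödinger model of the Weil representation $\omega$ of $\SL_2(F)$ (or its metaplectic double cover, when $\dim\BV$ is odd) on $C_c^\infty(\BV)$. The plan is to convert both support hypotheses into invariance statements under unipotent subgroups, show that the two invariances together force invariance under all of $\SL_2(F)$, and then kill $\phi$ by a scaling/compact-support argument.

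First I would translate the support conditions. With $n(b)=\bigl(\begin{smallmatrix}1&b\\0&1\end{smallmatrix}\bigr)$, the formula $\omega(n(b))\phi(x)=\psi(b(x,x)/2)\phi(x)$ together with $\supp(\phi)\subset\BV^{\circ\circ}$ (so $\val((x,x)/2)\ge 1$ on the support) and the unramifiedness of $\psi$ gives $\omega(n(b))\phi=\phi$ for every $b\in\varpi^{-1}O_F$. Dually, the condition $\supp(\wh\phi)\subset\BV^\circ$ gives $\omega(n(c))\wh\phi=\wh\phi$ for every $c\in O_F$; combined with $\omega(w)\phi\propto\wh\phi$ and the identity $wn(c)w^{-1}=n^-(-c)$, this translates to $\omega(n^-(c))\phi=\phi$ for all $c\in O_F$, where $n^-(c)=\bigl(\begin{smallmatrix}1&0\\c&1\end{smallmatrix}\bigr)$.

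Next I would show that the subgroup $\Gamma\subset\SL_2(F)$ generated by $n(\varpi^{-1}O_F)$ and $n^-(O_F)$ is all of $\SL_2(F)$. The Weyl element factorization $w=n(1)n^-(-1)n(1)$ already lies in $\Gamma$ since $1\in\varpi^{-1}O_F$ and $-1\in O_F$. The Bruhat-style identity $m(a)=n(a)n^-(-a^{-1})n(a)w^{-1}$ (with $m(a)=\bigl(\begin{smallmatrix}a&0\\0&a^{-1}\end{smallmatrix}\bigr)$) then puts $m(\varpi^{-1})\in\Gamma$, because $\varpi^{-1}\in\varpi^{-1}O_F$ and $-\varpi\in O_F$. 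Conjugating $n(b)$ or $n^-(c)$ by the powers $m(\varpi^{\pm k})\in\Gamma$ multiplies the entry by $\varpi^{\mp 2k}$, and iterating in both directions shows that $\Gamma$ contains $n(\varpi^{-k}O_F)$ and $n^-(\varpi^{-k}O_F)$ for all $k\ge 0$; the union over $k$ gives $n(F)\cup n^-(F)$, which already generates $\SL_2(F)$.

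The conclusion follows from a single scaling. From $m(\varpi^{-1})\in\Gamma$ and the Weil representation formula $\omega(m(a))\phi(x)=\chi_\BV(a)|a|^{\dim\BV/2}\phi(ax)$ (for a unitary character $\chi_\BV$ coming from the Weil index), invariance $\omega(m(\varpi^{-1}))\phi=\phi$ reads
\[
\phi(\varpi^{-k}x)=C^{k}\phi(x),\qquad C=\chi_\BV(\varpi)\,q^{-\dim\BV/2}\ne 0,
\]
for every $k\ge 1$. For any fixed $x\ne 0$ the points $\varpi^{-k}x$ leave every compact subset of $\BV$, so the compact support of $\phi$ forces $\phi(\varpi^{-k}x)=0$ for $k$ large; since $C\ne 0$ this gives $\phi(x)=0$, and by local constancy $\phi\equiv 0$.

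The only step requiring real care is the generation claim $\Gamma=\SL_2(F)$; everything else is an immediate consequence of the standard Weil representation formulas. A minor technicality is that for odd-dimensional $\BV$ the representation lives on the metaplectic cover, but only the single element $m(\varpi^{-1})$ enters the final step, so the cover's 2-cocycle contributes at worst a harmless constant to $C$ and does not affect the conclusion.
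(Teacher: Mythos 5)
Your proof is correct and takes essentially the same approach as the paper: translate the two support hypotheses into invariance of $\phi$ under $N(\varpi^{-1}O_{F})$ and $N_{-}(O_{F})$ in the Weil representation, verify that these two subgroups generate $\SL_2(F)$, and then kill $\phi$ using the full invariance. There are two small variations worth noting. First, when $\dim\BV$ is odd the paper avoids the metaplectic cover entirely by passing to $\phi\otimes\phi$ on the doubled space $\BV\obot\BV$ (reducing to the even-dimensional, genuine-representation case), whereas you invoke the cover directly; your claim that the cocycle only affects the constant $C$ by a sign is correct, but to make it airtight one should note that the subgroup of $\mathrm{Mp}_2(F)$ generated by the canonical unipotent lifts still surjects onto $\SL_2(F)$, so \emph{some} lift of $m(\varpi^{-1})$ fixes $\phi$, and any such lift acts by a nonzero multiple of the naive scaling formula. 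Second, in the endgame the paper deduces from $\SL_2(F)$-invariance that $\supp(\phi)$ lies in the null cone (e.g.\ via $N(F)$- or torus-invariance) and then uses local constancy, while you use invariance under the single element $m(\varpi^{-1})$ together with compact support to force $\phi(x)=0$ for all $x\neq 0$; these are equivalent, and your version is arguably more concrete. The generation argument you give ($w=n(1)n^{-}(-1)n(1)$, then $m(a)=n(a)n^{-}(-a^{-1})n(a)w^{-1}$ with $a=\varpi^{-1}$, then conjugate by powers of $m(\varpi^{-1})$) is the same as the paper's in substance, just organized differently.
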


\begin{proof}
If $\dim\BV$ is odd, we consider the ``doubling'' quadratic space $\BV\obot \BV$ and the function $\phi\otimes \phi\in C_c^\infty(\BV\obot \BV)$. It is easy to see that $\supp(\phi\otimes\phi)=\supp(\phi)\times \supp(\phi)$, $\wh{\phi\otimes \phi}=\wh{\phi}\otimes\wh{ \phi}$, $\BV^{\circ}\times \BV^{\circ}\subset (\BV\obot \BV)^{\circ}$, and $\BV^{\circ\circ}\times \BV^{\circ\circ}\subset (\BV\obot \BV)^{\circ\circ}$. Therefore it suffices to consider the case when $\dim\BV$ is even, which we assume from now on.  We use the the Weil representation $\omega$ of $\SL_2(F)$. The group $\SL_2(F)$ acts on $C_c^\infty(\BV)$ by the following formula
\begin{align}\label{eqn weil}
\omega\left(\begin{matrix} a& \\
& a^{-1}
\end{matrix}\right)\phi(x)&=\chi_{\BV}(a)|a|^{d/2}\phi(ax),\notag\\
\omega\left(\begin{matrix} 1&b \\
&1
\end{matrix}\right)\phi(x)&=\psi\left(\frac{1}{2}\,b\, (x,x)\right)\phi(x),\\
\omega\left(\begin{matrix} &1\\
-1&
\end{matrix}\right)\phi(x)&=\gamma_{\BV}\,\wh\phi(x) ,\notag
\end{align}
where $\chi_{\BV}$ is a quadratic character of $F^\times$ associated to the quadratic space $\BV$, and  $\gamma_{\BV}$ is the Weil constant.

 By the assumption on the support, the functions $\phi$ and $\wh\phi$ are fixed by $N(\varpi^{-1}O_{F})$ and  $N(O_{F})$ respectively, where $N$ denotes the unipotent subgroup of the standard Borel of $\SL_2$ of upper triangular matrices. Therefore $\phi$ is fixed by $N(\varpi^{-1}O_{F})$ and $N_-(O_{F})$ (the transpose of $N(O_{F})$). However, $N(\varpi^{-1}O_{F})$ and $N_-(O_{F})$ generate $\SL_2(F)$. We sketch a proof of this well-known fact. Using the following identity in $\SL_2(F)$
 $$
 \left(\begin{matrix} a&b \\
c&d
\end{matrix}\right)= \left(\begin{matrix} 1&a/c \\
&1
\end{matrix}\right)  \left(\begin{matrix} &-1/c \\
c&
\end{matrix}\right)\left(\begin{matrix} 1&d/c \\
&1
\end{matrix}\right),\quad c\neq 0,
 $$
it is easy to show that the group $\SL_2(F)$ is generated by $N(F)$ and any single element in $\SL_2(F)\setminus B(F)$. Now we first apply the above equality to $\left(\begin{matrix} 1& \\
1&1
\end{matrix}\right)$ (resp. $\left(\begin{matrix} 1& \\
\varpi&1
\end{matrix}\right)$) to generate $\left(\begin{matrix} &-1 \\
1&\end{matrix}\right)$ (resp.  $\left(\begin{matrix} &-1/\varpi \\
\varpi&
\end{matrix}\right)$). Then we note that $\left(\begin{matrix} &-1 \\
1&
\end{matrix}\right)\left(\begin{matrix} &-1/\varpi \\
\varpi&
\end{matrix}\right)=\left(\begin{matrix} -\varpi \\
&-1/\varpi
\end{matrix}\right) $ and this element together with $N(\varpi^{-1}O_{F})$ generate $N(F)$. 

It follows that  $\phi$ is fixed by  $\SL_2(F)$ and therefore $\supp(\phi)$ is contained in the null cone $\{x\in\BV: (x,x)=0\}$ (e.g., by using the invariance under the diagonal torus, or $N(F)$). Since $\phi$ is locally constant, it must vanish identically. 
\end{proof}

\begin{remark}
The uncertainty principle is also used in the new proof by   Beuzart-Plessis \cite{BP} of the Jacquet--Rallis fundamental lemma.
\end{remark}

\begin{corollary}
Let $\phi\in C_c^\infty(\BV)$ satisfy
\begin{itemize}
\item $\supp(\phi)\subset \BV^{\circ\circ}$, and 
\item $\wh\phi$ is a multiple of $ \phi$.
\end{itemize}
Then $\phi=0$.
\end{corollary}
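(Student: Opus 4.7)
The claim is a direct corollary of Proposition \ref{uncert o}, so the plan is essentially a one-line reduction. First I would dispose of the trivial case: if $\wh\phi = c\cdot \phi$ with $c=0$, then $\wh\phi=0$ and Fourier inversion immediately gives $\phi=0$.

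Otherwise $c\neq 0$, and then $\supp(\wh\phi)=\supp(\phi)$. By the hypothesis on $\phi$ we have $\supp(\phi)\subset \BV^{\circ\circ}$. But from the definitions \eqref{eqn: V circ} it is immediate that $\BV^{\circ\circ}\subset \BV^{\circ}$ (the strict inequality $\val((x,x)/2)>0$ implies the non-strict one). Hence $\supp(\wh\phi)\subset \BV^{\circ\circ}\subset \BV^{\circ}$, and both hypotheses of Proposition \ref{uncert o} are satisfied. Applying that proposition yields $\phi=0$.

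There is no genuine obstacle here: the only substance is recognizing that the second bullet collapses the two support conditions of the uncertainty principle into one, via the elementary inclusion $\BV^{\circ\circ}\subset \BV^{\circ}$. The real content was already packaged in Proposition \ref{uncert o}, whose proof used the Weil representation of $\SL_2(F)$ and the fact that $N(\varpi^{-1}O_F)$ together with $N_-(O_F)$ generate $\SL_2(F)$.
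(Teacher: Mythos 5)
Your reduction is exactly the intended one: the paper states the corollary without proof precisely because it follows immediately from Proposition \ref{uncert o}, and your argument (dispose of $c=0$ by Fourier inversion, otherwise $\supp(\wh\phi)=\supp(\phi)\subset\BV^{\circ\circ}\subset\BV^{\circ}$) is the evident way to see it. Nothing is missing.
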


\subsubsection{Hermitian case}\label{sec:uncert-princ}
Now we return to the case of hermitian space with respect to a (possibly ramified) quadratic extension $F/F_0$ where $F_0$ is non-archimedean local field  with characteristic not equal to $2$. Define $\BV^\circ$ and $\BV^{\circ\circ}$ 
\begin{align}\label{eqn: V circ U}
\BV^{\circ}=\{x\in \BV\mid \val(x)\geq 0\},\quad \BV^{\circ\circ}=\{x\in \BV\mid \val(x)> 0\},
\end{align}
where we recall that $\val(x)=\val((x,x))$ for  the hermitian form $(\,,\,)$  on $\BV$, cf. \S\ref{ss:notation}. 

\begin{proposition}\label{uncert}
Let $\phi\in C_c^\infty(\BV)$ satisfy
\begin{itemize}
\item $\supp(\phi)\subset  \BV^{\circ\circ}$, and 
\item $\supp(\wh\phi)\subset  \BV^{\circ}$.
\end{itemize}
Then $\phi=0$.

\end{proposition}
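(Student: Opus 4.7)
The plan is to reduce the hermitian case directly to the quadratic case (Proposition \ref{uncert o}) by restriction of scalars from $F$ to $F_0$. Viewing $\mathbb{V}$ as an $F_0$-vector space of dimension $2n$, define a symmetric $F_0$-bilinear form
\[
B(x,y) := \tr_{F/F_0}\bigl((x,y)\bigr).
\]
Since $(\,,\,)$ is a non-degenerate hermitian form and $F/F_0$ is separable (as $\charac F_0 \neq 2$), $B$ is a non-degenerate symmetric $F_0$-bilinear form on $\mathbb{V}$. Thus $(\mathbb{V},B)$ is a quadratic space over $F_0$ of dimension $2n$ to which Proposition \ref{uncert o} applies.

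Next I would check that the three pieces of data in the hypothesis transfer correctly. First, because $(x,x) \in F_0$, we have $B(x,x) = 2(x,x)$, so
\[
\val\bigl(B(x,x)/2\bigr) = \val\bigl((x,x)\bigr) = \val(x).
\]
Hence the positive cones $\mathbb{V}^\circ$ and $\mathbb{V}^{\circ\circ}$ of \eqref{eqn: V circ U} coincide with those of \eqref{eqn: V circ} for the quadratic space $(\mathbb{V},B)$. Second, the hermitian Fourier transform of \S\ref{ss:notation} is exactly
\[
\wh{f}(x) = \int_\mathbb{V} f(y)\, \psi\bigl(\tr_{F/F_0}(x,y)\bigr)\, dy = \int_\mathbb{V} f(y)\, \psi\bigl(B(x,y)\bigr)\, dy,
\]
which matches \eqref{eq:fourierO} for the quadratic space $(\mathbb{V},B)$ (once one fixes the Haar measure on the underlying $F_0$-vector space to be self-dual for $B$; this differs from the hermitian self-dual measure by an overall positive constant, which is harmless for the support statement). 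Hence the hypotheses of Proposition \ref{uncert o} hold for $\phi$ regarded as an element of $C_c^\infty(\mathbb{V}_{F_0})$ for the $F_0$-quadratic structure $B$.

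Applying Proposition \ref{uncert o} to $(\mathbb{V},B)$ then yields $\phi=0$. There is no real obstacle beyond bookkeeping; the only point to be careful about is that the character $\psi$ appearing in both propositions is an additive character on $F_0$ (not on $F$), so that the restriction of scalars is compatible with the Fourier transforms, and that the Weil index / measure normalization enters only as an overall scalar and therefore does not affect the vanishing conclusion.
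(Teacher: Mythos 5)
Your proposal is correct and is essentially identical to the paper's own proof: both restrict scalars from $F$ to $F_0$, pass to the symmetric bilinear form $\tr_{F/F_0}(x,y)$, observe that the cones and Fourier transforms match, and invoke Proposition \ref{uncert o}. The one minor addition you make (noting that the self-dual measure for $B$ may differ from the hermitian one by a positive scalar, which is harmless for a support/vanishing statement) is a reasonable point of care that the paper leaves implicit.
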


\begin{proof}
Consider the symmetric bilinear form on the underlying $F_0$-vector space of $\mathbb{V}$, $$(x,y)_{F_0}:=\tr_{F/F_0}(x,y)\in F_0,\quad x,y\in\BV.$$
%When $F/F_0$ is unramified, the measures on $\BV$ using the hermitian form  $ (\,,\,)$ and the quadratic form $ (\,,\,)_{F_0}$ coincide. To see this, we take an $O_F$-lattice (of full rank) $L$. Then the hermitian dual lattice $L^\vee:=\{x\in \BV\mid (x,L)\subset O_F\}$ and the quadratic dual lattice $$L^\ast=\{ x\in \BV\mid (x,L)_{F_0}\subset O_{F_0} \}=\{ x\in \BV\mid (x,L)\subset D_{F/F_0}^{-1}\} =D_{F/F_0}^{-1} L^\vee,$$ where the different ideal $D_{F/F_0}=O_F$ when $F/F_0$ is unramified.  It follows that 
Then the Fourier transform on $C_c^\infty(\BV)$ defined in  \S\ref{ss:notation} using the hermitian form $(\,,\,)$ on $\mathbb{V}$ is the same as the one in (\ref{eq:fourierO}) using $(\,,\,)_{F_0}$ on the underlying $F_0$-vector space of $\mathbb{V}$. Since $(x,x)_{F_0}/2=(x,x)$ for any $x\in \mathbb{V}$, the cones defined by \eqref{eqn: V circ} and \eqref{eqn: V circ U} coincide. Therefore the desired assertion follows from Proposition \ref{uncert o}.
\end{proof}

The uncertainty principle implies that, by Lemma \ref{lem: FT Int V}, the function $\Int_{L^\flat,\sV}$ is determined by its restriction to $$ \BV^{\circ} \setminus  \BV^{\circ\circ} =\{x\in\BV\mid\val(x)=0\}.
$$ 
Ideally one would like to prove the same conclusion as Lemma \ref{lem: FT Int V} holds for the function $\pDen_{L^\flat,\sV}$.  Then, by induction on $\dim\BV$, we can prove the main Theorem \ref{thm: main}.  However, we have not succeeded finding a direct proof the analog of Lemma \ref{lem: FT Int V} for $\pDen_{L^\flat,\sV}$. Nevertheless, a weaker version of  the uncertainty principle suffices to prove the identity $\Int_{L^\flat,\sV}=\pDen_{L^\flat,\sV}$ and this is what we will actually do in the next subsection.  A posteriori we can deduce that the function $\pDen_{L^\flat,\sV}$ also satisfies the same identity as $\Int_{L^\flat,\sV}$ does in Lemma \ref{lem: FT Int V}.

\subsection{The proof of Theorem \ref{thm: main}}\label{ss:proof}
We now prove the main Theorem \ref{thm: main}.
Fix  a rank $n-1$ lattice $L^\flat\subset\BV$ such that $L_F^\flat$ is non-degenerate. We want to prove an identity of functions on $\BV\setminus L^\flat_F$
$$\Int_{L^\flat}=\pDen_{L^\flat}.$$

By Theorem \ref{thm H}, equivalently we need to show

\begin{theorem}Let $L^\flat\subset\BV$ be a rank $n-1$ lattice such that $L_F^\flat$ is non-degenerate. Then
\begin{equation}\label{eqn:int=den}
\Int_{L^\flat,\sV}=\pDen_{L^\flat,\sV}
\end{equation}
as elements in $C_c^\infty(\BV).$
\end{theorem}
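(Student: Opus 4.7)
The plan is a double induction on $(n, \val(L^\flat))$: outer on $n$, inner on $\val(L^\flat)$. The outer base case $n=2$ is Theorem \ref{thm:n=2}(ii). Since Theorem \ref{thm H} supplies the horizontal identity, it suffices to show that $\phi := \Int_{L^\flat,\sV} - \pDen_{L^\flat,\sV}$ vanishes; by Corollary \ref{cor:LC int} and Proposition \ref{prop:LC Den}, $\phi \in C_c^\infty(\BV)$, so the uncertainty principle (Proposition \ref{uncert}) will apply once we control $\supp(\phi)$ and $\supp(\hat\phi)$.

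First I would show $\supp(\phi) \subset \BV^{\circ\circ}$. By continuity, it suffices to check $\phi(x) = 0$ for $x \in \BV \setminus L_F^\flat$ with $\val(x) \leq 0$. When $\val(x) < 0$ the lattice $L^\flat + \langle x \rangle$ is non-integral and both $\Int$ and $\pDen$ vanish. When $\val(x) = 0$, the unit-normed $x$ spans a self-dual $O_F$-line, so $L := L^\flat + \langle x\rangle$ splits orthogonally as $\langle x\rangle \obot L'$ with $L' := L \cap \langle x\rangle_F^\perp$ of rank $n-1$ inside $\langle x\rangle_F^\perp \cong \BV_{n-1}$. Lemma \ref{lem: cancel} and its analogue for local densities reduce $\Int(L), \pDen(L)$ to $\Int(L'), \pDen(L')$, which agree by the outer inductive hypothesis.

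Next, Corollary \ref{cor:FT int} gives $\hat\phi = -\Int_{L^\flat,\sV} - \hat\pDen_{L^\flat,\sV}$, and $\Int_{L^\flat,\sV}$ vanishes on $\{\val < 0\}$, so the problem reduces to showing
\[
\hat\pDen_{L^\flat,\sV}(x) = 0 \quad \text{for every $x \in \BV$ with $\val(x) < 0$,} \quad (\star)
\]
after which $\supp(\hat\phi) \subset \BV^\circ$ and Proposition \ref{uncert} forces $\phi = 0$. Theorem \ref{thm: pDen=0} gives $(\star)$ for $x \perp L^\flat$, which is the base of the inner induction. For general $x$, the $L^\flat$-invariance of $\phi$ forces both $\hat\phi$ and $\hat\pDen_{L^\flat,\sV}$ to vanish unless $\pi_\flat(x) \in (L^\flat)^\vee$, where $\pi_\flat$ denotes orthogonal projection onto $L_F^\flat$; thus attention restricts to such $x$. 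If $\pi_\flat(x) \in L^\flat$, the orthogonal case already suffices; otherwise $\tilde L^\flat := L^\flat + \langle \pi_\flat(x)\rangle$ is a strict integral enlargement with $\val(\tilde L^\flat) < \val(L^\flat)$, and the inner inductive hypothesis yields $\hat\pDen_{\tilde L^\flat,\sV} = -\pDen_{\tilde L^\flat,\sV}$ via Corollary \ref{cor:FT int}. A lattice-by-lattice rearrangement of the sum \eqref{eq:FT pDen} — adapting the bookkeeping of Lemma \ref{lem: pDen FT} to non-orthogonal $x$ — should then express $\hat\pDen_{L^\flat,\sV}(x)$ in terms of Fourier-side contributions at $\tilde L^\flat$ that vanish by induction (combined with Theorem \ref{thm: pDen=0}).

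The hard part is precisely this inner-inductive step for $(\star)$. The Fourier computations of Section \ref{s:FT ana}, particularly Lemma \ref{lem: pDen FT}, exploit the orthogonal splitting $\BV = L_F^\flat \obot (L_F^\flat)^\perp$ and the functional equation of the local Siegel series rather rigidly. Extending them to non-orthogonal $x$ requires a careful tracking of how the fundamental invariants of $L^\flat + \langle u\rangle$ change when the base $L^\flat$ is replaced by a strict enlargement $\tilde L^\flat$, so that the Fourier-side contributions at $L^\flat$ can be matched with the (inductively controlled) contributions at $\tilde L^\flat$. Once $(\star)$ is established the uncertainty principle closes the argument.
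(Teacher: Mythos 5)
Your outline diverges from the paper's in one harmless way and one fatal way. The harmless divergence: you propose an extra outer induction on $n$ to get $\supp(\phi)\subset\BV^{\circ\circ}$ (i.e.\ $\phi(x)=0$ whenever $\val(x)\le 0$), whereas the paper runs a single induction on $\val(L^\flat)$ and proves the sharper containment $\supp(\phi)\subset M(L^\flat)$ directly via Lemma \ref{lem:ind}. That step of yours looks fine, but it is not what the paper does, and, more importantly, it feeds into a different endgame.

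The fatal gap is exactly your step $(\star)$: establishing $\wh{\pDen}_{L^\flat,\sV}(x)=0$ for \emph{all} $x$ with $\val(x)<0$, not merely $x\perp L^\flat$. Your inner inductive step is not a proof. First, $\tilde L^\flat=L^\flat+\pair{\pi_\flat(x)}$ need not be integral: $\pi_\flat(x)\in(L^\flat)^\vee$ controls the pairings with $L^\flat$ but says nothing about $(\pi_\flat(x),\pi_\flat(x))$, and when $\val(x)<0$ the norm of $\pi_\flat(x)$ can perfectly well be non-integral. Second, and more seriously, even when $\tilde L^\flat$ is integral there is no evident identity tying $\wh{\pDen}_{L^\flat,\sV}$ to $\wh{\pDen}_{\tilde L^\flat,\sV}$: the enlargement relation \eqref{eqn diff} is a pointwise statement about $\pDen$, not a statement about Fourier transforms, and the manipulations of Lemma \ref{lem: pDen FT} make essential use of the orthogonal splitting $\BV=L_F^\flat\obot(L_F^\flat)^\perp$. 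You flag this yourself as ``the hard part,'' but the paper explicitly records (just before \S\ref{ss:proof}) that the authors ``have not succeeded finding a direct proof of the analog of Lemma \ref{lem: FT Int V} for $\pDen_{L^\flat,\sV}$.'' They then circumvent $(\star)$: by Lemma \ref{lem:ind} the difference $\phi$ is supported in $M(L^\flat)=L^\flat\obot\pair{u}$, so $L^\flat$-invariance forces $\phi={\bf 1}_{L^\flat}\otimes\phi_\perp$ with $\phi_\perp$ supported on the rank-one lattice $\pair{u}$; hence $\wh\phi=\vol(L^\flat){\bf 1}_{(L^\flat)^\vee}\otimes\wh\phi_\perp$, and $\wh\phi_\perp$ is invariant under translation by $\pair{u^\vee}$ with $\val(u^\vee)<0$. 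Since $\wh\phi_\perp$ also vanishes on $\{\val<0\}$ (using Corollary \ref{cor:FT int} together with Theorem \ref{thm: pDen=0} only in the orthogonal case), invariance under a negative-valuation translate forces $\wh\phi_\perp\equiv 0$. This ``weak uncertainty principle'' closes the argument without ever proving $(\star)$. Until you supply an actual proof of $(\star)$ — which would be a new result going beyond Theorem \ref{thm: pDen=0} — your outline does not close, while the paper's reorganization around the support lattice $M(L^\flat)$ does.
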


\begin{proof}
We prove the assertion by induction on $\val(L^\flat)$.  Let $(a_{1},a_2,\cdots,a_{n-1})$ be the fundamental invariants of the lattice $L^\flat$, cf. \S \ref{sec:latt-theor-notat}. Let $M=M(L^\flat)=L^\flat  \obot\pair{  u}$ be the lattice defined by \eqref{eqn M(L)}.
\begin{lemma} \label{lem:ind}
Let $x\in\BV\setminus L^\flat_F$ and let $(a'_{1},a'_2,\cdots,a'_{n})$ be the fundamental invariants of the lattice $L^\flat+\pair{x}$. Then the inequality 
\begin{equation}\label{ineq}
a_1'+\cdots+a_{n-1}'\geq a_1+\cdots+a_{n-1}
\end{equation}
holds if and only if $x\in M$.
\end{lemma}

\begin{proof} For $x\in M$,  we write $x=x^\flat+x^\perp$ where $x^\flat\in L^\flat$ and $x^\perp\perp L^\flat$. Then $L^\flat+\pair{x}=L^\flat+\pair{x^\perp}$. Therefore we may assume that $x\perp L^\flat$. It follows that $\val(x)\geq a_{n-1}$ by the definition of the lattice $M$,  and $a_i'=a_i$ for $1\leq i\leq n-1$. Hence  $a_1'+\cdots+a_{n-1}'=a_1+\cdots+a_{n-1}$, and the equation  \eqref{ineq} holds.

We now assume that  the inequality  \eqref{ineq} holds. We start with a special case. If $x\perp L^\flat$, the fundamental invariants of the lattice $L^\flat+\pair{x}$ is a re-ordering of $(a_{1},a_2,\cdots,a_{n-1},\val(x))$. From the inequality  \eqref{ineq},   it follows that $\val(x)\geq a_{n-1}$, and therefore  $x\in M$.

Now we consider the general case.  Let $\{e_1,\cdots, e_{n-1}\}$ be an orthogonal basis of $L^\flat$ such that $(e_i,e_i)=\varpi^{a_i}$. Write $$
x=\lambda_1e_1+\cdots +\lambda_{n-1}e_{n-1}+x^\perp,
$$
where $\lambda_i\in F, 1\leq i\leq n-1$ and $x^\perp\perp L^\flat$. The fundamental matrix of the basis $\{e_1,\cdots, e_{n-1},x\}$ of $L^\flat+\pair{x}$ is of the form
$$
T=\begin{pmatrix}\varpi^ {a_1}&&& (e_1,x)\\ 
&\ddots&&\vdots\\
&&\varpi^ {a_{n-1}}& (e_{n-1},x)\\
    (x,e_1)&\cdots&(x,e_{n-1})& (x,x)
\end{pmatrix}.
$$
We now use the characterization of the sum $a_1'+\cdots+a_{n-1}'$ as the minimum among the valuations of the determinants of all $(n-1)\times(n-1)$-minors of $T$. The set of such minors is bijective to the set of $(i,j)$-th entry: removing $i$-th row and $j$-th column to get such a minor. The valuation of the determinant of the $(n,i)$-th minor is $$\val((e_i,x)) -a_i+(a_1+\cdots+a_{n-1}).$$ 
From the inequality  \eqref{ineq}, it follows that
$$
\val((e_i,x)) \geq a_i,
$$
or equivalently $\lambda_i\in O_F$, for all $1\leq i\leq n-1$. Therefore $x-x^\perp\in L^\flat$, and $L^\flat+\pair{x}=L^\flat+\pair{x^\perp}$. Now we can assume that $x\perp L^\flat$ and by the special case above we complete the proof.
\end{proof}

Now we assume that the equation \eqref{eqn:int=den} 
$$
\Int_{L'^\flat,\sV}=\pDen_{L'^\flat,\sV}
$$holds for $L'^\flat$ such that $\val(L'^\flat)<\val(L^\flat)$. 
We may further assume that $L^\flat+\pair{x}$ is integral and has a basis $(e'_1,e'_2,\cdots, e'_n)$ such that $\val(e_i')=a_i'$. Let $L'^\flat=\pair{e_1',\cdots,e_{n-1}'}$. Then  we have
$$
\Int_{L^\flat,\sV}(x)=\Int_{L'^\flat,\sV}(x'),\text{ and}\quad \pDen_{L^\flat,\sV}(x)=\pDen_{L'^\flat,\sV}(x'),
$$
where $x'=e'_n$. 
By Lemma \ref{lem:ind}, if $x\notin M$, then we have a strict inequality 
$$
a_1'+\cdots+a_{n-1}'< a_1+\cdots+a_{n-1}.
$$ And so $\val(L'^\flat)<\val(L^\flat)$. By induction hypothesis, we have  
$$
\Int_{L'^\flat,\sV}(x')=\pDen_{L'^\flat,\sV}(x').
$$
It follows that
the support of the difference 
$$
\phi=\Int_{L^\flat,\sV}-\pDen_{L^\flat,\sV}\in C_c^\infty(\BV)
$$
is contained in the lattice $M$.

By Corollary \ref{cor:FT int}, we know
$$
\wh\Int_{L^\flat,\sV}(x)=-\Int_{L^\flat,\sV}(x).
$$
In particular, if $\val(x)<0$, then
$$
\wh\Int_{L^\flat,\sV}(x)=0.
$$
We know a little less about $\pDen_{L^\flat,\sV}$: by Theorem \ref{thm: pDen=0}, the vanishing $\wh\pDen_{L^\flat,\sV}(x)=0$ holds when  $\val(x)<0$ {\em and}  $x\perp L^\flat$. It follows that,  when  $\val(x)<0$ and $x\perp L^\flat$,
$$
\wh \phi(x)=0.
$$

Obviously the function $\phi$ is invariant under $L^\flat$. By the constraints imposed by the support of $\phi$ (being contained in $M$), it is of the form $$
\phi={\bf 1}_{L^\flat}\otimes \phi_\perp,
$$
where $\phi_\perp\in C_c^\infty((L_F^{\flat})^{\perp})$ is supported on the (rank one) lattice $M_\perp=\pair{   u}$. Then
$$
\wh \phi=\vol(L^\flat){\bf 1}_{L^{\flat,\vee}}\otimes \wh\phi_\perp.
$$
Here $\wh\phi_\perp$ is invariant under the translation by the dual lattice $M_\perp^\vee= \pair{   u^\vee}$, where $u^\vee=\varpi^{-a_{n}} u$. Note that $\val(u^\vee)=-a_{n}<0$. Now the Fourier transform $\wh \phi_\perp$ vanishes at every $x\perp L^\flat$ such that $\val(x)<0$.  It follows that $\wh\phi_\perp$ vanishes identically. Therefore $\phi=0$. This completes the proof. 
\end{proof}

\part{Local Kudla--Rapoport conjecture: the almost self-dual case}\label{part:local-kudla-rapoport-1}

\section{Local density for  an  almost self-dual lattice}\label{sec:local-density-an}
\subsection{Local density for  an  almost self-dual lattice} In this section we allow $F_0$ to be a non-archimedean local field of characteristic not equal to $2$ (but possibly with residue characteristic $2$), and $F$ an unramified quadratic extension.

Recall that we have defined the local density for 
two hermitian $O_F$-lattices $L$ and $M$
$$
\Den(M,L)=\lim_{N\rightarrow +\infty}\frac{\#\Rep_{M,L}(O_{F_0}/\varpi^N)}{q^{N\cdot\dim (\Rep_{M,L})_{F_0}}}
$$
in terms of the scheme $\Rep_{M,L}$, cf. \eqref{def: Rep} in Section \ref{ss:loc den}. 

Let $L$ be a hermitian $O_F$-lattice of rank $n$. For $k\ge0$, set
$$
M=\iden^{n-1+k}\obot \pair{\varpi},\quad \wit M=\iden^{n+1+k},
$$and
\begin{align}\label{eq:L 2 wit L}
L^\sharp=L\obot \ell,\quad\ell= \langle u_0\rangle,\quad (u_0,u_0)=\varpi.
\end{align}
We then have the following ``cancellation law''.
\begin{lemma}\label{lem Den alm dual}
Let $k\geq 0$. Then 
$$\Den(M,L)=\frac{\Den(\wit M,L^\sharp)}{\Den(\wit M,\ell)}.
$$
\end{lemma}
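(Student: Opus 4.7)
The plan is to establish a lattice-point identity
$$\#\Rep_{\wit M, L^\sharp}(O_{F_0}/\varpi^N) = \#\Rep_{\wit M, \ell}(O_{F_0}/\varpi^N) \cdot \#\Rep_{M, L}(O_{F_0}/\varpi^N)$$
valid for all $N \geq 2$, from which the claimed density identity follows by dividing through by the appropriate powers of $q$ and passing to the limit $N \to \infty$. First I check that the relevant dimensions match: by \eqref{eq:dimRep} one has $\dim(\Rep_{\wit M,L^\sharp})_{F_0}=(n+1)(n+1+2k)$, $\dim(\Rep_{\wit M,\ell})_{F_0}=2n+1+2k$, and $\dim(\Rep_{M,L})_{F_0}=n(n+2k)$, and a straightforward expansion yields $(n+1)(n+1+2k)=(2n+1+2k)+n(n+2k)$, so the normalization factors in the three local densities are compatible and the asserted density identity really does follow from the lattice-point identity above.

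For the lattice-point identity I exploit the orthogonal decomposition $L^\sharp = L \obot \ell$: a hermitian map $\Phi: L^\sharp \otimes O_{F_0}/\varpi^N \to \wit M \otimes O_{F_0}/\varpi^N$ is the same datum as a pair $(v,\psi)$ where $v=\Phi(u_0)$ satisfies $(v,v) \equiv \varpi \pmod{\varpi^N}$ and $\psi: L \otimes O_{F_0}/\varpi^N \to \wit M \otimes O_{F_0}/\varpi^N$ is hermitian with image landing in $v^\perp := \{w \in \wit M \otimes O_{F_0}/\varpi^N: (w,v) \equiv 0 \pmod{\varpi^N}\}$. This exhibits a natural fibration $\Rep_{\wit M, L^\sharp}(O_{F_0}/\varpi^N) \to \Rep_{\wit M, \ell}(O_{F_0}/\varpi^N)$, and the problem reduces to identifying each fiber with $\Rep_{M,L}(O_{F_0}/\varpi^N)$.

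The key algebraic input is the following: for any primitive $\tilde v \in \wit M$ with $(\tilde v, \tilde v) = \varpi$, the saturated hermitian complement $\tilde v^\perp := \{w \in \wit M : (w,\tilde v) = 0\}$ is isomorphic to $M$ as a hermitian $O_F$-lattice. Indeed, self-duality of $\wit M$ combined with primitivity of $\tilde v$ makes the linear form $(\cdot,\tilde v) : \wit M \to O_F$ surjective, producing the short exact sequence $0 \to \langle \tilde v \rangle \oplus \tilde v^\perp \to \wit M \to O_F/\varpi \to 0$; a discriminant count then gives $\det(\tilde v^\perp) = \varpi$ up to units, and since $F/F_0$ is unramified, a rank-$(n+k)$ integral hermitian lattice with this discriminant is forced to have fundamental invariants $(0,\ldots,0,1)$, whence $\tilde v^\perp \cong M$.

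Finally, for $N \geq 2$ any $v \in \wit M\otimes O_{F_0}/\varpi^N$ with $(v,v) \equiv \varpi\pmod{\varpi^N}$ admits a primitive lift $\tilde v \in \wit M$: were $v$ to lie in $\varpi(\wit M/\varpi^N)$, then $(v,v)$ would lie in $\varpi^2 O_F/\varpi^N$, contradicting the valuation-one condition as soon as $N \geq 2$. For such a primitive lift, the surjectivity of $(\cdot,\tilde v)$ lets one check that reduction mod $\varpi^N$ induces a bijection $\tilde v^\perp/\varpi^N \simeq v^\perp$, so the fiber over $v$ is in bijection with $\Rep_{\tilde v^\perp, L}(O_{F_0}/\varpi^N) = \Rep_{M, L}(O_{F_0}/\varpi^N)$. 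Summing over $v$ yields the lattice-point identity, and passing to the limit finishes the proof. The main obstacle I anticipate is the mod-$\varpi^N$ analysis of orthogonal complements in this last step; the crucial observation that makes it go through is that $(v,v)=\varpi$ having odd valuation one automatically forces primitivity at all levels $N \geq 2$, so there are no degenerate fibers to worry about in the limit.
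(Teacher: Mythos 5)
Your proof is correct, and the overall strategy is the same as the paper's: fiber $\Rep_{\wit M, L^\sharp}$ over $\Rep_{\wit M, \ell}$ via restriction to $\ell$, identify each fiber with $\Rep_{M,L}$, and conclude by the dimension count (which you verify correctly). What differs is how you establish the key identification of the fiber. The paper works entirely inside the truncated module $\wit M_i = \wit M \otimes O_F/\varpi^i$: it observes that $w_0 := \varphi(u_0)$ is primitive mod $\varpi$, uses non-degeneracy of the reduced form to produce $w_0'$ with $(w_0', w_0) = 1$, splits off the self-dual rank-two orthogonal direct summand spanned by $\{w_0, w_0'\}$, and reduces the identification $\varphi(\ell_i)^\perp \cong M_i$ to an elementary rank-two check. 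You instead lift $v$ to a primitive vector $\tilde v$ in the honest lattice $\wit M$ (possible for $N \ge 2$), compute $\tilde v^\perp \subseteq \wit M$ via a discriminant/index count, invoke the classification of unramified integral hermitian lattices by fundamental invariants to get $\tilde v^\perp \cong M$, and then pass back down mod $\varpi^N$ by checking $\tilde v^\perp/\varpi^N \cong v^\perp$. Both routes are sound; the paper's avoids lifting and any appeal to a classification statement, while yours makes the role of the discriminant transparent and explains cleanly why $N \ge 2$ suffices (both primitivity of $v$ and the valuation of $(\tilde v, \tilde v)$ are detected at level two). One small precision worth recording: a primitive lift $\tilde v$ need only have $(\tilde v, \tilde v)$ of valuation one, not exactly $\varpi$; that weaker fact is all your discriminant count uses, so nothing is lost.
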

\begin{proof}

For any hermitian $O_F$-lattice $L$, we denote $$L_i =L\otimes_{O_F}O_F/\varpi^i,$$
endowed with the reduction of the hermitian form.

Then the restriction to $\ell_i$ defines a map
$$\rm{Res}\colon
	\xymatrix@R=0ex{
\Herm(L^\sharp_i, \wit M_i)\ar[r]  & \Herm(\ell_i, \wit M_i) \\
		\varphi \ar@{|->}[r]  & \varphi|_{\ell_i}.}
$$
Let  $\varphi\in \Herm(L^\sharp_i, \wit M_i)$. Denote by $\varphi(\ell_i)^\perp$ the orthogonal complement in $\wit M_i$ of the image $\varphi(\ell_i)$, i.e.,
$$
\varphi(\ell_i)^\perp=\{x\in\wit M_i\mid (x, \varphi(\ell_i))=0 \}.
$$

Now let $i\geq 2$. We {\em claim} that there is an isomorphism of hermitian modules over $O_{F}/\varpi^i$:
$$
\xymatrix{ \varphi(\ell_i)^\perp \ar@{->}[r]^-{\sim} & M_i}.
$$
Since the norm of $u_0$ has valuation one, so does its image $w_0\coloneqq \varphi(u_0)\in\wit M_i$ (this makes sense when $i\ge2$). Hence $w_0\notin \varpi \wit M_i$, i.e., $w_0\bmod\varpi\neq 0\in \wit M_1=\wit M_i\otimes_{O_F/\varpi^i} O_F/\varpi$. By the non-degeneracy of the hermitian form on the reduction $\wit M_i$, the map
$$
	\xymatrix@R=0ex{
 \wit M_i \ar[r]  & O_F/\varpi^i   \\
		x\ar@{|->}[r]  &(x,w_0)}
$$
is  surjective, and its kernel is $\varphi(\ell_i)^\perp$ by definition. The kernel is a free module over $O_F/\varpi^i $ (since it must be flat, being the kernel of a surjective morphism between finite free modules; alternatively, look at the reduction $\bmod\, \varpi$ and apply Nakayama's lemma). 

Now there exists $w_0'\in \wit M_i$ such that $(w'_0,w_0)=1$. Then  $\{w_0,w_0'\}$ span a self-dual submodule of rank two, which must  be an orthogonal direct summand of $\wit M_i$, again by non-degeneracy of the hermitian form on $\wit M_i$.
This reduces the  assertion $\varphi(\ell_i)^\perp \simeq  M_i$ to the case % $k=0$, i.e.,
$\rank \wit M_i=2$. In the rank two case, it is easy to verify the desired isomorphism, e.g., using the basis $\{w_0,w_0'\}$. This proves the claim.

Note that the fiber of the map $\rm{Res}$ above $\varphi|_{\ell_i}$ is the set $\Herm(L_i, \varphi(\ell_i)^\perp) $ (and $\varphi(\ell_i)^\perp$ depends only on the restriction $\varphi|_{\ell_i}$).
It follows from the claim that the fiber has a constant cardinality (in particular, the map $\rm{Res}$ is surjective), namely that of 
$\Herm( L_i, M_i) $. Hence, $$\# \Herm(L^\sharp_i, \wit M_i)=\# \Herm( L_i,  M_i)\cdot \# \Herm( \ell_i, \wit M_i).$$ The result then follows from $$r( L^\sharp)(2 r( \wit M)-r(L^\sharp))=r( L)(2 r(  M)-r(L))+ r( \ell )(2 r(  \wit M)-r(\ell )),$$
where $r$ denotes the rank, cf. (\ref{eq:dimRep}).
\end{proof}

Recall that by  \eqref{eq: iden}
$$
\Den(\iden^{n-1+k},\iden^{n-1})=\prod_{i=1}^{n-1}(1-(-q)^{-i}X)\bigg|_{X= (-q)^{-k}}.
$$
\begin{theorem}\label{thm Den alm dual}
Let $\Lambda= \iden^{n-1}\obot \pair{\varpi}$. Let $k\geq 0$ and $L$ be a hermitian $O_F$-lattice of rank $n$. Then
$$\frac{\Den(\Lambda\obot \iden^{k} ,L)}{\Den( \iden^{n-1+k},\iden^{n-1})}=\Den(X, L^\sharp) \bigg|_{X= (-q)^{-k}}.
$$
\end{theorem}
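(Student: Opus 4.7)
The plan is to reduce the theorem to a direct computation combining the ``cancellation law'' of Lemma \ref{lem Den alm dual}, the defining identity of the normalized local Siegel series, and the explicit rank-one formula from Example \ref{ex rank=1}.

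First, I would set $M = \Lambda \obot \iden^k = \iden^{n-1+k}\obot \langle\varpi\rangle$ and $\wit M = \iden^{n+1+k}$, so that Lemma \ref{lem Den alm dual} applies verbatim and gives
\[
\Den(\Lambda\obot\iden^k, L) \;=\; \Den(M,L) \;=\; \frac{\Den(\wit M, L^\sharp)}{\Den(\wit M, \ell)}.
\]
Next, since $L^\sharp$ has rank $n+1$ and $\wit M = \iden^{(n+1)+k}$, the definition of the normalized Siegel series gives
\[
\Den(\wit M, L^\sharp) \;=\; \Den\!\bigl((-q)^{-k}, L^\sharp\bigr) \cdot \Den(\iden^{(n+1)+k}, \iden^{n+1}).
\]

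The main (and only) thing to verify is therefore the numerical identity
\[
\frac{\Den(\iden^{n+1+k}, \iden^{n+1})}{\Den(\iden^{n-1+k}, \iden^{n-1})\cdot \Den(\wit M, \ell)} \;=\; 1.
\]
For this I would use the explicit formula \eqref{eq: iden}, which gives the ratio
\[
\frac{\Den(\iden^{n+1+k}, \iden^{n+1})}{\Den(\iden^{n-1+k}, \iden^{n-1})}
\;=\; \bigl(1-(-q)^{-(n+k)}\bigr)\bigl(1-(-q)^{-(n+k+1)}\bigr).
\]
On the other hand, since $\ell = \langle u_0\rangle$ has $\val(\ell)=1$, Example \ref{ex rank=1} gives $\Den(X,\ell) = 1-X$, so by the definition of the normalized Siegel series applied to the rank-one lattice $\ell$ (with $k' = n+k$),
\[
\Den(\wit M, \ell) \;=\; \Den(\iden^{1+(n+k)}, \ell) \;=\; \bigl(1-(-q)^{-(n+k)}\bigr)\cdot \Den(\iden^{1+(n+k)}, \iden^1),
\]
and another application of \eqref{eq: iden} identifies the last factor with $1-(-q)^{-(n+k+1)}$. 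Comparing the two expressions gives the required identity.

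I do not anticipate any genuine obstacle here: once Lemma \ref{lem Den alm dual} is in hand, the argument is a bookkeeping exercise matching the factors coming from the two auxiliary normalizations (one for the rank-$(n+1)$ lattice $L^\sharp$, one for the rank-$1$ lattice $\ell$) against the rank-$(n-1)$ normalization appearing in the statement. The only place where a mild subtlety enters is making sure the exponents in \eqref{eq: iden} line up correctly, i.e.\ that the ``extra'' two factors produced by increasing the rank from $n-1$ to $n+1$ are precisely the two factors produced by $\Den(\wit M,\ell)$; this is exactly where the fact $\val(\ell)=1$ (equivalently, that $\Lambda$ is \emph{almost} self-dual rather than self-dual) is used.
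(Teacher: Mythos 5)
Your proposal is correct and takes essentially the same approach as the paper: both combine Lemma~\ref{lem Den alm dual}, the defining normalization of the Siegel series, formula~\eqref{eq: iden}, and Example~\ref{ex rank=1}, differing only in how the intermediate factors are grouped. The paper first simplifies the two ratios $\Den(\wit M,\iden^{n+1})/\Den(\wit M,\iden^1)$ and $\Den(\wit M,\ell)/\Den(\wit M,\iden^1)$ before invoking the cancellation law, whereas you apply the cancellation law first and then verify the resulting numerical identity; the computations are the same.
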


\begin{proof}
By   \eqref{eq: iden}, we have
\begin{equation*}
\Den(\iden^{n+1+k}, \iden^{1})=(1-(-q)^{-1}X)\bigg|_{X= (-q)^{-n-k}}.
\end{equation*}
and
\begin{equation*}
\Den(\iden^{n+1+k}, \iden^{n+1})=\prod_{i=1}^{n+1}(1-(-q)^{-i}X)\bigg|_{X= (-q)^{-k}}.
\end{equation*}
It follows that
\begin{align*}
\frac{ \Den(\iden^{n+1+k}, \iden^{n+1}) }{\Den(\iden^{n+1+k}, \iden^{1})}&=\prod_{i=1}^{n}(1-(-q)^{-i}X)\bigg|_{X= (-q)^{-k}}
\\&=\Den(\iden^{n+k}, \iden^n).
\end{align*}
(Alternatively, repeat the proof of Lemma \ref{lem Den alm dual} in the case $\ell$ a self-dual lattice of rank one.)

By Example \ref{ex rank=1}, we have $\Den(X,\ell)=1-X$, and hence
$$
\frac{\Den(\iden^{n+1+k},\ell)}{ \Den(\iden^{n+1+k}, \iden^1) }=\Den((-q)^{-n-k},\ell)=(1-  (-q)^{-n}X)\bigg|_{X= (-q)^{-k}}.
$$
It follows that 
\begin{align*}
\frac{ \Den(\iden^{n+1+k}, \iden^{n+1}) }{\Den(\iden^{n+1+k},\ell)}&=\prod_{i=1}^{n-1}(1-(-q)^{-i}X)\bigg|_{X= (-q)^{-k}}\\
&=\Den(\iden^{n-1+k},\iden^{n-1}).
\end{align*}

Finally, by Lemma \ref{lem Den alm dual}, we obtain
\begin{align*}
\frac{\Den(\iden^{n-1+k}\obot\langle \varpi\rangle,L)}{\Den(\iden^{n-1+k},\iden^{n-1})}&=\frac{\Den(\iden^{n+1+k}, L^\sharp)/\Den(\iden^{n+1+k},\ell) }{  \Den(\iden^{n+1+k}, \iden^{n+1})/ \Den(\iden^{n+1+k},\ell )}
\\&=\Den(X, L^\sharp) \bigg|_{X= (-q)^{-k}}.
\end{align*}
This completes the proof.
\end{proof}

\begin{example}[The case $\rank L=2$]\label{ex asd rank=2} If $\rank L=2$,  Theorem \ref{thm Den alm dual} above specializes to Sankaran's formula \cite[Proposition 3.1]{Sankaran2017} which we recall now. Let $L=\pair{\varpi^a}\obot \pair{\varpi^b}, a\leq b$, $a+b$ even. Define 
$$
\epsilon=\begin{cases} 0,& \text{if $b$ is even}\\1,& \text{if $b$ is odd}.
\end{cases}
$$
Then the formula  {\em loc. cit.} asserts that the LHS of  Theorem \ref{thm Den alm dual} is equal to 
\begin{align}\label{San n=2}
&(1-X)(X^2-(q^2-q)X+1)^\epsilon+\frac{1-X}{1-q^{-1}X}\biggl\{ qX(1-q)\frac{(qX)^b-(qX)^\epsilon}{qX-1}\notag
 \\ &+X^2(q-q^{-1}X)\frac{X^{2b}-X^{2\epsilon}}{X^2-1}+ \left[-q^{b+1}(X-1)+qX^{b+1}-q^{-1}X^{b+2}\right]\frac{X^{a+1}-X^{b+1}}{X^2-1} \biggr\}.
\end{align}
On the other hand, this is consistent with the explicit formula for $\Den(X,L^\sharp)$ given by \cite[proof of Theorem 5.2]{Terstiege2013}.
\begin{align}\label{Ter n=3}
 \Den(X,L^\sharp)=\frac{1}{1+X}\left\{\sum_{l=0}^{b+1} X^l(q^l-q^{1+b-l}X^{a+1})-\sum_{l=0}^{b-1} X^{1+l}(q^{2+l}-q^{1+b-l}X^{a+1})\right\}.
\end{align}
In fact, two functions on $(a,b)\in (\BZ_{\geq0})^2$ (not only for $(a+b)$ such that $2\mid a+b$) are characterized by the following properties: 
\begin{itemize}
\item The value at $(0,0)$ (resp., $(1,1)$)  is $1-X$ (resp., $(1-X)(X^2-(q^2-q)X+1)$).
\item The term involving $a$ is  
\begin{align*}&\frac{1-X}{1-q^{-1}X}\cdot \left[-q^{b+1}(X-1)+qX^{b+1}-q^{-1}X^{b+2}\right]\cdot\frac{X^{a+1}}{X^2-1}\\
=&
\frac{1}{1+X} X^{a+1}\left\{- \sum_{l=0}^{b+1} X^l q^{1+b-l}+\sum_{l=0}^{b-1} X^{1+l}q^{1+b-l}\right\}.
\end{align*}The two expressions come from \eqref{San n=2}  and  \eqref{Ter n=3} respectively.
\item The term not involving $a$ is a function $\phi$ in one variable $b\in\BZ_{\geq 0}$, which satisfies a difference equation
 $$
\phi(b)-\phi(b-1)=\frac{1}{1+X}q^{b+1} X^{b}(X- 1).
$$
The difference equation is easy to see from \eqref{Ter n=3}, and from \eqref{San n=2} by a straightforward calculation. 
\end{itemize}

\end{example}

\begin{definition}
After Theorem \ref{thm Den alm dual}, define the (normalized) \emph{local Siegel series relative to $\Lambda= \iden^{n-1}\obot \pair{\varpi}$} as the polynomial $\Den_\Lambda(X,L)\in \mathbb{Z}[X]$ 
such that 
\begin{align}
\Den_\Lambda((-q)^{-k},L)=\frac{\Den(\Lambda\obot \iden^{k} ,L)}{\Den(\iden^{n-1}\obot \iden^{k},\iden^{n-1})}.
\end{align}
\end{definition}
Then by Theorem \ref{thm Den alm dual},
\begin{align}\label{eq:Den sharp}
\Den_\Lambda(X,L)=\Den(X,L^\sharp)\in \mathbb{Z}[X].
\end{align}
In particular, if $\val(L)$ is even, then $\Den_\Lambda(1, L)=0$. In this case, we denote the central derivative of local density by $$\pDen_\Lambda(L)\coloneqq -\frac{\rd}{\rd X}\bigg|_{X=1}\Den_\Lambda(X,L).$$

\subsection{Relation with local Whittaker functions}\label{sec:relation-with-local-1} Let $\Lambda=\langle 1\rangle^{n-1}\obot \langle\varpi\rangle$ be an almost self-dual hermitian $O_F$-lattice. Let $L$ be a hermitian $O_F$-lattice of rank $n$. Let $T=((x_i, x_j))_{1\le i,j\le n}$ be the fundamental matrix of an $O_F$-basis $\{x_1,\ldots,x_n\}$ of $L$, an $n\times n$ hermitian matrix over $F$. Associated to the standard Siegel--Weil section of the characteristic function $\varphi_{1}=\mathbf{1}_{\Lambda^n}$ and the unramified additive character $\psi: F_0\rightarrow \mathbb{C}^\times$, there is a local (generalized) Whittaker function $W_T(g, s, \varphi_1)$ (see \S\ref{sec:four-coeff-deriv}, \S\ref{sec:incoh-eisenst-seri} for the precise definition). By \cite[Proposition 10.1]{Kudla2014}, when $g=1$, it satisfies the interpolation formula for integers $s=k\ge0$ (notice $\gamma_p(V)=-1$ in the notation there), $$W_T(1,k, \varphi_1)=(-q)^n\cdot\Den(\Lambda \obot \langle 1\rangle^{2k}, L).$$ So its value at $s=0$ is $$W_T(1, 0, \varphi_1)=(-q)^{-n}\cdot\Den(\Lambda, L)=(-q)^{-n}\cdot\Den_\Lambda(L)\cdot \Den(\langle 1\rangle^{n-1}, \langle 1\rangle^{n-1}),$$ and its derivative at $s=0$ is $$W_T'(1, 0, \varphi_1)=(-q)^{-n}\cdot\pDen_\Lambda(L)\cdot \Den(\langle 1\rangle^{n-1}, \langle 1\rangle^{n-1})\cdot\log q^2.$$ Plugging in \eqref{eq: iden}, we obtain
\begin{align}
W_T(1, 0, \varphi_1)&=\Den_\Lambda(L)\cdot (-q)^{-n}\prod_{i=1}^{n-1}(1-(-q)^{-i}),\\ W_T'(1, 0, \varphi_1)&=\pDen_\Lambda(L)\cdot (-q)^{-n}\prod_{i=1}^{n-1}(1-(-q)^{-i})\cdot \log q^2.    \label{eq:localWhittaker2}
\end{align}

\section{Kudla--Rapoport cycles in the almost principally polarized case}

\subsection{Rapoport--Zink spaces $\mathcal{N}^1_n$ with almost self-dual level}\label{sec:rapoport-zink-with}We recall the construction from \cite[\S5]{Rapoport2018}. For a $\Spf \OFb$-scheme $S$, we consider triples  $(Y,\iota, \lambda)$ over $S$ as in \S\ref{sec:rapoport-zink-spaces}, except that $\lambda$ is no longer principal; instead, it is required that $\ker\lambda\subseteq Y[\iota(\varpi)]$ has order $q^2$. Up to $O_F$-linear quasi-isogeny compatible with polarizations, there is a unique such triple $(\mathbb{Y}, \iota_{\mathbb{Y}}, \lambda_{\mathbb{Y}})$ over $\Spec \bar k$. Let $\mathcal{N}^1=\mathcal{N}_n^1=\mathcal{N}_{F/F_0, n}^1$ be the formal scheme over $\Spf \OFb$ which represents the functor sending each $S$ to the set of isomorphism classes of tuples $(Y, \iota, \lambda, \rho)$, where the \emph{framing} $\rho: Y\times_S \bar S\rightarrow \mathbb{Y}\times_{\Spec \bar k}\bar S$ is an $O_F$-linear quasi-isogeny of height 0 such that $\rho^*((\lambda_\mathbb{Y})_{\bar S})=\lambda_{\bar S}$. 

The Rapoport--Zink space $\mathcal{N}^1=\mathcal{N}_n^1$ is a formal scheme, locally formally of finite type, regular, of relative dimension $n-1$ and  has semi-stable reduction over $\Spf \OFb$ (\cite[Theorem 5.1]{Rapoport2018}, \cite[Theorem 1.2]{Cho2018}). Denote $$
\BW_n=\Hom^\circ_{O_F}(\ov\BE,\BY),
$$
and endow it with the hermitian form by the formula similar to $\BV_n$ (cf. \S\ref{sec:herm-space-mathbbv}). Then $\BW_n$ is a {\em split} hermitian space of dimension $n$. Similar to \S\ref{sec:kudla-rapop-cycl}, for every non-zero $x\in \BW_n$ we can define the special divisor on $\CN^1_n$, denoted by $\CY(x)$  (resp. $\CY'(x)$), over which the special homomorphism $x$ (resp. $\lambda_{\BY}\circ x$) extends to a homomorphism $\bar{\mathcal{E}}_S\rightarrow Y$ (resp. $\bar{\mathcal{E}}_S\rightarrow Y^\vee$).  Then, by a similar argument to the self-dual case in \cite[Proposition~3.5]{Kudla2011}, the special divisors $\CY(x)$ and  $\CY'(x)$ are Cartier (cf. \cite[Proposition~5.9]{Cho2018}, denoted  by $\CZ(x)$ and $\CY(x)$ in {\it loc. cit.}).

For the later use, we recall from \cite[Proposition~5.10]{Cho2018} 
\begin{equation}\label{eq:isom2}
 \begin{cases} \CY(x)\simeq \CN_{n-1}^1, &\text{ when } \val(x)=0,\\
   \CY'(x)\simeq\CN_{n-1},&\text{ when } \val(x)=-1.
   \end{cases}
\end{equation}
 We only indicate the construction of the second isomorphism $\CY'(x)\simeq\CN_{n-1}$, since the first one is rather close to the self-dual case (cf. \eqref{eq:inc M}). We may assume that $(x,x)=\varpi^{-1}$. Fix an $O_F$-linear isomorphism 
\begin{equation}
\beta\colon \BX_{n-1}\times\ov\BE\to \BY,
\end{equation}
such that $\beta^*(\lambda_{\BY})=\lambda_{\BX_{n-1}}\times \varpi\lambda_{\ov \BE}$ and the restriction of $\beta$ to the second factor is $\varpi x \in \BW_{n}$.  Then we have an orthogonal decomposition $\BW_n=\BV_{n-1}\obot\pair{x}_F$. Then we define a map $\delta: \CN_{n-1}\to \CN_{n}^1$ sending $(X^\flat, \iota_{X^\flat}, \lambda_{X^\flat}, \rho_{X^\flat})\in \CN_{n-1}(S)$ to $ (X^\flat\times\ov\CE, \iota_{X^\flat}\times\iota_{\ov\CE}, \lambda_{X^\flat}\times \varpi \lambda_{\ov\CE}, \rho_{X^\flat}\times\rho_{\ov\CE})\in  \CN_{n}^{1}(S)$. Then the homomorphism $(0,\lambda_{\ov\CE}):\ov\CE\to (X^\flat)^\vee\times\ov\CE^\vee$ extends $\lambda_{\BY}\circ x\in \BW_n$ and the map $\delta$ defines an isomorphism $\delta: \CN_{n-1}\simeq \CY'(x)$.

\begin{definition}
Let $L\subset \BW_n$ be an $O_F$-lattice of rank $n$ and let  $x_1,\cdots,x_n$ be an $O_F$-basis of $L$. Then we define 
 \begin{align}\label{eq:def Int'}
\Int'(L)\coloneqq \chi\bigl(\mathcal{N}^1_n, \mathcal{O}_{\mathcal{Y}(x_1)} \otimes^\mathbb{L}\cdots \otimes^\mathbb{L}\mathcal{O}_{\mathcal{Y}(x_n)}\bigr ).
\end{align} 
\end{definition}
 We have not justified the independence of the choice of the basis, which will be proved under a conjectural relation between $\CN_n^1$ and some auxiliary Rapoport--Zink spaces. It turns out that $\Int'(L)$ is not equal to the derived local density $\pDen_\Lambda(L)$ (cf. Theorem \ref{thm: main2'} below). This is a typical phenomenon in the presence of bad reductions, cf. \cite{Kudla2000,Sankaran2017,RSZ1, Rapoport2018}. Therefore, we will instead define a variant $\Int(L)$ of  $\Int'(L)$, which will give an  
exact identity $\Int(L)=\pDen_\Lambda(L)$ (Theorem \ref{thm: main2}).

\subsection{Auxiliary Rapoport--Zink spaces}\label{sec:auxil-rapop-zink}
 Before we present our variant, we need an auxiliary moduli space (cf. \cite{KRSZ}). Let $(\mathbb{X}_{n+1}, \iota_{\mathbb{X}_{n+1}}, \lambda_{\mathbb{X}_{n+1}})$ be as in \S\ref{sec:rapoport-zink-spaces}. Fix an $O_F$-linear isogeny
\begin{equation}
\alpha\colon \BY\times\ov\BE\to \BX_{n+1} ,
\end{equation}
such that $\ker\alpha\subset (\BY\times\ov \BE)[\varpi]$ and $\alpha^*(\lambda_{\BX_{n+1}})=\lambda_\BY\times \varpi\lambda_{\ov \BE}$. 
Let $x_0\in \BV_{n+1}=\Hom^\circ_{O_F}(\ov\BE,\BX_{n+1})$ be the restriction of $\alpha$ to the second factor. Then the assumption implies that the norm of $x_0$ is
$(x_0,x_0)=\varpi,$ and we have an orthogonal decomposition
$$
\BV_{n+1}=\BW_n\obot \pair{ x_0}_F.
$$

We denote by 
\begin{equation}\label{eq:KRSZ0}
\wit\CN_{n}^{1}\incl \CN_{n}^{1}\times_{\Spf \OFb}\CN_{n+1}
\end{equation}
the closed formal subscheme consisting of tuples $(Y, \iota_Y, \lambda_Y,  \rho_Y,  X, \iota_X, \lambda_X, \rho_X)$ such that $\alpha$ lifts to an isogeny $\wit\alpha\colon Y\times\ov\CE\to X$. If $\alpha$ lifts, then $\wit\alpha$ is unique and satisfies $\ker\wit\alpha\subset (Y\times\ov\CE)[\varpi]$ and $\wit\alpha^*(\lambda_X)=\lambda_Y\times \varpi \lambda_{\ov\CE}$. 

We therefore obtain a diagram 
\begin{align}\label{eq:KRSZ}
   \xymatrix{
	       &\wit\CN_{n}^{1}\ar[dl]_-{\pi_1} \ar[dr]^-{\pi_2}\\
	 \CN_{n}^{1}  &  &\CZ(x_0)\ar[r]&\CN_{n+1}  , 
	}
\end{align}
where $\pi_1$ and $\pi_2$ are induced by the two projection maps in \eqref{eq:KRSZ0}. Recall from \cite{Terstiege2013b} that the formal scheme $\CZ(x_0)$ is regular. 
%Both $\pi_1$ and $\pi_2$ are proper morphisms.

\begin{remark}\label{rem:corr}
Let  $\Lambda= \iden^{n-1}\obot \pair{\varpi}$ be as before. Let $\Lambda^\sharp$ be a self-dual lattice of rank $n+1$ containing $ \Lambda \oplus \pair{\varpi}$; there are $q+1$ such lattices  in the vector space $\Lambda_F \oplus \pair{\varpi}_F$. Then we have a natural embedding of hermitian spaces
$$
W_n\coloneqq\Lambda\otimes_{O_F} F \incl V_{n+1}\coloneqq\Lambda^\sharp\otimes_{O_F} F
$$
and their isometry groups $\U(W_n)\incl \U(V_{n+1})$. Let $K=\Aut(\Lambda)$ be the stabilizer of $\Lambda$, and similarly let $K^\sharp=\Aut(\Lambda^\sharp)$. Define $\wit K\coloneqq K\cap K^\sharp$ where the intersection is taken inside the unitary group $\U(V_{n+1})$:
\[
   \xymatrix{
	       &\wit K=K\cap K^\sharp \ar[dl] \ar[dr]\\
K=\Aut(\Lambda)  &  &K^\sharp=\Aut(\Lambda^\sharp) .
	}
\]
Intuitively, the Rapoport--Zink spaces $\CN^1_n,\wit\CN^1_n$, and $\CN_{n+1}$ correspond to the level structure $K,\wit K$, and $K^\sharp$ respectively. 
It is easy to see that the generic fiber of the map $\pi_1: \wit\CN_{n}^{1}\to \CN_{n}^{1}$ is finite \'etale of degree $[K:\wit K]=q+1$, and the generic fiber of the map  $\pi_2: \wit\CN_{n}^{1}\to \CZ(x_0)$ is an isomorphism. 
Therefore, $\CZ(x_0)$ is a regular integral model of a finite \'etale covering of the generic fiber of $\CN_{n}^1$. 

\end{remark}

Let $x\in\BW_n\subset \BV_{n+1}$. Denote by $\CZ^\flat(x)$ the restriction of the special divisor $\CZ(x)$ (on $\CN_{n+1}$) to $\CZ(x_0)$, i.e.,
$$
\CZ^\flat(x)\coloneqq\CZ(x_0)\cap \CZ(x)
$$
viewed as a formal subscheme on $\CZ(x_0)$. 

\begin{remark}\label{rem:altintegralmodel}It will be clear (cf. Theorem \ref{thm:D x}) that the generic fiber of $\CZ^\flat(x)$ (viewed as a divisor on the generic fiber of $\wit\CN_n^1$ since $\pi_2$ is an isomorphism on the generic fibers) is equal to the pull back along $\pi_1$ of the generic fiber of $\CY(x)$ on $\CN_n^1$. Therefore, we may use $\CZ^\flat(x)$ as an integral model of the pull-back of the generic fiber of $\CY(x)$.  
\end{remark}
Motivated by Remark \ref{rem:corr},  we now define a variant of $\Int'(L)$. 
\begin{definition}
Let $L\subset \BW_n$ be an $O_F$-lattice of rank $n$ and let  $x_1,\cdots, x_n$ be a basis of $L$. Then we define
\begin{align}\label{eq:Int L basis}
\Int(L; x_1,\cdots,x_n)=\frac{1}{q+1} \chi\bigl( \CZ(x_0),\CZ^\flat(x_1)\jiao\cdots \jiao  \CZ^\flat(x_n)\bigr ),
\end{align}
where the derived tensor product is taken as $\CO_{\CZ(x_0)}$-sheaves.   \end{definition}

\subsection{The $\Int=\pDen$ theorem}
The following theorem justifies our definition of the variant of intersection numbers. 
\begin{theorem}\label{thm: main2}
Let $L\subseteq \mathbb{V}$ be an $O_F$-lattice of full rank $n$. Then, for any basis $x_1,\cdots, x_n$ of $L$, we have $$\Int(L; x_1,\cdots,x_n)=\frac{1}{q+1}\pDen_\Lambda(L).$$
In particular, $\Int(L; x_1,\cdots,x_n)$ is independent of the choice of the basis and we therefore denote it by $\Int(L)$.  
\end{theorem}

\begin{proof}Let $x\in\BW_n$ be non-zero. Then $x\perp x_0$. 
By Lemma \ref{lem:two div}, we have
$$
\CO_{\CZ^\flat(x)}=\CO_{\CZ(x)}\otimes^\BL \CO_{\CZ(x_0)}
$$
as elements in $K_0'(\CZ(x_0))$.  Therefore, 
$$
\chi\bigl(\CZ(x_0), \CZ^\flat(x_1)\jiao\cdots \jiao  \CZ^\flat(x_n)\bigr )=\chi\bigl(\CN_{n+1},\CZ(x_0)\jiao \CZ(x_1)\jiao\cdots \jiao  \CZ(x_n)\bigr),
$$
which is $\Int(L^\sharp)$. By Theorem \ref{thm: main}, this is equal to $\pDen( L^\sharp)$. By \eqref{eq:Den sharp}  we obtain $\pDen( L^\sharp)=\pDen_\Lambda(L)$ and the proof is complete.
\end{proof}

\begin{remark}\label{rem:localKR2}
In the notation of \S\ref{sec:relation-with-local-1},  it follows immediately from Theorem \ref{thm: main2} and \eqref{eq:localWhittaker2} that $$\Int(L)=\frac{W_T'(1, 0, \varphi_1)}{\log q^2}\cdot  \frac{(-q)^n-1}{q+1}\cdot \prod_{i=1}^{n}(1-(-q)^{-i})^{-1}.$$ 
\end{remark}

\subsection{The comparison of two divisors} 
%The result in this and the next subsection is not used in Part \ref{part:semi-global-global}.
We compare the two divisors $ \CY(x)$ and $\CZ^\flat(x)$ after pulling-back to $\wit\CN_n^1$ along the diagram \eqref{eq:KRSZ}. The result is conditional on the conjectural  relation between $\CN^1_n$, 
$\wit\CN^1_n$ and $\CZ(x_0)$. % already alluded in Remark \ref{rem:KRSZ}.

Recall from \eqref{eq:KRSZ} that there are two projections $\pi_1$ and $\pi_2$. 
Let $\Ver^0(\BW_n)$ be the set of self-dual lattices $\Lambda$  in $\BW_n$. For each type $1$ lattice in $\BV_{n+1}$ of the form $
\Lambda\oplus \pair{x_0}$ where $\Lambda\in \Ver^0(\BW_n)$, there is a closed stratum  $\CV(\Lambda\oplus \pair{x_0})\subset\CN_{n+1}^{\red}$ which consists of a superspecial point (cf. \S\ref{sec:bruh-tits-strat})  contained in  $ \CZ(x_0)$. Let $\CZ(x_0)^{\rm ss}\subset \CZ(x_0)$ be the union of all of such superspecial points. Note that $\CZ(x_0)^{\rm ss}$ does not contain {\em all}  superspecial points on $\CZ(x_0)$. 

By the Bruhat--Tits stratification of the reduced locus of $\CN_n^1$ in \cite{Cho2018}, there exist a family of (disjoint) closed formal subschemes $\BP_{\Lambda}=\BP^{n-1}\incl \CN_n^1$ indexed by  $\Lambda\in \Ver^0(\BW_n)$ (cf. Remark 2.15 of {\it loc. cit.}). Denote by $\CN_n^{1,{\rm ss}}$ the  (disjoint) union of them.  

The following conjecture was observed by Kudla and Rapoport in an unpublished manuscript.
\begin{conjecture}\label{conj:KRSZ}
  %\begin{enumerate}
    \begin{altenumerate}
  \item The morphism $\pi_1$ is finite flat of degree $q+1$, \'etale away from $\CN_n^{1,{\rm ss}}$, and totally ramified along $\CN_n^{1,{\rm ss}}$.
   \item  The morphism $\pi_2$ is the blow-up of $\CZ(x_0)$ along the zero-dimensional subscheme $\CZ(x_0)^{\rm ss}$. \item The preimage of $\CN_n^{1,{\rm ss}}$ under $\pi_1$ is exactly the exceptional divisor on $\wit\CN^1_n$.
  %\end{enumerate}
    \end{altenumerate}
\end{conjecture}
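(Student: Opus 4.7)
The plan is to establish regularity of $\wit\CN_n^1$ and then analyze the two projections $\pi_1,\pi_2$ separately using Grothendieck--Messing deformation theory applied to the lift $\wit\alpha$ of the isogeny $\alpha$. First I would verify that $\wit\CN_n^1$ is regular of pure relative dimension $n-1$ over $\Spf\OFb$: the condition that $\alpha$ lifts cuts out a closed subfunctor of $\CN_n^1\times_{\Spf\OFb}\CN_{n+1}$ whose tangent space is computable via filtered Dieudonn\'e modules and which is transverse in the sense that its projection to either factor has fibers of the expected dimension. One should expect regularity to follow from a local model computation of the same Pappas--Rapoport type that handles each of $\CN_n^1$ and $\CN_{n+1}$ individually.

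\textbf{Analysis of $\pi_1$.} The fiber of $\pi_1$ over a point $(Y,\iota,\lambda,\rho)\in\CN_n^1$ parameterizes $O_F$-stable finite flat subgroup schemes $H\subset(Y\times\ov\CE)[\varpi]$ of order $q^2$ which are isotropic for $\lambda_Y\times\varpi\lambda_{\ov\CE}$ and reduce to $\ker\alpha$ on the special fiber. The parameter space of such $H$ is a closed subscheme of a relative Grassmannian over the base, hence projective, so $\pi_1$ is proper with finite fibers. Since both $\wit\CN_n^1$ and $\CN_n^1$ are regular of the same pure dimension, miracle flatness then gives that $\pi_1$ is finite flat. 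The generic degree equals the adelic index $[K:\wit K]=q+1$ by Remark \ref{rem:corr}. For \'etaleness away from $\CN_n^{1,\rm ss}$, I would compute the tangent map: on the non-superspecial locus the filtered Dieudonn\'e module of $Y\times\ov\CE$ decomposes under $O_F$ so that exactly $q+1$ distinct linearized choices of $H$ exist, each extending formally smoothly. At a superspecial point $z\in\BP_\Lambda\subset\CN_n^{1,\rm ss}$, the isocrystal admits an orthogonal decomposition reflecting $\Lambda$, and a direct computation should show that the $q+1$ possible lifts of $\alpha$ collapse modulo $\varpi$ into a single subgroup scheme, which is precisely the statement of total ramification of index $q+1$.

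\textbf{Analysis of $\pi_2$ and part (3).} For (2), I would verify successively: (a) $\pi_2$ is proper and an isomorphism on the generic fiber, hence birational; (b) $\pi_2$ is an isomorphism on the open complement of $\CZ(x_0)^{\rm ss}$, because away from those superspecial points the tuple $(X,\iota_X,\lambda_X)$ does not split off an isotropic rank-one summand matching $\pair{x_0}$, and thus the factor $Y$ of the lift $\wit\alpha$ is uniquely reconstructable; (c) the scheme-theoretic preimage $\pi_2^{-1}(\CZ(x_0)^{\rm ss})$ is a Cartier divisor in the regular scheme $\wit\CN_n^1$, componentwise isomorphic to $\BP^{n-1}$. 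Step (c) is the key computation: in local coordinates at a superspecial point, the ideal of the exceptional locus is generated by the single parameter governing the choice of the self-dual lattice $\Lambda^\sharp\supset\Lambda\oplus\pair{\varpi}$, and its vanishing locus is the projectivization of an explicit rank-$n$ bundle. By the universal property of blow-ups, steps (a)--(c) produce a canonical morphism from the blow-up of $\CZ(x_0)$ along $\CZ(x_0)^{\rm ss}$ to $\wit\CN_n^1$, which must be an isomorphism since both sides are proper regular models of the same generic fiber with matching exceptional loci. Part (3) then follows by cross-referencing: under $\pi_2$ the exceptional divisor is $\pi_2^{-1}(\CZ(x_0)^{\rm ss})$, and under $\pi_1$ its image is contained in $\CN_n^{1,\rm ss}$; comparing dimensions and the ramification structure from (1) forces $\pi_1^{-1}(\CN_n^{1,\rm ss})$ and this exceptional divisor to coincide as subschemes of $\wit\CN_n^1$.

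\textbf{Main obstacle.} The hardest step will be the explicit local analysis at the superspecial points, where one must simultaneously verify the total ramification of $\pi_1$, the Cartier-divisor nature of the exceptional locus for $\pi_2$, and the compatibility of these pictures along $\pi_1^{-1}(\CN_n^{1,\rm ss})=\pi_2^{-1}(\CZ(x_0)^{\rm ss})$. This requires a careful choice of local coordinates on $\wit\CN_n^1$ via display or window theory and a direct comparison with a local model for $\CZ(x_0)$ at a superspecial point; in effect one must reproduce, in the almost self-dual signature, the local model analysis that underlies the semistability of $\CN_n^1$ in \cite{Rapoport2018, Cho2018}. Once this local picture is pinned down, the globalization of all three assertions is essentially formal.
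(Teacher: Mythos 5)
The statement you were given is Conjecture~\ref{conj:KRSZ}, and the paper explicitly does \emph{not} prove it. Immediately after stating it, the authors remark that the conjecture was observed by Kudla and Rapoport in an unpublished manuscript and will be proved in~\cite{KRSZ}; they then \emph{assume} it in order to prove Lemma~\ref{lem:corrpushpull} and Theorem~\ref{thm: main2'}. There is therefore no in-paper proof against which your attempt can be compared. Note also that the unconditional local result of the almost self-dual case is Theorem~\ref{thm: main2}, which is formulated purely in terms of $\CZ(x_0)\subset\CN_{n+1}$ and avoids $\Int'$, and the global applications in Part~\ref{part:semi-global-global} use only that theorem; the paper flags explicitly that the subsection containing $\Int'$ and this conjecture is not used there.

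On its own merits, your write-up is a plan rather than a proof. The architecture --- regularity and pure dimension of $\wit\CN_n^1$ from a local model computation, miracle flatness for $\pi_1$, the universal property of blow-ups for $\pi_2$, and a matching of exceptional loci for (3) --- is reasonable, and the heuristics (the $q+1$ linearized lifts of $\alpha$ away from superspecial points, their collapse along $\CN_n^{1,\rm ss}$, the exceptional locus being a $\BP^{n-1}$-bundle over a finite center) are consistent with the expected geometry. But every substantive step is deferred: the regularity of $\wit\CN_n^1$, the tangent-map computation of $\pi_1$ on and off the superspecial locus, the identification of $\pi_2^{-1}(\CZ(x_0)^{\rm ss})$ as a Cartier divisor, and the isomorphism of $\pi_2$ away from $\CZ(x_0)^{\rm ss}$ all hinge on a local (display/window or Grothendieck--Messing) computation at a superspecial point that you do not carry out, and which you yourself flag under ``main obstacle.'' There is also a genuine gap at the end of the $\pi_2$ analysis: the universal property of blow-ups gives a proper birational morphism from $\wit\CN_n^1$ to the blow-up of $\CZ(x_0)$ along $\CZ(x_0)^{\rm ss}$, but a proper birational morphism between regular schemes need not be an isomorphism (a nontrivial blow-up is itself a counterexample); you would additionally need quasi-finiteness (so that the map is finite birational onto a normal scheme), or an explicit scheme-theoretic identification of the two exceptional divisors --- which again comes down to the missing local coordinate computation. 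Until that computation is actually performed, none of (1)--(3) is established.
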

In \cite{KRSZ} the authors will prove this conjecture, which from now on we assume to hold. It follows from the conjecture that $\wit\CN_n^1$ is regular and the exceptional divisor above $\CV(\Lambda\oplus \pair{x_0})$ for  $\Lambda\in \Ver^0(\BW_n)$ is a projective space isomorphic to $\BP^{n-1}$ which we will again denote it by $\BP_{\Lambda}$. For $\Lambda^\sharp\in\Ver(\BV_{n+1})$ containing $x_0$, we let $\wit\CV(\Lambda^\sharp)\subset\wit\CN_n^1$ be the strict transform of $\CV(\Lambda^\sharp)\subset \CZ(x_0)$ under the blow-up morphism $\pi_2$.

We remind the reader of our convention in  \S\ref{sec:notat-form-schem}.
\begin{lemma}\label{lem:D x0}
Let $n\geq 1$. Let $x \in \BW_n$ be non-zero vector. Then both $\pi_1^{-1} (\CY(x))$ and $\pi_2^{-1}( \CZ^\flat(x))$ are
Cartier divisors on $\wit\CN^{1}_n$. Moreover,  we have $\pi_1^\ast (\CY(x))=\pi_1^{-1} (\CY(x))$ (in $K_0^{\pi_1^{-1} (\CY(x))}(\wit\CN^{1}_n)\simeq K_0'(\pi_1^{-1} (\CY(x)))$) and $\pi_2^\ast( \CZ^\flat(x))=\pi_2^{-1}( \CZ^\flat(x))$ (in $K_0^{\pi_2^{-1} (\CZ^\flat(x))}(\wit\CN^{1}_n)\simeq K_0'(\pi_2^{-1} (\CZ^\flat(x)))$).
\end{lemma}
\begin{proof} We have the following observation which applies to both $\pi_1$ and  $\pi_2$. Let $\pi:\CX\to\CY$ be a morphism between regular formal schemes such that for every $z\in \CX^{\red}$ the induced map on local rings $\pi_z^\sharp:\CO_{\CY,\pi(z)}\to \CO_{\CX,z} $ is injective.  Let $D$ be a Cartier divisor on $\CY$. Then $\pi^{-1}(D)$ is a Cartier divisor. 

Now that both $\pi_1^{-1}(\CY(x))$ and $\pi_2^{-1}(\CZ^\flat(x))$ are Cartier divisors on $\wit\CN_n^1$, it follows  that $\pi_1^\ast( \CO_{\CY(x)})= \CO_{\pi_1^{-1}(\CY(x))}$ and  $\pi_2^\ast( \CO_{\CZ^\flat(x)})= \CO_{\pi_2^{-1}(\CZ^\flat(x))}$ (e.g., by the argument of \cite[Lemma~B.2~$(i)$]{Zhang2019}, which applies even if the vertical map in {\it loc. cit.} is not a closed immersion). 
\end{proof}

\begin{theorem}
\label{thm:D x}
Let $n\geq 1$. Let $x \in \BW_n$ be non-zero vector. Define a locally finite (Cartier) divisor on $\wit\CN^1_n$
$$
{\rm Exp}(x)\coloneqq\sum_{\Lambda\in \Ver(x)} \BP_{\Lambda}, 
$$where 
$$ \Ver(x)\coloneqq\{\Lambda\subset \BW_n\mid \Lambda^\vee=\Lambda, x\in \Lambda\}.
$$
Then  there is an equality of (Cartier) divisors on $\wit\CN^{1}_n$
$$
\pi_1^{-1}( \CY(x))=\pi_2^{-1} (\CZ^\flat(x))-{\rm Exp}(x).
$$
%modulo sheaves with support on codimensional  $\geq 2$ 
\end{theorem}
%Here and henceforth, the pull-back and the push-forward homomorphisms are always in the derived sense.

%\begin{example}[The case $n=1$]\label{ex: n=1} Though Theorem \ref{thm:D x} does not cover the case $n=1$, we can formulate a natural analog. 
%
%
%It is easy to see that, $\CN^1_{1}\simeq \Spf \OFb$,  $\pi_2$ is an isomorphism $\wit\CN_1^1\simeq\CZ(x_0)$ where both $\wit\CN_1^1$ and $\CZ(x_0)$ are isomorphic to a quasi-canonical lifting, the degree $q+1$ ramified cover $\Spf \OFb_{,1}$ of $ \Spf \OFb$.   Let $x\in \BW_1$, then $\CY(x)$ is non-empty unless $\val(x)\geq 2$ (note that $\val(x)$ is even), in which case it has $\OFb$-length $\frac{\val(x)}{2}$ by the theory of canonical lifting. By \cite{Kudla2011}, we also know that the divisor $\CZ^\flat(x)=\CZ(x)\cap\CZ(x_0)$ has $\OFb_{,1}$-length $1+(q+1)\frac{\val(x)}{2}$.
%Therefore we obtain an analogous equality of Cartier divisors on $\wit\CN_1^1$:
%$$
%\pi_1^{-1}( \CY(x))=\pi_2^{-1}( \CZ^\flat(x))-  \wit\CN_{1}^{1,{\rm red}}.
%$$
%
%\end{example}

\begin{proof}

%To show the claim, it suffices to show that the local equation defining $D$ has non-zero image  

%Next we show the desired equality of Cartier divisors. 
We first note that a point $\CV(\Lambda\obot\pair{x_0})$ in $ \CZ(x_0)^{\rm ss}$ corresponding to $\Lambda\in\Ver^0(\BW_n)$ lies on $\CZ^\flat(x)$ if and only if $x\in \Lambda$. 

From the moduli interpretations, it is clear that $\pi_1^{-1} (\CY(x))\subset \pi_2^{-1} (\CZ^\flat(x))$. Denote $$\mathfrak{D}:=\pi_2^{-1} (\CZ^\flat(x))-\pi_1^{-1} (\CY(x)).$$ Let $\mult_\Lambda(x)\in\BZ_{\geq 0}$ be the multiplicity of $\BP_\Lambda$ (as a Cartier divisor on $\wit\CN_n^1$) in $\mathfrak{D}$. Set 
$$
\mathfrak{D}':=\mathfrak{D}- \sum_{\Lambda\in \Ver(x)} \mult_\Lambda(x)\,\BP_{\Lambda},
$$
Then $\mathfrak{D}'$ is an effective Cartier divisor and it does not contain any of the $\BP_\Lambda$. It suffices to show that $\mathfrak{D}'=0$ and $ \mult_\Lambda(x)=1$ for every  $\Lambda\in \Ver(x)$.

We proceed by induction on $n\geq 1$. When $n=1$, we know that $\CN^1_{1}\simeq \Spf \OFb$,  $\pi_2$ is an isomorphism $\wit\CN_1^1\simeq\CZ(x_0)$ where both $\wit\CN_1^1$ and $\CZ(x_0)$ are isomorphic to a quasi-canonical lifting, the degree $q+1$ ramified cover $\Spf \OFb_{,1}$ of $ \Spf \OFb$.   Then $\CY(x)$ is non-empty unless $\val(x)\geq 2$ (note that $\val(x)$ is even), in which case it has $\OFb$-length $\frac{\val(x)}{2}$ by the theory of canonical lifting. By \cite{Kudla2011}, we also know that the divisor $\CZ^\flat(x)=\CZ(x)\cap\CZ(x_0)$ has $\OFb_{,1}$-length $1+(q+1)\frac{\val(x)}{2}$.
Therefore the desired equality of Cartier divisors on $\wit\CN_1^1$ follows by comparing the $\OFb_{,1}$-lengths.

%When $n=2$, the  divisor $\CY(x)$ is determined by  \cite[Theorem~2.9]{Sankaran2017}.  The structure of the divisor $\CZ^\flat(x)=\CZ(x)\cap\CZ(x_0)$ can be deduced from \cite[Proposition~3.6, 3.9, 3.10]{Terstiege2013} (or Theorem \ref{thm:n=2}). Together with Conjecture  \ref{conj:KRSZ}, one can verify directly the desired identity. Here we note that the authors in {\it loc. cit.} assumed $(x,x)\neq 0$. But the case $(x,x)=0$ while $x\neq 0$ remains true, and can be deduced 

Now let $n\geq 2$. We note that the case $n=2$ is slightly different (and in fact easier). We {\em claim} that, when $n=2$, the divisor $\mathfrak{D}'$ is vertical (i.e., its structure sheaf is annihilated by a power of $\varpi$). To show the claim, we first consider the case: $(x,x)\in O_F$ and $(x,x)\neq 0$. By \cite[Theorem~2.9]{Sankaran2017} and the finite flatness of $\pi_1$ (Conjecture  \ref{conj:KRSZ} $(i)$), the horizontal component of $\pi_1^{-1} (\CY(x))$ has degree $q+1$ (over $\OFb$); similarly by Theorem \ref{thm:horizontal} and Conjecture  \ref{conj:KRSZ} $(iii)$, the horizontal component of $\pi_2^{-1} (\CZ^\flat(x))$ also has degree $q+1$. By $\pi_1^{-1} (\CY(x))\subset \pi_2^{-1} (\CZ^\flat(x))$, their horizontal components must cancel out in $\mathfrak{D}'$. We then consider the remaining case: $(x,x)=0$ and $x\neq 0$. We apply Lemma \ref{lem:tate} to deduce that $\CZ^\flat(x)=\CZ(x_0)\cap\CZ(x)\subset\CN_3$ has no horizontal component (otherwise the rank two lattice $\pair{x,x_0}$ is embedded into the self-dual lattice $L$ in Lemma \ref{lem:tate}; however, the orthogonal complement of $\pair{x_0}_F$ in $L_F$ is a two dimensional non-split hermitian space which has no non-zero isotropic vector). This proves the claim. Now  by \cite[Theorem~0.2]{Terstiege2013}, the special fiber of $\CZ(x_0)$ (as a Cartier divisor on $\CZ(x_0)$) is the sum of $\wit\CV(\Lambda^\sharp)$ for all $\Lambda^\sharp\in\Ver^3(\BV_3)$ containing $x_0$. It follows that we may express $\mathfrak{D}'$ as a (locally finite) sum of Cartier divisors
\begin{align}\label{eq:D' n=2}
\mathfrak{D}'=\sum_{\Lambda^\sharp\in\Ver^3(\BV_3),\,x_0\in \Lambda^\sharp} \mult_{\Lambda^\sharp} (x)\,\wit\CV(\Lambda^\sharp).
\end{align}

We now return to $n\geq 2$. The basic idea is to intersect the given divisors with (many) well-positioned special divisors that are isomorphic to $\CN_{n-1}^1$ or  $\CN_{n-1}$ (cf. \eqref{eq:isom2}). We first determine $\mult_\Lambda(x)$ and show that, when $n\geq 3$, the divisor $\mathfrak{D}'$ does not intersect any of the $\BP_\Lambda$ for $\Lambda\in\Ver(x)$. We fix a $\Lambda_0\in \Ver(x)$. Since $\Lambda_0$ is self-dual of rank  $n\geq 2$, there exists a vector $e\in \Lambda_0$ such that $\val(e)=0$ and that $e$ is linearly independent of $x$. We have an orthogonal decomposition $\BW_{n}=\BW_{n-1}\obot\pair{e}$, and let $x^\flat\in \BW_{n-1}$ be the projection of $x$ to $\BW_{n-1}$. Then $x^\flat\neq 0$. By \eqref{eq:isom2}, the special divisor $\CY(e)$ on $\CN_n^1$ is isomorphic to $\CN^1_{n-1}$. We consider the commutative diagrams  with the obvious maps 
\begin{align}\label{eq:diag case1}
   \xymatrix{
	       &\wit\CN_{n-1}^{1}\ar[d]_{\wit\delta} \ar[dl]_-{\pi_1^\flat} \ar[dr]^-{\pi_2^\flat}\\
	 \CN_{n-1}^{1}\simeq \CY(e) \ar[d]  &   \wit\CN_{n}^{1} \ar[dl]_-{\pi_1}\ar[dr]^-{\pi_2}&\CZ(\pair{e,x_0})\ar[d]\ar[r]& \CZ(e)\simeq \CN_{n}\ar[d]  \\
	 \CN_{n}^{1} &&\CZ(x_0)\ar[r]&\CN_{n+1}.
	}
\end{align}
 The leftmost and the rightmost squares are cartesian (but the middle square is not). All the vertical maps are closed immersions. The pull-back of the divisor $\mathfrak{D}$  along the map $\wit\delta$ is the analogous Cartier divisor $\mathfrak{D}^\flat=(\pi_2^{\flat})^{-1} (\CZ^\flat(x^\flat))-(\pi_1^\flat)^{-1} (\CY(x^\flat))$ on $\wit\CN_{n-1}^1$ (cf. \cite[Proposition~5.11~(1, 2)]{Cho2018}). The pull-back of $\mathfrak{D}'$ along $\wit\delta$ is then a Cartier divisor
\begin{align}\label{eq:delta D'}
\wit\delta^{-1} (\mathfrak{D}') =\mathfrak{D}^\flat-\sum_{\Lambda\in \Ver(x)\atop e\in \Lambda} \mult_\Lambda(x)\,\BP_{\Lambda^\flat},
\end{align}
where $\Lambda^\flat\in\Ver^0(\BW_{n-1})$ denotes the orthogonal complement of $e$ in $\Lambda$. 
By induction hypothesis, we have $ \mathfrak{D}^\flat={\rm Exp}(x^\flat)$, which is a sum over $\Lambda\in\Ver(x^\flat)$ (and $\Ver(x^\flat)$ is bijective to the set of $\Lambda=\Lambda^\flat\obot\pair{e}$ in the sum \eqref{eq:delta D'}), but with known multiplicity $ \mult_{\Lambda^\flat}(x^\flat)=1$. When $n=2$, by \eqref{eq:D' n=2} and the fact that the strict transform $\wit\CV(\Lambda^\sharp)$ does not intersect the image of $\wit\delta$, we have $\wit\delta^{-1} (\mathfrak{D}') =0$, and hence we can already deduce that $\mult_{\Lambda_0}(x)=1$ for every $\Lambda_0\in \Ver(x)$. 

When $n\geq 3$, we can only deduce that $\mult_{\Lambda_0}(x)\leq 1$.  To see that $\mult_{\Lambda_0}(x)\neq 0$, we look at the intersection number between $ \mathfrak{D}'$ and a certain line $\BP^1$ in $\BP_{\Lambda_0}$
$$
\chi(\wit\CN_n^1, \BP^1\jiao  \mathfrak{D}')=\chi(\wit\CN_n^1, \BP^1\jiao  \mathfrak{D})-\mult_{\Lambda_0}(x)\,\chi(\wit\CN_n^1, \BP^1\jiao \BP_{\Lambda_0} ).
$$
 Here we choose the line $\BP^1$ to be $\BP_{M}$ for some rank two self-dual sublattice $M\subset \Lambda_0$ such that $x\notin M_F^\perp$. 
Since $\mathfrak{D}'$ does not contain $\BP_{\Lambda_0}$, its restriction to  $\BP_{\Lambda_0}$ is an effective Cartier divisor and therefore the left hand side is non-negative. On the other hand, $\chi(\wit\CN_n^1, \BP^1\jiao  \mathfrak{D}) =-1$ is strictly negative (e.g., by repeating \eqref{eq:diag case1} $n-2$ times to reduce to the case $n=2$). Therefore $\mult_{\Lambda_0}(x)\neq 0$ and we deduce that $\mult_{\Lambda_0}(x)=1$.  This is true for every $\Lambda_0\in \Ver(x)$. It follows that $\wit\delta^{-1} (\mathfrak{D}')=0$, i.e., $\mathfrak{D}'$ does not intersect $\wit\CN^1_{n-1}$ in \eqref{eq:diag case1}. This then implies that $\mathfrak{D}'$ does not intersect $\BP_{\Lambda_0}$  for every $\Lambda_0\in \Ver(x)$ (otherwise the intersection $\mathfrak{D}'\cap \BP_{\Lambda_0}$ would be a non-zero Cartier divisor on $\BP_{\Lambda_0}$, which necessarily intersects with the hyperplane $\BP_{\Lambda_0}\cap \wit\CN^1_{n-1}$ in $\BP_{\Lambda_0}\simeq \BP^{n-1}$ when $n\geq 3$; hence $\mathfrak{D}'$ must intersect $\wit\CN^1_{n-1}$, a contradiction!).

Now let us identify the open formal subscheme $\wit\CN_n^1\setminus \bigsqcup_{\Lambda\in\Ver^0(\BW_n)} \BP_{\Lambda}$ with $\CZ(x_0)\setminus \CZ(x_0)^{\rm ss}$ by Conjecture \ref{conj:KRSZ} $(iii)$. It remains to show that $\mathfrak{D}'$ is locally trivial at every point  $z\in \CZ(x_0)(\ov k)\setminus \CZ(x_0)^{\rm ss}$ (i.e., the local equations are all units).   Suppose that there exists a point $z\in \CZ(x_0)(\ov k)\setminus \CZ(x_0)^{\rm ss}$ where the local equation defining $\mathfrak{D}$ is not a unit. By \eqref{eq:KRstrat}, there exists a unique {\em maximal} vertex lattice $\Lambda^\sharp\subset\BV_{n+1}$ such that $z\in \CV(\Lambda^\sharp)\subset \CZ(\pair{x_0,x})^\red$.  

We first assume that $z$ is a non-super-general point (\S\ref{sec:bruh-tits-strat}). Then the type  $t(\Lambda^\sharp)\leq n$. Therefore there exists $e\in\Lambda^\sharp$ such that $\val(e)=0$ and $e$ is linearly independent of $x$.  Then $\pair{e,x_0}$ is a vertex lattice of rank two. There are exactly two cases:
   \begin{altenumerate}
    \item $\pair{e,x_0}$ has type $1$.
  \item $\pair{e,x_0}$ has type $0$.
     \end{altenumerate}
In the first case, we may assume that $e\perp x_0$.  Then we again consider the commutative diagrams \eqref{eq:diag case1} and retain the notation there. Now the point $z$ belongs to $\wit\CN_{n-1}^{1}(\ov k)$ (and not on the exceptional divisor). We have assumed that locally at $z$ the divisor $\mathfrak{D}$  is not defined by a unit. It follows that $\mathfrak{D}^\flat$ is not defined by a unit locally at $z$, contradicting the induction hypothesis. In the second case, let $e^\flat\in \BW_n$ be the orthogonal projection of $e$ to $\BW_n$. Then $\val(e^\flat)=-1$, and we may assume that $(e^\flat,e^\flat)=\varpi^{-1}$. Then by \eqref{eq:isom2}, the special divisor $\CY'(e^\flat)$ on $\CN_n^1$ is isomorphic to $\CN_{n-1}$. We consider the commutative diagrams 
\begin{align}
   \xymatrix{
	       &\CN_{n-1}\ar[d]_{\wit\delta} \ar[dl] \ar[dr]\\
	 \CN_{n-1}\simeq \CY'(e^\flat) \ar[d]  &   \wit\CN_{n}^{1} \ar[dl]_-{\pi_1}\ar[dr]^-{\pi_2}&\CZ(\pair{e,x_0})\simeq\CN_{n-1}\ar[d]\ar[r]&\CZ(e)\simeq\CN_{n}\ar[d]  \\
	 \CN_{n}^{1} &&\CZ(x_0)\ar[r]&\CN_{n+1}
	}
\end{align}
where the only non-obvious map $\wit\delta: \CN_{n-1}\to \wit\CN_n^1$ is defined as follows. The natural morphisms $\CN_{n-1}\simeq\CY'(e^\flat)\incl \CN_{n}^1$ and  $\CN_{n-1}\simeq\CZ(\pair{e,x_0})\incl \CZ(x_0)\incl\CN_{n+1}$  induced a morphism $\CN_{n-1}\to  \CN_{n}^1\times \CN_{n+1}$. Then it is straightforward to verify that this morphism factors through \eqref{eq:KRSZ0}, and therefore defines a morphism $\wit\delta$. 
%\footnote{{\color{red} To be commented; for our own reference.}We check the ``straightforward" part as follows.  Write $\BY=\BX_{n-1}\times \ov\BE$ with $\lambda_\BY=\lambda_{\BX_{n-1}}\times \varpi\lambda_{\ov\BE}$, write $\BX_{n+1}=\BX_{n-1}\times \ov\BE\times\ov\BE$. We may ignore the factor $\BX_{n-1}$. Define 
%$$\xymatrix{\alpha=(\cdots,\varpi e^\flat,x_0): \BY\times\ov\BE=\cdots\times \ov\BE\times\ov\BE\ar[r]& \BX_{n+1}=\cdots\times \ov\BE\times\ov\BE.}$$ More concretely, we consider $\Hom^\circ_{O_F}(\ov\BE,\ov\BE\times \ov\BE)=F\obot F$ with the standard hermitian form. Then we may take $e=b^{-1}(1,0),  x_0=(a,b)$ where $a,b\in O_F^\times$ satisfy $\Nm a+\Nm b=\varpi$. Then $\varpi e^\flat=(\ov b,-\ov a)$ and we define $\wit\alpha$ by the same formula in terms of $(\varpi e^\flat,x_0)$ but viewing $\varpi e^\flat,x_0$ as homomorphisms $\Hom^\circ_{O_F}(\ov\CE,\ov\CE\times \ov\CE)$:
%$$
%\xymatrix{ X^\flat\times \ov\CE\times\ov\CE\ar[r]^-{\wit\alpha}\ar[d]^{(\lambda_{ X^\flat},\varpi \lambda_{\ov\CE},\varpi \lambda_{\ov\CE})}& X^\flat\times \ov\CE\times\ov\CE \ar[d]^{(\lambda_{ X^\flat}, \lambda_{\ov\CE},\lambda_{\ov\CE})}\\ 
%(X^\flat)^\vee\times \ov\CE^\vee\times\ov\CE^\vee&\ar[l]_-{\wit\alpha^\vee}  (X^\flat)^\vee\times \ov\CE^\vee\times\ov\CE^\vee}.
%$$}
The rest of the proof is similar to the first case (using \cite[Proposition~5.11~(3, 4)]{Cho2018}) instead), and we omit the detail. 
When $n=2$, this already implies $\mathfrak{D}'=0$ because every curve $\wit\CV(\Lambda^\sharp)$ in \eqref{eq:D' n=2} must pass through some non-super-general point $z\in \CZ(x_0)(\ov k)\setminus \CZ(x_0)^{\rm ss}$ (there are $q^3+1$ of them on $\wit\CV(\Lambda^\sharp)$  and only $q+1$ lie on the exceptional divisor). 

% defined as follows. Let $(X^\flat, \iota_{X^\flat}, \lambda_{X^\flat}, \rho_{X^\flat})\in \CN_{n-1}(S)$.  to $ (X^\flat\times\ov\CE, \iota_{X^\flat}\times\iota_{\ov\CE}, \lambda_{X^\flat}\times \varpi \lambda_{\ov\CE}, \rho_{X^\flat}\times\rho_{\ov\CE})\in  \CN_{n}^{1}(S)$. Then the homomorphism $(0,\lambda_{\ov\CE}):\ov\CE\to (X^\flat)^\vee\times\ov\CE^\vee$ extends $\lambda_{\BY}\circ x\in \BW_n$ and the map $\delta$ defines an isomorphism $\delta: \CN_{n-1}\simeq \CY'(x)$.

Finally, we assume that $n\geq 3$  and $z\in\mathfrak{D}'(\ov k)$ is a super-general point in $\CN_{n+1}(\ov k)$.  Then $n+1$ is odd and  $t(\Lambda^\sharp)=n+1$. The intersection $\mathfrak{D}'\cap\wit\CV(\Lambda^\sharp)$ is a closed subscheme of $\wit\CV(\Lambda^\sharp)$ with codimension one. It is also contained in the open subscheme $\CV(\Lambda^\sharp)^\circ$. This is impossible due to the affineness of $\CV(\Lambda^\sharp)^\circ$ \cite[Corollary 2.8]{Lusztig1976/77} and its dimension $\frac{n}{2}\geq 2$. This completes the induction.
\end{proof}

\subsection{The intersection number $\Int'(L)$ } 
We are now ready to complete the computation of the intersection number $\Int'(L)$ defined by \eqref{eq:def Int'}. Note that the result is conditional on Conjecture \ref{conj:KRSZ}.
\begin{theorem}\label{thm: main2'}
Let $L\subseteq \mathbb{V}$ be an $O_F$-lattice. Then 
$$\Int'(L)=\frac{1}{q+1}\left(\pDen_\Lambda(L)-\Den(L)\right).$$
In particular, the definition \eqref{eq:def Int'} is independent of the choice of the basis.
\end{theorem}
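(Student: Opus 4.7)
The plan is to deduce the formula by comparing $\Int'(L)$ with $\Int(L)$ through the correspondence diagram \eqref{eq:KRSZ}, and invoking the already-established Theorem \ref{thm: main2}, which gives $(q+1)\Int(L) = \pDen_\Lambda(L)$. By Conjecture \ref{conj:KRSZ}, $\pi_1$ is finite flat of degree $q+1$, so the projection formula yields $(q+1)\Int'(L) = \chi(\wit\CN_n^1,\, A_1 \jiao \cdots \jiao A_n)$ for $A_i \coloneqq \pi_1^{\ast}\CO_{\CY(x_i)}$. On the other hand, since $\pi_2$ is the blow-up of the regular formal scheme $\CZ(x_0)$ along the regular zero-dimensional subscheme $\CZ(x_0)^{\rm ss}$, we have $R\pi_{2\ast}\CO_{\wit\CN_n^1} = \CO_{\CZ(x_0)}$, so the projection formula gives $(q+1)\Int(L) = \chi(\wit\CN_n^1,\, B_1 \jiao \cdots \jiao B_n)$ for $B_i \coloneqq \pi_2^{\ast}\CO_{\CZ^\flat(x_i)}$. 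It therefore suffices to show
\begin{equation*}
\chi\bigl(\wit\CN_n^1,\; A_1 \jiao \cdots \jiao A_n \;-\; B_1 \jiao \cdots \jiao B_n\bigr) \;=\; -\Den(L).
\end{equation*}

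The two classes are compared via Lemma \ref{lem:corrpushpull}: the identity of Cartier divisors $\pi_1^{\ast}\CY(x_i) + {\rm Exp}(x_i) = \pi_2^{\ast}\CZ^\flat(x_i)$ on $\wit\CN_n^1$ translates via $\CO(-\pi_1^{\ast}\CY(x_i)) = \CO(-\pi_2^{\ast}\CZ^\flat(x_i)) \otimes \CO({\rm Exp}(x_i))$ into the $K$-theoretic relation $A_i = B_i - G_i$, where $G_i \coloneqq \CO_{{\rm Exp}(x_i)}(-\pi_1^{\ast}\CY(x_i))$ is supported on ${\rm Exp}(x_i) = \bigsqcup_{\Lambda \in \Ver(x_i)} \BP_\Lambda$. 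Expanding and using that the exceptional divisors $\BP_\Lambda$ for distinct $\Lambda$ are pairwise disjoint,
\begin{equation*}
\prod_{i=1}^n A_i - \prod_{i=1}^n B_i \;=\; \sum_{\emptyset \ne S \subseteq \{1,\ldots,n\}} (-1)^{|S|}\,\Bigl(\prod_{i \in S} G_i\Bigr) \jiao \Bigl(\prod_{i \notin S} B_i\Bigr),
\end{equation*}
and each term is supported on $\bigsqcup_{\Lambda \in \bigcap_{i\in S}\Ver(x_i)} \BP_\Lambda$. The computation thereby localizes to each exceptional $\BP_\Lambda \simeq \BP^{n-1}$ and reduces to an intersection-theoretic calculation on $\BP^{n-1}$ governed by the normal bundle $N_{\BP_\Lambda/\wit\CN_n^1} = \CO_{\BP^{n-1}}(-1)$ together with the multiplicities of the divisors $\CZ^\flat(x_j)$ at the superspecial points $p_\Lambda \in \CZ(x_0)^{\rm ss}$.

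The main obstacle will be to verify that, after the inclusion–exclusion over $S$ is carried out, the net contribution from each $\Lambda \in \Ver^0(\BW_n)$ collapses to exactly $-1$ when $L \subseteq \Lambda$ (equivalently, $\Lambda \in \Ver(x_i)$ for every $i$) and to $0$ otherwise (terms from $\Lambda$ containing only some of the $x_i$ cancelling among themselves). Combined with the formula $\Den(L) = \#\{\Lambda \in \Ver^0(\BW_n) : L \subseteq \Lambda\}$ coming from \eqref{eq: Den L}, this yields $\chi(\wit\CN_n^1,\, \prod A_i - \prod B_i) = -\Den(L)$, and hence $(q+1)\Int'(L) = \pDen_\Lambda(L) - \Den(L)$. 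The independence of \eqref{eq:def Int'} from the chosen basis of $L$ is then automatic, since the right-hand side depends only on the isometry class of $L$.
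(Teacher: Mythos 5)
Your overall strategy matches the paper's: compare $\Int'(L)$ and $\Int(L)$ through the correspondence $\wit\CN^1_n$, use Lemma \ref{lem:D x} to relate $\pi_1^*\CY(x_i)$ to $\pi_2^*\CZ^\flat(x_i)$, and invoke Theorem \ref{thm: main2}. The setup is sound: the projection formula for $\pi_1$ gives the factor $q+1$, the $K$-theoretic identity $A_i = B_i - G_i$ is correct, and the identification $\Den(L)=\#\{\Lambda\in\Ver^0(\BW_n): L\subseteq\Lambda\}$ via \eqref{eq: Den L} is the right counting.

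However, there is a genuine gap, and you name it yourself: the statement that the inclusion--exclusion over nonempty $S$ collapses to $-1$ for each $\Lambda\supseteq L$ (and $0$ otherwise) is asserted, not proved. Carrying out that computation as you set it up would require controlling the classes $B_i|_{\BP_\Lambda}$ for $i\notin S$, which involves the multiplicity of $\CZ^\flat(x_i)$ at the point $p_\Lambda$ and the splitting of $\pi_2^*\CZ^\flat(x_i)$ into exceptional and strict-transform parts --- none of which you compute. Your parenthetical explanation (``terms from $\Lambda$ containing only some of the $x_i$ cancelling among themselves'') also misdiagnoses what happens: for $\Lambda$ with $L\not\subseteq\Lambda$, the terms vanish individually because some $B_i$ or $G_i$ simply misses $\BP_\Lambda$; there is no delicate cancellation. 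The paper avoids the entire inclusion--exclusion computation by a sharper observation: applying the projection formula to $\pi_2$, one sees that $\chi(\CO_{\BP_\Lambda}\otimes^{\BL}\pi_2^*\CF)=0$ for \emph{any} $\CF$ on $\CZ(x_0)$ supported in dimension $<n$, because $\pi_{2*}\CO_{\BP_\Lambda}$ is concentrated on a $0$-dimensional subscheme. This forces every mixed term (nonempty proper $S$, at least one $B_i$ and one $G_j$ factor) to vanish outright, so only the two extreme terms survive: the pure $\prod B_i$ term, giving $\pDen(L^\sharp)=\pDen_\Lambda(L)$, and the pure $(-1)^n\prod G_i$ term, which reduces to $(-1)^n\cdot(-1)^{n-1}=-1$ per $\Lambda\supseteq L$ via $\chi(\wit\CN_n^1,\BP_\Lambda^{\,\jiao n})=(-1)^{n-1}$. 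You would need to supply either this vanishing lemma or the explicit $\BP^{n-1}$ intersection computations to make your argument complete.
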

\begin{remark}
The case $n=2$ is due to \cite{Sankaran2017}.
\end{remark}
\begin{example}[The case $n=1$] When $n=1$, let $L=\pair{x}\subset \BW_1$.
It is easy to see that
 $$\Int'(L)=
 \begin{cases} \frac{\val(x)}{2}, & \val(x)\geq 0,\\
 0,& \text{otherwise}.
 \end{cases}
 $$
  On the other hand, the local density formula shows that $$\pDen_\Lambda(L)= \begin{cases}
  1+(q+1)\frac{\val(x)}{2},& \val(x)\geq 0,\\
 0,& \text{otherwise}.
 \end{cases}$$ 
This verifies the theorem in the case $n=1$.
\end{example}

\begin{proof}%We refer to \S\ref{sec:notat-form-schem} for the unexplained notation used in the proof. 
By the projection formula for the finite flat map $\pi_1$, we obtain an equality in $K'_0(\CY(x_1)\cap\cdots\cap\CY(x_n))$
$$
\pi_{1\ast}(\pi_1^\ast( \CO_{\CY(x_1)})\otimes^\BL \cdots \otimes^\BL\pi_1^\ast (\CO_{\CY(x_n)}))=\deg(\pi_1) \, \CO_{\CY(x_1)}\otimes^\BL \cdots \otimes^\BL \CO_{\CY(x_n)},
$$
and hence 
\begin{align*}
\Int'(L)=&
\chi \left( \CN_n^1, \CO_{\CY(x_1)}\otimes^\BL \cdots \otimes^\BL \CO_{\CY(x_n)}\right)
\\=&\frac{1}{\deg(\pi_1)} \chi \left(\wit\CN_{n}^1,\pi_1^\ast (\CO_{\CY(x_1)})\otimes^\BL \cdots \otimes^\BL\pi_1^\ast (\CO_{\CY(x_n)})\right).
\end{align*}

For two Cartier divisors $D_1$ and $D_2$ on a regular formal scheme $\CX$, we have $\CO_{D_1+D_2}=\CO_{D_1}+\CO_{D_2}\in K_0^{D_1\cup D_2}(\CX)/{\rm F}^2K_0^{D_1\cap D_2}(\CX)$.  This allows us to apply Lemma \ref{lem:D x0} and the equality of Cartier divisors in Theorem \ref{thm:D x} and to obtain an equality in  $ K_0'( \pi_2^{-1}(\CZ^\flat(x)))/{\rm F}^1K_0'({\rm Exp}(x))$
$$
\pi_1^\ast  (\CO_{\CY(x)})=\pi_2^{\ast} (\CO_{\CZ^\flat(x)})- \CO_{{\rm Exp}(x)}.% \sum_{\Lambda\in\Ver(x)}\CO_{\BP_\Lambda}.
$$
Fix a connected component $\BP_\Lambda$ of ${\rm Exp}(x)$ and let $i_\Lambda:\BP_\Lambda\to \wit\CN_n^1$ be the closed immersion. 
For any $\CF\in {\rm F}^1 K_0(\BP_\Lambda)$ and $n-1$ Cartier divisors $D_1,\cdots,D_{n-1}$ on $\wit\CN_n^1$, we have an equality in $K_0(\BP_\Lambda)$
$$
\CF\otimes_{\CO_{\wit\CN_n^1}}^\BL\CO_{D_1}\otimes_{\CO_{\wit\CN_n^1}}^\BL\cdots\otimes_{\CO_{\wit\CN_n^1}}^\BL\CO_{D_{n-1}}= \CF\otimes_{\CO_{\BP_\Lambda}}^\BL i_\Lambda^\ast (\CO_{D_1})\otimes_{\CO_{\BP_\Lambda}}^\BL\cdots\otimes_{\CO_{\BP_\Lambda}}^\BL i_\Lambda^\ast (\CO_{D_{n-1}}).
$$
Since $i_\Lambda^\ast (\CO_{D_i})\in  {\rm F}^1 K_0(\BP_\Lambda)$,
the above product belongs to $\mathrm{F}^{n} K_0(\BP_\Lambda)$ by \cite[B.3]{Zhang2019} (applied to the scheme $\BP_\Lambda$). Since $\dim\BP_\Lambda=n-1$, we conclude that $\CF\otimes_{\CO_{\wit\CN_n^1}}^\BL\CO_{D_1}\otimes_{\CO_{\wit\CN_n^1}}^\BL\cdots\otimes_{\CO_{\wit\CN_n^1}}^\BL\CO_{D_{n-1}}=0$. It follows that
\begin{align}\label{eq:Int'2}
\Int'(L)=\frac{1}{\deg(\pi_1)} \chi \left(\wit\CN_{n}^1,\bigl( \pi_2^{\ast} (\CO_{\CZ^\flat(x_1)})- \CO_{{\rm Exp}(x_1)} \bigr)\otimes^\BL \cdots \otimes^\BL \bigl( \pi_2^{\ast} (\CO_{\CZ^\flat(x_n)})- \CO_{{\rm Exp}(x_n)} \bigr)\right).
\end{align}

We apply  the projection formula to $\pi_2:\wit\CN_n^1\to\CZ(x_0)$
$$
\pi_{2\ast} \bigl(\CO_{\BP_\Lambda}\otimes_{\CO_{\wit\CN_n^1}}^\BL \pi_2^\ast(\CF)\bigr)=\pi_{2\ast}(\CO_{\BP_\Lambda})\otimes_{\CO_{\CZ(x_0)}}^\BL  \CF,
$$
where $\Lambda\in \Ver^0(\BW_n)$ and $\CF\in K_0(\CZ(x_0))$. Since the first factor $\pi_{2\ast}(\CO_{\BP_\Lambda})$ is supported on a zero-dimensional  subscheme of $\CZ(x_0)$, 
we have 
\begin{align}\label{eq:van chi}
\chi\bigl(\CO_{\BP_\Lambda}\otimes_{\CO_{\wit\CN_n^1}}^\BL \pi_2^\ast(\CF)\bigr)=0,
\end{align}
for any $\CF\in \mathrm{F}^1K_0(\CZ(x_0))$. %In particular, this applies to $\CF = \CZ^\flat(x_2) \jiao\cdots\jiao \CZ^\flat(x_n)$ (or any $n-1$ of the $n$ divisors). 

Next, for $\Lambda_1,\cdots,\Lambda_n\in\Ver^0(\BW_n)$, the intersection numbers between exceptional divisors are equal to
$$
\chi\bigl(\wit\CN_n^1,\BP_{\Lambda_1}\jiao\cdots\jiao\BP_{\Lambda_n}\bigr)=\begin{cases}
(-1)^{n-1},&  \Lambda_1=\cdots=\Lambda_n,\\
0,&\text{otherwise}.\end{cases}
$$
Together with \eqref{eq:Int'2}  and \eqref{eq:van chi} we obtain
\begin{align}\label{eq:Int'3} 
\deg(\pi_1)\Int'(L)=\chi \left( \wit\CN_n^1, \pi_2^{\ast} (\CO_{\CZ^\flat(x_1)}) \otimes^\BL \cdots  \otimes^\BL\pi_2^{\ast} (\CO_{\CZ^\flat(x_n)})\right) +(-1)^n \sum_{ \Lambda\in\Ver^0(\BW_n) \atop L\subset\Lambda}(-1)^{n-1}.
\end{align}

By the projection formula for $\pi_2$, we have
\begin{align*}
\pi_{2\ast}(\pi_2^\ast (\CO_{\CZ^\flat(x_1)}))&=\pi_{2\ast}(\CO_{\wit\CN_n^1}\otimes_{\CO_{\wit\CN_n^1}} \pi_2^\ast (\CO_{\CZ^\flat(x_1)}))
\\&=\pi_{2\ast}(\CO_{\wit\CN_n^1})\otimes_{\CO_{\CZ(x_0)}}^\BL \CO_{\CZ^\flat(x_1)}\quad\in K'_0(\CZ(x_0)).
\end{align*}
Note that $\pi_{2\ast}(\CO_{\wit\CN_n^1})-\CO_{\CZ(x_0)}$ is supported on $\CZ(x_0)^{\rm ss}$ which is zero-dimensional. We obtain
\begin{align*}
&
\chi \left( \wit\CN_n^1, \pi_2^{\ast} (\CO_{\CZ^\flat(x_1)}) \otimes^\BL \cdots \otimes^\BL \pi_2^{\ast} (\CO_{\CZ^\flat(x_n)})\right)
\\=&\chi \left(\CZ(x_0),  \CZ^\flat(x_1) \jiao\cdots\jiao   \CZ^\flat(x_n)\right)
%\\=&\chi \left(\CN_{n+1}, \CZ(x_0)\jiao  \CZ(x_1) \jiao\cdots\jiao   \CZ(x_n)\right)
%\\=&\Int(L^\sharp)
\\=&\pDen_\Lambda(L),
\end{align*}
where the last equality follows from Theorem \ref{thm: main2}.

Finally,  by \eqref{eq: Den L} we have
$$
\#\left\{\Lambda\in\Ver^0(\BW_n) \mid L\subset\Lambda\right\}=\Den(L).
$$
By \eqref{eq:Int'3} and $\deg(\pi_1)=q+1$,  the proof is complete.
\end{proof}

\begin{remark}
In the notation of \S\ref{sec:relation-with-local} and \S\ref{sec:relation-with-local-1},  it follows immediately from Theorem \ref{thm: main2'},  \eqref{eq:localWhittaker0}, \eqref{eq:localWhittaker2} that $$\Int'(L)=\left(\frac{W_T'(1, 0, \varphi_1)}{\log q^2}\cdot  \frac{(-q)^n-1}{q+1}-W_T(1,0,\varphi_0)\cdot\frac{1}{q+1}\right)\cdot\prod_{i=1}^{n}(1-(-q)^{-i})^{-1}.$$ 
\end{remark}

\part{Semi-global and global applications: arithmetic Siegel--Weil formula}\label{part:semi-global-global}

In this part we apply our main Theorem \ref{thm: main} to prove an identity between the local intersection number of Kudla--Rapoport cycles on (integral models of) unitary Shimura varieties  at an inert prime with hyperspecial level and the derivative of a Fourier coefficient of Siegel--Eisenstein series on unitary groups (also known as the \emph{local arithmetic Siegel--Weil formula}). This is achieved by relating the Kudla--Rapoport cycles on unitary Shimura varieties to those on unitary Rapoport--Zink spaces via the $p$-adic uniformization, and by relating the Fourier coefficients to local representation densities. This deduction is more or less standard (see \cite{Kudla2014} and \cite{Terstiege2013}), and we will state the results for more general totally real base fields and level structures, making use of the recent advance on integral models of unitary Shimura varieties (\cite{Rapoport2017}). We will also apply the main Theorem \ref{thm: main2} in the almost self-dual case to deduce a similar identity at an inert prime with almost self-dual level. Finally, combining these semi-global identities with archimedean identities of Liu \cite{Liu2011} and Garcia--Sankaran \cite{Garcia2019} will allow us to deduce the \emph{arithmetic Siegel--Weil formula} for Shimura varieties with minimal levels at inert primes, at least when the quadratic extension is unramified at all finite places.

\section{Shimura varieties and semi-global integral models}

\subsection{Shimura varieties}\label{sec:shimura-varieties} We will closely follow \cite{Rapoport2017} and \cite{RSZsurvey}. In this part we switch to global notations. Let $F$ be a CM number field, with $F_0$ its totally real subfield of index 2. We fix  a CM type $\Phi\subseteq \Hom(F, \overline{\mathbb{Q}})$ of $F$ and a distinguished element $\phi_0\in \Phi$. We fix an embedding $\overline{\mathbb{Q}}\hookrightarrow \mathbb{C}$ and identify the CM type $\Phi$ with the set of archimedean places of $F$, and also with the set of archimedean places of $F_0$. Let $V$ be an $F/F_0$-hermitian space of dimension $n\ge2$. Let $V_\phi=V \otimes_{F, \phi}\mathbb{C}$ be the associated $\mathbb{C}/\mathbb{R}$-hermitian space for $\phi\in\Phi$. Assume the signature of $V_\phi$ is given by
$$  (r_\phi,r_{\bar\phi})=\begin{cases}
  (n-1,1), & \phi=\phi_0,\\
  (n, 0), & \phi\in \Phi\setminus\{\phi_0\}.
\end{cases}$$
Define a variant $G^\mathbb{Q}$ of the unitary simulate group $\GU(V)$ by $$G^\mathbb{Q}\coloneqq \{g\in \Res_{F_0/\mathbb{Q}}\GU(V): c(g)\in \mathbb{G}_m\},$$  where $c$ denotes the similitude character. Define a cocharacter $$h_{G^\mathbb{Q}}: \mathbb{C}^\times\rightarrow G^\mathbb{Q}(\mathbb{R})\subseteq \prod_{\phi\in\Phi}\GU(V_\phi)(\mathbb{R})\simeq\prod_{\phi\in\Phi} \GU(r_\phi,r_{\bar \phi})(\mathbb{R}),$$ where its $\phi$-component is given by $$h_{G^\mathbb{Q},\phi}(z)=\diag\{ z\cdot 1_{r_\phi},\bar z \cdot 1_{r_{\bar\phi}}\}.$$ 
Then its $G^\mathbb{Q}(\mathbb{R})$-conjugacy class defines a Shimura datum $(G^\mathbb{Q},\{h_{G^\mathbb{Q}}\})$. Let $E_{r}=E(G^\mathbb{Q}, \{h_{G^\mathbb{Q}}\})$ be the reflex field, i.e., the subfield of $\overline{\mathbb{Q}}$ fixed by $\{\sigma\in\Aut(\overline{\mathbb{Q}}/\mathbb{Q}): \sigma^*(r)=r\}$, where $r: \Hom(F, \overline{\mathbb{Q}})\rightarrow \mathbb{Z}$ is the function defined by $r(\phi)=r_\phi$.

We similarly define the group $Z^\mathbb{Q}$ (a torus) associated to a totally positive definite $F/F_0$-hermitian space of dimension 1 (i.e., of signature $\{(1,0)_{\phi\in\Phi}\}$) and a cocharacter $h_{Z^\mathbb{Q}}$ of $Z^\mathbb{Q}$. The reflex field $E_\Phi=E(Z^\mathbb{Q}, \{h_{Z^\mathbb{Q}}\})$ is equal to the reflex field of the CM type $\Phi$, i.e., the subfield of $\overline{\mathbb{Q}}$ fixed by $\{\sigma\in\Gal(\overline{\mathbb{Q}}/\mathbb{Q}): \sigma\circ\Phi=\Phi\}$. 

Now define a Shimura datum $(\wit G, \{h_{\wit G}\})$ by  $$\wit G\coloneqq Z^\mathbb{Q}\times_{\mathbb{G}_m} G^\mathbb{Q}=\{(z,g)\in Z^\mathbb{Q}\times G^\mathbb{Q}: \mathrm{Nm}_{F/F_0}(z)=c(g)\},\quad h_{\wit G}=(h_{Z^\mathbb{Q}}, h_{G^\mathbb{Q}}).$$ Its reflex field $E$ is equal to the composite $E_rE_\Phi$, and the CM field $F$ becomes a subfield of $E$ via the embedding $\phi_0$. Let $\KG\subseteq \wit G(\mathbb{A}_f)$ be a compact open subgroup. Then the associated Shimura variety $\Sh_{\KG}=\Sh_{\KG}(\wit G,\{h_{\wit G}\})$ is of dimension $n-1$ and has a canonical model over $\Spec E$.  We remark that $E=F$ when $F/\mathbb{Q}$ is Galois, or when $F=F_0K$ for some imaginary quadratic $K/\mathbb{Q}$ and the CM type $\Phi$ is induced from a CM type of $K/\mathbb{Q}$ (e.g., when $F_0=\mathbb{Q}$).

\subsection{Semi-global integral models at hyperspecial levels}\label{sec:semi-global-integral}

Let $p$ be a prime number. If $p=2$, then we assume all places $v$ of $F_0$ above $p$ are unramified in $F$. Let $\nu$ be a place of $E$ above $p$. It determines places $v_0$ of $F_0$ and $w_0$ of $F$ via the embedding $\phi_0$. To specify the level $\KG$, notice that for $G\coloneqq \Res_{F_0/\mathbb{Q}}\U(V)$ we have an isomorphism
\begin{equation}
  \label{eq:tildeG}
  \wit G\simeq Z^\mathbb{Q}\times G,\quad (z,g)\mapsto (z,z^{-1}g).
\end{equation}
  We consider the open compact subgroup of the form $$\KG\simeq K_{Z^\mathbb{Q}}\times K_G$$ under the decomposition (\ref{eq:tildeG}). We assume that $K_{Z^\mathbb{Q}}$ is the unique maximal open compact subgroup of $Z^\mathbb{Q}(\mathbb{A}_f)$ and  $$K_G=\prod_{v|p} K_{G,v}\times K_{G}^p.$$ In this subsection, we assume

\begin{enumerate}[label=(H\arabic*)]
\item\label{item:H1}  $v_0$ is  inert in $F$ (possibly ramified over $p$).
\item\label{item:H2} $V_{v_0}$ is split and we take $K_{G,v_0}$ to be the stabilizer of a self-dual lattice $\Lambda_{v_0}\subseteq V_{v_0}$, a hyperspecial subgroup of $\U(V)(F_{0,v_0})$.
\item For each place $v\ne v_0$ of $F_0$ above $p$, let $K_{G,v}^\circ$ be the maximal compact subgroup of $\U(V)(F_{0,v})$ given by  the stabilizer of a vertex lattice $\Lambda_v\subseteq V_v$. We take $K_{G,v}=K_{G,v}^\circ$ if $v$ is nonsplit in $F$. We take $K_{G,v}\subseteq K_{G,v}^\circ$ to be any open compact subgroup if $v$ is split in $F$.
\item $K_G^p\subseteq G(\mathbb{A}_f^p)$ is any open compact subgroup.
\end{enumerate}

Under these conditions, Rapoport--Smithling--Zhang \cite[\S4.1]{Rapoport2017} and \cite[\S4--5]{RSZsurvey} (see also \cite[Proposition C.20]{Liu2018}) construct a smooth integral model $\mathcal{M}_{\KG}\tildeG$ of $\Sh_{\KG}\tildeGh$ over $O_{E,(\nu)}$. More precisely, for a locally noetherian $O_{E,(\nu)}$-scheme $S$, we consider $\mathcal{M}_{\KG}\tildeG(S)$ to be the groupoid of tuples $(A_0, \iota_0, \lambda_0, A,\iota,\lambda, \bar\eta^p, \bar\eta_p^{v_0})$, where
\begin{enumerate}[label=(M\arabic*)]
\item\label{item:M1} $A_0$ (resp. $A$) is an abelian scheme over $S$.
\item $\iota_0$ (resp. $\iota$) is an action of $O_F \otimes \mathbb{Z}_{(p)}$ on $A_0$ (resp. $A$) satisfying the Kottwitz condition of signature $\{(1,0)_{\phi\in\Phi}\}$ (resp. signature $\{(r_\phi,r_{\bar \phi})_{\phi\in\Phi}\}$).
\item $\lambda_0$ (resp. $\lambda$) is a polarization of $A_0$ (resp. $A$) whose Rosati involution induces the automorphism given by the nontrivial Galois automorphism of $F/F_0$ via $\iota_0$ (resp. $\iota$).
\item $\bar\eta^p$ is a $K_G^p$-orbit of $\mathbb{A}_{F,f}^p$-linear isometries between lisse $\mathbb{A}_{F,f}^p$-sheaves $$\eta^p: \Hom_F(\hat V^p(A_0), \hat V^p(A))\simeq V \otimes_F \mathbb{A}_{F,f}^p.$$ Here $\hat V^p(\cdot)$ denotes the $\mathbb{A}_{F,f}^p$-Tate module.% , and $-V$ denotes the hermitian space with the same underlying $F$-space $V$ but with hermitian form multiplied by $-1$.
\item\label{item:splitlevel} $\bar \eta_p^{v_0}$ is a collection $\{\bar \eta_v\}$, where $v\ne v_0$ runs over places of $F_0$ above $p$ such that $v$ is split in $F$, and each $\bar\eta_v$ is a $K_{G,v}$-orbit of $F_w$-linear isomorphisms between lisse $F_w$-sheaves $$\eta_v: \Hom_{O_{F_w}}(A_0[w^\infty], A[w^\infty])\otimes_{O_{F_w}}F_w\simeq V \otimes_F F_w.$$ Here $w$ is the unique place over $v$ determined by the CM type $\Phi$, and we view $K_{G,v}$ as an open subgroup of  $\GL(V_w)\cong \U(V_v)$ under the decomposition $V_v\cong V_w \oplus V_{\overline{w}}$. Notice that by the Kottwitz signature condition, both $A_0[w^\infty]$ and $A[w^\infty]$ are  \'etale $O_{F_w}$-modules (cf. \cite[Definition C.19]{Liu2018}). 
\end{enumerate}
Such a tuple is required to satisfy the following extra conditions:
\begin{enumerate}[resume,label=(M\arabic*)]
\item\label{item:M5} $(A_0,\iota_0,\lambda_0)\in \mathcal{M}_0^{\mathfrak{a},\xi}(S)$. Here $\mathcal{M}_0^{\mathfrak{a},\xi}$ is an integral model of $\Sh_{K_{\mathbb{Z}^\mathbb{Q}}}(Z^\mathbb{Q},\{h_{Z^\mathbb{Q}}\})$ coming from an axillary moduli problem depending on a choice of an nonzero  coprime-to-$p$ ideal $\mathfrak{a}$ of $O_{F_0}$ and $\xi$ a certain similarity class of 1-dimensional hermitian $F/F_0$-hermitian spaces (\cite[\S3.2]{Rapoport2017}). These axillary choices are made to ensure that the unitary group in 1-variable with $\mathfrak{a}$-level structure exists and so $\mathcal{M}_0^{\mathfrak{a},\xi}$ is non-empty. In particular, the polarization $\lambda_0$ is coprime-to-$p$. We remark that when $F/F_0$ is ramified at some finite place, one may choose $\mathfrak{a}$ to be the trivial ideal. Moreover, when $F_0=\mathbb{Q}$, there is only one choice of $\xi$, and the condition $(A_0,\iota_0,\lambda_0)\in \mathcal{M}_0^{\mathfrak{a},\xi}(S)$ is nothing but requiring $\lambda_0$ to be principal.
\item\label{item:M6} For each place $v$ of $F_0$ above $p$, $\lambda$ induces a polarization $\lambda_v$ on the $p$-divisible group $A[v^\infty]$. We require $\ker \lambda_v\subseteq A[\iota(\varpi_v)]$ of rank equal to the size of $\Lambda_v^\vee/\Lambda_v$, where $\varpi_v$ is a uniformizer of $F_{0,v}$. In particular, we require $\lambda_{v_0}$ to be principal.
\item\label{item:Eisenstein} For the place $v_0$, we further require the \emph{Eisenstein condition} in \cite[\S5.2,\ case (2)]{RSZsurvey}. We remark the Eisenstein condition is automatic when $v_0$ is unramified over $p$, 
\item\label{item:M7} For each place $v\ne v_0$ of $F_0$ above $p$, we further require the \emph{sign condition} and \emph{Eisenstein condition} as explained in \cite[\S4.1]{Rapoport2017}. We remark that the sign condition is automatic when $v$ is split in $F$, and the Eisenstein condition is automatic when the places of $F$ above $v$ are unramified over $p$.
\end{enumerate}

A morphism $(A_0, \iota_0, \lambda_0, A,\iota,\lambda, \bar\eta^p, \bar\eta_p^{v_0})\rightarrow (A_0', \iota_0', \lambda_0', A',\iota',\lambda', \bar\eta^p{}', \bar\eta_p^{v_0}{}')$ in this groupoid is an isomorphism $(A_0, \iota_0, \lambda_0)\xrightarrow{\sim}(A_0', \iota_0', \lambda_0')$ in $\mathcal{M}_0^{\mathfrak{a},\xi}(S)$ and an $O_{F,(p)}$-linear quasi-isogeny $A\rightarrow A'$ inducing an isomorphism $A[p^\infty]\xrightarrow{\sim}A'[p^\infty]$, pulling $\lambda'$ back to $\lambda$, pulling $\bar\eta^p{}'$ back to $\bar\eta^p$ and pulling $ \bar\eta_p^{v_0}{}'$ back to $\bar\eta_p^{v_0}$.

By  \cite[Theorem 4.1]{Rapoport2017}, \cite[Theorem 5.6 (c)]{RSZsurvey}, the functor $S\mapsto \mathcal{M}_{\KG}\tildeG(S)$ is represented by a Deligne--Mumford stack $\mathcal{M}_{\KG}\tildeG$ smooth  over $\Spec O_{E,(\nu)}$. For $K_G^p$ small enough, $\mathcal{M}_{\KG}\tildeG$ is relatively representable over $\mathcal{M}_0^{\mathfrak{a},\xi}$, with generic fiber naturally isomorphic to the canonical model of $\Sh_{\KG}\tildeGh$ over~$\Spec E$. 

\subsection{Semi-global integral models at almost self-dual parahoric levels}\label{sec:almost-self-dual}
With the same set-up as \S\ref{sec:semi-global-integral}, but replace the assumptions \ref{item:H1} and \ref{item:H2} by
\begin{enumerate}[label=(A\arabic*)]
\item $v_0$ is inert in $F$ and unramified over $p$. 
\item $V_{v_0}$ is nonsplit and we take $K_{G,v_0}$ to be the stabilizer of an almost self-dual lattice $\Lambda_{v_0}\subseteq V_{v_0}$, a maximal parahoric subgroup of $\U(V)(F_{0,v_0})$.
\end{enumerate}
For a locally noetherian $O_{E,(\nu)}$-scheme $S$, we consider $\mathcal{M}_{\KG}\tildeG(S)$ to be the groupoid of tuples $\allowbreak(A_0, \iota_0, \lambda_0, A,\iota,\lambda, \bar\eta^p, \bar\eta_p^{v_0})$ satisfying \ref{item:M1}---~\ref{item:M7}. In particular, $\lambda_{v_0}$ is almost principal instead of principal in \ref{item:M6}.

By \cite[Theorem 4.7]{Rapoport2017}, the functor $S\mapsto \mathcal{M}_{\KG}\tildeG(S)$ is represented by a Deligne--Mumford stack $\mathcal{M}_{\KG}\tildeG$ flat over $\Spec O_{E,(\nu)}$. For $K_G^p$ small enough, $\mathcal{M}_{\KG}\tildeG$ is relatively representable over $\mathcal{M}_0^{\mathfrak{a},\xi}$, with generic fiber naturally isomorphic to the canonical model of $\Sh_{\KG}\tildeGh$ over~$\Spec E$. Moreover, when $\nu$ is unramified over $p$ (e.g., all $p$-adic places of $F$ are unramified over $p$), $\mathcal{M}_\KG\tildeG$ has semi-stable reduction over $\Spec O_{E,(\nu)}$ by  \cite[Theorem 4.7]{Rapoport2017} and \cite[Proposition 1.4]{Cho2018}.

\subsection{Semi-global integral models at split primes}\label{sec:split} With the same set-up as \S\ref{sec:semi-global-integral}, but replace the assumption \ref{item:H1} by
\begin{enumerate}[label=(S)]
\item\label{item:S} $v_0$ is split in $F$ (possibly ramified over $p$).
\end{enumerate}
For a locally noetherian $O_{E,(\nu)}$-scheme $S$, we consider $\mathcal{M}_{\KG}\tildeG(S)$ to be the groupoid of tuples $\allowbreak(A_0, \iota_0, \lambda_0, A,\iota,\lambda, \bar\eta^p, \bar\eta_p^{v_0})$ satisfying \ref{item:M1}---~\ref{item:M7}. We further require 
\begin{enumerate}[label=(MS)]
\item\label{item:MS} when $p$ is locally nilpotent on $S$, the $p$-divisible group $A[w_0^\infty]$ is a Lubin--Tate group of type $r|_{w_0}$ (\cite[\S8]{Rapoport2017a}). We remark that this condition is automatic when $v_0$ is unramified over $p$.
\end{enumerate}

By \cite[Theorem 4.2]{Rapoport2017}, as in the hyperspecial case, the functor $S\mapsto \mathcal{M}_{\KG}\tildeG(S)$ is represented by a Deligne--Mumford stack $\mathcal{M}_{\KG}\tildeG$ smooth  over $\Spec O_{E,(\nu)}$. For $K_G^p$ small enough, $\mathcal{M}_{\KG}\tildeG$ is relatively representable over $\mathcal{M}_0^{\mathfrak{a},\xi}$, with generic fiber naturally isomorphic to the canonical model of $\Sh_{\KG}\tildeGh$ over~$\Spec E$.

\subsection{Semi-global integral models with Drinfeld levels at split primes}\label{sec:drinfeld} With the same set-up as \S\ref{sec:split}, we may consider semi-global integral models with Drinfeld levels by further requiring
\begin{enumerate}[label=(D)]
\item\label{item:D}
  \begin{enumerate}
  \item the place $\nu$ of $E$ matches the CM type $\Phi$ (in the sense of \cite[\S 4.3]{Rapoport2017}): if $\phi\in\Hom(F, \overline{\mathbb{Q}})$ induces the $p$-adic place $w_0$ of $F$ (via $\nu: E\hookrightarrow \overline{\mathbb{Q}_p}$), then $\phi\in\Phi$. We remark that the matching condition is automatic when $F=F_0K$  for some imaginary quadratic $K/\mathbb{Q}$ and the CM type $\Phi$ is induced from a CM type of $K/\mathbb{Q}$ (e.g., when $F_0=\mathbb{Q}$), or when $v_0$ is of degree one over $p$.
  \item the extension $E_{\nu}/E_{r|_{v_0}}$ is unramified, where $E_{r|_{v_0}}$ is the local reflex field as defined in \cite[\S4.1]{Rapoport2017}.  We remark that this condition is automatic if all $p$-adic places of $F$ are unramified over $p$.
  \end{enumerate}
\end{enumerate}
For $m\ge0$, we consider the open compact subgroup $K_{G}^m\subseteq K_{G}$ such that $K_{G,v_0}^m\subseteq K_{G,v_0}$ is the principal congruence subgroup modulo $\varpi_{v_0}^m$, and $K_{G,v}^m=K_{G,v}$ for $v\ne v_0$. Write $\KG^m=K_{Z^\mathbb{Q} }\times K_G^m$. Notice that $\KG^0=\KG$. We define a semi-global integral model $\mathcal{M}_{\KG^m}\tildeG$ of $\Sh_{\KG^m}\tildeGh$ over $O_{E,(\nu)}$ as follows. For a locally noetherian $O_{E,(\nu)}$-scheme $S$, we consider $\mathcal{M}_{\KG^m}\tildeG(S)$ to be the groupoid of tuples $(A_0, \iota_0, \lambda_0, A,\iota,\lambda, \bar\eta^p, \bar\eta_p^{v_0}, \eta_{w_0})$, where $(A_0, \iota_0, \lambda_0, A,\iota,\lambda, \bar\eta^p, \bar\eta_p^{v_0})\in \mathcal{M}_{\KG}\tildeG(S)$ and the additional datum  $\eta_{w_0}$ is a Drinfeld level structure:
\begin{enumerate}[label=(MD)]
\item\label{item:MD} when $p$ is locally nilpotent on $S$, $\eta_{w_0}$ is an $O_{F,w_0}$-linear homomorphism of finite flat group schemes $$\eta_{w_0}: \varpi_{w_0}^{-m}\Lambda_{w_0}/\Lambda_{w_0}\rightarrow \underline{\Hom}_{O_{F,w_0}}( A_0[w_0^m], A[w_0^m]).$$
\end{enumerate}

By \cite[Theorem 4.5]{Rapoport2017} (where the second condition in \ref{item:D} should be added to ensure regularity), the functor $S\mapsto \mathcal{M}_{\KG^m}\tildeG(S)$ is represented by a regular Deligne--Mumford stack $\mathcal{M}_{\KG}\tildeG$, flat over $\Spec O_{E,(\nu)}$ and finite flat over $\mathcal{M}_{\KG}\tildeG$,  with generic fiber naturally isomorphic to the canonical model of $\Sh_{\KG^m}\tildeGh$ over~$\Spec E$. 

\subsection{Semi-global integral models at ramified primes}\label{sec:ramified} With the same set-up as \S\ref{sec:semi-global-integral}, but replace the assumption \ref{item:H1} by
\begin{enumerate}[label=(R)]
\item\label{item:R} $v_0$ is ramified in $F$ (so $p\ne2$) and unramified over $p$. 
\end{enumerate}
For a locally noetherian $O_{E,(\nu)}$-scheme $S$, we consider $\mathcal{M}_{\KG}\tildeG(S)$ to be the groupoid of tuples $\allowbreak(A_0, \iota_0, \lambda_0, A,\iota,\lambda, \bar\eta^p, \bar\eta_p^{v_0})$ satisfying \ref{item:M1}---~\ref{item:M7}. We further require
\begin{enumerate}[label=(MR)]
\item\label{item:MR} when $p$ is locally nilpotent on $S$, the $p$-divisible group $A[w_0^\infty]$ satisfies the Pappas wedge condition (\cite[Definition 2.4]{Kudla2014}, \cite[\S5.2]{RSZsurvey}).
\end{enumerate}
By \cite[Theorem 5.4]{RSZsurvey}, the functor $S\mapsto \mathcal{M}_{\KG}\tildeG(S)$ is represented by a Deligne--Mumford stack $\mathcal{M}_{\KG}\tildeG$ flat over $\Spec O_{E,(\nu)}$. For $K_G^p$ small enough, $\mathcal{M}_{\KG}\tildeG$ is relatively representable over $\mathcal{M}_0^{\mathfrak{a},\xi}$, with generic fiber naturally isomorphic to the canonical model of $\Sh_{\KG}\tildeGh$ over~$\Spec E$. By \cite[Theorem 6.7]{RSZsurvey}, it has isolated singularities and we may further obtain a regular model by blowing up (the \emph{Kr\"amer model}, see \cite[Definition 6.10]{RSZsurvey}) which we still denote by $\mathcal{M}_K$.

\section{Incoherent Eisenstein series}

\subsection{Siegel Eisenstein series}\label{sec:sieg-eisenst-seri} Let $W$ be the standard split $F/F_0$-skew-hermitian space of dimension $2n$. Let $G_n=\U(W)$. Write $G_n(\mathbb{A})=G_n(\mathbb{A}_{F_0})$ for short. Let $P_n(\mathbb{A})=M_n(\mathbb{A})N_n(\mathbb{A})$ be the standard Siegel parabolic subgroup of $G_n(\mathbb{A})$, where
\begin{align*}
  M_n(\mathbb{A})&=\left\{m(a)=\begin{pmatrix}a & 0\\0 &{}^t\bar a^{-1}\end{pmatrix}: a\in \GL_n(\mathbb{A}_F)\right\},\\
  N_n(\mathbb{A}) &= \left\{n(b)=\begin{pmatrix} 1_n & b \\0 & 1_n\end{pmatrix}: b\in \Herm_n(\mathbb{A}_F)\right\}.
\end{align*}

Let $\eta: \mathbb{A}_{F_0}^\times/F_0^\times\rightarrow \mathbb{C}^\times$ be the quadratic character associated to $F/F_0$. Fix $\chi: \mathbb{A}_{F}^\times\rightarrow \mathbb{C}^\times$ a character such that $\chi|_{\mathbb{A}_{F_0}^\times}=\eta^n$. We may view $\chi$ as a character on $M_n(\mathbb{A})$ by $\chi(m(a))=\chi(\det(a))$ and extend it to $P_n(\mathbb{A})$ trivially on $N_n(\mathbb{A})$. Define the \emph{degenerate principal series} to be the unnormalized smooth induction $$I_n(s,\chi)\coloneqq \Ind_{P_n(\mathbb{A})}^{G_n(\mathbb{A})}(\chi\cdot |\cdot|_{F}^{s+n/2}),\quad s\in \mathbb{C}.$$ For a standard section $\Phi(-, s)\in I_n(s,\chi)$ (i.e., its restriction to the standard maximal compact subgroup of $G_n(\mathbb{A})$ is independent of $s$), define the associated \emph{Siegel Eisenstein series} $$E(g,s, \Phi)\coloneqq \sum_{\gamma\in P_n(F_0)\backslash G_n(F_0)}\Phi(\gamma g, s),\quad g\in G_n(\mathbb{A}),$$ which converges for $\Re(s)\gg 0$ and admits meromorphic continuation to $s\in \mathbb{C}$. Notice that $E(g,s,\Phi)$ depends on the choice of $\chi$.

\subsection{Fourier coefficients and derivatives}\label{sec:four-coeff-deriv}
By class field theory\footnote{This should be well-known, but we include the argument for the convenience of the readers. Let $\psi_0=\psi_\mathbb{Q}\circ\tr_{F_0/\mathbb{Q}}: \mathbb{A}_{F_0}/F_0\rightarrow \mathbb{C}^\times$, where $\psi_\mathbb{Q}: \mathbb{A}_\mathbb{Q}/\mathbb{Q}\rightarrow \mathbb{C}^\times$ is the standard additive character (so $\psi_{\mathbb{Q},\infty}(x)=e^{2\pi ix}$). Then the conductor of $\psi_0$ is the different ideal $\delta_{F_0/\mathbb{Q}}$ of $F_0/\mathbb{Q}$. Let $H$ be the Hilbert class field of $F_0$. Since $F/F_0$ is ramified at infinite places, we know that $H$ and $F$ are linearly disjoint over $F_0$. It follows that $\Gal(H/F_0)$ is generated by the Frobenii associated to $\mathrm{Spl}(F/F_0)$. Hence by class field theory, the ideal class group of $F_0$ is generated by $\mathrm{Spl}(F/F_0)$. In particular, we may find $a\in F_0^\times$ such that the ideal $a\delta_{F_0}$ is supported on $\mathrm{Spl}(F/F_0)$. Then the character $\psi(x)=\psi_0(ax)$ works. Moreover, by a theorem of Hecke (\cite[Theorem 176]{Hecke1954}, see also \cite{Armitage1967}), the ideal class of $\delta_{F_0/\mathbb{Q}}$ is a square. Hence we may further choose $a\in F_0^\times$ to be a square.}, we may and do choose an additive character $\psi: \mathbb{A}_{F_0}/F_0\rightarrow \mathbb{C}^\times$ such that $\psi$ is unramified outside $\mathrm{Spl}(F/F_0)$ (the set of finite places of $F_0$ split in $F$). % Assume $\psi$ is unramified at all places of $F_0$ which are inert in $F$.
We have a Fourier expansion $$E(g,s,\Phi)=\sum_{T\in\Herm_n(F)}E_T(g,s,\Phi),$$ where $$E_T(g,s,\Phi)=\int_{N_n(F_0)\backslash N_n(\mathbb{A})} E(n(b)g,s,\Phi)\psi(-\tr(Tb))\,\rd n(b),$$ and the Haar measure $\rd n(b)$ is normalized to be self-dual with respect to $\psi$. When $T$ is nonsingular, for factorizable $\Phi=\otimes_v\Phi_v$ we have a factorization of the Fourier coefficient into a product $$E_T(g,s,\Phi)=\prod_v W_{T,v}(g_v, s, \Phi_v),$$ where the \emph{local (generalized) Whittaker function} is defined by $$W_{T,v}(g_v, s, \Phi_v)=\int_{N_n(F_{0,v})}\Phi_v(w_n^{-1}n(b)g,s)\psi(-\tr(Tb))\, \rd n(b),\quad w_n=
\begin{pmatrix}
0  & 1_n\\
  -1_n & 0\\
\end{pmatrix}.$$ and has analytic continuation to $s\in \mathbb{C}$. Thus we have a decomposition of the derivative of a nonsingular Fourier coefficient at $s=s_0$,
\begin{equation}
  \label{eq:eissum}
E_T'(g, s_0, \Phi)=\sum_v E'_{T,v}(g, s_0,\Phi),
\end{equation}
 where  
\begin{equation}
  \label{eq:eisenfactor}
  E'_{T,v}(g, s, \Phi)=W_{T,v}'(g_v, s,\Phi_v)\cdot \prod_{v'\ne v}W_{T,v'}(g_{v'},s,\Phi_{v'}).
\end{equation}

\subsection{Incoherent Eisenstein series} \label{sec:incoh-eisenst-seri} Let $\mathbb{V}$ be an $\mathbb{A}_F/\mathbb{A}_{F_0}$-hermitian space of rank $n$.  Let $\sS(\mathbb{V}^n)$ be the space of Schwartz functions on $\mathbb{V}^n$. The fixed choice of $\chi$ and $\psi$ gives a \emph{Weil representation} $\omega=\omega_{\chi,\psi}$ of $G_n(\mathbb{A})\times \U(\mathbb{V})$ on $\sS(\mathbb{V}^n)$. Explicitly, for $\varphi\in \sS(\mathbb{V}^n)$ and $\mathbf{x}\in \mathbb{V}^n$,
\begin{align*}
\omega(m(a))\varphi(\mathbf{x})&=\chi(m(a))|\det a|_F^{n/2}\varphi(\mathbf{x}\cdot a),&m(a)\in M_n(\mathbb{A}),\\
\omega(n(b))\varphi(\mathbf{x})&=\psi(\tr b\,T(\mathbf{x}))\varphi(\mathbf{x}),&n(b)\in N_n(\mathbb{A}),\\
\omega_\chi(w_n)\varphi(\mathbf{x})&=\gamma_{\mathbb{V}}^n\cdot\widehat \varphi(\mathbf{x}),&w_n=\left(\begin{smallmatrix}
0  & 1_n\\
  -1_n & 0\\
\end{smallmatrix}\right),\\
\omega(h)\varphi(\mathbf{x})&=\varphi(h^{-1}\cdot\mathbf{x}),& h\in \U(\mathbb{V}).
\end{align*}
Here $T(\mathbf{x})=((x_i,x_j))_{1\le i,j\le n}$ is the \emph{fundamental matrix} of $\mathbf{x}$, $\gamma_{\mathbb{V}}$ is the Weil constant (see \cite[(10.3)]{Kudla2014}), and $\widehat\varphi$ is the Fourier transform of $\varphi$ using the self-dual Haar measure on $\mathbb{V}^n$ with respect to $\psi\circ\tr_{F/F_0}$.

For $\varphi\in \sS(\mathbb{V}^n)$, define a function $$\Phi_\varphi(g)\coloneqq \omega(g)\varphi(0),\quad g\in G_n(\mathbb{A}).$$ Then $\Phi_\varphi\in I_n(0,\chi)$. Let $\Phi_\varphi(-,s)\in I_n(s,\chi)$ be the associated standard section, known as \emph{the standard Siegel--Weil section} associated to $\varphi$. For $\varphi\in\sS(\mathbb{V}^n)$, we write $$E(g,s,\varphi)\coloneqq E(g, s, \Phi_\varphi), \quad E_T(g,s,\varphi)\coloneqq E_T(g,s,\Phi_\varphi),\quad E'_{T,v}(g,s,\varphi)\coloneqq E'_{T,v}(g,s,\Phi_\varphi),$$ and similarly for $W_{T,v}(g_v,s,\varphi_v)$. We say $\mathbb{V}$ (resp. $\Phi_\varphi$, $E(g,s,\varphi)$) is \emph{coherent} if $\mathbb{V}=V \otimes_{F_0}\mathbb{A}_{F_0}$ for some $F/F_0$-hermitian space $V$, and \emph{incoherent} otherwise. When $E(g,s,\varphi)$ is incoherent,  its central value $E(g,0,\varphi)$ automatically vanishes (cf. \cite[\S9]{Kudla2014}). In this case, we write the central derivatives as $$\pEis(g,\varphi)\coloneqq E'(g,0,\varphi),\quad \pEis_{T}(g,\varphi)\coloneqq E'_{T}(g,0,\varphi), \quad \pEis_{T,v}(g,\varphi)\coloneqq E'_{T,v}(g,0,\varphi).$$ Let $T\in \Herm_n(F)$ be nonsingular. Then $W_{T,v}(g_v,0,\varphi_v)\ne0$ only if $\mathbb{V}_v$ represents $T$, hence $\pEis_{T,v}(g,\varphi)\ne0$ only if $\mathbb{V}_{v'}$ represents $T$ for all $v'\ne v$. Let $\Diff(T, \mathbb{V})$ be the set of finite places $v$ such that $\mathbb{V}_{v}$ does not represent $T$. Since $\mathbb{V}$ is incoherent,  by (\ref{eq:eissum}) we know that $\pEis_T(g,\varphi)\ne0$ only if $\Diff(T, \mathbb{V})=\{v\}$ is a singleton, and in this case $v$ is necessarily nonsplit in $F$ (cf. \cite[Lemma 9.1]{Kudla2014}). Thus
 \begin{equation}\label{eq:Diffset}
    \pEis_T(g,\varphi)\ne0\Rightarrow \Diff(T,\mathbb{V})=\{v\},\quad \pEis_T(g,\varphi)=\pEis_{T,v}(g,\varphi).
 \end{equation}

We say $\varphi_{v}\in \sS(\mathbb{V}_v^n)$ is \emph{nonsingular} if its support lies in $\{\mathbf{x}\in \mathbb{V}_v^n: \det T(\mathbf{x})\ne0\}$.  By \cite[Proposition 2.1]{Liu2011a}, we have
\begin{equation}
  \label{eq:twononsingular}
  \varphi\text{ is nonsingular at two finite places} \Longrightarrow \pEis_T(g,\varphi)=0\text{ for any singular } T.
\end{equation} 

\subsection{Classical incoherent Eisenstein series associated to the Shimura datum}\label{sec:incoh-eisenst-seri-1}

Assume that we are in the situation in \S \ref{sec:shimura-varieties}. Let $\mathbb{V}$ be the incoherent hermitian space obtained from $V$ so that $\mathbb{V}$ has signature $(n,0)_{\phi\in\Phi}$ and $\mathbb{V}_v\cong V_v$ for all finite places $V_v$. 

The hermitian symmetric domain for $G_n=\U(W)$ is the \emph{hermitian upper half space}
\begin{align*}
  \mathbb{H}_n&=\{\sz\in \mathrm{Mat}_{n}(F_\infty): \frac{1}{2i}\left(\sz-{}^t\bar\sz\right)>0\}\\
  &=\{\sz=\sx+i\sy:\ \sx\in\Herm_n(F_\infty),\ \sy\in\Herm_n(F_\infty)_{>0}\},  
\end{align*}
where $F_\infty=F \otimes_{F_0} \mathbb{R}^\Phi\cong \mathbb{C}^\Phi$. Define the \emph{classical incoherent Eisenstein series} to be $$E(\sz,s,\varphi)\coloneqq \chi_\infty(\det(a))^{-1}\det(\sy)^{-n/2}\cdot E(g_\sz,s, \varphi), \quad g_\sz\coloneqq n(\sx)m(a)\in G_n(\mathbb{A}),$$ where $a\in\GL_n(F_\infty)$ such that $\sy=a{}^{t}\bar a$. Notice that $E(\sz,s,\Phi)$ does not depend on the choice of $\chi$. We write the central derivatives as $$\pEis(\sz,\varphi)\coloneqq E'(\sz,0,\varphi),\quad \pEis_T(\sz, \varphi)\coloneqq E'_T(\sz,0,\varphi),\quad \pEis_{T,v}(\sz, \varphi)\coloneqq E'_{T,v}(\sz,0,\varphi).$$ Then we have a Fourier expansion
\begin{equation}
  \label{eq:eisz}
  \pEis(\sz,\varphi)=\sum_{T\in\Herm_n(F)}\pEis_T(\sz,\varphi)
\end{equation}
% When $T$ is nonsingular, by the decomposition  \eqref{eq:eissum}, we have \begin{equation}
%   \pEis_T(\sz, \varphi)=\sum_v \pEis_{T,v}(\sz,\varphi).
% \end{equation}
By (\ref{eq:Diffset}) we know that
\begin{equation}
  \label{eq:diffana}
  \pEis_T(\sz, \varphi)\ne0\Rightarrow \Diff(T,\mathbb{V})=\{v\},\quad \pEis_T(\sz,\varphi)=\pEis_{T,v}(\sz, \varphi).
\end{equation}
For the fixed open compact subgroup $\KG\subseteq \wit G(\mathbb{A}_f)$, we will choose $$\varphi= \varphi_{\KG} \otimes \varphi_{\infty}\in \sS(\mathbb{V}^n)$$ such that $\varphi_K\in\sS(\mathbb{V}^n_f)$ is $\KG$-invariant (where $\KG$ acts on $\mathbb{V}$ via the second factor $K_G$) and $\varphi_{\infty}$ is the Gaussian function $$\varphi_{\infty}(\mathbf{x})=\psi_\infty(i\tr T(\mathbf{x})).$$ % Here $\mathbf{x}=(x_1,\ldots,x_m)\in \mathbb{V}^m$.
For our fixed choice of Gaussian $\varphi_\infty$, we write $$E(\sz,s,\varphi_K)=E(\sz,s,\varphi_K \otimes \varphi_\infty),\quad \pEis(\sz, \varphi_K)=\pEis(\sz, \varphi_K \otimes \varphi_\infty)$$ and so on for short. When $T>0$ is totally positive definite, by \cite[Proposition 4.5 (2)]{Liu2011} the archimedean Whittaker function is $$W_{T,\infty}(\sz, 0, \varphi_\infty)=c_\infty\cdot q^{T}, \quad q^T\coloneqq \psi_\infty(\tr (T\sz))$$ for some constant $c_\infty$ independent of $T$. It follows from \eqref{eq:eisenfactor} that we have a factorization
\begin{equation}
  \label{eq:pEisfactorzation}
  \pEis_{T,v}(\sz, \varphi_K)=c_\infty\cdot W_{T,v}'(1,0,\varphi_{\KG, v})\cdot\prod_{v'\ne v, v'\nmid\infty}W_{T,v'}(1,0,\varphi_{\KG, v'})\cdot q^T.
\end{equation}

\section{The semi-global identity at inert primes}

In this section we assume that we are in the situation of \S\ref{sec:semi-global-integral} (hyperspecial level) or \S\ref{sec:almost-self-dual} (almost self-dual level). We fix the level $\KG$ as above and write $\mathcal{M}=\mathcal{M}_K$ for short. 

\subsection{$p$-adic uniformization of the supersingular locus of $\mathcal{M}$} \label{sec:p-adic-unif}  Let $\widehat{\mathcal{M}^\mathrm{ss}}$ be the completion of the base change $\mathcal{M}_{O_{\breve E_\nu}}$ along the supersingular locus $\mathcal{M}_{k_\nu}^\mathrm{ss}$ of its special fiber $\mathcal{M}_{k_\nu}$. Here $E_\nu$ is the completion of $E$ at $\nu$ and $k_\nu$ is its residue field. Assume $p>2$. Then we have a $p$-adic uniformization theorem (\cite{RZ96}, \cite[Theorem 4.3]{Cho2018}, see also the proof of \cite[Theorem 8.15]{Rapoport2017}),
\begin{equation}
  \label{eq:padicunif}
  \widehat{\mathcal{M}^\mathrm{ss}}\simeq \wit G'(\mathbb{Q})\backslash [\mathcal{N}'\times \wit G(\mathbb{A}_f^p)/\KG^p].
\end{equation}
Here $\wit G'=Z^\mathbb{Q}\times_{\mathbb{G}_m}G'^{\mathbb{Q}}$ is the group associated to a $F/F_0$-hermitian space $V'$ obtained from $V$ by changing the signature at $\phi_0$ from $(n-1,1)$ to $(n, 0)$ and the invariant at $v_0$ from $+1$ (resp. $-1$) to $-1$ (resp. $+1$) (i.e., $V'_{v_0}$ is a non-split (resp. split) $F_{w_0}/F_{0,v_0}$-hermitian space) in the hyperspecial case (resp. the almost self-dual case). The relevant Rapoport--Zink space $\mathcal{N}'$ associated to $\wit G'$ is given by $$\mathcal{N}'\simeq (Z^\mathbb{Q}(\mathbb{Q}_p)/K_{Z^\mathbb{Q},p})\times \mathcal{N}_{O_{\breve E_\nu}}\times \prod_{v\ne v_0}\U(V)(F_{0,v})/K_{G,v},$$ where the product is over places  $v\ne v_0$ of $F_0$ over $p$, and $\mathcal{N}$ is isomorphic to $\mathcal{N}_{F_{w_0}/F_{0,v_0},n}$, the Rapoport--Zink space defined in \S\ref{sec:rapoport-zink-spaces}\footnote{We use the convention $(1,n-1)$ for the signature of Rapoport--Zink spaces while the convention $(n-1,1)$ for Shimura varieties; each of these two conventions is more preferable in its respective setting. Strictly speaking, \cite[Theorem 4.3]{Cho2018} and \cite[Theorem 8.15]{Rapoport2017} assume that $v_0$ is unramified over $p$. This assumption can be dropped due to the Eisenstein condition in \ref{item:Eisenstein}. In fact, \cite[Definition 2.8 (ii)]{Mihatsch2016} specializes to the Eisenstein condition in \ref{item:Eisenstein} for signature $(1, n-1)$, so Mihatsch's theorem \cite[Theorem 4.1]{Mihatsch2016} is still applicable even when $v_0$ is ramified over $p$.} in the hyperspecial case, or isomorphic to $\mathcal{N}_{F_{w_0}/F_{0,v_0},n}^1$, the Rapoport--Zink space defined in \S\ref{sec:rapoport-zink-with} in the almost self-dual case.

\subsection{The hermitian lattice $\mathbb{V}(A_0,A)$}\label{sec:herm-latt-mathbbv} For a locally noetherian $O_{E,(\nu)}$-scheme $S$ and a point $(A_0,\iota_0, \lambda_0, A,\iota, \lambda,\bar\eta^p)\in \mathcal{M}(S)$, define the \emph{space of special homomorphisms} to be $$\mathbb{V}(A_0, A)\coloneqq \Hom_{O_F}(A_0, A) \otimes \mathbb{Z}_{(p)},$$ a free $O_{F,(p)}\coloneqq O_F \otimes \mathbb{Z}_{(p)}$-module of finite rank. Then $\mathbb{V}(A_0,A)$ carries a $O_{F,(p)}$-valued hermitian form: for $x,y\in \mathbb{V}(A_0,A)$, the pairing $(x,y)\in O_{F,(p)}$ is given by $$(A_0\xrightarrow{x}A\xrightarrow{\lambda}A^\vee \xrightarrow{y^{\vee}}A_0^\vee\xrightarrow{\lambda_0^{-1}}A_0)\in\End_{O_F}(A_0) \otimes \mathbb{Z}_{(p)}=\iota_0(O_{F,(p)})\simeq O_{F,(p)}.$$ Notice that  $\lambda_0^{-1}$ makes sense as the polarization $\lambda_0$  is coprime-to-$p$ by \ref{item:M5}.

Let $m\ge1$. Given an $m$-tuple $\mathbf{x}=[x_1,\ldots,x_m]\in \mathbb{V}(A_0,A)^m$, define its \emph{fundamental matrix} to be $$T(\mathbf{x})\coloneqq ((x_i,x_j))_{1\le i,j\le m}\in \Herm_m(O_{F,(p)}),$$ an $m\times m$ hermitian matrix over $O_{F,(p)}$.

\subsection{Semi-global Kudla--Rapoport cycles $\mathcal{Z}(T,\varphi_K)$}\label{sec:semi-global-kudla}

We say a Schwartz function $\varphi_K\in \sS(\mathbb{V}^m_f)$ is \emph{$v_0$-admissible} if it is $K$-invariant and $\varphi_{K,v}=\mathbf{1}_{(\Lambda_{v})^m}$ for all $v$ above $p$ such that $v$ is nonsplit in $F$. First we consider a special $v_0$-admissible Schwartz function of the form
\begin{equation}
  \label{eq:testfunction}
  \varphi_K=(\varphi_i)\in \sS(\mathbb{V}^m_f),\quad \varphi_{i}=\mathbf{1}_{\Omega_i},\quad i=1,\ldots,m,
\end{equation}
where $\Omega_i\subseteq \mathbb{V}_f$ is a $K$-invariant open compact subset such that $\Omega_{i,v}=\Lambda_v$ for all $v$ above $p$ such that $v$ is nonsplit in $F$.  Given such a special Schwartz function $\varphi_K$ and $T\in \Herm_m(O_{F,(p)})$, define a semi-global  \emph{Kudla--Rapoport cycle} $\mathcal{Z}(T,\varphi_K)$ over $\mathcal{M}$ as follows. For a locally noetherian $O_{E,(\nu)}$-scheme $S$, define $\mathcal{Z}(T,\varphi_K)(S)$ to be the groupoid of tuples $(A_0,\iota_0, \lambda_0, A,\iota, \lambda,\bar\eta^p, \bar\eta_p^{v_0}, \mathbf{x})$ where
\begin{enumerate}
\item $(A_0,\iota_0, \lambda_0, A,\iota, \lambda,\bar\eta^p, \bar\eta_p^{v_0})\in \mathcal{M}(S)$,
\item $\mathbf{x}=[x_1,\ldots, x_m]\in \mathbb{V}(A_0,A)^m$ with fundamental matrix $T(\mathbf{x})=T$.
\item $\eta^p(\mathbf{x})\in(\Omega^{(p)}_i)\subseteq (\mathbb{V}^{(p)}_f)^m$.
\item $\eta_v(\mathbf{x})\in (\Omega_{i, w})\subseteq \mathbb{V}_w^m$ for all $v\ne v_0$ split in $F$, where $\bar\eta_p^{v_0}=\{\bar\eta_v\}$ (cf. \ref{item:splitlevel}). 
\end{enumerate}
The functor $S\mapsto \mathcal{Z}(T,\varphi_K)(S)$ is represented by a (possibly empty) Deligne--Mumford stack which is finite and unramified over $\mathcal{M}$ (\cite[Proposition 2.9]{Kudla2014}), and thus defines a cycle $\mathcal{Z}(T,\varphi_K)\in \mathrm{Z}^*(\mathcal{M})$. For a general $v_0$-admissible Schwartz function $\varphi_K\in \sS(\mathbb{V}^m_f)$, by  extending $\mathbb{C}$-linearly we obtain a cycle $\mathcal{Z}(T,\varphi_K)\in\mathrm{Z}^*(\mathcal{M})_\mathbb{C}$.

\subsection{Variants of semi-global Kudla--Rapoport cycles $\mathcal{Z}^\flat(T, \varphi_K)$ at almost self-dual level}\label{sec:variants-semi-global}

Assume that we are in the situation of \ref{sec:semi-global-kudla} and $\Lambda_{v_0}$ is almost self-dual. We will define a variant $\mathcal{Z}^\flat(T, \varphi_K)$ of the semi-global Kudla--Rapoport cycle. To do so, consider a diagram of Shimura varieties
\begin{equation}
  \label{eq:diagSh}
  \xymatrix{
  &\Sh_{K\cap K^\sharp} \ar[dl]_{\pi_1} \ar[dr]^{\pi_2}\\
  \Sh_K  &  &\Sh_{K^\sharp},}
\end{equation}
where the level at $v_0$ is modified as in Remark \ref{rem:corr}.

More precisely, consider a hermitian space of dimension $n+1$, $$V^\sharp:=V \obot \langle x_0\rangle,$$ where $u_0=(x_0,x_0)$ has valuation 1 at $v_0$ and valuation 0 for all places $v\ne v_0$ of $F_0$ above $p$. We take the level $K^\sharp\subseteq \wit G^\sharp(\mathbb{A}_f)$ such that
\begin{enumerate}
\item $K^\sharp_{G,v_0}$ is the stabilizer of a self-dual lattice $\Lambda^\sharp_{v_0}\subseteq V^\sharp_{v_0}$,
\item for $v\ne v_0$ a place of $F_0$ above $p$, $K^\sharp_{G,v}$ is the stabilizer of the lattice $\Lambda_{v} \obot \langle x_0\rangle\subseteq V^\sharp_v$, 
\item $K^p\subseteq K^{\sharp,p}\cap \wit G(\mathbb{A}_f^p)$.
\end{enumerate} Denote by $$\Sh_{K^\sharp}=\Sh_{K^\sharp}(\wit G^\sharp, \{h_{\wit G^\sharp}\}),\quad \Sh_{K\cap K^\sharp}=\Sh_{K\cap K^\sharp}(\wit G, \{h_{\wit G}\})$$ the Shimura varieties defined in \S\ref{sec:shimura-varieties}.

Let $\mathcal{M}^\sharp$ be the semi-global model of $\Sh_{K^\sharp}$ over $O_{E,(\nu)}$ as defined in \S\ref{sec:semi-global-integral}. Define the semi-global integral model $\mathcal{M}_{K\cap K^\sharp}$ of $\Sh_{K\cap K^\sharp}$  over $O_{E,(\nu)}$ as follows. For a locally noetherian $O_{E,(\nu)}$-scheme $S$, define $\mathcal{M}_{K\cap K^\sharp}(S)$ to be the groupoid of tuples $(A_0,\iota_0, \lambda_0, A,\iota, \lambda,\bar\eta^p, \bar\eta_p^{v_0}, A^\sharp,\iota^\sharp, \lambda^\sharp, \bar\eta^{p,\sharp}, \bar\eta_p^{v_0,\sharp}, \alpha)$, where
\begin{enumerate}
\item $(A_0,\iota_0, \lambda_0, A,\iota, \lambda,\bar\eta^p, \bar\eta_p^{v_0})\in \mathcal{M}(S)$,
\item $(A_0,\iota_0, \lambda_0, A^\sharp,\iota^\sharp, \lambda^\sharp, \bar\eta^{p,\sharp}, \bar\eta_p^{v_0,\sharp})\in \mathcal{M}^\sharp(S)$,
\item $\alpha: A\times A_0\rightarrow A^\sharp$ is an isogeny of degree $q_{v_0}$ such that $\ker\alpha\subseteq (A\times A_0)[u_0]$ and $\alpha^*(\lambda^\sharp)=\lambda\times u_0\lambda_0$. 
\end{enumerate} Then the diagram (\ref{eq:diagSh}) extends to semi-global integral integral models $$   \xymatrix{
  &\mathcal{M}_{K\cap K^\sharp} \ar[dl]_{\pi_1} \ar[dr]^{\pi_2}\\
  \mathcal{M}  &  &\mathcal{M}^\sharp.}$$ The $p$-adic uniformization theorem (\ref{eq:padicunif}) of \cite{RZ96} then holds for $\mathcal{M}_{K\cap K^\sharp}$ with $\mathcal{N}=\wit{\mathcal{N}}^1_{F_{w_0}/F_{0,v_0},n}$, the auxiliary Rapoport--Zink space defined in \S\ref{sec:auxil-rapop-zink}.

Analogous to Remark \ref{rem:altintegralmodel}, % we obtain a cycle $\mathcal{Z}^\flat(t_i,\varphi_i)$ on an integral model $\mathcal{M}_{K\cap K^\sharp}$ of $\Sh_{K\cap K^\#}$, which can serve as an integral model of the pullback along $\pi_1$ of the generic fiber of $\mathcal{Z}(t_i,\varphi_i)$ on $\Sh_K$. Similarly,
we obtain a cycle $\mathcal{Z}^\flat(T,\varphi_K)$ on $\mathcal{M}_{K\cap K^\sharp}$, which can serve as an integral model of the pullback of the generic fiber of $\mathcal{Z}(T,\varphi_K)$ along $\pi_1$. More precisely, first assume that $\varphi_K$ is a special $v_0$-admissible Schwartz function as in (\ref{eq:testfunction}). For a locally noetherian $O_{E,(\nu)}$-scheme $S$, define $\mathcal{Z}^\flat(T,\varphi_K)(S)$ to be the groupoid of tuples $(A_0,\iota_0, \lambda_0, A,\iota, \lambda,\bar\eta^p, \bar\eta_p^{v_0}, A^\sharp,\iota^\sharp, \lambda^\sharp, \allowbreak \bar\eta^{p,\sharp}, \bar\eta_p^{v_0,\sharp}, \alpha, \mathbf{x})$, where
\begin{enumerate}
\item $(A_0,\iota_0, \lambda_0, A,\iota, \lambda,\bar\eta^p, \bar\eta_p^{v_0}, A^\sharp,\iota^\sharp, \lambda^\sharp, \bar\eta^{p,\sharp}, \bar\eta_p^{v_0,\sharp}, \alpha)\in \mathcal{M}_{K\cap K^\sharp}(S)$,
\item $(A_0,\iota_0, \lambda_0, A^\sharp,\iota^\sharp, \lambda^\sharp, \bar\eta^{p,\sharp}, \bar\eta_p^{v_0,\sharp}, \mathbf{x})\in \mathcal{Z}^\sharp(T,\varphi_K)$ (the semi-global Kudla--Rapoport cycle on $\mathcal{M}^\sharp$ defined in \S\ref{sec:semi-global-kudla}).
\end{enumerate} The functor $S\mapsto \mathcal{Z}^\flat(T,\varphi_K)(S)$ is represented by a (possibly empty) Deligne--Mumford stack which is finite and unramified over $\mathcal{M}_{K\cap K^\sharp}$ and thus defines a cycle $\mathcal{Z}^\flat(T,\varphi_K)\in \mathrm{Z}^*(\mathcal{M}_{K\cap K^\sharp})$. For a general $v_0$-admissible Schwartz function $\varphi_K\in \sS(\mathbb{V}^m_f)$, by  extending $\mathbb{C}$-linearly we obtain a cycle $\mathcal{Z}^\flat(T,\varphi_K)\in\mathrm{Z}^*(\mathcal{M}_{K\cap K^\sharp})_\mathbb{C}$.

\subsection{The local arithmetic intersection number $\Int_{T,v_0}(\varphi_K)$} \label{sec:local-arithm-inters} Assume $T\in \Herm_n(O_{F,(p)})_{>0}$ is totally positive definite. Let $t_1,\ldots,t_n$ be the diagonal entries of $T$. % Then $t_i\in (O_{F,(p)})_{>0}$ and $\mathcal{Z}(t_i)\ne\varnothing$ for each $i$.
Let $\varphi_K\in \sS(\mathbb{V}^n_f)$ be a special Schwartz function as in (\ref{eq:testfunction}).

When $\Lambda_{v_0}$ is self-dual, define
\begin{equation}\label{eq:localint}
\Int_{T,\nu}(\varphi_K)\coloneqq \chi(\mathcal{Z}(T,\varphi_K), \mathcal{O}_{\mathcal{Z}(t_1,\varphi_1)} \otimes^\mathbb{L}\cdots \otimes^\mathbb{L}\mathcal{O}_{\mathcal{Z}(t_n,\varphi_n)})\cdot\log q_\nu,  
\end{equation}
where $q_\nu$ denotes the size of the residue field $k_\nu$ of $E_{\nu}$, $\mathcal{O}_{\mathcal{Z}(t_i,\varphi_i)}$ denotes the structure sheaf of the semi-global Kudla--Rapoport divisor $\mathcal{Z}(t_i,\varphi_i)$, $\otimes^\mathbb{L}$ denotes the derived tensor product of coherent sheaves on $\mathcal{M}$, and $\chi$ denotes the Euler--Poincar\'e characteristic (an alternating sum of lengths of $\mathcal{O}_{E,(\nu)}$-modules). 

When $\Lambda_{v_0}$ is almost self-dual, define
\begin{equation}\label{eq:localint2}
\Int_{T,\nu}(\varphi_K)\coloneqq \frac{1}{\deg \pi_1}\chi(\mathcal{Z}^\flat(T,\varphi_K), \mathcal{O}_{\mathcal{Z}^\flat(t_1,\varphi_1)} \otimes^\mathbb{L}\cdots \otimes^\mathbb{L}\mathcal{O}_{\mathcal{Z}^\flat(t_n,\varphi_n)})\cdot\log q_\nu,  
\end{equation} where $\deg \pi_1$ is the generic degree of the generically finite morphism $\pi_1$ (\S\ref{sec:variants-semi-global}).

Finally, when $\Lambda_{v_0}$ is self-dual or almost self-dual, define $$\Int_{T,v_0}(\varphi_K)\coloneqq \frac{1}{[E:F_0]}\cdot \sum_{\nu|v_0}\Int_{T,\nu}(\varphi_K).$$ We extend the definition of $\Int_{T,v_0}(\varphi_K)$ to a general $v_0$-admissible $\varphi_K\in \sS(\mathbb{V}^n_f)$ by  extending $\mathbb{C}$-linearly.

\subsection{The semi-global identity} Recall that we are in the situation of \S\ref{sec:semi-global-integral} (hyperspecial level) or \S\ref{sec:almost-self-dual} (almost self-dual level).

\begin{theorem}\label{thm:semi-global-identity}
 Assume $p>2$. Assume $\varphi_\KG\in \sS(\mathbb{V}^n_f)$ is $v_0$-admissible (\S\ref{sec:semi-global-kudla}). Then for any $T\in \Herm_n(O_{F,(p)})_{>0}$, $$\Int_{T,v_0}(\varphi_{\KG})q^T=c_K\cdot \pEis_{T,v_0}(\sz,\varphi_{\KG}),$$ where $c_K=\frac{(-1)^n}{\vol(K)}$ is a nonzero constant independent of $T$ and $\varphi_K$, and $\vol(K)$ is the volume of $K$ under a suitable Haar measure on $\wit G(\mathbb{A}_f)$.
\end{theorem}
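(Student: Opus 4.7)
\smallskip

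The plan is to execute the standard pattern of a semi-global Kudla--Rapoport identity, using $p$-adic uniformization to reduce the geometric side to the local Rapoport--Zink intersection numbers computed in Part \ref{part:local-kudla-rapoport}--\ref{part:local-kudla-rapoport-1}, while factoring the analytic side via its Euler product.

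First, by $\BC$-linearity in $\varphi_K$, one reduces to the special factorizable case $\varphi_K=\bigotimes\varphi_i$ with $\varphi_i=\bf 1_{\Omega_i}$ as in \eqref{eq:testfunction}, and in particular $\varphi_{i,v_0}=\bf 1_{\Lambda_{v_0}}$. Next, one shows that both sides vanish unless $\Diff(T,\BV)=\{v_0\}$: on the analytic side this is \eqref{eq:diffana}; on the geometric side, if $V_{v_0}$ represents $T$, then (together with the signature assumption at $\phi_0$ and the fact that $V_v\simeq \BV_v$ at every other finite place) the Kottwitz signature argument used in the proof of Lemma \ref{lem:properscheme} forces $\CZ(T,\varphi_K)$ to have no characteristic-zero points, and in fact to be supported away from $\nu|v_0$, so $\Int_{T,\nu}(\varphi_K)=0$. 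So we henceforth assume $\Diff(T,\BV)=\{v_0\}$, in which case the nearby hermitian space $V'$ of \S\ref{sec:p-adic-unif} globally represents $T$, and $\CZ(T,\varphi_K)_{O_{\breve E_\nu}}$ is concentrated in the supersingular locus $\CM^{\rm ss}_{k_\nu}$.

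The key step is to apply the $p$-adic uniformization \eqref{eq:padicunif} to rewrite $\Int_{T,\nu}(\varphi_K)$. Writing  $\mathbf{x}=(x_1,\ldots,x_n)\in (V')^n$ with $T(\mathbf{x})=T$ and letting $L=L(\mathbf{x})\subset V'_{v_0}\simeq \BV_{v_0}'$ be the lattice generated by its $v_0$-components (which is automatically of full rank $n$ by the $\Diff$ condition), the formula takes the shape
\begin{equation*}
  \Int_{T,\nu}(\varphi_K)
  =\frac{\log q_\nu}{\vol(K)}\cdot\!\!\sum_{\mathbf{x}\in \U(V')(F_0)\bs (V')^n\atop T(\mathbf{x})=T}\!\!
  \Int_{v_0}\!\bigl(L(\mathbf{x})\bigr)\cdot
  \prod_{v\ne v_0,\infty}\!\varphi_{K,v}(\mathbf{x})\cdot \prod_{\phi\in\Phi}\!\bf 1_{(V'_\phi)^n}(\mathbf{x}),
\end{equation*}
where $\Int_{v_0}(L)$ denotes $\Int(L)$ in the self-dual case and $\Int(L)$ (in the sense of \S\ref{part:local-kudla-rapoport-1}) in the almost self-dual case. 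Theorems \ref{thm: main} and \ref{thm: main2} convert $\Int_{v_0}(L)$ to $\pDen(L)$, resp.\ $\frac{1}{q+1}\pDen_\Lambda(L)$. Using Remark \ref{rem:localKR1} (or \ref{rem:localKR2}), this matches $(\log q_{v_0}^2)^{-1}W_{T,v_0}'(1,0,\varphi_{v_0})$ up to the explicit normalizing factor $\prod_{i=1}^n(1-(-q_{v_0})^{-i})^{-1}$ (with the extra $\frac{(-q)^n-1}{q+1}$ in the almost self-dual case). Summing over $\nu|v_0$ and dividing by $[E:F_0]$ replaces $\log q_{v_0}$ by $\log q_\nu/[E:F_0]$ and collects the various embeddings.

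On the analytic side, one uses the factorization \eqref{eq:eisenfactor} applied at $v=v_0$: the non-$v_0$ Whittaker factors, evaluated at $g=g_\sz$ with our Gaussian $\varphi_\infty$, produce exactly $q^T$ times the product of the local factors computed by the standard Siegel--Weil formula at finite split/nonsplit places and by the Godement--Siegel formula at archimedean places. The constants at the non-$v_0$ places match (up to a global normalizing factor independent of $T,\varphi_K$) the product $\prod_{v\ne v_0}\vol_v$ appearing in the geometric orbital sum, because at every place where $\BV_v\simeq V_v$ the orbital integral $\sum_{\mathbf{x}\in\U(V)(F_{0,v})\bs V_v^n,\, T(\mathbf{x})=T}\varphi_{K,v}(\mathbf{x})$ equals the standard unfolding of $W_{T,v}(1,0,\varphi_{K,v})$ divided by the appropriate local unramified factor. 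Assembling everything and absorbing the constants into $c_K=(-1)^n/\vol(K)$ (the sign $(-1)^n$ accounting for $\gamma_{\BV_\infty}^n=(-1)^n$ in the Gaussian Whittaker computation of Liu \cite{Liu2011}) yields the claimed identity.

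The main obstacle is bookkeeping: one has to check that the \emph{same} constant $c_K$ works in both the self-dual and the almost self-dual cases, that the factors $\prod_i(1-(-q_{v_0})^{-i})^{-1}$ from Remarks \ref{rem:localKR1}--\ref{rem:localKR2} exactly cancel the ratio between the local \emph{unramified} Whittaker factor (which by the Gindikin--Karpelevich-type formula at inert $v_0$ is essentially $\prod_i(1-(-q_{v_0})^{-i})$) and the normalization in Theorem \ref{thm Den alm dual}, and that the extra factor $\frac{(-q_{v_0})^n-1}{q_{v_0}+1}$ appearing in the almost self-dual case precisely matches the combination $[K:\wit K]^{-1}\cdot(\text{degree of the Lubin--Tate ratio of Whittaker values})$ arising from the correspondence of Remark \ref{rem:corr}. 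Once these numerology checks are done---which amounts to comparing \eqref{eq:localWhittaker1}, \eqref{eq:localWhittaker2} and \eqref{eq: iden} with the explicit local Whittaker computation of Kudla--Sweet and Liu at inert places---the rest is formal.
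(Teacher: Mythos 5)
Your proposal follows essentially the same route as the paper's: $p$-adic uniformization reduces $\Int_{T,\nu}$ to a sum of local Rapoport--Zink intersection numbers weighted by an orbital sum, Theorems~\ref{thm: main}/\ref{thm: main2} convert those local numbers to derived local densities and hence (via Remarks~\ref{rem:localKR1}/\ref{rem:localKR2}) to local Whittaker derivatives, and the orbital sum (the ``point-count'') is identified via the classical Siegel--Weil formula with the product of the non-$v_0$ Whittaker values, so that the factorization~\eqref{eq:eisenfactor} closes the circle.

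Two places need sharpening. First, your vanishing argument is phrased misleadingly: the Kottwitz signature argument from Lemma~\ref{lem:properscheme} shows $\CZ(T,\varphi_K)$ has \emph{no} characteristic-zero points for any nonsingular full-rank $T$, independently of $\Diff(T,\BV)$; what forces $\Int_{T,\nu}(\varphi_K)=0$ when $V_{v_0}$ represents $T$ is the representability constraint \emph{after} $p$-adic uniformization --- the nearby space $V'$ has $V'_{v_0}\not\cong V_{v_0}$, so for $T$ nonsingular of rank $n$ the space $V'_{v_0}$ cannot represent $T$ and the orbital sum over $V'$ is empty. This is the content of \cite[Lemma 2.21]{Kudla2014}, which the paper invokes.

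Second, and more substantively, your claim that ``the extra factor $\frac{(-q_{v_0})^n-1}{q_{v_0}+1}$\ldots precisely matches the combination $[K:\wit K]^{-1}\cdot(\text{Lubin--Tate ratio})$'' is not a verification. The paper's proof of the almost self-dual case is organized differently: following Sankaran's argument for the semi-global identity, the whole comparison of constants between the two cases reduces to the single numerical identity
$$
\frac{\Den(\Lambda,\Lambda)}{\Den(\iden^{n-1}, \iden^{n-1})}=\frac{\pDen_\Lambda(L)}{\Int(L)},
$$
which is then checked to be $q+1$ on both sides: the left side equals $\Den(\Lambda^\sharp)$ by Theorem~\ref{thm Den alm dual}, which by~\eqref{eq: Den L} counts self-dual lattices containing the type-2 vertex lattice $\Lambda^\sharp$, i.e.\ isotropic lines in a $2$-dimensional nondegenerate $k_F$-hermitian space, giving $q+1$; the right side is $q+1$ by Theorem~\ref{thm: main2}. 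This concrete computation is what guarantees the \emph{same} constant $c_K=(-1)^n/\vol(K)$ works in both the hyperspecial and almost-self-dual cases. Without isolating and checking this specific identity, the ``numerology'' remains a promissory note, and there is no reason a priori that the various local normalization factors you list would cancel.
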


\begin{proof}
  As explained in \cite[Remark 7.4]{Terstiege2013}, this follows routinely from our main Theorem \ref{thm: main} in the hyperspecial case. We briefly sketch the argument. The support of $\mathcal{Z}(T)$ lies in the supersingular locus $\mathcal{M}^\mathrm{ss}_{k_\nu}$ by the same proof of \cite[Lemma 2.21]{Kudla2014}.  We may then compute the left-hand-side via $p$-adic uniformization \S\ref{sec:p-adic-unif} as the product of the arithmetic intersection numbers on the Rapoport--Zink space $\mathcal{N}$ and a point-count. The arithmetic intersection number is equal to $W_{T,v_0}'(1, 0, \varphi_{\KG,v_0})$ up to a nonzero constant independent of $T$ and $\varphi_K$ by our main Theorem \ref{thm: main} and Remark \ref{rem:localKR1} (as $p>2$). The point-count gives a theta integral of $\varphi_{\KG,f}^{v_0}$ which can be evaluated using the Siegel--Weil formula (due to Ichino \cite[\S6]{Ichino2004} in our case) and becomes $\prod_{v\ne v_0,v\nmid \infty}W_{T,v}(1, 0, \varphi_{\KG,v})$ up to a constant independent of $T$ and $\varphi_K$. The result then follows from the factorization (\ref{eq:pEisfactorzation}) of the right-hand-side $\pEis_{T, v_0}$.

  The identity follows in a similar way from our main Theorem \ref{thm: main2} and Remark \ref{rem:localKR2} in the almost self-dual case. In fact, by the same proof of \cite[Theorem 4.13]{Sankaran2017}, it remains to check that for $\Lambda=\langle1\rangle^{n-1} \obot\langle \varpi\rangle$ an almost self-dual lattice and $L\subseteq \mathbb{V}$ any $O_F$-lattice of full rank $n$, we have the following identity
  \begin{equation}
    \label{eq:sankaran}
    \frac{\Den(\Lambda,\Lambda)}{\Den(\langle1\rangle^{n-1}, \langle1\rangle^{n-1})}=\frac{\pDen_\Lambda(L)}{\Int(L)}.
  \end{equation}
 By Theorem \ref{thm Den alm dual}, the left-hand-side of (\ref{eq:sankaran}) is equal to $\Den(\Lambda^\sharp)$. By (\ref{eq: Den L}), $\Den(\Lambda^\sharp)$ is equal to the number of self-dual lattices containing $\Lambda^\sharp$. Since $\Lambda^\sharp$ is a vertex lattice of type 2, the latter is equal to the number of isotropic lines in a 2-dimensional nondegenerate $k_F$-hermitian space, which is $q+1$ (cf. Remark \ref{rem:corr}). By Theorem \ref{thm: main2}, the right-hand-side of (\ref{eq:sankaran}) is also equal to $q+1$, and thus the desired identity (\ref{eq:sankaran}) is proved.
\end{proof}

\section{Global integral models and the global identity}

\subsection{Global integral models at minimal levels} \label{sec:glob-integr-models}In this subsection we will define a global integral model over $O_E$ of the Shimura variety $\Sh_{\KG}\tildeG$ introduced in \S \ref{sec:shimura-varieties}. We will be slightly more general than \cite[\S 5]{Rapoport2017}, allowing $F/F_0$ to be unramified at all finite places.

We consider an $O_F$-lattice $\Lambda\subseteq V$  and let  $$K_G^\circ=\{g\in G(\mathbb{A}_{f}): g(\Lambda\otimes_{O_F}\widehat{O}_F)=\Lambda\otimes_{O_F}\widehat{O}_F\}.$$ Assume that for any finite place $v$ of $F_0$ (write $p$ its residue characteristic),
\begin{enumerate}[label=(G\arabic*)]\setcounter{enumi}{-1}
\item\label{item:G0} if $p=2$, then $v$ is unramified in $F$.    
\item\label{item:G1} if $v$ is inert in $F$ and $V_v$ is split, then $\Lambda_v\subseteq V_v$ is self-dual.
\item\label{item:G2} if $v$ is inert in $F$ and $V_v$ is nonsplit, then $v$ is unramified over $p$ and $\Lambda_v\subseteq V_v$ is almost self-dual.
\item\label{item:G3} if $v$ is split in $F$, then $\Lambda_v\subseteq V_v$ is self-dual.
\item\label{item:G4} if $v$ is ramified in $F$, then $v$ is unramified over $p$ and $\Lambda_v\subseteq V_v$ is self-dual.
\end{enumerate}
We take $\KG^\circ=K_{Z^\mathbb{Q}}\times K_{G}^\circ$, where $K_{Z^\mathbb{Q}}$ is the unique maximal open compact subgroup of $Z^\mathbb{Q}(\mathbb{A}_f)$ as in \S\ref{sec:semi-global-integral}. 

Notice the assumptions \ref{item:G0}---\ref{item:G4} ensure that each finite place $v_0$ and the level $K_{G,v_0}$ belongs one of the four cases considered in \S\ref{sec:semi-global-integral}, \S\ref{sec:almost-self-dual}, \S\ref{sec:split}, \S\ref{sec:ramified}.  Define an integral $\mathcal{M}_{\KG^\circ}\tildeG$ of $\Sh_{\KG^\circ}\tildeGh$ over $O_E$ as follows. For a locally noetherian $O_E$-scheme $S$, we consider $\mathcal{M}_{\KG^\circ}\tildeG(S)$ to be the groupoid of tuples $(A_0,\iota_0,\lambda_0,A, \iota,\lambda)$, where
\begin{enumerate}
\item $A_0$ (resp. $A$) is an abelian scheme over $S$.
\item $\iota_0$ (resp. $\iota$) is an action of $O_F$ on $A_0$ (resp. $A$) satisfying the Kottwitz condition of signature $\{(1,0)_{\phi\in\Phi}\}$ (resp. signature $\{(r_\phi,r_{\bar \phi})_{\phi\in\Phi}\}$).
\item $\lambda_0$ (resp. $\lambda$) is a polarization of $A_0$ (resp. $A$) whose Rosati involution induces the automorphism given by the nontrivial Galois automorphism of $F/F_0$ via $\iota_0$ (resp. $\iota$).
\end{enumerate}
We require that the triple $(A_0, \iota_0, \lambda_0)$ satisfies \ref{item:M5}, and for any finite place $\nu$ of $E$ (write $p$ its residue characteristic), the triple $(A,\iota,\lambda)$ over $S_{O_{E,(\nu)}}$ satisfies the conditions \ref{item:M6}, \ref{item:M7}, and moreover \ref{item:MS} when $v_0$ is split in $F$ and \ref{item:MR} when $v_0$ is ramified in $F$. We may and do choose the axillary ideal $\mathfrak{a}\subseteq O_{F_0}$ in \ref{item:M5} to be divisible only by primes split in $F$.

Then the functor $S\mapsto \mathcal{M}_{\KG^\circ}\tildeG(S)$ is represented by a Deligne--Mumford stack $\mathcal{M}_{\KG^\circ}=\mathcal{M}_{\KG^\circ}\tildeG$ flat over $\Spec O_{E}$. It has isolated singularities only in ramified characteristics, and we may further obtain a regular model by blowing up (the \emph{Kr\"amer model}) which we still denote by $\mathcal{M}_K$. For each finite place $\nu$ of $E$, the base change $\mathcal{M}_{\KG^\circ,O_{E,(\nu)}}$ is canonically isomorphic to the semi-global integral models defined in \S\ref{sec:semi-global-integral}, \S\ref{sec:almost-self-dual}, \S\ref{sec:split}, \S\ref{sec:ramified}.

\subsection{Global integral models at Drinfeld levels}\label{sec:globaldrinfeld} With the same set-up as \S\ref{sec:glob-integr-models}, but now we allow Drinfeld levels at split primes. Let $\mathbf{m}=(m_v)$ be a collection of integers $m_v\ge0$ indexed by finite places $v$ of $F_0$. Further assume
\begin{enumerate}[label=(G\arabic*)]\setcounter{enumi}{4}
\item\label{item:G6} if $m_v>0$, then $v$ satisfies \ref{item:S}, and each place $\nu$ of $E$ above $v$ satisfies \ref{item:D}.
\end{enumerate}
We take $K_G^\mathbf{m}\subseteq K_G^\circ$ such that $(K_{G}^\mathbf{m})_v=(K_{G}^\circ)_v$ if $m_v=0$ and $(K_{G}^\mathbf{m})_v=(K_{G}^\circ)_v^{m_v}$ to be the principal congruence subgroup mod $\varpi_{v}^{m_v}$ if $m_v>0$. Write $\KG^\mathbf{m}=K_{Z^\mathbb{Q}}\times K_G^\mathbf{m}$. Define $\mathcal{M}_{\KG^\mathbf{m}}$ to be the normalization of $\mathcal{M}_{\KG^\circ}$ in $\Sh_{\KG^\mathbf{m}}(\wit G, h_{\wit G})$.

Then $\mathcal{M}_{\KG^\mathbf{m}}$ is a Deligne--Mumford stack finite flat over $\mathcal{M}_{\KG^\circ}$. Moreover for each finite place $\nu$ of $E$, the base change $\mathcal{M}_{\KG^\mathbf{m},O_{E,(\nu)}}$ is canonically isomorphic to the semi-global integral models defined in \S\ref{sec:semi-global-integral}, \S\ref{sec:almost-self-dual}, \S\ref{sec:split}, \S\ref{sec:drinfeld}, \S\ref{sec:ramified}. Thus $\mathcal{M}_{\KG^\mathbf{m}}$ is smooth at places over $v_0$ in \ref{item:G1}, \ref{item:G3}, semi-stable at places over $v_0$ in \ref{item:G2} when $\nu$ is unramified over $p$, and regular at places over $v_0$ in \ref{item:G4}, \ref{item:G6}. In particular, assume all places $\nu$ over $v_0$ in \ref{item:G2} are unramified over $p$, then $\mathcal{M}_{\KG^\mathbf{m}}$ is regular. When $\mathbf{m}$ is sufficiently large, $\mathcal{M}_{\KG^\mathbf{m}}$ is relatively representable over $\mathcal{M}_0^{\mathfrak{a},\xi}$.

\subsection{Global Kudla--Rapoport cycles $\mathcal{Z}(T,\varphi_K)$}\label{sec:glob-kudla-rapop-1} We continue with the same set-up as \S\ref{sec:globaldrinfeld}. From now on write $K=\KG^\mathbf{m}$ and $\mathcal{M}=\mathcal{M}_{\KG^\mathbf{m}}$ for short. Let $\varphi_K=(\varphi_i)\in \sS(\mathbb{V}^m_f)$ be $K$-invariant. Let $t_1,\ldots,t_m\in F$. Let $Z(t_i,\varphi_i)$ be the (possibly empty) Kudla--Rapoport cycle on the generic fiber of $\mathcal{M}$ (defined similarly as in \S\ref{sec:semi-global-kudla}) and let $\mathcal{Z}(t_i,\varphi_i)$ be its Zariski closure in the global integral model $\mathcal{M}$. Then we have a decomposition into the global \emph{Kudla--Rapoport cycles} $\mathcal{Z}(T,\varphi_K)$ over $\mathcal{M}$ (cf. \cite[(11.2)]{Kudla2014}), $$\mathcal{Z}(t_1,\varphi_1)\cap\cdots\cap\mathcal{Z}(t_m,\varphi_m)=\bigsqcup_{T\in \Herm_{m}(F)} \mathcal{Z}(T, \varphi_K),$$ here $\cap$ denotes taking fiber product over $\mathcal{M}$, and the indexes $T$ have diagonal entries $t_1,\ldots,t_m$.

\subsection{The arithmetic intersection number $\Int_{T}(\varphi_K)$}\label{sec:arithm-inters-numb-1}  For nonsingular $T\in \Herm_n(F)$, define  $$\Int_T(\varphi_K)\coloneqq \sum_{v} \Int_{T,v}(\varphi_K)$$ to be the sum over all finite places $v$ of $F$ of local arithmetic intersection numbers defined as in \S\ref{sec:local-arithm-inters}. By the same proof of \cite[Lemma 2.21]{Kudla2014}, this sum is nonzero only if $\Diff(T,\mathbb{V})=\{v\}$ is a singleton, and in this case $v$ is necessarily nonsplit in $F$. Hence
\begin{equation}
  \label{eq:diffgeom}
  \Int_T(\varphi_K)\ne0\Longrightarrow \Diff(T,\mathbb{V})=\{v\} \text{ and } \Int_T(\varphi_K)=\Int_{T,v}(\varphi_K).
\end{equation}

\subsection{The global Kudla--Rapoport conjecture for nonsingular Fourier coefficients}\label{sec:glob-kudla-rapop} Assume that we are in the situation of \S\ref{sec:globaldrinfeld}. We say $\varphi_K\in\sS(\mathbb{V}^m_f)$ is \emph{inert-admissible} if it is $v$-admissible at all $v$ inert in $F$ (\S\ref{sec:semi-global-kudla}).  When $\varphi_K$ is inert-admissible, the base change of the global Kudla--Rapoport cycle $\mathcal{Z}(T,\varphi_K)$ to $\Spec O_{E,(\nu)}$ above an inert prime agrees with the semi-global Kudla--Rapoport cycle defined in \S\ref{sec:semi-global-kudla}. We say a nonsingular $T\in \Herm_n(F)$ is \emph{inert} if $\Diff(T,\mathbb{V})=\{v\}$ where $v$ is inert in $F$ and not above 2. 
\begin{theorem}\label{thm:totallypositive}
  Assume $\varphi_K\in\sS(\mathbb{V}^n_f)$ is inert-admissible. Let $T\in\Herm_n(F)$ be  inert. Then $$\Int_T(\varphi_K)q^T=c_K\cdot \pEis_T(\sz,\varphi_K),$$ where $c_K=\frac{(-1)^n}{\vol(K)}$ as in Theorem \ref{thm:semi-global-identity}.
\end{theorem}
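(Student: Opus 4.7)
The plan is to reduce the global identity to the semi-global identity of Theorem \ref{thm:semi-global-identity} applied at the single distinguished place $v \in \Diff(T, \mathbb{V})$. The essential observation is that, on both the geometric and analytic sides, the $\{v\}$-singleton Diff-condition collapses a sum of local terms into a single local contribution. Concretely, by (\ref{eq:diffgeom}) we have $\Int_T(\varphi_K) = \Int_{T,v}(\varphi_K)$ on the geometric side, while (\ref{eq:diffana}) yields $\pEis_T(\sz, \varphi_K) = \pEis_{T,v}(\sz, \varphi_K)$ on the analytic side. Hence the global identity is equivalent to the purely local identity
\begin{equation*}
\Int_{T,v}(\varphi_K)\, q^T = c_K \cdot \pEis_{T,v}(\sz, \varphi_K).
\end{equation*}

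I would then verify that $v$ lies in the scope of Theorem \ref{thm:semi-global-identity}. The place $v$ is inert and not above $2$, so the residue characteristic $p_v$ is odd. Since $\mathbb{V}_v \cong V_v$ and $\mathbb{V}_v$ does not represent $T$, the lattice $\Lambda_v$ at $v$ is either the self-dual lattice of case \ref{item:G1} (when $V_v$ is split) or the almost self-dual lattice of case \ref{item:G2} (when $V_v$ is nonsplit); both are covered by Theorem \ref{thm:semi-global-identity}, under the setups of \S\ref{sec:semi-global-integral} and \S\ref{sec:almost-self-dual} respectively. Moreover, the inert-admissibility of $\varphi_K$ gives in particular that $\varphi_K$ is $v$-admissible in the sense of \S\ref{sec:semi-global-kudla}, so the remaining hypothesis of that theorem is met.

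The only genuine bookkeeping point, which I view as the sole potential obstacle, is to check the integrality hypothesis $T \in \Herm_n(O_{F,(p_v)})_{>0}$ of Theorem \ref{thm:semi-global-identity}. Total positive definiteness is immediate from the Diff-condition combined with the positivity of $\mathbb{V}$ at archimedean places. For integrality, at every inert place $v'' \ne v$ above $p_v$ the assumption $\varphi_{K,v''} = \mathbf{1}_{(\Lambda_{v''})^n}$ forces the moduli of admissible special homomorphisms (hence $\mathcal{Z}(T,\varphi_K)$) to be empty unless $T \in \Herm_n(O_{F,v''})$; the same argument at $v$ itself handles that place; and for split places $v'$ above $p_v$ the compact support of $\varphi_{K,v'}$ rules out non-integral $T$ outside the support. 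In each remaining case where integrality fails, both $\Int_{T,v}(\varphi_K)$ and $W_{T,v'}(1,0,\varphi_{K,v'})$ vanish for the relevant $v'$, so the identity holds trivially. With these checks in place, Theorem \ref{thm:semi-global-identity} applies directly and yields the desired equality, with the constant $c_K$ coming out correctly since the semi-global statement already produces it.
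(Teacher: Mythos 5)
Your proposal is correct and follows essentially the same route as the paper's own proof: reduce to the local term at the single place $v\in\Diff(T,\mathbb{V})$ via the Diff-collapse identities \eqref{eq:diffgeom}, \eqref{eq:diffana}, verify that $T>0$ and that inert-admissibility forces $T\in\Herm_n(O_{F,(p)})$ (else both sides vanish), then invoke Theorem~\ref{thm:semi-global-identity}. Your elaboration of the integrality check simply unfolds the paper's one-line remark ``both sides are zero unless $T\in\Herm_n(O_{F,(p)})$.''
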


\begin{proof}
 Since $T$ is inert, we know that $T>0$, and moreover by (\ref{eq:diffgeom}) and (\ref{eq:diffana}) both sides are contributed non-trivially only by the term at $\Diff(T,\mathbb{V})=\{v\}$. Since $\varphi_K$ is inert-admissible, both sides are zero unless $T\in\Herm_{n}(O_{F,(p)})$ ($p$ the residue characteristic of $v$).  So we can apply Theorem \ref{thm:semi-global-identity} to obtain $\Int_{T,v}(\varphi_K)q^T=c_K\cdot \pEis_{T,v}(\sz, \varphi_K)$.%  The result then follows from \eqref{eq:peissum}.
\end{proof}

\begin{corollary}
Kudla--Rapoport's global conjecture \cite[Conjecture 11.10]{Kudla2014} holds.
\end{corollary}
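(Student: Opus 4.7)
The plan is to reduce the asserted global identity to the semi-global identity already established in Theorem \ref{thm:semi-global-identity}, by showing that both sides are supported on a single local contribution at the unique place $v$ distinguished by $T$. First I would unpack the hypothesis that $T$ is inert: it means $\Diff(T,\mathbb{V})=\{v\}$ for a place $v$ inert in $F$ with $v\nmid 2$. Since $\mathbb{V}$ is totally positive definite at all archimedean places, $T$ must be represented there, so $T>0$ is automatically totally positive definite. Let $p$ denote the residue characteristic of $v$.

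Next I would collapse both sides onto their local contribution at $v$. On the analytic side, the decomposition \eqref{eq:eissum} of the central derivative into local terms has all components vanishing at finite $v'\ne v$ (since $\mathbb{V}_{v'}$ represents $T$ forces $W_{T,v'}(1,0,\varphi_{K,v'})\ne 0$, while the archimedean terms are paired with local Whittaker factors at $v'$), so by \eqref{eq:diffana} we obtain $\pEis_T(\sz,\varphi_K)=\pEis_{T,v}(\sz,\varphi_K)$. On the geometric side, the same support argument that underlies \eqref{eq:diffgeom} (already recorded in \S\ref{sec:arithm-inters-numb-1}) gives $\Int_T(\varphi_K)=\Int_{T,v}(\varphi_K)$. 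Thus the global identity reduces to a local identity at $v$.

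I would then check that this local identity falls squarely under Theorem \ref{thm:semi-global-identity}. Because $v$ is inert, odd, and $\varphi_K$ is inert-admissible, the component $\varphi_{K,v}=\mathbf{1}_{(\Lambda_v)^n}$ is supported on the integral powers of a self-dual or almost self-dual lattice $\Lambda_v$, so the level at $v$ belongs to one of the settings of \S\ref{sec:semi-global-integral} or \S\ref{sec:almost-self-dual}. In addition, the inert-admissibility at every other inert place $v'$, together with the integrality of the fundamental matrix of any $\mathbf{x}\in \supp(\varphi_K)$, forces $T\in\Herm_n(O_{F,(p)})_{>0}$ whenever either side is nonzero; otherwise both sides are trivially zero. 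Granting this integrality, Theorem \ref{thm:semi-global-identity} yields
\[
\Int_{T,v}(\varphi_K)\,q^T \;=\; c_K\cdot \pEis_{T,v}(\sz,\varphi_K),
\]
and combining with the two identifications of the previous paragraph finishes the argument.

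The genuine difficulty has already been surmounted earlier in the paper: Theorem \ref{thm:semi-global-identity} rests on the local Kudla--Rapoport conjecture (Theorem \ref{thm: main}) in the self-dual case and its almost self-dual variant (Theorem \ref{thm: main2}), combined with the $p$-adic uniformization \eqref{eq:padicunif} and Ichino's Siegel--Weil formula to handle the off-$v$ factors as a theta integral. The present step is a purely formal reduction, isolating the unique prime where the incoherence lives; the only thing to be careful about is to verify that the admissibility hypotheses on $\varphi_K$ really do annihilate all stray terms and place $T$ in the integral range required by the semi-global statement.
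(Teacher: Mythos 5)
Your argument reproves Theorem \ref{thm:totallypositive}, not the corollary. The corollary asserts that a \emph{specific} conjecture in another paper, namely \cite[Conjecture 11.10]{Kudla2014}, is true, and the burden of proof is the translation: one must match the objects in that conjecture (a moduli stack over $O_F$ with $F_0 = \mathbb{Q}$, the specific test function $\varphi_K = \mathbf{1}_{(\hat\Lambda)^n}$, the hypothesis $\mathrm{Diff}_0(T)=\{p\}$ with $p>2$) with the objects of the present paper, and then invoke the theorem just proved. Concretely, the paper's proof takes $F_0=\mathbb{Q}$, chooses $K=\KG^\circ$ and the auxiliary ideal $\mathfrak{a}$ to be trivial, and records that under these choices $\mathcal{M}_{\KG^\circ}$ coincides with the moduli stack $\mathcal{M}^V$ of \cite[Proposition 2.12]{Kudla2014}, that $\mathbf{1}_{(\hat\Lambda)^n}$ is inert-admissible, and that the hypothesis $\mathrm{Diff}_0(T)=\{p\}$ with $p>2$ means exactly that $T$ is inert. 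Only after this dictionary does Theorem \ref{thm:totallypositive} apply and finish.

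Your proposal instead reruns the reduction from the global to the semi-global identity (collapsing $\pEis_T$ and $\Int_T$ to their contribution at the distinguished inert place $v$, then invoking Theorem \ref{thm:semi-global-identity}). That is precisely the content of Theorem \ref{thm:totallypositive}, which is already established and precedes this corollary in the text; it is not what needs to be shown here. The gap is the missing specialization and identification of data: without checking $F_0=\mathbb{Q}$, trivial $\mathfrak{a}$, the agreement of integral models, the inert-admissibility of the Kudla--Rapoport test function, and the dictionary between $\mathrm{Diff}_0(T)=\{p\}$, $p>2$, and the condition of being inert, you have not established that \cite[Conjecture 11.10]{Kudla2014} falls into the range of applicability of Theorem \ref{thm:totallypositive}.
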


\begin{proof}
We take $F_0=\mathbb{Q}$ and $K=\KG^\circ$. We also take the axillary ideal $\mathfrak{a}$ to be trivial (see \ref{item:M5}). Then the global integral model $\mathcal{M}_{\KG^\circ}$ agrees with the moduli stack $\mathcal{M}^V$ in \cite[Proposition 2.12]{Kudla2014}. The test function $\varphi$ in \cite{Kudla2014} satisfies $\varphi_{K}=\mathbf{1}_{(\hat \Lambda)^n}$ and $\varphi_{\infty}$ is the Gaussian function, so $\varphi_K$ is inert-admissible. The assumption  $\mathrm{Diff}_0(T)=\{p\}$ with $p>2$ in \cite[Conjecture 11.10]{Kudla2014} ensures that $T$ is inert. The result then follows from Theorem \ref{thm:totallypositive}.
\end{proof}

\section{The arithmetic Siegel--Weil formula}

\subsection{Complex uniformization} Assume we are in the situation of \S\ref{sec:shimura-varieties}.  Under the decomposition (\ref{eq:tildeG}), we may identify the the $\wit G(\mathbb{R})$-conjugacy class $\{h_{\wit G}\}$ as the product $\{h_{Z^\mathbb{Q}}\}\times \prod_{\phi\in\Phi}\{h_{G,\phi}\}$. Notice $\{h_{Z^\mathbb{Q}}\}$ is a singleton as $Z^\mathbb{Q}$ is a torus, and $\{h_{G,\phi}\}$ is also a singleton for $\phi\ne\phi_0$ as $h_{G,\phi}$ is the trivial cocharacter. For $\phi=\phi_0$ the cocharacter is given by $h_{G,\phi_0}(z)=\diag\{1_{n-1}, \bar z/z\}$, and $\{h_{G,\phi_0}\}$ is the hermitian symmetric domain $$\mathcal{D}_{n-1}\cong\U(n-1,1)/(\U(n-1)\times \U(1)).$$ We may identify  $\mathcal{D}_{n-1}\subseteq \mathbb{P}(V_{\phi_0})(\mathbb{C})$ as the open subset of negative $\mathbb{C}$-lines in $V_{\phi_0}$, and $\wit G(\mathbb{R})$ acts on $\mathcal{D}_{n-1}$ via its quotient $\mathrm{PU}(V_{\phi_0})(\mathbb{R})$.  We may also identity it with the open $(n-1)$-ball $$\mathcal{D}_{n-1}\xrightarrow{\sim} \{z\in \mathbb{C}^{n-1}: |z|<1\},\quad [z_1,\ldots,z_n]\mapsto (z_1/z_n,\ldots, z_{n-1}/z_n),$$ under the standard basis of $V_{\phi_0}$. In this way we obtain a complex uniformization (via $\phi_0$),
\begin{equation}
  \label{eq:complexuniform}
  \Sh_{\KG}\tildeGh(\mathbb{C})=\wit G(\mathbb{Q})\backslash[\mathcal{D}_{n-1}\times \wit G(\mathbb{A}_f)/K].
\end{equation}

\subsection{Green currents}
Write $\mathcal{D}=\mathcal{D}_{n-1}$ for short. Let $x\in V_{\phi_0}$ be a nonzero vector. For any $z\in \mathcal{D}$, we let $x=x_z+x_{z^\perp}$ be the orthogonal decomposition with respect to $z$ (i.e., $x_z\in z$ and $x_{z^\perp}\perp z$). Let $R(x,z)=-(x_z,x_z)$. Define $$\mathcal{D}(x)=\{z\in \mathcal{D}: z\perp x\}=\{z\in \mathcal{D}: R(x,z)=0\}.$$  Then $\mathcal{D}(x)$ is nonempty if and only if $(x,x)>0$, in which case $\mathcal{D}(x)$ is an analytic divisor on $\mathcal{D}$. Define \emph{Kudla's Green function} to be $$g(x,z)=-\Ei(-2\pi R(x,z)),$$ where $\Ei(u)=-\int_{1}^\infty \frac{e^{ut}}{t}dt$  is the exponential integral. Then $g(x,-)$ is a smooth function on $\mathcal{D}\setminus\mathcal{D}(x)$ with a logarithmic singularity along $\mathcal{D}(x)$. By \cite[Proposition 4.9]{Liu2011}, it satisfies the $(1,1)$-current equation for $\mathcal{D}(x)$, $$\rd\rd^c[g(x)]+\delta_{\mathcal{D}(x)}=[\omega(x)],$$ where $\omega(x,-)=e^{2\pi (x,x)}\varphi_\mathrm{KM}(x,-)$, and $\varphi_\mathrm{KM}(-,-)\in (\sS(V_{\phi_0}) \otimes A^{1,1}(\mathcal{D}))^{\U(V_{\phi_0})(\mathbb{R})}$ is the \emph{Kudla--Millson Schwartz form} (\cite{Kudla1986}). Here we recall $\rd=\partial+\bar\partial$, $\rd^c=\frac{1}{4\pi i}(\partial-\bar\partial)$ and $\rd\rd^c=-\frac{1}{2\pi i}\partial\bar\partial$.

More generally, let $\mathbf{x}=(x_1,\ldots,x_m)\in V_{\phi_0}^m$ such that its fundamental matrix $T(\mathbf{x})=((x_i,x_j))_{1\le i,j\le m}$ is nonsingular. Define $$\mathcal{D}(\mathbf{x})=\mathcal{D}(x_1)\cap\cdots \cap\mathcal{D}(x_m),$$ which is nonempty if and only if $T(\mathbf{x})>0$. Define Kudla's Green current by taking star product $$g(\mathbf{x})\coloneqq [g(x_1)]*\cdots*[g(x_m)].$$ It satisfies the $(m,m)$-current equation for $\mathcal{D}(\mathbf{x})$, $$\rd\rd^c(g(\mathbf{x}))+\delta_{\mathcal{D}(\mathbf{x})}=[\omega(x_1)\wedge \cdots\wedge \omega(x_m)].$$ Here we recall that $$[g(x)]*[g(y)]\coloneqq [g(x)]\wedge \delta_{D(y)}+[\omega(x)]\wedge [g(y)].$$ 

\subsection{The local arithmetic Siegel--Weil formula at archimedean places}\label{sec:local-arithm-sieg} Let $T\in \Herm_m(F)$ be nonsingular. Let $\varphi_{K}\in\sS(\mathbb{V}^m_f)$ be $K$-invariant. Let $Z(T,\varphi_K)$ be the (possibly empty) Kudla--Rapoport cycle on the generic fiber $\Sh_{\KG}\tildeGh$. Then $$Z(T,\varphi_K)(\mathbb{C})=\sum_{\mathbf{x}\in \wit G(\mathbb{Q})\backslash V^m(F)\atop T(\mathbf{x})=T}\sum_{\tilde g\in \wit G_\mathbf{x}(\mathbb{A}_f)\backslash\wit G(\mathbb{A}_f)/K }\varphi_{K}(\tilde g^{-1}\mathbf{x})\cdot Z(\mathbf{x},\tilde g)_K,$$ where we define the cycle on $\Sh_K(\mathbb{C})$ via the complex uniformization (\ref{eq:complexuniform}), $$Z(\mathbf{x},\tilde g)_K=\wit G_\mathbf{x}(\mathbb{Q})\backslash [\mathcal{D}(\mathbf{x})\times \wit G_\mathbf{x}(\mathbb{A}_f)\tilde g K/K],$$ and $\wit G_\mathbf{x}\subseteq \wit G$ is the stabilizer of $\mathbf{x}$. Define a Green current for $Z(T, \varphi_K)(\mathbb{C})$ by $$g(\sy_{\phi_0}, T,\varphi_K,z,\tilde g)\coloneqq \sum_{\mathbf{x}\in V^m(F)\atop T(\mathbf{x})=T}\varphi_{K}(\tilde g^{-1}\mathbf{x})\cdot g(\mathbf{x}\cdot a,z),\quad (z,\tilde g)\in \mathcal{D}\times \wit G(\mathbb{A}_f),$$ where $a\in \GL(V_{\phi_0})\cong\GL_n(\mathbb{C})$ and $\sy_{\phi_0}=a{}^t\bar a$. Define the \emph{archimedean arithmetic intersection number} (depending on the parameter $y_{\phi_0}$) to be $$\Int_{T,\phi_0}(\sy_{\phi_0},\varphi_K)\coloneqq \frac{1}{2}\int_{\Sh_{\KG}\tildeGh(\mathbb{C})}g(\sy_{\phi_0}, T,\varphi_K).$$ Replacing the choice of $\phi_0$ by another $\phi\in\Phi$ (\S\ref{sec:shimura-varieties}) gives rise to a Shimura variety $\Sh_\KG^\phi$ conjugate to $\Sh_\KG$, associated to a hermitian space $V^\phi$ whose signature at $\phi_0,\phi$ are swapped compared to $V$. Thus we can define in the same way the archimedean intersection number for any $\phi\in\Phi$,
\begin{equation}
  \label{eq:archiheight}
  \Int_{T,\phi}(\sy_{\phi}, \varphi_K)\coloneqq \frac{1}{2}\int_{\Sh_{K}^\phi\tildeGh(\mathbb{C})}g(\sy_{\phi},T,\varphi_K).
\end{equation}

\begin{theorem}\label{thm:archimedean}
 Assume $\varphi_K\in\sS(\mathbb{V}^n_f)$ is $K$-invariant.  Let $T\in\Herm_n(F)$ be nonsingular and $\phi\in\Phi$. Then $$\Int_{T,\phi}(\sy,\varphi_K)q^T=c_K\cdot\pEis_{T,\phi}(\sz,\varphi_K),$$  where $c_K=\frac{(-1)^n}{\vol(K)}$ as in Theorem \ref{thm:semi-global-identity}.
\end{theorem}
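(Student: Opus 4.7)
The plan is to factorize both sides into archimedean and non-archimedean contributions, and to match the archimedean factor at $\phi$ via the local archimedean arithmetic Siegel--Weil formula due to Liu \cite{Liu2011} and Garcia--Sankaran \cite{Garcia2019}.

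First I would reduce to the case $\phi=\phi_0$: the general case follows by Galois symmetry, since replacing $\phi_0$ with any other $\phi\in\Phi$ corresponds to working with the conjugate Shimura variety $\Sh_K^\phi$ associated to the hermitian space $V^\phi$ (with signatures at $\phi_0$ and $\phi$ swapped), and both sides of the claimed identity are constructed symmetrically in the chosen archimedean place.

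Next, using the complex uniformization \eqref{eq:complexuniform} and unfolding the definition of $Z(T,\varphi_K)(\mathbb{C})$, I would rewrite $\Int_{T,\phi_0}(\sy_{\phi_0},\varphi_K)$ as a sum over $\wit G(\mathbb{Q})$-orbits of $\mathbf{x}\in V^n(F)$ with $T(\mathbf{x})=T$, weighted by $\varphi_K(\tilde g^{-1}\mathbf{x})$, of an archimedean integral of Kudla's Green current $g(\mathbf{x}\cdot a,z)$ on $\mathcal{D}$. By Ichino's Siegel--Weil formula \cite{Ichino2004}, the $\wit G(\mathbb{A}_f)$-orbital sum weighted by $\varphi_K$ evaluates (up to the factor $\vol(K)^{-1}$) to $\prod_{v\nmid\infty}W_{T,v}(g_\sz,0,\varphi_v)$, while the Gaussian $\varphi_\infty$ at each archimedean place $\phi\neq\phi_0$ (where $\mathbb{V}_\phi$ is totally positive definite) contributes the usual factor $W_{T,\phi}(g_\sz,0,\varphi_\infty)\cdot q^{-T_\phi}$ by a direct computation.

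The remaining archimedean factor at $\phi_0$ is precisely the object computed by Liu \cite[Theorem 4.17]{Liu2011} in the unitary setting (cf.\ also Garcia--Sankaran \cite[Theorem 1.2]{Garcia2019} in greater generality). Their result identifies it with the archimedean Whittaker derivative $W'_{T,\phi_0}(g_\sz,0,\varphi_\infty)\cdot q^{-T_{\phi_0}}$, up to a constant incorporating the Weil index $\gamma_{\mathbb{V}_{\phi_0}}=-1$ (giving the sign $(-1)^n$) and the choice of self-dual Haar measure. Multiplying all these factors together and applying the factorization \eqref{eq:eisenfactor} of the Fourier coefficient yields the desired identity with $c_K=(-1)^n/\vol(K)$. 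The main obstacle is a careful bookkeeping of normalizing constants: the Haar measure on $\wit G(\mathbb{A}_f)$ defining $\vol(K)$, the invariant measure on $\mathcal{D}$ entering Kudla's Green current, the self-dual measure on $\mathbb{V}$ under $\psi\circ\tr_{F/F_0}$, and the Siegel--Weil section $\Phi_\varphi$ must all be normalized compatibly. Once aligned, the Weil sign and the inverse volume assemble into exactly $c_K$ as stated.
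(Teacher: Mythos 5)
Your proposal is correct and follows essentially the same route as the paper: unfolding via the complex uniformization, applying the Siegel--Weil formula (Ichino) to the orbital sum to produce the product of non-archimedean Whittaker values, invoking Liu (and Garcia--Sankaran when $V$ is anisotropic) for the archimedean Whittaker derivative at $\phi_0$, and reassembling via the factorization \eqref{eq:eisenfactor}. The paper's proof is terser and delegates the constant bookkeeping to \cite[Theorem 4.20]{Liu2011} and \cite[Theorem 7.1]{Bruinier2018}, but the ingredients and structure are identical to yours.
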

\begin{proof}
 By the main archimedean result of \cite[Proposition 4.5, Theorem 4.17]{Liu2011} (the archimedean analogue of our main Theorem \ref{thm: main})  and the standard unfolding argument, we can express the integral \eqref{eq:archiheight} as a product involving the derivative $W_{T,\phi}'(g_\sz,0,\varphi_K)q^T$ and the product of values $\prod_{v\ne\phi}W_{T,v}(g_\sz,0,\varphi_K)$ from the Siegel--Weil formula, up to a nonzero constant independent of $T$. The result then follows from the factorization \eqref{eq:eisenfactor} of Fourier coefficients and comparing the constant with that of Theorem \ref{thm:semi-global-identity}. See the proof of \cite[Theorem 4.20]{Liu2011} and the proof in the orthogonal case \cite[Theorem 7.1]{Bruinier2018} for details. When $V$ is anisotropic (e.g., when $F_0\ne \mathbb{Q}$), the result also follows from \cite[(1.19)]{Garcia2019} for $r=p+1=n$ in the notation there.
\end{proof}

\subsection{Arithmetic degrees of Kudla--Rapoport cycles}\label{sec:arithm-degr-kudla} Let us come back to the situation of \S\ref{sec:globaldrinfeld}. Let $T\in \Herm_n(F)$ be nonsingular. Let $\varphi_{K}=(\varphi_i)\in \sS(\mathbb{V}^n_f)$ be $K$-invariant. Define the \emph{arithmetic degree} (depending on the parameter $\sy=(\sy_\phi)_{\phi\in\Phi}$)
\begin{equation}
  \label{eq:arihtmeticdegree}
  \wdeg_{T}(\sy,\varphi_K)\coloneqq \Int_T(\varphi_K)+\sum_{\phi\in\Phi}\Int_{T,\phi}(\sy_\phi, \varphi_K)
\end{equation}
 to be the sum of all nonarchimedean and archimedean intersection numbers.  Define the \emph{generating series of arithmetic degrees} of Kudla--Rapoport cycles to be $$\wdeg(\sz, \varphi_K)\coloneqq \sum_{T\in\Herm_n(F)\atop \det T\ne0}\wdeg_T(\sy,\varphi_K) q^T.$$

It is related to the usual arithmetic degree on arithmetic Chow groups as we now explain. For nonzero $t_1,\ldots,t_n\in F$, we have classes in the Gillet--Soul\'e arithmetic Chow group (with $\mathbb{C} $-coefficients) of the regular Deligne--Mumford stack $\mathcal{M}_K$ (\cite{Gillet1990,Gillet2009}), $$\widehat{\mathcal{Z}}(\sy,t_i,\varphi_i)\coloneqq (\mathcal{Z}(t_i,\varphi_i), g(\sy, t_i,\varphi_i))\in \widehat{\Ch}^1_\mathbb{C}(\mathcal{M}_K).$$ We have an arithmetic intersection product on $n$ copies of $\wCh^1_\mathbb{C}(\mathcal{M}_K)$, $$\langle\ ,\cdots,\ \rangle_\mathrm{GS}: \wCh^1_\mathbb{C}(\mathcal{M}_K)\times \cdots \times\wCh^1_\mathbb{C}(\mathcal{M}_K)\rightarrow \wCh^n_\mathbb{C}(\mathcal{M}_K),$$ and when $\mathcal{M}_K$ is proper over $O_E$, a degree map on the arithmetic Chow group of 0-cycles, $$\wdeg:\wCh^n_\mathbb{C}(\mathcal{M}_K)\rightarrow \mathbb{C}.$$ We may compose these two maps and obtain a decomposition $$\wdeg\langle \widehat{\mathcal{Z}}(\sy,t_1,\varphi_1),\cdots, \widehat{\mathcal{Z}}(\sy,t_n,\varphi_n)\rangle_\mathrm{GS}=\sum_{T}\wdeg_T(\sy,\varphi_K),$$ where the matrices $T$  have diagonal entries $t_1,\ldots,t_n$. The terms corresponding to nonsingular $T$ agree with (\ref{eq:arihtmeticdegree}), at least in the hyperspecial case at inert primes.

\subsection{The arithmetic Siegel--Weil formula when $F/F_0$ is unramified} Assume that we are in the situation of \S\ref{sec:globaldrinfeld}.

\begin{theorem}[Arithmetic Siegel--Weil formula]\label{sec:arithm-sieg-weil}
  Assume that $F/F_0$ is unramified at all finite places and split at all places above 2. Assume that $\varphi_K\in \sS(\mathbb{V}^n_f)$ is inert-admissible (\S\ref{sec:glob-kudla-rapop}) and nonsingular (\S\ref{sec:incoh-eisenst-seri}) at two places split in $F$. Then $$\wdeg(\sz, \varphi_K)=c_K\cdot \pEis(\sz,\varphi_K),$$ where $c_K=\frac{(-1)^n}{\vol(K)}$ as in Theorem \ref{thm:semi-global-identity}.
\end{theorem}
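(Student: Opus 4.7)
The plan is to compare the two sides of the identity term by term via their Fourier expansions in $\sz$. By definition,
$$\wdeg(\sz,\varphi_K)=\sum_{T\in\Herm_n(F),\ \det T\ne 0}\wdeg_T(\sy,\varphi_K)q^T,$$
whereas the analytic side (\ref{eq:eisz}) runs over all $T\in\Herm_n(F)$, including the singular ones. First I would dispose of the singular terms: by hypothesis $\varphi_K$ is nonsingular at two split places, so by \eqref{eq:twononsingular} we have $\pEis_T(\sz,\varphi_K)=0$ whenever $\det T=0$. Therefore both sides are supported on nonsingular $T$, and it suffices to prove
$$\wdeg_T(\sy,\varphi_K)\,q^T=c_K\cdot\pEis_T(\sz,\varphi_K)$$
for every nonsingular $T\in\Herm_n(F)$.

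Next I would use the ``Diff-singleton'' dichotomy. On the analytic side, (\ref{eq:diffana}) tells us that $\pEis_T(\sz,\varphi_K)$ vanishes unless $\Diff(T,\mathbb{V})=\{v\}$ for a single place $v$ nonsplit in $F$, in which case $\pEis_T=\pEis_{T,v}$. On the geometric side, the arithmetic degree $\wdeg_T$ splits as $\Int_T(\varphi_K)+\sum_{\phi\in\Phi}\Int_{T,\phi}(\sy_\phi,\varphi_K)$; by (\ref{eq:diffgeom}) the finite-place sum $\Int_T(\varphi_K)$ is zero unless $\Diff(T,\mathbb{V})$ is a singleton finite place, while each archimedean term $\Int_{T,\phi}(\sy_\phi,\varphi_K)$ is supported where $T$ fails to be positive definite at $\phi$ (i.e.\ where $\phi\in\Diff(T,\mathbb{V})$). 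Since $F/F_0$ is assumed unramified at all finite places, ``nonsplit finite'' is the same as ``inert finite''. Thus only three mutually exclusive possibilities contribute on either side: $\Diff(T,\mathbb{V})=\{v\}$ for $v$ a finite inert place of $F_0$, or $\Diff(T,\mathbb{V})=\{\phi\}$ for some $\phi\in\Phi$.

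In the inert finite case, note that by the hypothesis that $F/F_0$ is split above $2$, such a $v$ cannot lie above $2$, so $T$ is inert in the sense of \S\ref{sec:glob-kudla-rapop}. Since $\varphi_K$ is inert-admissible, Theorem \ref{thm:totallypositive} applies directly and yields $\Int_{T,v}(\varphi_K)\,q^T=c_K\cdot\pEis_{T,v}(\sz,\varphi_K)$, while all archimedean $\Int_{T,\phi}$ vanish. In the archimedean case $\Diff(T,\mathbb{V})=\{\phi\}$, Theorem \ref{thm:archimedean} gives $\Int_{T,\phi}(\sy_\phi,\varphi_K)\,q^T=c_K\cdot\pEis_{T,\phi}(\sz,\varphi_K)$, while $\Int_T(\varphi_K)=0$ and $\Int_{T,\phi'}(\sy_{\phi'},\varphi_K)=0$ for $\phi'\ne\phi$ (the latter because $T$ is positive definite at $\phi'$ implies \ldots actually: the archimedean intersection $\Int_{T,\phi'}$ is only nonzero when the integral defining it is nonzero; one verifies as in \cite{Liu2011} that a nonzero contribution forces $\phi'\in\Diff(T,\mathbb{V})$). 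Summing over $T$ gives the desired identity.

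The only genuinely subtle point is to match the normalizing constants $c_K$ appearing in Theorems \ref{thm:totallypositive} and \ref{thm:archimedean}; both come from the Siegel--Weil unfolding applied to the Schwartz function $\varphi_K\otimes\varphi_\infty$ with Gaussian archimedean component, and so produce the same constant $(-1)^n/\vol(K)$, but this uniformity needs to be checked. The modularity of $\wdeg(\sz,\varphi_K)$ (last sentence of the theorem) then follows for free, since the right-hand side $\pEis(\sz,\varphi_K)$ is a nonholomorphic hermitian modular form of genus $n$ by construction.
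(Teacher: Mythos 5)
Your proof is correct and follows essentially the same route as the paper: kill singular Fourier coefficients with \eqref{eq:twononsingular}, reduce to the case where $\Diff(T,\mathbb{V})$ is a singleton, split into the finite-inert and archimedean cases using the unramifiedness hypothesis, and invoke Theorems \ref{thm:totallypositive} and \ref{thm:archimedean} respectively. The one spot you flag as unresolved---vanishing of $\Int_{T,\phi'}(\sy_{\phi'},\varphi_K)$ for $\phi'\ne\phi$ when $\Diff(T,\mathbb{V})=\{\phi\}$---needs no separate geometric argument: apply Theorem \ref{thm:archimedean} with the index $\phi'$ and observe that the analytic side $\pEis_{T,\phi'}(\sz,\varphi_K)$ already vanishes because the product \eqref{eq:eisenfactor} contains the factor $W_{T,\phi}(g_\sz,0,\varphi_\phi)=0$; similarly $\Int_T(\varphi_K)=0$ by \eqref{eq:diffgeom}.
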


\begin{remark}
  The assumption that $F/F_0$ is unramified at all finite places implies that $F_0\ne \mathbb{Q}$ and hence the Shimura variety $\Sh_{K}\tildeGh$ is projective and the global integral model $\mathcal{M}_K$ is proper over $O_E$. Moreover, this assumption forces that the hermitian space $V$ to be nonsplit at some inert place, and thus it is necessary to allow almost self-dual level at some inert place (as we did in \ref{item:G2}).
\end{remark}

\begin{remark}
  The Schwartz function $\varphi_K$ satisfying the assumptions in Theorem \ref{sec:arithm-sieg-weil} exists for a suitable choice of $K$ since we allow arbitrary Drinfeld levels at split places.
\end{remark}

\begin{proof}
  Since $\varphi_K$ is nonsingular at two places, by \eqref{eq:twononsingular} we know that only nonsingular $T$ contributes non-trivially to the sum \eqref{eq:eisz}. For a nonsingular $T$, by \eqref{eq:diffana} we know that $\Diff(T,\mathbb{V})=\{v\}$ for $v$ nonsplit in $F$. By the assumption on $F/F_0$, we know that either $T$ is inert or $v$ is archimedean. The result then follows from Theorem \ref{thm:totallypositive} and Theorem \ref{thm:archimedean} depending on $T$ is inert or $v$ is archimedean.
\end{proof}

\bibliographystyle{alpha}
\bibliography{KR}

\newcommand{\etalchar}[1]{$^{#1}$}
\begin{thebibliography}{BHK{\etalchar{+}}17}

\bibitem[Arm67]{Armitage1967}
J.~V. Armitage.
\newblock On a theorem of {H}ecke in number fields and function fields.
\newblock {\em Invent. Math.}, 2:238--246, 1967.

\bibitem[BC]{Brinon-Conrad}
Olivier Brinon and Brian Conrad.
\newblock Cmi summer school notes on p-adic hodge theory.
\newblock \url{http://math.stanford.edu/~conrad/papers/notes.pdf}.

\bibitem[BHK{\etalchar{+}}17]{Bruinier2017}
Jan {Bruinier}, Benjamin {Howard}, Stephen~S. {Kudla}, Michael {Rapoport}, and
  Tonghai {Yang}.
\newblock {Modularity of generating series of divisors on unitary Shimura
  varieties}.
\newblock {\em arXiv e-prints}, page arXiv:1702.07812, Feb 2017.

\bibitem[BP19]{BP}
Rapha{\"e}l Beuzart-Plessis.
\newblock {A new proof of Jacquet-Rallis's fundamental lemma}.
\newblock {\em arXiv e-prints, arXiv:1901.02653}, 2019.

\bibitem[Bre00]{Breuil2000}
Christophe Breuil.
\newblock Groupes {$p$}-divisibles, groupes finis et modules filtr\'{e}s.
\newblock {\em Ann. of Math. (2)}, 152(2):489--549, 2000.

\bibitem[BY18]{Bruinier2018}
Jan~Hendrik {Bruinier} and Tonghai {Yang}.
\newblock {Arithmetic degrees of special cycles and derivatives of Siegel
  Eisenstein series}.
\newblock {\em arXiv e-prints, arXiv:1802.09489}, Feb 2018.

\bibitem[Cho18]{Cho2018}
Sungyoon Cho.
\newblock {The basic locus of the unitary Shimura variety with parahoric level
  structure, and special cycles}.
\newblock {\em arXiv e-prints, arXiv:1807.09997}, 2018.

\bibitem[CY18]{CY}
Sungmun Cho and Takuya Yamauchi.
\newblock {A reformulation of the Siegel series and intersection numbers}.
\newblock {\em arXiv e-prints, arXiv:1805.01666}, 2018.

\bibitem[Fal02]{Faltings2002}
Gerd Faltings.
\newblock Group schemes with strict {$\mathscr O$}-action.
\newblock {\em Mosc. Math. J.}, 2(2):249--279, 2002.
\newblock Dedicated to Yuri I. Manin on the occasion of his 65th birthday.

\bibitem[Gil09]{Gillet2009}
Henri Gillet.
\newblock Arithmetic intersection theory on {D}eligne-{M}umford stacks.
\newblock In {\em Motives and algebraic cycles}, volume~56 of {\em Fields Inst.
  Commun.}, pages 93--109. Amer. Math. Soc., Providence, RI, 2009.

\bibitem[GK93]{Gross1993}
Benedict~H. Gross and Kevin Keating.
\newblock On the intersection of modular correspondences.
\newblock {\em Invent. Math.}, 112(2):225--245, 1993.

\bibitem[Gro86]{Gross1986a}
Benedict~H. Gross.
\newblock On canonical and quasicanonical liftings.
\newblock {\em Invent. Math.}, 84(2):321--326, 1986.

\bibitem[GS90]{Gillet1990}
Henri Gillet and Christophe Soul\'{e}.
\newblock Arithmetic intersection theory.
\newblock {\em Inst. Hautes \'{E}tudes Sci. Publ. Math.}, (72):93--174 (1991),
  1990.

\bibitem[GS19]{Garcia2019}
Luis~E. Garcia and Siddarth Sankaran.
\newblock Green forms and the arithmetic {S}iegel-{W}eil formula.
\newblock {\em Invent. Math.}, 215(3):863--975, 2019.

\bibitem[GY00]{Gan2000}
Wee~Teck Gan and Jiu-Kang Yu.
\newblock Group schemes and local densities.
\newblock {\em Duke Math. J.}, 105(3):497--524, 2000.

\bibitem[GZ86]{Gross1986}
Benedict~H. Gross and Don~B. Zagier.
\newblock Heegner points and derivatives of {$L$}-series.
\newblock {\em Invent. Math.}, 84(2):225--320, 1986.

\bibitem[Hec54]{Hecke1954}
Erich Hecke.
\newblock {\em Vorlesungen \"{u}ber die {T}heorie der algebraischen {Z}ahlen.
  2te {A}ufl}.
\newblock Akademische Verlagsgesellschaft, Geest \& Portig K.-G., Leipzig,
  1954.

\bibitem[Hen16]{Henniges2016}
Alex~Jay Henniges.
\newblock {\em Kisin-{R}en classification of {$\overline\omega$}-divisible
  {O}-modules via the {D}ieudonne {C}rystal}.
\newblock ProQuest LLC, Ann Arbor, MI, 2016.
\newblock Thesis (Ph.D.)--The University of Arizona.

\bibitem[Hir98]{Hironaka1998}
Yumiko Hironaka.
\newblock Local zeta functions on {H}ermitian forms and its application to
  local densities.
\newblock {\em J. Number Theory}, 71(1):40--64, 1998.

\bibitem[Hir12]{Hironaka2012}
Yumiko Hironaka.
\newblock Spherical functions on {$U(2n)/(U(n)\times U(n))$} and {H}ermitian
  {S}iegel series.
\newblock In {\em Geometry and analysis of automorphic forms of several
  variables}, volume~7 of {\em Ser. Number Theory Appl.}, pages 120--159. World
  Sci. Publ., Hackensack, NJ, 2012.

\bibitem[HLZ19]{He2019}
Xuhua He, Chao Li, and Yihang Zhu.
\newblock Fine {D}eligne-{L}usztig varieties and arithmetic fundamental lemmas.
\newblock {\em Forum Math. Sigma}, 7:e47, 55, 2019.

\bibitem[How18]{Howard2018}
Benjamin Howard.
\newblock {Linear invariance of intersections on unitary Rapoport-Zink spaces}.
\newblock {\em arXiv e-prints, arXiv:1811.01482}, 2018.

\bibitem[HZ76]{Hirzebruch1976}
F.~Hirzebruch and D.~Zagier.
\newblock Intersection numbers of curves on {H}ilbert modular surfaces and
  modular forms of {N}ebentypus.
\newblock {\em Invent. Math.}, 36:57--113, 1976.

\bibitem[Ich04]{Ichino2004}
Atsushi Ichino.
\newblock A regularized {S}iegel-{W}eil formula for unitary groups.
\newblock {\em Math. Z.}, 247(2):241--277, 2004.

\bibitem[Jac62]{Jacobowitz1962}
Ronald Jacobowitz.
\newblock Hermitian forms over local fields.
\newblock {\em Amer. J. Math.}, 84:441--465, 1962.

\bibitem[Kat99]{Katsurada1999}
Hidenori Katsurada.
\newblock An explicit formula for {S}iegel series.
\newblock {\em Amer. J. Math.}, 121(2):415--452, 1999.

\bibitem[Kis06]{Kisin2006}
Mark Kisin.
\newblock Crystalline representations and {$F$}-crystals.
\newblock In {\em Algebraic geometry and number theory}, volume 253 of {\em
  Progr. Math.}, pages 459--496. Birkh\"{a}user Boston, Boston, MA, 2006.

\bibitem[Kit93]{Kitaoka1993}
Yoshiyuki Kitaoka.
\newblock {\em Arithmetic of quadratic forms}, volume 106 of {\em Cambridge
  Tracts in Mathematics}.
\newblock Cambridge University Press, Cambridge, 1993.

\bibitem[KM86]{Kudla1986}
Stephen~S. Kudla and John~J. Millson.
\newblock The theta correspondence and harmonic forms. {I}.
\newblock {\em Math. Ann.}, 274(3):353--378, 1986.

\bibitem[KR99]{Kudla1999a}
Stephen~S. Kudla and Michael Rapoport.
\newblock Arithmetic {H}irzebruch-{Z}agier cycles.
\newblock {\em J. Reine Angew. Math.}, 515:155--244, 1999.

\bibitem[KR00a]{Kudla2000a}
Stephen~S. Kudla and Michael Rapoport.
\newblock Cycles on {S}iegel threefolds and derivatives of {E}isenstein series.
\newblock {\em Ann. Sci. \'{E}cole Norm. Sup. (4)}, 33(5):695--756, 2000.

\bibitem[KR00b]{Kudla2000}
Stephen~S. Kudla and Michael Rapoport.
\newblock Height pairings on {S}himura curves and {$p$}-adic uniformization.
\newblock {\em Invent. Math.}, 142(1):153--223, 2000.

\bibitem[KR11]{Kudla2011}
Stephen~S. Kudla and Michael Rapoport.
\newblock Special cycles on unitary {S}himura varieties {I}. {U}nramified local
  theory.
\newblock {\em Invent. Math.}, 184(3):629--682, 2011.

\bibitem[KR14]{Kudla2014}
Stephen~S. Kudla and Michael Rapoport.
\newblock Special cycles on unitary {S}himura varieties {II}: {G}lobal theory.
\newblock {\em J. Reine Angew. Math.}, 697:91--157, 2014.

\bibitem[KRSZ19]{KRSZ}
Stephen~S. {Kudla}, Michael {Rapoport}, Brian {Smithling}, and Wei {Zhang}.
\newblock {\em in preparation}, 2019.

\bibitem[KRY99]{Kudla1999}
Stephen~S. Kudla, Michael Rapoport, and Tonghai Yang.
\newblock On the derivative of an {E}isenstein series of weight one.
\newblock {\em Internat. Math. Res. Notices}, (7):347--385, 1999.

\bibitem[KRY06]{Kudla2006}
Stephen~S. Kudla, Michael Rapoport, and Tonghai Yang.
\newblock {\em Modular forms and special cycles on {S}himura curves}, volume
  161 of {\em Annals of Mathematics Studies}.
\newblock Princeton University Press, Princeton, NJ, 2006.

\bibitem[Kud97a]{Kudla1997}
Stephen~S. Kudla.
\newblock Algebraic cycles on {S}himura varieties of orthogonal type.
\newblock {\em Duke Math. J.}, 86(1):39--78, 1997.

\bibitem[Kud97b]{Kudla1997a}
Stephen~S. Kudla.
\newblock Central derivatives of {E}isenstein series and height pairings.
\newblock {\em Ann. of Math. (2)}, 146(3):545--646, 1997.

\bibitem[Kud04]{Kudla2004}
Stephen~S. Kudla.
\newblock Special cycles and derivatives of {E}isenstein series.
\newblock In {\em Heegner points and {R}ankin {$L$}-series}, volume~49 of {\em
  Math. Sci. Res. Inst. Publ.}, pages 243--270. Cambridge Univ. Press,
  Cambridge, 2004.

\bibitem[Liu11a]{Liu2011}
Yifeng Liu.
\newblock Arithmetic theta lifting and {$L$}-derivatives for unitary groups,
  {I}.
\newblock {\em Algebra Number Theory}, 5(7):849--921, 2011.

\bibitem[Liu11b]{Liu2011a}
Yifeng Liu.
\newblock Arithmetic theta lifting and {$L$}-derivatives for unitary groups,
  {II}.
\newblock {\em Algebra Number Theory}, 5(7):923--1000, 2011.

\bibitem[Liu18]{Liu2018}
Yifeng Liu.
\newblock {Fourier-Jacobi cycles and arithmetic relative trace formula}.
\newblock \url{https://gauss.math.yale.edu/~yl2269/FJcycle.pdf}, August 2018.

\bibitem[LL20]{LL2020}
Chao {Li} and Yifeng {Liu}.
\newblock {C}how groups and {$L$}-derivatives of automorphic motives for
  unitary groups.
\newblock {\em preprint}, May 2020.

\bibitem[Lus76]{Lusztig1976/77}
G.~Lusztig.
\newblock Coxeter orbits and eigenspaces of {F}robenius.
\newblock {\em Invent. Math.}, 38(2):101--159, 1976.

\bibitem[LZ17]{Li2017}
Chao Li and Yihang Zhu.
\newblock Remarks on the arithmetic fundamental lemma.
\newblock {\em Algebra Number Theory}, 11(10):2425--2445, 2017.

\bibitem[LZ20]{LZ2020}
Chao {Li} and Wei {Zhang}.
\newblock {On the arithmetic Siegel--Weil formula for GSpin Shimura varieties}.
\newblock {\em in preparation}, May 2020.

\bibitem[{Mih}16]{Mihatsch2016}
Andreas {Mihatsch}.
\newblock {Relative unitary RZ-spaces and the Arithmetic Fundamental Lemma}.
\newblock {\em arXiv e-prints, arXiv:1611.06520}, November 2016.

\bibitem[RSZ17a]{RSZ1}
M.~Rapoport, B.~Smithling, and W.~Zhang.
\newblock On the arithmetic transfer conjecture for exotic smooth formal moduli
  spaces.
\newblock {\em Duke Math. J.}, 166(12):2183--2336, 2017.

\bibitem[RSZ17b]{Rapoport2017}
Michael {Rapoport}, Brian {Smithling}, and Wei {Zhang}.
\newblock {Arithmetic diagonal cycles on unitary Shimura varieties}.
\newblock {\em arXiv e-prints, arXiv:1710.06962}, Oct 2017.

\bibitem[RSZ18]{Rapoport2018}
Michael {Rapoport}, Brian {Smithling}, and Wei {Zhang}.
\newblock Regular formal moduli spaces and arithmetic transfer conjectures.
\newblock {\em Math. Ann.}, 370(3-4):1079--1175, 2018.

\bibitem[RSZ19]{RSZsurvey}
Michael {Rapoport}, Brian {Smithling}, and Wei {Zhang}.
\newblock {On Shimura varieties for unitary groups}.
\newblock {\em arXiv e-prints, arxiv:1906.12346}, June 2019.

\bibitem[RZ96]{RZ96}
Michael Rapoport and Thomas Zink.
\newblock {\em Period spaces for {$p$}-divisible groups}, volume 141 of {\em
  Annals of Mathematics Studies}.
\newblock Princeton University Press, Princeton, NJ, 1996.

\bibitem[RZ17]{Rapoport2017a}
Michael Rapoport and Thomas Zink.
\newblock On the {D}rinfeld moduli problem of {$p$}-divisible groups.
\newblock {\em Camb. J. Math.}, 5(2):229--279, 2017.

\bibitem[San17]{Sankaran2017}
Siddarth Sankaran.
\newblock Improper intersections of {K}udla-{R}apoport divisors and
  {E}isenstein series.
\newblock {\em J. Inst. Math. Jussieu}, 16(5):899--945, 2017.

\bibitem[Sie51]{Siegel1951}
Carl~Ludwig Siegel.
\newblock Indefinite quadratische {F}ormen und {F}unktionentheorie. {I}.
\newblock {\em Math. Ann.}, 124:17--54, 1951.

\bibitem[Tat65]{Tate1965}
John~T. Tate.
\newblock Algebraic cycles and poles of zeta functions.
\newblock In {\em Arithmetical {A}lgebraic {G}eometry ({P}roc. {C}onf. {P}urdue
  {U}niv., 1963)}, pages 93--110. Harper \& Row, New York, 1965.

\bibitem[Tat67]{Tate1967}
J.~T. Tate.
\newblock {$p$}-divisible groups.
\newblock In {\em Proc. {C}onf. {L}ocal {F}ields ({D}riebergen, 1966)}, pages
  158--183. Springer, Berlin, 1967.

\bibitem[Tat94]{Tate1994}
John Tate.
\newblock Conjectures on algebraic cycles in {$l$}-adic cohomology.
\newblock In {\em Motives ({S}eattle, {WA}, 1991)}, volume~55 of {\em Proc.
  Sympos. Pure Math.}, pages 71--83. Amer. Math. Soc., Providence, RI, 1994.

\bibitem[Ter11]{Terstiege2011}
Ulrich Terstiege.
\newblock Intersections of arithmetic {H}irzebruch-{Z}agier cycles.
\newblock {\em Math. Ann.}, 349(1):161--213, 2011.

\bibitem[Ter13a]{Terstiege2013}
Ulrich Terstiege.
\newblock Intersections of special cycles on the {S}himura variety for {$\rm
  GU(1,2)$}.
\newblock {\em J. Reine Angew. Math.}, 684:113--164, 2013.

\bibitem[Ter13b]{Terstiege2013b}
Ulrich Terstiege.
\newblock On the regularity of special difference divisors.
\newblock {\em C. R. Math. Acad. Sci. Paris}, 351(3-4):107--109, 2013.

\bibitem[Vol10]{Vollaard2010}
Inken Vollaard.
\newblock The supersingular locus of the {S}himura variety for {${\rm
  GU}(1,s)$}.
\newblock {\em Canad. J. Math.}, 62(3):668--720, 2010.

\bibitem[VW11]{Vollaard2011}
Inken Vollaard and Torsten Wedhorn.
\newblock The supersingular locus of the {S}himura variety of {${\rm
  GU}(1,n-1)$} {II}.
\newblock {\em Invent. Math.}, 184(3):591--627, 2011.

\bibitem[Wei65]{Weil1965}
Andr\'{e} Weil.
\newblock Sur la formule de {S}iegel dans la th\'{e}orie des groupes
  classiques.
\newblock {\em Acta Math.}, 113:1--87, 1965.

\bibitem[YZZ13]{Yuan2013}
Xinyi Yuan, Shou-Wu Zhang, and Wei Zhang.
\newblock {\em The {G}ross-{Z}agier formula on {S}himura curves}, volume 184 of
  {\em Annals of Mathematics Studies}.
\newblock Princeton University Press, Princeton, NJ, 2013.

\bibitem[Zha14]{Zhang2014}
Wei Zhang.
\newblock Fourier transform and the global {G}an-{G}ross-{P}rasad conjecture
  for unitary groups.
\newblock {\em Ann. of Math. (2)}, 180(3):971--1049, 2014.

\bibitem[{Zha}19]{Zhang2019}
Wei {Zhang}.
\newblock {Weil representation and arithmetic fundamental lemma}.
\newblock {\em preprint}, May 2019.

\end{thebibliography}

\end{document}